\newtheorem{thm}{Theorem}[section]
\newtheorem{cor}[thm]{Corollary}
\newtheorem{lem}[thm]{Lemma}
\newtheorem{prop}[thm]{Proposition}
\numberwithin{equation}{thm}
\theoremstyle{definition}
\newtheorem{defn}[thm]{Definition}
\newtheorem{rem}[thm]{Remark}
\newtheorem{rems}[thm]{Remarks}
\newtheorem{conv}[thm]{Convention}
\newtheorem{notn}[thm]{Notation}
\newtheorem{exam}[thm]{Example}
\newcommand{\D}{\mathcal{D}}
\newcommand{\US}{\boldsymbol{\mathfrak{A}}}
\newcommand{\fcY}{\mathcal{Y}}
\def\f#1{\mathfrak{#1}}
\newcommand{\fS}{\f{S}}
\newcommand{\C}{\mathcal{C}}
\def\MN(#1){ M_{#1}(\mathbb{N})}
\def\MNR(#1,#2){ M_{#1}(\mathbb{N})_{#2}}
\def\MNS(#1){ M_{#1}(\mathbb{N})^{\pm}}
\newcommand{\ZZ}{\mathbb{Z}}
\newcommand{\ZG}{{{\mathbb{Z}}}_2}
\newcommand{\Nt}{{{\mathbb{N}}}_2}
\newcommand{\NN}{\mathbb{N}}
\def\MZ(#1){ M_{#1}(\ZG)}
\def\MNt(#1){ M_{#1}(\Nt)}
\def\NZ(#1){ (\NN|\ZG)^{#1}}
\def\NZST(#1,#2,#3){ (\NN|\ZG)^{#1}_{#2|#3}}
\def\NZS(#1,#2){ {\NN}^{#1}_{#2}}
\def\MNZ(#1,#2){ M_{#1}(\NN | \Nt)_{#2}}
\def\MNZN(#1){ M_{#1}(\NN | \Nt)}
\def\CMNZ(#1,#2,#3){\Lambda(#1,#2|#3)}
\def\CMN(#1,#2){\Lambda(#1,#2)}
\def\CMNP(#1,#2){\Lambda_{#1,#2}}
\def\MNZNS(#1){ \MNZN(#1)^{\pm}}
\def\SE#1{{#1}^{\bar{0}}}
\def\SO#1{{#1}^{\bar{1}}}
\def\SEE#1{{#1}^{\bar{0}}}
\def\SOE#1{{#1}^{\bar{1}}}
\def\SUP#1{\SE{#1}|\SO{#1}}
\def\SS(#1,#2){{#1}^{\ol{#2}}}
\def\SSE(#1,#2){{#1}^{\ol{#2}}}
\def\ESE(#1, #2, #3){ \SEE{#1}_{{#2},{#3}} }
\def\ESO(#1, #2, #3){ \SOE{#1}_{{#2},{#3}} }
\def\bs#1{\boldsymbol{#1}}
\def\Qqs(#1,#2){ \mathcal{Q}(#1,#2) }
\def\lcase#1{\MakeLowercase{#1}}
\newcommand{\SerA}{\f{H}^c_r}
\newcommand{\HCR}{\mathcal{H}^c_{r, R}}
\newcommand{\Heck}{\mathcal{H}_{r,R}}
\newcommand{\QqnrR}{\mathcal{Q}_{\lcase{q}}(\lcase{n},\lcase{r}; R)}
\def\ol#1{\overline{#1}}
\newcommand{\ep}{\epsilon}
\def\Qvs(#1){\mathcal{Q}_{\lcase{v}}{(\lcase{#1})}}
\def\Uvq(#1){U_{\lcase{v}}(\mathfrak{\lcase{q}}_{\lcase{#1}})}
\def\USN(#1){{\US[#1]}_{v}}
\def\SABJR(#1,#2,#3,#4){({#1}|{#2})[\bs{#3}, #4]}
\def\SABJRS(#1,#2,#3,#4){({#1}|{#2})[#3, #4]}
\def\SABJS(#1,#2,#3){({#1}|{#2})[#3]}
\def\SAJRS(#1,#2,#3){{#1}[#2, #3]}
\def\SAJS(#1,#2){{#1}[#2]}
\def\SABJ(#1,#2,#3){({#1}|{#2})[\bs{#3}]}
\def\SAJR(#1,#2,#3){{#1}[\bs{#2}, #3]}
\def\SAJ(#1,#2){{#1}[\bs{#2}]}
\def\ABJR(#1,#2,#3,#4){({#1}|{#2})(\bs{#3}, #4)}
\def\ABJRS(#1,#2,#3,#4){({#1}|{#2})(#3, #4)}
\def\ABJS(#1,#2,#3){({#1}|{#2})(#3)}
\def\AJRS(#1,#2,#3){{#1}(#2, #3)}
\def\AJS(#1,#2){{#1}(#2)}
\def\ABJ(#1,#2,#3){({#1}|{#2})(\bs{#3})}
\def\AJR(#1,#2,#3){{#1}(\bs{#2}, #3)}
\def\AJ(#1,#2){{#1}(\bs{#2})}
\def\STDUE(#1,#2){({#1}+E_{{#2},{#2}+1}-E_{{#2}+1,{#2}+1}|0)}
\def\STDUO(#1,#2){({#1}-E_{{#2}+1,{#2}+1}|E_{{#2},{#2}+1})}
\def\STDLE(#1,#2){({#1}-E_{{#2},{#2}}+E_{{#2}+1,{#2}}|0)}
\def\STDLO(#1,#2){({#1}-E_{{#2},{#2}}|E_{{#2}+1,{#2}})}
\def\STDDE(#1){(D_{#1}|0)}
\def\STDDO(#1,#2){({#1}-E_{{#2},{#2}}|E_{{#2},{#2}})}
\newcommand{\End}{\mathrm{End}}
\newcommand{\ro}{\mathrm{ro}}
\newcommand{\co}{\mathrm{co}}
\newcommand{\AP}{{A^+_{h,k}}}
\newcommand{\TDAP}{T_{d_{A^+_{h,k}}}}
\newcommand{\AM}{{A^-_{h,k}}}
\newcommand{\TDAM}{T_{d_{A^-_{h,k}}}}
\def\STEPX#1#2{{[\![{#1}]\!]}_{#2}}
\def\STEP#1{ {[\![{#1}]\!]}_{{q}} }
\def\STEPP#1{{[\![{#1}]\!]}_{{q}^2}}
\def\STEPPD#1{{[\![{#1}]\!]}_{{q},{q}^2}}
\def\STEPPDR#1{{[\![{#1}]\!]}_{{q}^2,{q}}}
\newcommand{\where}{\ \bs{|} \ }
\def\Hom{\mathrm{Hom}}
\newcommand{\spaceintv}{}
\def\intd(#1,#2,#3){\left[\begin{matrix}{#1};{#2}\\{#3}\end{matrix}\right]}
\def\intds(#1,#2){\left[\begin{matrix}{#1}\\{#2}\end{matrix}\right]}
\def\intdss(#1,#2){\intd({#1},{0},{#2})}
\def\diag{{\rm{diag}}}
\def\parity#1{p({#1})}
\def\TAIJ(#1,#2){ T^\lhd_{({#1}, {#2})} }
\def\TDIJ(#1,#2){ T^\rhd_{({#1}, {#2})} }
\def\rmText#1{}
\def\rmForm#1{}
\def\AK(#1,#2){ {\overleftarrow{\bf r}}^{#2}_{#1} }
\def\BK(#1,#2){ {\overrightarrow{\bf r}}^{#2}_{#1} }
\newcommand{\YK}{{\widetilde a}^{\bar1}_{h-1,k}}
\newcommand{\cbefore}{{\overleftarrow c}_{\!\!\Ad\;\;}^{h,k}}
\newcommand{\cafter}{{\overrightarrow c}_{\!\!\Ad}^{h+1,k}}
\newcommand{\ckbefore}{{\overleftarrow c}_{\!\!\Ad\;\;}^{h,k}}
\newcommand{\ckafter}{{\overrightarrow c}_{\!\!\Ad}^{h,k}}
\newcommand{\cmiddle}{c_{\Ad;k}^{h,h+1}}
\def\up{{\upsilon}}
\def\bsS{{\boldsymbol{\mathcal S}}}
\def\sS{{\mathcal S}}
\def\rmO{{\mathrm O}}
\def\Ad{{A^{\!\star}}}
\def\Bd{{B^{\!\star}}}
\def\Cd{{C^{\!\star}}}
\def\Xd{{X^{\!\star}}}
\def\Md{{M^{\!\star}}}
\def\Nd{{N^{\!\star}}}
\def\Dd{D^{\star}}
\def\Ed{E^{\star}}
\def\Fd{F^{\star}}
\def\qSchR{{\mathcal S_q(n,r;R)}}
\def\urcm{A^{\text{\normalsize $\llcorner$}}}
\def\ulcm{A^{\text{\normalsize $\lrcorner$}}}
\def\llcm{A_{\text{\normalsize$\urcorner$}}}
\def\lrcm{A_{\text{\normalsize$\ulcorner$}}}
\def\sdpHk{{}_\text{\sc sdp}{\rm H}}
\def\sdpHk{{}_\text{\sc sdp}{\rm H}\overline{\textsc{k}}}
\def\sdpHe{{}_\text{\sc sdp}{\rm H}\overline{\textsc{e}}}
\def\sdpHf{{}_\text{\sc sdp}{\rm H}\overline{\textsc{f}}}
\def\Tk{{\rm T}\overline{\textsc{k}}}
\def\Te{{\rm T}\overline{\textsc{e}}}
\def\hahk{\widehat a_{h,k-1}}
\def\ha{\widehat a}
\def\fkm{\mathfrak m}
\def\ufm{\underline{\mathfrak m}}
\def\fkM{\mathfrak M}
\def\la{{\lambda}}
\title[Some multiplication formulas in queer $q$-Schur superalgebras]
{Some multiplication formulas in queer $q$-Schur superalgebras}
\author{Jie Du$^\star$, Haixia Gu$^{\star\star}$, Zhenhua Li$^*$, and Jinkui Wan$^{**}$}
\address{J. D., School of Mathematics and Statistics,
University of New South Wales, Sydney NSW 2052, Australia}
\email{j.du@unsw.edu.au}
\address{H. G., School of Science, Huzhou University, Huzhou, China}
\email{ghx@zjhu.edu.cn}
\address{Z. L., School of Mathematical Sciences, Xiamen University, Xiamen 361005, China}
\email{zhen-hua.li@qq.com}
\address{J. W., School of Mathematics and Statistics, Beijing Institute of Technology,
Beijing 100081, China. } \email{wjk302@hotmail.com}
\keywords{quantum supergroup, $q$-Schur superalgebra, Hecke-Clifford superalgebra, Schur--Olshanski duality}
\date{\today}
\subjclass[2010]{20B30, 20G43, 17A70, 20C08}
\thanks{$^\star$Partially supported by the Science FRG;\;\; $^{\star\star}$Partially supported by NSFC-12071129}
\thanks{*Partially supported by NSFC-11871404;\;\;**Partially supported by NSFC-12122101 \& NSFC-12071026}
\begin{document}
\maketitle

\begin{abstract} Building on the work \cite{DW2},  where some standard basis for the queer $q$-Schur superalgebra $\QqnrR$ is defined by a labelling set of matrices and their associated double coset representatives, we investigate the matrix representation of the regular module of $\QqnrR$ with respect to this basis. More precisely, we derive explicitly (resp., partial explicitly) the multiplication formulas of the basis elements by certain even (resp., odd) generators of a queer $q$-Schur superalgebra. 

These multiplication formulas are highly technical to derive, especially in the odd case. It requires to discover many multiplication (or commutation) formulas in the Hecke--Clifford algebra $\HCR$ associated with  the labelling matrices. 
For example, for a given such a labelling matrix $\Ad$, there are several  matrices $w(A)$, $\sigma(A), \widetilde A$, and $\widehat A$ associated with the base matrix $A$ of $\Ad$, where $w(A)$ is used to compute a reduced expression of the distinguished double coset representatives $d_A$, and the other matrices are used to describe the permutation $d_A$ and the SDP (commutation) condition between $T_{d_A}$ and generators of the Clifford subsuperalgebra.  

With these multiplication formulas, we will construct a new realisation of the quantum queer supergroup in a forthcoming paper \cite{DGLW}, and to give new applications to the integral Schur--Olshanski duality and its associated representation theory at roots of unity.
\end{abstract}

\maketitle

\tableofcontents

\spaceintv
\section{Introduction}\label{sec_introduction}
For a given finitely generated algebra $\mathcal A$ over a field, the regular representation of $_{\mathcal A}\mathcal A$ can be understood as the matrix representations of generators with respect to a certain given basis. If the basis contains the generators as a subset, computing such matrix representations amounts to the determination of the structure constants associated with the generators or to the explicit multiplication formulas of the basis  by the generators. When $\mathcal A$ is a member of the family of Schur and $q$-Schur algebras, which are fundamental objects in representation theory, such multiplication formulas are first established by Beilinson--Lusztig--MacPherson (BLM) in \cite{BLM} (see also \cite[\S13.5]{DDPW} for more details), and used to determine the regular representation (or a new realisation) of quantum linear groups.

The method in \cite{BLM} is geometric, using the convolution algebra interpretation of $q$-Schur algebras associated with certain partial flag varieties, and the multiplication formulas are derived directly from the convolution products. This geometric method has also been generalised to the $q$-Schur algebra of affine type $A$ \cite{GV, L} and, more recently, to $q$-Schur algebras arising from $i$-quantum groups and quantum symmetric pairs \cite{BW, BKLW} of type AIII, whose Satake diagrams have no black nodes (see also \cite{LNX} for a coordinate algebra approach to these $q$-Schur algebras), and to affine type $C$
\cite{FLLLW}. However, for quantum linear supergroups and $q$-Schur superalgebras, the geometric method cannot be applied. Thus, an algebraic method is developed in \cite{DG, DGZ}. In this approach, the multiplication formulas for $q$-Schur superalgebras, regarded as endomorphism algebras of $q$-permutation supermodules over Hecke algebras, are derived using certain multiplication (or commutation) formulas  in the Hecke algebras involving distinguished double coset representatives.

The aim of this paper is to establish similar fundamental multiplication formulas for queer $q$-Schur superalgebras arising from the Schur-Olshanski duality \cite{Ol}. We will use the definition of queer $q$-Schur superalgebras $\QqnrR$  introduced by the first and last authors \cite{DW2}.
As endomorphism algebras of a direct sum of certain cyclic modules (or queer $q$-permutation supermodules) over the Hecke-Clifford superalgebra $\HCR$, such a queer $q$-Schur superalgebra admits a standard basis $\{\phi_\Ad\}$ labelled by certain $n\times 2n$ matrices $\Ad=(A^{\bar0}|A^{\bar1})$. Here each such a matrix defines a distinguished double coset representative $d_A$ associated with the ``base matrix'' $A:=A^{\bar0}+A^{\bar1}$ in the symmetric group $\fS_r$ and an element $c_{\Ad}$ in the Clifford superalgebra, %whose product becomes the generator of a queer $q$-permutation supermodule,
and the standard basis element $\phi_\Ad$ is defined by an element $T_\Ad$ in $\HCR$ associated with $d_A$ and the element $c_{\Ad}$.
Continuing with this construction, we now compute the structure constants occurring in the multiplication $\phi_\Xd\phi_\Ad$, for an arbitrary $\Ad$, %if $\Xd$ is even, and certain $\Ad$ if $\Xd$ is odd,
where $\Xd$ is one of the following six matrices
$$\aligned
(\lambda|O),\;\; (\lambda + E_{h, h+1} -E_{h+1, h+1} |O), \;\;(\lambda-E_{h,h}+E_{h+1,h}|O)\;\;(\text{of parity }0);\\
 (\lambda-E_{h,h}|E_{h,h}),\;\;(\lambda-E_{h+1, h+1}| E_{h, h+1} ),\;\;(\lambda-E_{h,h}|E_{h+1,h})\;\;(\text{of parity }1).\endaligned$$
Here $\lambda\in\mathbb N^n$ represent diagonal matrices. These matrices correspond to the Chevalley type generators for the queer Lie superalgebra and its universal enveloping algebra.

However, unlike the quantum $\mathfrak{gl}_n$ or $\mathfrak{gl}_{m|n}$ case, there are no complete multiplication formulas for $\Xd$ being the odd matrices above. This subtlety was observed over 10 years ago! In order to seek new ideas and methods, we tested two experiments. First, can the universal enveloping algebra of the queer Lie superalgebra be approached by queer Schur superalgebras?  Second, can the quantum queer supergroups be approached by constructing its regular representation directly? Both questions had affirmative answers; see \cite{GLL} and \cite{DLZ}. The two works convinced us that a construction using queer $q$-Schur superalgebras must exist and so does the integral Schur--Olshanski duality.

Here is the new approach to tackle the problem. Instead of seeking complete multiplication formulas in the odd case, we focus on the ``head part'', which is determined by a so-called SDP condition, leaving out the big ``tail part''; see Theorems \ref{phidiag1}--\ref{philower1}. The SDP condition is about commuting certain generators in the Clifford superalgebra with a basis element $T_{d_A}$  in the Hecke algebra associated with the distinguished double coset representative $d_A$ defined by a matrix $A$; see Definition \ref{defn:SDP}. If a matrix satisfies this condition, then certain multiplication formulas in the odd case can be completely computed. Fortunately, these head part formulas do provide a path to solving this decade long problem.

In the forthcoming paper \cite{DGLW}, we will use the multiplication formulas (and partial formulas in the odd case) in this paper to give a new construction (a BLM type realisation) for the quantum queer supergroup ${\bf U}(\mathfrak q_n)$. Furthermore, this breakthrough will allow us to establish the integral Schur--Olshanski duality and to study the polynomial representation theory at roots of unity \cite{DWZ}. We also remark that,
as a by product,  all above given $\phi_\Xd$ generate the queer $q$-Schur algebra $\QqnrR$ and the multiplication formulas in \S\S5--6 provide the regular representation on the standard basis. See Remark \ref{regular}.

We organise the paper as follows. We start with the definitions of Hecke-Clifford superalgebras $\HCR$ and queer $q$-Schur superalgebras $\QqnrR$ in \S2, and discuss the construction of their standard bases.
In \S3, we establish several commutation formulas in $\HCR$ which involve distinguished double coset representatives and the $c$-elements aforementioned. The SDP condition is introduced and investigated in \S4. The main result of this section is Theorem \ref{CdA1} which characterises the SDP condition by the shape of the matrix $A$.
The fundamental multiplication formulas for the even (resp., odd) case  are established in \S5 (resp. in \S6).
In \S7, we present some special cases for the multiplication formulas and relations relative to the generators. Finally, in the last section, we outline some ongoing work on applications of these multiplication formulas, including the aforementioned new realisation of the quantum queer supergroup ${\bf U}(\mathfrak q_n)$.

{\bf Some Notations.}  Let $\ZZ$ (resp., $\NN$) be the set of integers (resp. non-negative integers). Let $\ZZ_2=\{\bar 0,\bar1\}$ denote the abelian group of order 2 and $\NN_2=\{0,1\}\subset\NN$.

For any positive integer $n$, let $[1,n]=\{1,2,\ldots,n\}$, and define $[i+1, i+n]$ similarly for any integer $i$.
Let $[\![n]\!]=[\![n]\!]_q=\frac{q^n-1}{q-1}$ denote the Gaussian integers and let $[\![n]\!]_{x,y}=[\![n]\!]_x-[\![n]\!]_y$.
The free $\ZZ$-module $\ZZ^n$ has standard basis $\bs{\ep}_{1},\bs{\ep}_{2},\ldots,\bs{\ep}_{n}$ and ``positive roots'' $\alpha_1,\ldots,\alpha_{n-1}$, where $\alpha_h=\bs{\ep}_{h}-\bs{\ep}_{h+1}$.

Let $\MN(n)$ be the set of $n\times n$-matrices over $\mathbb N$. For any $m\times n$ matrix $A$ over $\NN$, let $|A|$ denote the sum of entries of $A$.

\spaceintv
\section{The queer $q$-Schur superalgebra}\label{sec_qqschur}% and quantum queer supergroups}

Let $R$ be a commutative ring with 1.
The Clifford superalgebra {$\C_r$} is an associative superalgebra over {$R$} defined by odd generators {$c_1, \cdots , c_r$} and relations: for $1\leq i,j\leq r$,
\begin{equation}\label{cliff}
	c_i^2 = -1,  \quad c_i c_j = - c_j c_i \; (i \ne j).
\end{equation}

Let {$\fS_{r}$} be the symmetric group on {$r$} letters,
with Coxeter generators (or simple reflections) {$ s_{i}=(i, i+1)$}, for all {$1 \le i < r$} and length function $\ell$.
Let $y<w$ denote the Bruhat partial order relation on $\fS_r$.
The Hecke  algebra  {$\Heck={\mathcal H}(\fS_r)_R$} associated with {$\fS_{r}$} and $1\neq q\in R^\times$
is an algebra  over {${R}$}  generated by
{$T_{i} = T_{s_{i}}$} for all  {$1 \le i < r$},
subject to the relations: for $1 \le i, j \le r-1$,
\begin{equation}\label{Hecke}
	\aligned &(T_i - {q} )(T_i + 1) = 0, \quad \\
	&T_i T_j = T_j T_i\,
	 (|i-j| >1),\quad\\
	&T_i T_{i+1} T_i = T_{i+1} T_i T_{i+1}\; (i\neq r-1).\endaligned
\end{equation}

The {\it Hecke-Clifford superalgebra}  {$\HCR$}  is a  superalgebra  over {${R}$}  generated by even generators
{$T_1, \cdots, T_{r-1}$} and odd generators {$c_1, \cdots, c_r$},
subject to the relations \eqref{cliff}-\eqref{Hecke} and the extra relations: for $1\leq i\leq r-1, 1\leq j\leq r,$
\begin{equation}\label{Hecke-Cliff}
\aligned c_j T_i&=T_i c_j\,\quad(j\neq i, i+1), \quad\\
 c_{i+1} T_i&=T_i c_i, \quad\\
c_i T_i &=T_i c_{i+1} + ({q}-1)(c_i - c_{i+1}).\endaligned
\end{equation}
For any homogeneous element {$h \in \HCR$}, let {$\parity{h}$} be its parity, more precisely,  {$\parity{h} = \bar{0}$} if  {$h$} is even, or {$\parity{h} = \bar{1}$} if {$h$} is odd.

The following relations can be deduced directly from \eqref{Hecke}-\eqref{Hecke-Cliff} and will be used in the sequel: for any {$1\leq k\leq r-1$},
\begin{equation}\label{Tick-inv}
\begin{aligned}
(1)\;\;&{q} T_{k}^{-1} = T_{k} -  ({q} - 1),
\qquad\\
%&(2)\;\;   x_{\alpha} T_{k}^{-1} =  {q}^{-1} x_{\alpha} \quad  \mbox{ if }  s_{k} \in \fS_{\alpha},\\
(2)\;\;&T_{k} c_{k+1}
= {q} c_{k} T_{k}^{-1} + ({q} - 1) c_{k+1},
\qquad\\
(3)\;\;& c_{k}T_{k}
= {q} T_{k}^{-1} c_{k+1}  + ({q} - 1) c_{k}.
\end{aligned}
\end{equation}

For $w \in \fS_r$ with reduced expression $w=s_{i_1}s_{i_2}\cdots s_{i_k}$ and $\alpha\in {\NN}^r_2$, let
\begin{align*}
T_w = T_{i_1} \cdots T_{i_k},\qquad c^\alpha=c_1^{\alpha_1}c_2^{\alpha_2}\cdots c_r^{\alpha_r}.
\end{align*}
Then it is known (cf. \cite[Lemma 2.2]{DW1}) that both
%Here $w= s_{i_1} \cdots s_{i_k}$  is any reduced expression of $w$.
%Then, by \cite[Lemma 2.2]{DW1}, both
$$\mathcal B=\{c^\alpha T_w\mid w\in\fS_r,\alpha\in {\NN}^r_2\}\;\text{ and }\;
\mathcal B'=\{T_wc^\alpha\mid w\in\fS_r,\alpha\in {\NN}^r_2\}$$ form (standard) $R$-bases for $\HCR$, consisting of homogeneous elements.

\begin{lem} The transition matrix from $\mathcal B$ to $\mathcal B'$ has the lower triangular form
\begin{equation}
\left(\begin{matrix}
P_{1}&O&\cdots&O\\%&\cdots&O\\
{*}&P_2&\cdots&O\\%&\cdots&O\\
\vdots&\vdots&\ddots&\vdots\\%&%&O\\
{*}&{*}&\cdots&P_{r!}\\%&\cdots&O\\
%\vdots&\vdots&&\vdots&&O\\
%O&O&\cdots&O&\cdots&A_s
\end{matrix}\right),
\end{equation}
where each block $P_k$ ($k\in[1,r!]$) on the diagonal is a signed permutation matrix of size $2^r\times 2^r$. Moreover, each $P_k$ is a block diagonal of blocks
of sizes ${r\choose s}$, for all $s\in[1,r]$.
\end{lem}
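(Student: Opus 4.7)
The plan is to expand each basis element $c^\alpha T_w\in\mathcal B$ in the basis $\mathcal B'$ by transporting the Clifford factors past $T_w$ using the commutation relations \eqref{Hecke-Cliff}. I would first fix a linear ordering of $\fS_r=\{w_1,\ldots,w_{r!}\}$ refining the length order (placing longer elements first, to match the stated lower-triangular convention), and, within each fixed $w_k$, group the $2^r$ monomials $c^\alpha$ according to the weight $|\alpha|=\alpha_1+\cdots+\alpha_r$ so that $\NN^r_2$ is stratified as $\bigsqcup_{s=0}^r\{\alpha:|\alpha|=s\}$ with strata of cardinalities $\binom{r}{s}$. With this ordering, the asserted block and sub-block decomposition of the transition matrix will be forced by the identity
\[
c^\alpha T_w \;=\; \epsilon(w,\alpha)\,T_w\,c^{w^{-1}\!\cdot\alpha} \;+\;\sum_{\substack{u\in\fS_r,\;\ell(u)<\ell(w)\\\gamma\in\NN^r_2}} d_{u,\gamma}(w,\alpha)\,T_u c^\gamma,
\]
where $w^{-1}\!\cdot\alpha\in\NN^r_2$ is defined by $(w^{-1}\!\cdot\alpha)_j=\alpha_{w(j)}$, and $\epsilon(w,\alpha)\in\{\pm 1\}$ is the sign of the permutation sorting the sequence $(w^{-1}(j_1),\ldots,w^{-1}(j_s))$ into increasing order, with $j_1<\cdots<j_s$ enumerating $\supp(\alpha)$.

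I would prove this identity by induction on $\ell(w)$. The base case $w=e$ is immediate. For the step, write $w=w's_i$ with $\ell(w)=\ell(w')+1$, apply the inductive expansion to $c^\alpha T_{w'}$, and multiply on the right by $T_i$. The three cases of \eqref{Hecke-Cliff} dictate how each Clifford factor crosses $T_i$: it commutes when its index is neither $i$ nor $i+1$, it swaps via $c_{i+1}T_i=T_ic_i$, or it splits via $c_iT_i=T_ic_{i+1}+(q-1)(c_i-c_{i+1})$. The leading contributions recombine into $T_w c_{w^{-1}(j_1)}\cdots c_{w^{-1}(j_s)}$, and reordering this Clifford string into standard increasing form using the anticommutativity of \eqref{cliff} contributes exactly the sign $\epsilon(w,\alpha)$. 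Every non-leading correction drops a $T_i$-factor, so after re-expansion in $\mathcal B'$ it lies in the $R$-span of $\{T_u c^\gamma:\ell(u)<\ell(w)\}$ and populates the strictly lower part of the column indexed by $(w,\alpha)$.

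The lemma now reads off from this identity. The column indexed by $(w_k,\alpha)$ carries a single diagonal entry $\epsilon(w_k,\alpha)$ at row $(w_k,w_k^{-1}\!\cdot\alpha)$, so the $k$-th diagonal block takes the form
\[
P_k=\bigl(\epsilon(w_k,\alpha)\,\delta_{\gamma,\,w_k^{-1}\!\cdot\alpha}\bigr)_{\gamma,\alpha\in\NN^r_2},
\]
which is a signed permutation matrix of size $2^r\times 2^r$, while all remaining non-zero entries sit in the strictly block-lower triangular part. The sub-block refinement is then automatic from $|w_k^{-1}\!\cdot\alpha|=|\alpha|$: the signed permutation $\alpha\mapsto w_k^{-1}\!\cdot\alpha$ preserves every weight stratum, whence $P_k$ decomposes as block diagonal with blocks of sizes $\binom{r}{s}$ (for $s=0,1,\ldots,r$).

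The main obstacle I anticipate is the bookkeeping for the sign $\epsilon(w,\alpha)$ through the induction: each application of the swap rule $c_{i+1}T_i=T_ic_i$ and of the leading term of $c_iT_i=T_ic_{i+1}+\cdots$ transposes an adjacent pair of Clifford indices, and one must check that the cumulative sign after $\ell(w)$ such moves and a final sort matches the combinatorial $\sgn(\sigma)$ formula above. Independence of this sign on the chosen reduced expression for $w$ is, of course, guaranteed a posteriori by the uniqueness of expansion in the basis $\mathcal B'$, but the direct combinatorial verification is the heart of the computation.
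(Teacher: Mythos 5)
Your proposal is correct and follows essentially the same route as the paper: commute $c^\alpha$ through $T_w$ to isolate the signed leading term $\pm T_w c^{\alpha.w}$ (your $w^{-1}\!\cdot\alpha$ is the paper's $\alpha.w$), then order the basis by decreasing $w$ and, within each $w$-block, by $|\alpha|$. The only real difference is that you rederive the commutation identity from scratch by induction on $\ell(w)$ and make the sign $\epsilon(w,\alpha)$ explicit, whereas the paper cites \cite[(2.4)]{DW2} for the leading-term formula and adds an induction on $|\alpha|$ to show the lower-order correction terms also preserve $|\alpha|$ (a refinement recorded in \eqref{homog} but not actually required for the block structure of the lemma).
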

\begin{proof} Define the place permutation of $\fS_r$ on ${\NN}^r_2$ by setting
$(\alpha_1,\ldots, \alpha_r).w=(\alpha_{w(1)},\ldots,\alpha_{w(r)})$. We have, by \cite[(2.4)]{DW2},
$$c^\alpha T_w=\pm T_w c^{\alpha.w}+\sum_{y<w,\beta\in\NN_2^r}f^{\alpha,w}_{\beta,y}T_yc^\beta$$
 for some $f^{\alpha,w}_{\beta,y}\in R$.
  For $1\leq s\leq r$, let $\NN_2^{r,s}=\{\alpha\in \NN_2^r\mid s=|\alpha|\}$. If $\alpha\in\NN_2^{r,s}$, the formula above can be further refined to
 \begin{equation*}\label{homog}
 c^\alpha T_w=\pm T_w c^{\alpha.w}+\sum_{y<w,\beta\in\NN_2^{r,s}}f^{\alpha,w}_{\beta,y}T_yc^\beta
 \end{equation*}
 (For a proof, use induction on $|\alpha|$.)
 Thus, if we order the basis $\mathcal B$ by linearly  refining the partial order defined by setting
 $$T_wc^\alpha<T_yc^\beta\iff w>y \text{ or }y=w, |\alpha|\leq|\beta|\;\; (\text{and in lexicographic order if }|\alpha|=|\beta|)$$
(and define $c^\alpha T_w<c^\beta T_y$ similarly), then the transition matrix has the desired form.
\end{proof}

For later use, we record the following special case:
 \begin{equation}\label{homog}
 c_j T_w= T_w c_{w^{-1}(j)}+\sum_{y<w,i\in[1,r]}f^{j,w}_{i,y}T_yc_i.
 \end{equation}

Define the set of  compositions of $r$ into $n$ parts:
\begin{equation}\label{Lanr}
\CMN(n,r)=\{\lambda=(\lambda_1,\lambda_2,\ldots,\lambda_n)\in \NN^n \where  r=|\lambda|:=\lambda_1+\cdots+\lambda_n \}.
\end{equation}
For {$ \lambda,\mu \in \CMN(n,r)$}, let {$\fS_{\lambda}$} be the standard Young subgroup of {$\fS_r$} corresponding to {$\lambda$}, and
let {$\D_{\lambda}$} be the distinguished (i.e. shortest) representatives of right cosets of {$\fS_{\lambda}$} in {$\fS_r$},
Then {$\D_{\lambda, \mu}:=\D_\lambda\cap\D_\mu^{-1}$} is the distinguished representatives of the {$\fS_{\lambda}$}-{$\fS_{\mu}$} double cosets in {$\fS_r$}.

Associated with {$\fS_{\lambda}$}, define the even element
\begin{align}\label{xlambda}
	x_{\lambda} = \Sigma_{\fS_{\lambda}}:= \sum_{w \in \fS_{\lambda}} T_w.
\end{align}
It is known that, for {$  s_i \in \fS_{\lambda} $}, we have
\begin{align}\label{Tixlambda}
	T_i x_{\lambda} =x_{\lambda}  T_i = {q} x_{\lambda},
\end{align}
and $x_\lambda\HCR$ is an ${R}$-free supermodule\footnote{By an $R$-free supermodule, we mean an $R$-module of the form $V_{\bar0}\oplus V_{\bar1}$ such that both $V_{\bar0}$ and $V_{\bar1}$ are $R$-free.}, as well as a right $\HCR$-supermodule.

\medskip

Given ${R}$-free supermodules  $V,W$,
let   $\Hom_{{R}}(V,W)$  be the set of  all ${R}$-linear  maps from $V$ to $W$.
We make $\Hom_{{R}}(V,W)$  into an $R$-free supermodule by declaring
${\Hom_{{R}}(V,W)}_{i}$
to be  the set of  homogeneous maps of parity $i\in \ZG$,
that is,
the linear maps $\theta:V\rightarrow W$ with $\theta(V_{j})\subset W_{{i}+{j}}$ for {$j\in\ZG$}.
{\it Note that it is typical in a superalgebra to write the expressions
which only make sense for homogeneous elements,
and the expected meaning for arbitrary elements
 is obtained by extending linearly from the homogeneous case.}

Following \cite{DW2},  the {\it queer $q$-Schur superalgebra} over {$R$} is defined to be

\begin{equation}\label{QqnrR}
	\QqnrR := \End_{\HCR}\Big(\bigoplus_{\lambda \in \CMN(n,r)} x_{\lambda} \HCR\Big).
	%\Qqnr=\mathcal Q_{{v}^2}(n,r;\mathcal Z).
\end{equation}
Here we view $T_R(n,r):=\bigoplus_{\lambda \in \CMN(n,r)} x_{\lambda} \HCR$ as a right $\HCR$-module and homomorphisms are $\HCR$-module homomorphisms, i.e., $R$-linear maps $\phi:T_R(n,r)\to T_R(n,r)$ satisfying $\phi(mh)=\phi(m)h$, for all $m\in T_R(n,r)$ and $h\in\HCR$.

\begin{rem}\label{sHom}Notice that  $\End_{R}(T_R(n,r))$ is an $R$-free supermodule. Thus,
we may also define the endomorphism superalgebra $ \End_{\HCR}^s(T_R(n,r))$
which consists of $\HCR$-supermodule homomorphisms defined by the rule
\begin{equation}\label{eq:twist}
 \Phi(mh)=(-1)^{\parity{\Phi}\parity{h}}\Phi(m)h,
\end{equation}
for all $m\in T_R(n,r)$ and (homogeneous)  $h\in\HCR,\Phi\in\End_{R}(T_R(n,r))$
(see  \cite[Def.~1.1]{CW}).
These superalgebras  over $R=\mathbb Q(\up)$ $(q=\up^2$) will be used in \cite{DGLW} for a new construction of the quantum queer supergroup. It can be proved that  if $\sqrt{-1}\in R$ then
$$\End_{\HCR}^s(T_R(n,r))\cong \End_{\HCR}(T_R(n,r))=\QqnrR.$$
Let $\widetilde{\boldsymbol{\mathcal Q}}(n,r):= \End_{\mathcal H^c_{r,\mathbb Q(\up)}}^s(T_{\mathbb Q(\up)}(n,r))$. Note that
$T_{\mathbb C(\up)}(n,r)$ can be identified as the tensor space considered in \cite{Ol}; see \cite[Lem.~3.1]{WW}.
\end{rem}

In order to describe a natural (standard) basis for $\QqnrR$, we need a few more notations.

Associated with {${\lambda} = ({\lambda}_1, \cdots , {\lambda}_{n}) \in \CMN(n,r)$}, define its {\it partial sum sequence},
%{$\widetilde{\lambda}_i = \sum_{k=1}^i \lambda_{k}$}
\begin{equation}\label{latilde}
\widetilde\lambda:=(\widetilde{\lambda}_1,\ldots,\widetilde{\lambda}_n),\text{ where }\widetilde{\lambda}_i = \sum_{k=1}^i \lambda_{k} \;\;(1\leq i\leq n), \text{ and }\widetilde{\lambda}_0=0.
\end{equation}

Let $\MN(n)$ be the set of $n\times n$-matrices over $\mathbb N$.
For any {$A=(a_{i,j}) \in \MN(n)$}, let
\begin{align}\label{nuA}
	\nu_{A} := (a_{1,1}, \cdots, a_{n,1}, a_{1,2}, \cdots, a_{n,2}, \cdots,  a_{1,n}, \cdots, a_{n,n})=(\nu_1,\nu_2,\ldots,\nu_{n^2}).
\end{align}
Define $\widetilde {\nu_A}=\nu_{\widetilde A}$ similarly, where %for some $1\leq i,j\leq n $, then we write the partial sum $\widetilde{\nu}_k$ as
\begin{align}\label{Atilde}
%\widetilde{\nu}_k=
\widetilde A=(\widetilde a_{i,j}),\text{ with }\;\widetilde{a}_{i,j} := \sum_{p=1}^{j-1}\sum_{u=1}^{n} a_{u,p} + \sum_{u=1}^{i} a_{u,j},
\text{ and set }  \widetilde a_{0,1}=0,\;
\widetilde{a}_{0,j} =\widetilde{a}_{n,j-1}\; (j\geq2).
\end{align}

Following \cite[Section 4]{DW2}, we introduce the following odd elements in $\HCR$: for $1\leq i\leq j\leq r$,
\begin{equation}\label{cqij}
	c_{{q},i,j} = {q}^{j-i}c_i  + \cdots + {q} c_{j-1} + c_j,\qquad
	c'_{{q},i,j} = c_i + {q} c_{i+1} + \cdots + {q}^{j-i}c_j.
\end{equation}

For {$\lambda= (\lambda_1, \cdots , \lambda_m)\in \mathbb N^m$} and {$\alpha \in \Nt^m $} such that $\alpha\leq\lambda$ (meaning $\alpha_i\leq\lambda_i $, for all $1\leq i\leq m$), recall the following elements introduced in \cite[\S4]{DW2}
\begin{equation}\label{eq_c_a}
\begin{aligned}
	&c^{\alpha}_{\lambda} = {(c_{{q}, 1, \widetilde{\lambda}_1})}^{\alpha_1}
				{(c_{ {q}, \widetilde{\lambda}_1+1 , \widetilde{\lambda}_2} )}^{\alpha_2}
				\cdots
				{(c_{{q}, \widetilde{\lambda}_{m-1}+1,  \widetilde{\lambda}_m} )}^{\alpha_m}, \\
	&(c^{\alpha}_{\lambda})' = {(c'_{{q}, 1, \widetilde{\lambda}_1})}^{\alpha_1}
				{(c'_{{q}, \widetilde{\lambda}_1 + 1,  \widetilde{\lambda}_2} )}^{\alpha_2}
				\cdots
				{(c'_{{q}, \widetilde{\lambda}_{m-1} + 1,  \widetilde{\lambda}_m })}^{\alpha_m} .
\end{aligned}
\end{equation}
Here we used the convention $(c_{q, \widetilde{\lambda}_{i-1}+1 , \widetilde{\lambda}_i} )^{\alpha_i}=1$ if $\lambda_i=0$ (so, $\alpha_i=0$).
By \cite[Lem.~4.1, Cor.~4.3]{DW2},
we have, for all $\alpha\leq\lambda$,
\begin{align}\label{xc}
	x_{\lambda} c_{\lambda}^{\alpha} = (c_{\lambda}^{\alpha})'  x_{\lambda}.
\end{align}

Let {$  \MNR(n,r) = \{ A=(a_{i,j}) \in \MN(n) \where  r=|A| \} $}, where {$|A|:=\sum_{i,j} a_{i,j} $}.
It is well known (see, e.g., \cite[Section 4.2]{DDPW}) that there is a bijection
\begin{equation}\label{jmath}
\aligned
	\mathfrak{j}: \MNR(n,r) &\to \{ (\lambda,  d, \mu) \where \lambda,\mu \in \CMN(n,r), d \in \D_{\lambda, \mu} \} \\
	A & \mapsto (\ro(A), d_A, \co(A)),
	\endaligned
\end{equation}
where, for $A=(a_{i,j})$,
\begin{equation}\label{roco}
\ro(A)=\Big(\sum_{j=1}^na_{1,j},\ldots,\sum_{j=1}^na_{n,j}\Big)\text{ and }\co(A)=\Big(\sum_{i=1}^na_{i,1},\ldots,\sum_{i=1}^na_{i,n}\Big).
\end{equation}
Call $\ro(A)$ (resp., $\co(A)$) the {\it row sum} (resp., {\it column sum}) vector of $A$. For the distinguished double coset representative $d_A$, see \eqref{d_A} below for its reduced expression and \eqref{map d_A} for the precise description as a permutation.

We are now ready to describe a basis for $\QqnrR$.

Recall {$\Nt = \{0, 1\} \subset \NN$} and  the following matrix sets introduced in \cite{DW2}:\footnote{The set $\MNZN(n)$ is denoted $M_n(\NN|\ZZ_2)$ in \cite{DW2}.}
\begin{equation}
\aligned\label{MNZ}
	& \MNZN(n) = \{ \Ad=(\SUP{A}) \where \SE{A} \in \MN(n), \SO{A} \in \MNt(n) \}, \\
	& \MNZ(n,r) = \{ \Ad=(\SUP{A}) \in \MNZN(n) \where  \SE{A} + \SO{A} \in \MNR(n,r) \}.
\endaligned
\end{equation}
If $\Ad =(\SE{A}|\SO{A})\in\MNZ(n,r)$ with $\SE{A}= (\SEE{a}_{i,j}),\SO{A}=(\SOE{a}_{i,j})$, then we often write
$\Ad=(\SEE{a}_{i,j}|\SOE{a}_{i,j})$. We call the matrix
\begin{equation}\label{Ad->A}
A=\lfloor\Ad\rfloor:=\SE{A}+\SO{A}=(a_{i,j})\in\MNR(n,r)\;, \text{ where }a_{i,j}=\SEE{a}_{i,j}+\SOE{a}_{i,j},
\end{equation}
the {\it base} (matrix) of $\Ad$. For later use in Section 6, we also let, for $\lambda,\mu\in\Lambda(n,r)$,
$$\MNZN(n)_{\lambda,\mu}=\{\Ad\in\MNZN(n)\mid \ro(A)=\lambda,\co(A)=\mu\}.$$

With the compositions $\nu=\nu _A$ %$=(a_{1,1},\ldots,a_{n,1},\ldots,a_{1,n},\ldots,a_{n,n})$ and
and $\alpha=\nu_{A^{\bar1}}$ %:=(\SOE{a}_{1,1},\ldots, \SOE{a}_{n,1},\ldots, \SOE{a}_{1,n},\ldots, \SOE{a}_{n,n})$
as defined
in \eqref{nuA}, we define
\begin{equation}\label{eq cA}
c_{\Ad} :=  c_{\nu}^{\alpha}\;\;\text{ and }\;\;T_{\Ad} :=  x_{\ro( A)} T_{d_{ A}} c_{\Ad}\sum _{{\sigma} \in \D_{{\nu}} \cap \fS_{\co( A)}} T_{\sigma}.
\end{equation}
For the simplicity of later use, we abbreviate the ``tail'' of the element $T_\Ad$ by setting
\begin{equation}\label{tail}
\Sigma_A:=\Sigma_{\D_{{\nu_A}} \cap \fS_{\co( A)}}=\sum _{{\sigma} \in \D_{{\nu_A}} \cap \fS_{\co( A)}} T_{\sigma}.
\end{equation}

We also set
\begin{equation}\label{p(A)}
c'_{\Ad} =  (c_{\nu}^{\alpha})',\qquad
p(\Ad):=p(T_\Ad)\equiv|\SO{A}|\text{ (mod 2)}.
\end{equation}

\begin{prop}[{\cite[Prop.~5.2, Th.~5.3]{DW2}\label{DW-basis}}]
(1) For any $\lambda,\mu\in\Lambda(n,r)$,
the intersection $x_\lambda\HCR\cap \HCR x_\mu$ is a free $R$-module with basis $\{T_\Ad\mid \Ad  \in\MNZ(n,r), \lambda=\ro( A), \mu=\co( A)\}$.

(2) The queer $q$-Schur superalgebra $\QqnrR$ is a free $R$-supermodule with basis $\{ \phi_{\Ad} \where  \Ad \in \MNZ(n,r)\}$, where {$\phi_{\Ad} $} is defined by
\begin{equation*}
\phi_{\Ad}: \bigoplus_{\lambda \in \CMN(n,r)} x_{\lambda} \HCR \longrightarrow \bigoplus_{\lambda \in \CMN(n,r)} x_{\lambda} \HCR,\;\;
 x_{\mu}h \longmapsto \delta_{\mu, \co( A)} T_{\Ad}h,\;\forall  \mu \in \CMN(n,r), h \in \HCR,
\end{equation*}
where $A$ is the base of $\Ad$. Moreover, $\phi_\Ad$ has the parity $p(\Ad)$.
\end{prop}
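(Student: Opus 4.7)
The strategy is to prove Part~(1) first, isolating the basis for the intersection $x_\lambda \HCR \cap \HCR x_\mu$, and then derive Part~(2) via a Frobenius-type identification of $\HCR$-homomorphisms.

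For Part~(1), the first task is to verify that each $T_\Ad$ with $\ro(A)=\lambda$ and $\co(A)=\mu$ lies in $x_\lambda\HCR \cap \HCR x_\mu$. Membership in $x_\lambda\HCR$ is immediate from the definition of $T_\Ad$. For the other inclusion I use that $\fS_{\nu_A} = d_A^{-1}\fS_\lambda d_A \cap \fS_\mu$ is a standard Young subgroup, so that $x_\mu = x_{\nu_A}\Sigma_A$ with $\Sigma_A$ as in \eqref{tail}. The plan is to show directly that $x_\lambda T_{d_A}\Sigma_A$ is killed by $(T_i-q)$ on the right for every $s_i \in \fS_\mu$, via the standard double-coset combinatorics. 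The Clifford factor $c_\Ad$ is built from the elements $c_{q,\widetilde{\nu}_{i-1}+1,\widetilde{\nu}_i}$, which by \eqref{xc} combined with conjugation through $T_{d_A}$ and the commutations \eqref{Hecke-Cliff} slide through the $\fS_\mu$-action in a controlled way. Together these confirm $T_\Ad \in \HCR x_\mu$.

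For $R$-linear independence and spanning of $\{T_\Ad\}$, I use the standard basis $\mathcal{B}' = \{T_w c^\alpha\}$ of $\HCR$. Expanding $T_\Ad$ in this basis and invoking the triangularity of Lemma~2.1, the leading term under a suitable partial order (first by the Bruhat length of the Hecke component, then by the Clifford degree refined lexicographically) takes the form $\pm T_{w_0^\lambda d_A \sigma_{\max}}\, c^{\alpha(\Ad)}$ for some maximal $\sigma_{\max}\in\D_{\nu_A}\cap\fS_\mu$ and a $\{0,1\}$-vector $\alpha(\Ad)$ encoding $\SO{A}$. Since distinct $\Ad$ yield distinct leading terms, linear independence follows. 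For spanning, I take an arbitrary $\xi \in x_\lambda\HCR \cap \HCR x_\mu$, expand it in $\mathcal{B}'$, and peel off leading terms by subtracting appropriate $T_\Ad$'s, closing the argument by induction on the ordering. A cleaner alternative is to filter by $|\SO{A}|$: the associated graded reduces to the classical basis result for $x_\lambda \mathcal{H}(\fS_r) \cap \mathcal{H}(\fS_r) x_\mu$ (see \cite{DDPW}), and a lifting argument then recovers the full $T_\Ad$-basis.

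For Part~(2), the map $\phi_\Ad$ is a well-defined $\HCR$-homomorphism precisely because $T_\Ad \in x_{\ro(A)}\HCR \cap \HCR x_{\co(A)}$: the right factorisation $T_\Ad \in \HCR x_{\co(A)}$ ensures that $x_{\co(A)}h \mapsto T_\Ad h$ is independent of the choice of $h$ representing $x_{\co(A)}h$, while the left factorisation places the image in $x_{\ro(A)}\HCR$. The parity $p(\phi_\Ad)\equiv |\SO{A}|\pmod{2}$ is read off from $c_\Ad$. The basis statement then follows from the standard decomposition
\[
\QqnrR \;=\; \bigoplus_{\lambda,\mu \in \CMN(n,r)} \Hom_{\HCR}(x_\mu\HCR,\, x_\lambda\HCR),
\]
combined with the identification $\Hom_{\HCR}(x_\mu\HCR, x_\lambda\HCR) \cong x_\lambda\HCR \cap \HCR x_\mu$ via $\phi \mapsto \phi(x_\mu)$, which sends $\phi_\Ad$ to $T_\Ad$ and is a bijection by Part~(1). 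The principal obstacle is the containment $T_\Ad \in \HCR x_\mu$ and the subsequent leading-term analysis: the relations \eqref{Hecke-Cliff} produce error terms of the shape $(q-1)(c_i-c_{i+1})$ whenever a $c$-generator crosses a $T_i$, and careful bookkeeping of these error terms, together with the summation over $\D_{\nu_A}\cap\fS_\mu$ and the commutation of $c_\Ad$ with the Hecke factor, forms the delicate combinatorial core of the proof. Lemma~2.1 and the identity \eqref{xc} are the two structural tools that make this tracking feasible.
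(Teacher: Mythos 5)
The paper imports this proposition from \cite[Prop.~5.2, Th.~5.3]{DW2} without an internal proof, so there is no in-paper argument to compare against. Assessed on its own terms, your Part~(2) is correct: the decomposition of $\QqnrR$ as $\bigoplus_{\lambda,\mu}\Hom_{\HCR}(x_\mu\HCR,x_\lambda\HCR)$ and the identification of each summand with $x_\lambda\HCR\cap\HCR x_\mu$ via $\phi\mapsto\phi(x_\mu)$ is standard, and the parity observation is immediate. Your Part~(1), however, has two genuine gaps.

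First, the containment $T_\Ad\in\HCR x_\mu$ is only gestured at. Saying that $c_\Ad$ ``slides through the $\fS_\mu$-action in a controlled way'' glosses over the fact that in the definition \eqref{eq cA} the factor $c_\Ad$ sits \emph{between} $T_{d_A}$ and $\Sigma_A$, and \eqref{xc} can only be invoked once $x_{\nu_A}$ appears to the left of $c_\Ad$. The missing ingredient is the double-coset factorisation
\[
x_\lambda T_{d_A} \;=\; \Bigl(\sum_{u'\in\D^{-1}_{\nu(d_A^{-1})}\cap\fS_\lambda} T_{u'}\Bigr)\,T_{d_A}\,x_{\nu_A}
\]
(equation~\eqref{eq:X-lambda-Td}, from \cite[Cor.~3.5]{DW2}); with it one gets directly
\[
T_\Ad = \Bigl(\sum_{u'} T_{u'}\Bigr)T_{d_A}\,x_{\nu_A}\,c_\Ad\,\Sigma_A = \Bigl(\sum_{u'} T_{u'}\Bigr)T_{d_A}\,(c_\Ad)'\,x_{\nu_A}\Sigma_A = \Bigl(\sum_{u'} T_{u'}\Bigr)T_{d_A}\,(c_\Ad)'\,x_\mu,
\]
using \eqref{xc} and $x_{\nu_A}\Sigma_A=x_\mu$. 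Your alternative plan of showing $x_\lambda T_{d_A}\Sigma_A$ is killed on the right by $T_i-q$ for $s_i\in\fS_\mu$ would also require proving that the right annihilator of $\{T_i-q:s_i\in\fS_\mu\}$ in $\HCR$ equals $\HCR x_\mu$, which you never establish. Second, the leading-term argument for the basis is underdetermined: $c_\Ad$ is a product of the sums $c_{q,i,j}=q^{j-i}c_i+\cdots+c_j$, not a Clifford monomial $c^\alpha$, so there is no single $T_wc^\alpha$ term to peel off without a further block-by-block analysis of the supports $[\widetilde\nu_{k-1}+1,\widetilde\nu_k]$. A cleaner route is to compare $\{T_\Ad\}$ against the explicit $R$-basis $\{x_\lambda T_dc^\alpha T_v : d\in\D_{\lambda,\mu},\ \alpha\in\NN_2^r,\ v\in\D_{\nu(d)}\cap\fS_\mu\}$ of $x_\lambda\HCR$ recorded in Lemma~\ref{6.1} (\cite[Cors~3.5~\&~3.8]{DW2}), which already fixes the ambient free module; the global triangularity of the transition matrix between $\mathcal B$ and $\mathcal B'$ in all of $\HCR$ is too coarse an instrument for isolating the intersection $x_\lambda\HCR\cap\HCR x_\mu$.
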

We have, for $\Ad,\Bd\in \MNZ(n,r)$ with base $A,B$, respectively,
\begin{equation}\label{co=ro}
\phi_\Bd\phi_\Ad \ne  0 \implies \co(B) = \ro( A).
\end{equation}
By \eqref{eq cA},  if we write $T_\Ad=x_{\ro( A)}h'$,
where $h'={T_{d _A}} c_{\Ad}\Sigma_A,$ %\sum _{{\sigma} \in \D_{{\nu}_{_A}} \cap \fS_{\co( A)}} T_{\sigma}$,  
then $h'$ is a homogeneous element in the superalgebra $\HCR$ with parity $p(h')=p(\Ad)$
and the structure constants $\gamma^\Md_{\Bd,\Ad}$
appearing in $\phi_\Bd\phi_\Ad = \sum_{\Md} \gamma^\Md_{\Bd,\Ad} \phi_\Md$
are determined by writing $T_\Bd h'\in x_{\ro(B)}\HCR\cap \HCR x_{\co( A)}$
as a linear combination of $T_\Md$'s. In other words,
\begin{equation}\label{eq_gB}
z=\phi_\Bd\phi_\Ad(x_{\co( A)})=T_\Bd h'= \sum_{\Md \in \MNZ(n,r)} \gamma_{\Bd,\Ad}^\Md T_\Md.
\end{equation}
Moreover, we have {$\parity{\Md} = \parity{\Ad} + \parity{\Bd}$}, for every {$\Md$} with $\gamma_{\Bd,\Ad}^\Md\neq0$.

\begin{cor}\label{q-Sch}
Let $\qSchR$ be the $R$-submodule spanned by $\{\phi_{(A|{\rmO})}\mid A\in\MNR(n,r)\}$. Then $\qSchR$ is a subalgebra of $\QqnrR$ (with the same identity). Moreover, $\qSchR$ is isomorphic to the $q$-Schur algebra (of type $A$).
\end{cor}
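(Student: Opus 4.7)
The plan is to check three claims in sequence: that $\qSchR$ contains the identity of $\QqnrR$, that it is closed under multiplication, and that it is isomorphic to the type~$A$ $q$-Schur algebra $\mathcal S_q(n,r;R):=\End_{\Heck}\big(\bigoplus_{\lambda\in\CMN(n,r)}x_\lambda\Heck\big)$. The common thread is the observation that when $A^{\bar1}=\rmO$ the basis element $T_{(A|\rmO)}$ is purely Hecke-theoretic, i.e.\ it lies in the subalgebra $\Heck\subseteq\HCR$.

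First I would unpack the definitions: for $\Ad=(A|\rmO)$ one has $\alpha=\nu_{A^{\bar1}}=0$, so by \eqref{eq_c_a} $c_\Ad=1$, and hence by \eqref{eq cA}
$$T_{(A|\rmO)}=x_{\ro(A)}T_{d_A}\,\Sigma_A\in\Heck.$$
In particular $\phi_{(A|\rmO)}$ is even. For $A=\diag(\lambda)$ one has $d_A=1$ and $\Sigma_A=1$, so $T_{(\diag(\lambda)|\rmO)}=x_\lambda$ and $\phi_{(\diag(\lambda)|\rmO)}$ is the identity on $x_\lambda\HCR$ and zero on the other summands of $T_R(n,r)$. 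Thus the identity of $\QqnrR$ equals $\sum_{\lambda\in\CMN(n,r)}\phi_{(\diag(\lambda)|\rmO)}\in\qSchR$.

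Next I would verify closure. Given $\Bd=(B|\rmO),\Ad=(A|\rmO)$ with $\co(B)=\ro(A)$, \eqref{eq_gB} reduces to
$$\phi_\Bd\phi_\Ad(x_{\co(A)})=T_\Bd\cdot T_{d_A}\Sigma_A=x_{\ro(B)}T_{d_B}\Sigma_B\,T_{d_A}\Sigma_A,$$
a product of elements of $\Heck$; it therefore lies in $x_{\ro(B)}\Heck\cap\Heck x_{\co(A)}$. The classical Hecke-algebra double-coset basis theorem (equivalently, the restriction of Proposition~\ref{DW-basis}(1) to matrices with trivial odd part) says that this intersection is $R$-free on $\{T_{(M|\rmO)}\mid M\in\MNR(n,r),\,\ro(M)=\ro(B),\,\co(M)=\co(A)\}$, and this Hecke basis is a subset of the $\HCR$-basis of $x_{\ro(B)}\HCR\cap\HCR x_{\co(A)}$ given by Proposition~\ref{DW-basis}(1). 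By uniqueness of expansion, the structure constants $\gamma^\Md_{\Bd,\Ad}$ in \eqref{eq_gB} vanish whenever $M^{\bar1}\neq\rmO$, which shows $\qSchR\cdot\qSchR\subseteq\qSchR$.

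Finally, I would define $\Psi:\mathcal S_q(n,r;R)\to\qSchR$ on the standard Dipper--James basis by $\Psi(\psi_A)=\phi_{(A|\rmO)}$, where $\psi_A$ is determined by $\psi_A(x_\mu)=\delta_{\mu,\co(A)}x_{\ro(A)}T_{d_A}\Sigma_A$. Both sides are $R$-free of the same rank $|\MNR(n,r)|$, so $\Psi$ is an $R$-module isomorphism; and the structure constants of both algebras in these bases are read off from the same expansion of the same Hecke-algebra product $x_{\ro(B)}T_{d_B}\Sigma_B T_{d_A}\Sigma_A$ used in the closure step, so $\Psi$ automatically preserves multiplication. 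The only technical point requiring attention in this plan is the restriction of Proposition~\ref{DW-basis}(1) invoked above; this amounts to noting that $\Heck$ is a subalgebra of $\HCR$ and that the Hecke double-coset elements appear inside the $\HCR$ double-coset basis precisely as the subset indexed by $\{M^{\bar1}=\rmO\}$, a compatibility that is essentially built into the very construction of $T_\Ad$ in \eqref{eq cA}.
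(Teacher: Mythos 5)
Your proposal is correct, and it takes what is essentially the intended route: the paper states the corollary without explicit proof, as an immediate consequence of Proposition~\ref{DW-basis} together with the observation (built into the definitions \eqref{eq_c_a}--\eqref{eq cA}) that $c_{(A|\rmO)}=1$ and hence $T_{(A|\rmO)}=x_{\ro(A)}T_{d_A}\Sigma_A\in\Heck$. Your identification of the identity as $\sum_\lambda\phi_{(\diag\lambda|\rmO)}$ and your closure argument via uniqueness of the $T_\Md$-expansion are exactly right. The one step you state a little quickly is the passage from ``$T_\Bd h'\in\Heck$ and $T_\Bd h'\in x_{\ro(B)}\HCR\cap\HCR x_{\co(A)}$'' to ``the $\gamma^\Md_{\Bd,\Ad}$ vanish for $M^{\bar1}\neq\rmO$''; this does hold, but it relies on the fact, implicit in Lemma~2.1's triangular transition matrix, that the decomposition $\HCR=\bigoplus_{s\geq0}\operatorname{span}\{T_wc^\alpha:|\alpha|=s\}$ is stable under left and right multiplication by $\Heck$, so that $\Heck$ is the $s=0$ layer and $T_\Md$ lies in the $s=|M^{\bar1}|$ layer. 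Given that, the Hecke basis $\{T_{(M|\rmO)}\}$ is indeed linearly independent from the $T_\Md$ with $M^{\bar1}\neq\rmO$ inside $\HCR$, uniqueness of expansion kills the unwanted coefficients, and your isomorphism $\psi_A\mapsto\phi_{(A|\rmO)}$ then carries structure constants across because both sides expand the same Hecke product $x_{\ro(B)}T_{d_B}\Sigma_B T_{d_A}\Sigma_A$ in the same Dipper--James basis.
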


\section{Some commutation relations in the Hecke-Clifford superalgebra }\label{sec_basicformulas}

In this section, %$R$ denotes a commutative ring such that {$2 \in R^\times$} (the group of units in $R$).
we establish a number of commutation relations  in $\HCR$
which will be useful to compute the multiplication formulas
in the queer $q$-Schur superalgebra.
%(i.e., the quantum queer Schur superalgebra).

Let {$E_{i,j} \in \MN(n)$} denote the matrix unit with entry $1$ at {$(i,j)$} position and {$0$} elsewhere. For any {$A=(a_{i,j}) \in \MNR(n,r)$} and {$h,k \in[1, n]$, $h<n$}, define matrices in $\MNR(n,r)$:
\begin{equation}\label{Ahk}
\aligned
	 A^+_{h,k} &:= A + E_{h,k} - E_{h+1, k}, \text{ if } a_{h+1,k}>0;\\
	  A^-_{h,k} &:= A - E_{h,k} + E_{h+1, k}, \text{ if } a_{h,k}>0.
	  \endaligned
\end{equation}
 As shown in \cite[Prop. 3.6]{DGZ}, explicit relations between the distinguished double coset representatives $d_A$ and $d_{A^+_{h,k}}$, $d_{A^-_{h,k}}$ are given via their reduced expressions. In fact, such matrices occur naturally in the product $\phi_{\Xd}\phi_\Ad$ for the $\Xd$ mentioned in the introduction.

% We first extend them to similar relations in $\HCR$.

We further define the partial sums of row $h$ of $A$ by setting $\AK(h,1)(A)=0$, $\BK(h,n)(A)=0$, and
\begin{equation}\label{prsum}
\aligned
\AK(h,k)(A)&:= \sum^{k-1}_{u=1}a_{h, u},\;\text{ for }2\leq k\leq n+1;\\
\BK(h,k)(A)&:= \sum^{n}_{j=k+1}a_{h, j},\;\text{ for }0\leq k\leq n-1.
\endaligned
\end{equation}
Note that $\AK(h,n+1)(A)=\ro(A)_h$, the $h$-th component of $\ro(A)$, and $\BK(h,0)(A)=\ro(A)_h$.

For $A=(a_{i,j})\in \MN(n)$ with {$\lambda=\ro(A)$}, {$r=|A|$}, $\AK(h,k)=\AK(h,k)(A)$, let
\begin{equation}\label{Ahat}
\widehat A=(\widehat a_{i,j}), \;\text{ where }\;\widehat a_{i,j}=\widetilde \lambda_{i-1}+\AK(i,j+1),\;\text{ and set }\;\widehat a_{1,0}=0, \widehat a_{i,0}=\widehat a_{i-1,n} (i>1).
\end{equation}
Observe that the matrix $\widehat A$ and the matrix $\widetilde A$ defined in \eqref{Atilde} are related by
 $\widetilde{\,{}^t\!A\,}={}^t(\widehat A)$, where ${}^t\!B$ is the transpose of $B$.

The following corner sub-matrices will be useful later on.
\begin{defn}\label{corner} For {$A=(a_{i,j}) \in \MNR(n,r)$} and $h,k\in[1,n]$, deleting $h$th row and $k$th column divides the matrix into four submatrices in general. We will call them, respectively, {\it the upper left/right corner matrix} and {\it the lower left/right corner matrix at $(h,k)$}. We denote the upper left/right corner matrix at $(h,k)$ by $\ulcm_{h,k}/\urcm_{h,k}$, and the lower left/right corner matrix at $(h,k)$ by $\llcm^{h,k}/\lrcm^{h,k}$.

We also set $\urcm_{1,k}=0$, $\urcm_{h,n}=0$, $\llcm^{h,1}=0$, and  $\llcm^{n,k}=0$, etc.
\end{defn}
We will use the lower left corner matrices to characterise a SDP condition in Section 4.

We first describe a reduced expression of $d_A$; see  \cite[Lemma 3.2]{DGZ}. For $\lambda=\ro(A)$, $\mu=\co(A)$,  define the ``hook sums'' matrix $\sigma(A)=(\sigma_{i,j})$ of $A$, where, for $i,j\in[1,n]$,
\begin{align}\label{sigmaij}
{\sigma}_{i,j}
	%:=\sum_{t=1}^{j-1}  \sum_{u=1}^{n}a_{u,t}		+ \sum_{u \le i, t \ge j}  a_{u,t}
		:= \widetilde\mu_{j-1}+\sum_{u \le i, t \ge j}  a_{u,t}
	= \widetilde{\lambda}_{i}	 + \sum_{u > i, t < j}  a_{u,t}.
\end{align} (Recall the $\widetilde{\cdot}$ notation in \eqref{latilde} and \eqref{Atilde} and see Figure 1 below for a depiction of $\sigma_{h-1,k}$.)

For $i\in[2,n],j\in[1,n-1]$, define $w_{i,j}\in\fS_r$ by
 \begin{equation}\label{wij}
\begin{aligned}
w_{i,j}=\left\{
\begin{array}{ll}
1,&\text{ if }a_{i,j}=0, \text{ or } a_{i,j}>0 \text{ and }\sigma_{i-1,j} = \widetilde{a}_{i-1,j},\\
w_{i,j}^{(1)}w_{i,j}^{(2)}\cdots w_{i,j}^{(a_{i,j})},& \text{ if }a_{i,j}>0 \text{ and }\sigma_{i-1,j} > \widetilde{a}_{i-1,j},
\end{array}
\right.
\end{aligned}
\end{equation}
where $w_{i,j}^{(k)}=s_{\sigma_{i-1,j}+k-1} s_{\sigma_{i-1,j}+k-2 } \cdots s_{\widetilde{a}_{i-1,j}+k}$ for $k\in[1,a_{i,j}]$.

Note that, for $h\in[2,n],k\in[1,n-1]$, the two corner matrices $\llcm^{h,k}$ and $\urcm_{h,k}$ play interesting roles in linking the entries of $\sigma(A)$, $\widetilde A$, and $\widehat A$ in the following way:
\begin{equation}\label{2corner}
\aligned
\sigma_{h-1,k}&=\widetilde{a}_{h-1,k}+\sum_{s<h, t> k}a_{s,t}=\widetilde{a}_{h-1,k}+|\urcm_{h,k}|\\
&=\widetilde{\lambda}_{h-1}+\overleftarrow{\bf r}^{k}_h+\sum_{i>h,j<k}a_{i,j}=\widehat a_{h,k-1}+|\llcm^{h,k}|.
\endaligned
\end{equation}
\begin{multicols}{2}
We use a picture (Figure 1) to indicate the various entry positions $(h-1,k)=\bullet$, $(h,k-1)=*$ and $(h,k)=\circ$.
The two grey areas represent the submatrices $\urcm_{h,k}$ and $\llcm^{h,k}$. (Recall Definition \ref{corner} for corner matrices.)

\begin{center}\label{hatvstilde}
\begin{tikzpicture}[scale=.6]
\draw (0.5,3) -- (4,3);
\draw (2,2) -- (4,2);
\draw(0.5,.6) -- (2,.6);
\draw (0.5,3) -- (0.5,.6);
\draw (2,2) -- (2,.6);
\draw (4,2) -- (4,3);
\fill (2.4,2) node {$_\bullet$};
\fill (2,1.6) node {$_*$};
\fill (2.4,1.6) node {$_\circ$};
\fill (0.15,2) node {\tiny\it h-1};
\fill (2.4,3.3) node {\rotatebox[origin=c]{90}{$_k$}};
\fill (2,3.35) node {\rotatebox[origin=c]{90}{\tiny\it k-1}};
\fill (.2,1.6) node {\tiny\it h};
%\fill (2,.2) node {\tiny k-1};
\fill (4.6,2.5) node {$_{\urcm_{h,k}}$};
\fill (1.2,0) node {$_{\llcm^{h,k}}$};
\fill (3.5,-.3) node {\tiny (Figure 1)};
\draw [fill=lightgray] (2.8, 2) rectangle (4,3);
\draw [fill=lightgray] (2,1.2) rectangle (0.5,.5);
\end{tikzpicture}
\end{center}
\end{multicols}

Thus, each $w_{i,j}^{(k)}$ has length $\ell(w_{i,j}^{(k)})=|\urcm_{i,j}|$.
%$$\aligned
%&(s_{\sigma_{i-1,j}} s_{\sigma_{i-1,j} - 1} \cdots s_{\widetilde{a}_{i-1,j} +1})\\
%&(s_{\sigma_{i-1,j}+1} s_{\sigma_{i-1,j}} \cdots s_{\widetilde{a}_{i-1,j} +2})\\
%\hspace{1.5in}	\cdots		(s_{\sigma_{i-1,j}+a_{i,j}-1} s_{\sigma_{i-1,j}+a_{i,j}-2 } \cdots s_{\widetilde{a}_{i,j}})$
By \cite[Lem.~3.2]{DGZ}, we have
\begin{equation}\label{d_A}
\begin{aligned}
	d_A =
		(w_{2,1}w_{3,1}\cdots w_{n,1})
		(w_{2,2}w_{3,2}\cdots w_{n,2})
		\cdots
		(w_{2,n-1}w_{3,n-1}\cdots w_{n,n-1}),
\end{aligned}
\end{equation}
and (cf. \cite[Exer.~8.2]{DDPW})
$$\ell(d_A)=\sum_{1\leq i,j\leq n}a_{i,j}|\urcm_{i,j}|.$$

.
\begin{rem}\label{rem:dA=1}
(1) We remark that the condition $\sigma_{i-1,j} = \widetilde{a}_{i-1,j}$ (i.e., $\urcm_{i,j}=0$) in \eqref{wij} is trivially true for $i=1$ or $j=n$. Thus, we may extend the definition of $w_{i,j}$ to include $w_{1,j}=1=w_{i,n}$, for all $i,j\in[1,n]$. Thus,
we may depict $d_A$ as the $n\times n$ matrix $w(A)=(w_{i,j})_{1\leq i,j\leq n}$ and regard $d_A$ as the product down column 1 of $w(A)$, then column 2  of $w(A)$, and so on. We may identify $d_A$ as $w(A)$.

%Moreover, if $A'=(a_{i_k,j_l})_{1\leq k\leq a\atop 1\leq l\leqq b}$ is a submatrix of $A$, say $A'=\ulcm_{h,k}$, and assume $w(A')=(w_{i_k,j_k})_{1\leq k\leq a\atop 1\leq l\leqq b}$ is the corresponding submatrix of $w(A)$, then $w(A')$ may be regarded as a subword of $d_A$.

(2) Observe the following:
%Note that, by definition, we have
\begin{equation*}
\aligned d_A=1&
\iff\sigma_{i-1,j}-\widetilde{a}_{i-1,j}=0,\text{ for all  $2 \le i \le  n,  1 \le j \le n-1$ with $a_{i,j}>0$;}
 \\
&\iff \urcm_{i,j}=0,% \sum_{u \le i-1 , t\geq j+1 }a_{u,t} = 0,
\text{ for all
 $2 \le i \le  n,  1 \le j \le n-1$ with $a_{i, j}> 0$.}
 \endaligned
\end{equation*}

(3) For convenience of later use, if $w_{i,j}\neq 1$, we may display all the simple reflections $s_k$ in $w_{i,j}$ as follows:
\begin{equation}\label{full wij}\aligned
&s_{\sigma_{i-1,j}} s_{\sigma_{i-1,j}-1 } \cdots s_{\widetilde{a}_{i-1,j}+1}\\
&s_{\sigma_{i-1,j}+1} s_{\sigma_{i-1,j} } \cdots s_{\widetilde{a}_{i-1,j}+2}\\
&\cdots\cdots\\
&s_{\sigma_{i-1,j}+a_{i,j}-1} s_{\sigma_{i-1,j}+a_{i,j}-2 } \cdots s_{\widetilde{a}_{i-1,j}+a_{i,j}}.
\endaligned
\end{equation}
If $s_k$ appears in the display, then we write $s_k|w_{i,j}$. If $w$ is a product of some $w_{i,j}$
and $w\neq1$,
then $s_k|w$ is well defined.
For such a $w$, define
\begin{equation}\label{max}
\text{max}(w):=\text{max}\{k\mid k\in[1,r), s_k|w\},\quad
  \text{min}(w):=\text{min}\{k\mid k\in[1,r), s_k|w\}.
  \end{equation}
We see easily from \eqref{full wij} that if $w_{i,j}\neq 1$, then
\begin{equation}\label{bound}
\text{max}(w_{i,j})= \sigma_{i-1,j}+a_{i,j}-1\;\;\text{ and }\;\; \text{min}(w_{i,j})= \widetilde{a}_{i-1,j}+1.
\end{equation}
\end{rem}
Call $w$ a {\it segment} of $d_A$ if $w$ is a product from $w_{i,j}$ to $w_{i',j'}$ in \eqref{d_A}. In the sequel, we often write
$d_A=\overleftarrow w\cdot w\cdot \overrightarrow w$, where $\overleftarrow w$ (resp., $\overrightarrow w$) is the segment of $d_A$ before (resp., after) $w$.
\begin{lem}\label{mmin} (1) If $w=w_{i,j}\cdots w_{h,k}$ is a segment of $d_A$ and $w\neq1$, then
$\text{\rm min}(w)\geq \widetilde a_{i-1,j}+1.$

(2) If $y=w({\ulcm_{h+1,k+1}}):=(w_{1,1}w_{2,1}w_{3,1}\cdots w_{h,1})
\cdots
(w_{1,k}w_{2,k}w_{3,k}\cdots w_{h,k})$ and $w\neq 1$, then $\text{\rm max}(y)\leq \sigma_{h-1,k} +a_{h,k}-1$.
\end{lem}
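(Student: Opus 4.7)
The plan is to reduce both parts to the explicit formulas in \eqref{bound} for $\mathrm{min}(w_{i',j'})$ and $\mathrm{max}(w_{i',j'})$, and then to monotonicity properties of the quantities $\widetilde a_{i-1,j}$ (for part (1)) and $\sigma_{i-1,j}+a_{i,j}$ (for part (2)). In both cases, $\mathrm{min}(w)$ (respectively, $\mathrm{max}(y)$) equals the minimum (respectively, maximum) of $\mathrm{min}(w_{i',j'})$ (respectively, $\mathrm{max}(w_{i',j'})$) taken over those $(i',j')$ which occur in the segment and satisfy $w_{i',j'}\neq 1$, so the task in each case is a block-wise comparison.

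For part (1), by \eqref{bound} the inequality $\mathrm{min}(w)\geq \widetilde a_{i-1,j}+1$ reduces to showing $\widetilde a_{i'-1,j'}\geq \widetilde a_{i-1,j}$ whenever $(i',j')$ appears at or after $(i,j)$ in the column-by-column reading order of \eqref{d_A}. Directly from \eqref{Atilde}, within a fixed column one has $\widetilde a_{i,j}-\widetilde a_{i-1,j}=a_{i,j}\geq 0$, and the crucial ``column jump'' uses the convention $\widetilde a_{0,j+1}=\widetilde a_{n,j}$ to give $\widetilde a_{1,j+1}-\widetilde a_{n-1,j}=a_{n,j}+a_{1,j+1}\geq 0$. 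Together these show that $\widetilde a_{i-1,j}$ is non-decreasing along the reading order, which is exactly what is needed.

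For part (2), by \eqref{bound} applied to each non-trivial $w_{i',j'}$ occurring in $y$ (noting that $w_{1,j'}=1$ contributes nothing by Remark~\ref{rem:dA=1}(1)), it suffices to prove that the function $f(i,j):=\sigma_{i-1,j}+a_{i,j}$ satisfies $f(i',j')\leq f(h,k)$ whenever $i'\leq h$ and $j'\leq k$. Using the first expression for $\sigma_{i,j}$ in \eqref{sigmaij}, a short calculation gives
\begin{align*}
f(i,j)-f(i-1,j)&=\sum_{t>j}a_{i-1,t}+a_{i,j}\geq 0,\\
f(i,j)-f(i,j-1)&=\sum_{u>i}a_{u,j-1}+a_{i,j}\geq 0,
\end{align*}
so $f$ is non-decreasing in each argument separately. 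Applying this to the chain $(i',j')\to (h,j')\to(h,k)$ yields $f(i',j')\leq f(h,k)$, hence $\mathrm{max}(w_{i',j'})=f(i',j')-1\leq f(h,k)-1=\sigma_{h-1,k}+a_{h,k}-1$.

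The computations themselves are routine; the only point requiring care is the column-to-column transition in part (1), where one must invoke the wrap-around convention $\widetilde a_{0,j+1}=\widetilde a_{n,j}$ correctly to verify monotonicity of $\widetilde a$ along the reading order. Once the two monotonicity statements are established, both assertions of the lemma follow immediately.
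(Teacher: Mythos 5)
Your proof is correct and follows essentially the same route as the paper's: both reduce to the monotonicity of $\widetilde a_{i-1,j}$ along the column-by-column reading order (for part (1)) and of $\sigma_{i-1,j}+a_{i,j}$ in each index (for part (2)), combined with the formulas in \eqref{bound}. The only difference is one of exposition: the paper simply asserts these monotonicity facts as evident, whereas you supply the explicit difference computations (including the correct handling of the wrap-around convention $\widetilde a_{0,j+1}=\widetilde a_{n,j}$ at the column transition), which is a fuller justification of the same argument rather than a different one.
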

\begin{proof} (1) is clear since $\text{min}(w)=\text{min}(w_{i',j'})=\widetilde a_{i'-1,j'}+1$, where $w_{i',j'}$ is the first non-identity $w_{s,t}$ appearing in $w$, and $\widetilde a_{i'-1,j'}\geq \widetilde a_{i-1,j}$ whenever $j'>j$ or $j'=j$ but $i'>i$.

(2) follows from the fact that $\sigma_{i-1,j}+a_{i,j}-1\leq\sigma_{h-1,k}+a_{h,k}-1$ whenever $i\leq h$ and $j\leq k$.
\end{proof}

We are now ready to look at some commutation relations in $\HCR$.
The first  involves the distinguished representatives $d_A$,  $d_{A^+_{h,k}}$, and
$d_{A^-_{h,k}}$. It tells how to turn an expression involving $T_{d_A}$ to an expression involving $T_{d_{A^+_{h,k}}}$ or $T_{d_{A^-_{h,k}}}$.

The following elements in {$\HCR$} will be  frequently used later on in this paper.
For any {$1 \le i , j \le r-1$},
define
\begin{equation}\label{Tij}
\begin{aligned}
\TAIJ(i,j) &=\begin{cases}  1 + T_{i} + T_{i} T_{i+1} + \cdots +  T_{i} T_{i+1}  \cdots  T_{j}, &\text{ if }i\leq j;\\
1,&\text{ otherwise,}\end{cases}\\%\;\quad\TAIJ(i,i-1)=1;\\
\TDIJ(j,i) &=\begin{cases}  1 + T_{j} + T_{j} T_{j-1} + \cdots +  T_{j} T_{j-1}  \cdots  T_{i},&\text{ if }j\geq i;\\
1,&\text{ otherwise.}\end{cases}
\end{aligned}
\end{equation}
For example, in $\mathcal H(\fS_{r+1})_R$, $\Sigma_{\mathcal D_{(1,r)}}=\TAIJ(1,r)$ and $\Sigma_{\mathcal D_{(r,1)}}=\TDIJ(r,1)$.
\begin{lem}\label{prop_pjshift}
	For {$A =(a_{i,j})\in\MN(n)$}, let
	{$\lambda = \ro(A)$}, $\AK(h,k)=\AK(h,k)(A)$, and $\BK(h,k)=\BK(h,k)(A)$. %Keep the above notations.

{\rm(1)}
 For each $1\leq h\leq n-1$ and $1\leq k\leq n$ such that $a_{h+1,k}>0$, we have%, for $\AK(h+1,k)\leq j<\AK(h+1,k+1)$,
 %all $j\in[0,\lambda_{h+1})$
\begin{align*}
&\sum_{j=\AK(h+1,k)}^{\AK(h+1,k+1) - 1}
		T_{\widetilde{\lambda}_{h}+1} T_{\widetilde{\lambda}_{h}+2} \cdots T_{\widetilde{\lambda}_{h}+j}
		{T_{d_A}}
=
		T_{\widetilde{\lambda}_{h}} T_{\widetilde{\lambda}_{h}-1}\cdots T_{\widetilde{\lambda}_{h}-\BK(h,k)+1}
		{{\TDAP}}\TAIJ(\widetilde{a}_{h,k}+1,\widetilde{a}_{h,k}+a_{h+1,k}-1).
		%\TAIJ( {\widetilde{a}_{h,k}+1}}, {a_{h+1, k} - 1})
		%(\sum_{j=\AK(h+1,k)}^{\AK(h+1,k+1) - 1}T_{\widetilde{a}_{h,k}+1} \cdots T_{\widetilde{a}_{h,k}+p_j}).
\end{align*}

{\rm(2)}
For each $1\leq h\leq n$ and $1\leq k\leq n$ such that $a_{h,k}>0$, we have
\begin{align*}
&\sum_{j=\BK(h,k)}^{\BK(h,k-1) - 1}
	T_{\widetilde{\lambda}_{h}-1} \cdots T_{\widetilde{\lambda}_{h}-j }
	T_{d_{A}}
	= T_{\widetilde{\lambda}_{h}} T_{\widetilde{\lambda}_{h}+1}\cdots T_{\widetilde{\lambda}_{h}+\AK(h+1,k)-1}  T_{d_{A^-_{h,k}}}
	\TDIJ(\widetilde{a}_{h,k}-1,\widetilde{a}_{h,k}-a_{h,k}+1).
	%(\sum_{j=\BK(h,k)}^{\BK(h,k-1) - 1}T_{\widetilde{a}_{h,k}-1} \cdots T_{\widetilde{a}_{h,k}-q_j}).
\end{align*}
%Here $p_j=j-\AK(h+1,k)$ and $q_j = j - \BK(h,k)$. (Note that, in case {$j=0$}, we set
%	{$T_{\widetilde{\lambda}_{h}+1} \cdots T_{\widetilde{\lambda}_{h}+j }=1$} and
%	{$T_{\widetilde{\lambda}_{h}-1} \cdots T_{\widetilde{\lambda}_{h}-j } = 1$}).
\end{lem}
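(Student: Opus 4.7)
The plan is to evaluate both sides by reducing everything to canonical reduced-expression products in $\HCR$, using the explicit column-major factorisation \eqref{d_A} of $d_A$ together with the length bound $\ell(d_A)=\sum_{i,j}a_{i,j}|\urcm_{i,j}|$. Throughout, the key observation is that the matrix move $A \mapsto A^+_{h,k}$ (resp., $A\mapsto A^-_{h,k}$) translates into a precise local modification of the $(h,k)$- and $(h+1,k)$-blocks of the reduced expression, and one then needs a telescoping identity to convert the sum of prefixes into the advertised single term with a $\TAIJ$- (resp., $\TDIJ$-) tail.

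For part (1), I would isolate the segment $w_{h+1,k}$ in the factorisation $d_A = \overleftarrow w \cdot w_{h+1,k}\cdot \overrightarrow w$ (with $w_{h+1,k}$ displayed fully as in \eqref{full wij}), and apply \lemref{mmin} together with \eqref{bound} to confirm that the simple reflections occurring in $\overleftarrow w$ and $\overrightarrow w$ are confined to the expected index ranges relative to $\widetilde a_{h,k}+1$ and $\sigma_{h,k}+a_{h+1,k}-1$. The prefix $T_{\widetilde\lambda_h+1}\cdots T_{\widetilde\lambda_h+j}$ on the left of $T_{d_A}$ touches exactly the top of $w_{h+1,k}$ together with the block $\overleftarrow w$ immediately above. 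The sum over $j\in[\AK(h+1,k),\AK(h+1,k+1)-1]$ then factors as a fixed ``head'' $T_{\widetilde\lambda_h+1}\cdots T_{\widetilde\lambda_h+\AK(h+1,k)}$ followed by a telescoping tail of $a_{h+1,k}$ terms. Combining the head with $\overleftarrow w \cdot w_{h+1,k}$ and reshuffling by braid relations $T_iT_{i+1}T_i=T_{i+1}T_iT_{i+1}$, one recovers the reduced expression of $T_{\widetilde\lambda_h}T_{\widetilde\lambda_h-1}\cdots T_{\widetilde\lambda_h-\BK(h,k)+1} T_{d_{A^+_{h,k}}}$, using the identity \eqref{2corner} to convert between entries of $\sigma(A)$, $\widetilde A$ and $\widehat A$; the telescoping tail becomes $\TAIJ(\widetilde a_{h,k}+1, \widetilde a_{h,k}+a_{h+1,k}-1)$ after being commuted through $T_{d_{A^+_{h,k}}}$ to the far right.

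Part (2) is entirely parallel: one reverses the direction of the shift, swaps the roles of rows $h$ and $h+1$, replaces $\AK(h+1,k)$ by $\BK(h,k)$, and replaces the ascending $\TAIJ$ by the descending $\TDIJ$. Boundary cases where $\BK(h,k)=0$ (in part (1)) or $\AK(h+1,k)=0$ (in part (2)) make one of the prefix products empty and have to be verified directly from the formulas.

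The main obstacle will be the length-matching and braid-bookkeeping in the middle step: one must verify at each stage of the push that both sides remain products of pairwise-distinct simple reflections, so that no quadratic correction $T_i^2=(q-1)T_i+q$ appears. This is the delicate combinatorial heart of the argument, and it is where \lemref{mmin}, the display \eqref{full wij}, and the matching of corner sums in \eqref{2corner} do the bulk of the work.
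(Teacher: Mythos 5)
Your overall strategy --- proving the identity directly from the column-major factorisation \eqref{d_A} of $d_A$ and braid/length bookkeeping --- is different from the paper's, which disposes of the lemma in two lines by quoting the permutation identity
$s_{\widetilde{\lambda}_{h}+1}\cdots s_{\widetilde{\lambda}_{h}+j}\,d_A
= s_{\widetilde{\lambda}_{h}}\cdots s_{\widetilde{\lambda}_{h}-\overrightarrow{\bf r}^k_h+1}\,d_{A^+_{h,k}}\,s_{\widetilde a_{h,k}+1}\cdots s_{\widetilde a_{h,k}+p_j}$
from \cite[Prop.~3.6(1)]{DGZ} for each $j$ separately, observing that both sides are reduced, and summing over $j$. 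A self-contained combinatorial proof along your lines is legitimate in principle, but as written your proposal has three concrete gaps. First, the entire content of the lemma \emph{is} that per-$j$ identity together with length-additivity on both sides; your sketch never establishes it, but only asserts that ``reshuffling by braid relations'' recovers the reduced expression of $T_{\widetilde\lambda_h}\cdots T_{d_{A^+_{h,k}}}$. That reshuffle --- showing how prefixing $d_A$ by $s_{\widetilde\lambda_h+1}\cdots s_{\widetilde\lambda_h+\overleftarrow{\bf r}^k_{h+1}}$ converts the $(h,k)$- and $(h+1,k)$-blocks of $w(A)$ into those of $w(A^+_{h,k})$ while producing the factor $s_{\widetilde\lambda_h}\cdots s_{\widetilde\lambda_h-\overrightarrow{\bf r}^k_h+1}$ on the left --- is the whole theorem, and it is exactly what \cite[Prop.~3.6]{DGZ} proves.

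Second, the mechanism you describe for the tail is garbled: after factoring the sum as a fixed head times $\TAIJ(\widetilde\lambda_h+\overleftarrow{\bf r}^k_{h+1}+1,\widetilde\lambda_h+\overleftarrow{\bf r}^{k+1}_{h+1}-1)$, that sum sits to the \emph{left} of $T_{d_A}$, so it must be commuted through $T_{d_A}$ (using \eqref{map d_A} to see that $d_A$ carries $\widetilde a_{h,k}+i\mapsto \widehat a_{h+1,k-1}+i$ within the $(h+1,k)$-block, whence the re-indexing to $\widetilde a_{h,k}+1,\dots$), not through $T_{d_{A^+_{h,k}}}$; alternatively one treats each $j$ separately as the paper does, in which case nothing is commuted at all and the tail appears directly on the right of $d_{A^+_{h,k}}$. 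Third, your criterion for avoiding the quadratic correction --- that each side be ``a product of pairwise-distinct simple reflections'' --- is not the right one and would fail immediately: the reduced words in \eqref{full wij} repeat simple reflections (e.g.\ $s_{\sigma_{i-1,j}}$ occurs in both $w_{i,j}^{(1)}$ and $w_{i,j}^{(2)}$). What is actually needed is length-additivity $\ell(xy)=\ell(x)+\ell(y)$ at each concatenation, which for the left-hand side follows from $d_A\in\mathcal D_{\ro(A)}$ and the prefix lying in $\fS_{\ro(A)}$, and for the right-hand side is again part of the content of \cite[Prop.~3.6]{DGZ}. Until these points are filled in, the proposal is a plan rather than a proof.
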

\begin{proof}
We only prove {\rm(1)}, and the proof of {\rm(2)} is similar.
For each {$j \in  [\AK(h+1,k), \AK(h+1,k+1) - 1]$} and  {$p_j=j-\AK(h+1,k)$},
according to \cite[Proposition 3.6{\rm(1)}]{DGZ},
we have
\begin{align*}
&s_{\widetilde{\lambda}_{h}+1} s_{\widetilde{\lambda}_{h}+2} \cdots s_{\widetilde{\lambda}_{h}+  j} d_{A}
=
		s_{\widetilde{\lambda}_{h}} s_{\widetilde{\lambda}_{h}-1}\cdots
		s_{\widetilde{\lambda}_{h} - \BK(h,k) +1}
		{d_{A^+_{h,k}}}
		 s_{\widetilde{a}_{h,k}+1} \cdots s_{\widetilde{a}_{h,k}+p_j}.
\end{align*}
Since both sides are reduced expressions, it follows that
\begin{align*}
&T_{\widetilde{\lambda}_{h}+1} T_{\widetilde{\lambda}_{h}+2} \cdots T_{\widetilde{\lambda}_{h}+  j} T_{d_{A}}
=
		T_{\widetilde{\lambda}_{h}} T_{\widetilde{\lambda}_{h}-1}\cdots
		T_{\widetilde{\lambda}_{h} - \BK(h,k) +1}
		T_{d_{A^+_{h,k}}}
		 T_{\widetilde{a}_{h,k}+1} \cdots T_{\widetilde{a}_{h,k}+p_j}.
\end{align*}
Hence, (1) follows.%and then equation in {\rm(1)} is proved.
\end{proof}

The next relations show how to interchange the tails $\Sigma_A$ and $\Sigma_{A^+_{h,k}}$ or $\Sigma_A$ and $\Sigma_{A^-_{h,k}}$, where $A^+_{h,k}$ and $A^-_{h,k}$ are defined in \eqref{Ahk} and their tails in \eqref{tail}.
\begin{lem}\label{Tnsum}
	Let  {$ h \in [1,n-1] , k \in [1,n] $}, and {$A=(a_{i,j}) \in \MNR(n,r)$}.
	 Then we have
\begin{align*}
{\rm(1)} \quad &\TAIJ( {\widetilde{a}_{h,k}} + 1,  { {\widetilde{a}_{h,k}} + a_{h+1,k}-1})\Sigma_A
		%\sum_{\sigma\in\mathcal{D}_{\nu_A}\cap {\fS}_\mu} T_{\sigma}
		= \TDIJ( {{\widetilde{a}_{h,k}}}, {{\widetilde{a}_{h,k}} - a_{h,k} + 1})\Sigma_{A^+_{h,k}},\; \text{ if }\; a_{h+1,k} \ge 1;\\
		%\sum_{\sigma\in\mathcal{D}_{\nu^+}\cap {\fS}_\mu} T_{\sigma}, \text{ if }\; a_{h+1,k} \ge 1;\\
{\rm(2)} \quad 	&\TDIJ( {{\widetilde{a}_{h,k}} - 1} ,   {{\widetilde{a}_{h,k}} - a_{h,k} + 1})\Sigma_A
		%\sum_{\sigma\in\mathcal{D}_{\nu_A}\cap {\fS}_\mu} T_{\sigma}
		 = \TAIJ( {{\widetilde{a}_{h,k}}} , {{\widetilde{a}_{h,k}} + a_{h+1,k} - 1})\Sigma_{A^-_{h,k}},\; \text{ if }\; a_{h,k} \ge 1.\\
	%\sum_{\sigma\in\mathcal{D}_{\nu^-}\cap {\fS}_\mu} T_{\sigma}, \text{ if }\;a_{h,k}\geq1.
\end{align*}
\end{lem}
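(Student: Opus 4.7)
The plan is to factor $\Sigma_A$ column by column, then peel off the contribution of rows $h$ and $h+1$ in column $k$, and finally express both sides of each reduced two-block identity as one and the same three-block shuffle sum. The group $\fS_{\co(A)}$ splits as $\prod_{k=1}^n\fS_{\co(A)_k}$ with disjoint supports, and $\fS_{\nu_A}$ respects this decomposition, so $\Sigma_A=\prod_{k=1}^n\Sigma_A^{(k)}$, where $\Sigma_A^{(k)}$ is the sum of $T_\sigma$ over the distinguished representatives of $\fS_{a_{1,k}}\times\cdots\times\fS_{a_{n,k}}$ in $\fS_{\co(A)_k}$. Since $A^{\pm}_{h,k}$ differs from $A$ only in column $k$, we have $\Sigma_A^{(j)}=\Sigma_{A^{\pm}_{h,k}}^{(j)}$ for $j\ne k$, and every $T_i$ appearing in the prefactors $\TAIJ(\widetilde a_{h,k}+1,\widetilde a_{h,k}+a_{h+1,k}-1)$ or $\TDIJ(\widetilde a_{h,k},\widetilde a_{h,k}-a_{h,k}+1)$ has $i$ inside the column-$k$ positions, hence commutes with every $\Sigma^{(j)}$ for $j\ne k$. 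Thus the two identities reduce to their column-$k$ versions.

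Set $m=a_{h,k}$, $n'=a_{h+1,k}$ and $B=\widetilde a_{h,k}$, so rows $h$ and $h+1$ of column $k$ occupy the positions $\{B-m+1,\ldots,B+n'\}$, with internal split at $B$ for $A$, at $B+1$ for $A^+_{h,k}$, and at $B-1$ for $A^-_{h,k}$. Let $\mu'$ be the composition obtained from $(a_{1,k},\ldots,a_{n,k})$ by merging its $h$-th and $(h+1)$-th entries into one entry of size $m+n'$. Each of the column-$k$ parts of $\nu_A$, $\nu_{A^+_{h,k}}$, $\nu_{A^-_{h,k}}$ refines $\mu'$, so the standard transitivity identity $\Sigma_{\D_\nu}=\Sigma_{\D_\nu\cap\fS_\mu}\,\Sigma_{\D_\mu}$ (valid whenever $\nu$ refines $\mu$, because $T_{ed}=T_eT_d$ for $d\in\D_\mu$ and $e\in\fS_\mu$) yields common-tail factorisations
\begin{align*}
\Sigma_A^{(k)}=Z_{m,n'}\,\Sigma'',\qquad \Sigma_{A^+_{h,k}}^{(k)}=Z_{m+1,n'-1}\,\Sigma'',\qquad \Sigma_{A^-_{h,k}}^{(k)}=Z_{m-1,n'+1}\,\Sigma'',
\end{align*}
where $Z_{p,q}$ denotes the shuffle sum over distinguished representatives of $\fS_p\times\fS_q$ inside $\fS_{p+q}$ acting on $\{B-m+1,\ldots,B+n'\}$ with split determined by $p,q$. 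Substituting into (1) and (2) and stripping the common $\Sigma''$ and the other columns, the two identities collapse to
\begin{align*}
\TAIJ(B+1,B+n'-1)\,Z_{m,n'}&=\TDIJ(B,B-m+1)\,Z_{m+1,n'-1},\\
\TDIJ(B-1,B-m+1)\,Z_{m,n'}&=\TAIJ(B,B+n'-1)\,Z_{m-1,n'+1}.
\end{align*}

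For the first identity, the composition $(m,1,n'-1)$ on $\{B-m+1,\ldots,B+n'\}$ with splits at $B$ and $B+1$ refines both $(m,n')$ (by splitting the second block) and $(m+1,n'-1)$ (by splitting the first block). Two short computations, enumerating where the singleton block is placed, identify the shuffle sum for $\fS_1\times\fS_{n'-1}\subseteq\fS_{n'}$ on $\{B+1,\ldots,B+n'\}$ as $\TAIJ(B+1,B+n'-1)$, and the one for $\fS_m\times\fS_1\subseteq\fS_{m+1}$ on $\{B-m+1,\ldots,B+1\}$ as $\TDIJ(B,B-m+1)$. Applying the transitivity identity to both refinements expresses $\Sigma_{\D_{(m,1,n'-1)}}$ as each side of the claimed equality, which proves the first. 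The second follows identically, using the composition $(m-1,1,n')$ with splits at $B-1$ and $B$. The only subtlety is to keep the split positions inside the coarser compositions consistent so that the two factorisations of the three-block sum line up correctly; everything else is standard.
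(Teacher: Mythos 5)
Your argument is correct and rests on the same combinatorial identity the paper uses: factoring the shuffle sum over the common three-block refinement $\delta^+$ (your $(m,1,n'-1)$) through $\fS_{\nu}$ and $\fS_{\nu^+}$ respectively, via double-coset transitivity and length additivity. The paper dispenses with the column-by-column reduction and the intermediate merged composition $\mu'$ by working directly inside $\fS_{\co(A)}$, and obtains (2) from (1) via the observation $(A^+_{h,k})^-_{h,k}=A$, but these are only cosmetic shortenings of what you wrote.
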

\begin{proof}Let $\mu=\co(A)=\co(A^+_{h,k})$, $\nu=\nu_A$, $\nu^+=\nu_{A^+_{h,k}}$, and $\fS_{\delta^+}=\fS_{\nu}\cap\fS_{\nu^+}$ with $\delta^+$ as given in \eqref{nu+nu-}. Then, for $a_{h+1,k} \ge 1$,
$$\mathcal D_{\delta^+}\cap\fS_{\mu}=(\mathcal D_{\delta^+}\cap\fS_{\nu})(\mathcal D_\nu\cap\fS_\mu)=
(\mathcal D_{\delta^+}\cap\fS_{\nu^+})(\mathcal D_{\nu^+}\cap\fS_\mu),$$
where
 $$\aligned
& \mathcal D_{\delta^+}\cap\fS_{\nu}=\{1, s_{{\widetilde{a}_{h,k}} + 1}, s_{{\widetilde{a}_{h,k}} + 1}s_{{\widetilde{a}_{h,k}} + 2}, \cdots ,   s_{{\widetilde{a}_{h,k}} + 1} \cdots s_{{\widetilde{a}_{h,k}} + a_{h+1,k}-1}\}\\
 &\mathcal D_{\delta^+}\cap\fS_{\nu^+}=\{1, s_{\widetilde{a}_{h,k}}, s_{\widetilde{a}_{h,k}} s_{\widetilde{a}_{h,k} - 1}, \cdots,
	 s_{\widetilde{a}_{h,k}} \cdots s_{\widetilde{a}_{h,k} - a_{h,k} + 1}\}.
 \endaligned$$
%Thus, we have
% (see, e.g., \cite[Prop.~3.4]{GLL} for more details),
%\begin{align*}
	%&\Big(\sum_{ \mathcal D_{\delta^+}\cap\fS_{\nu}}\tau\Big)
	%	\sum_{\sigma\in\mathcal{D}_{\nu}\cap \mathfrak{S}_\mu}\sigma= \sum_{\sigma'\in \mathcal D_{\delta^+}\cap\fS_{\mu}}\sigma'	  =\Big(\sum_{ \mathcal D_{\delta^+}\cap\fS_{\nu^+}}\tau\Big)
	%\sum_{\sigma\in\mathcal{D}_{\nu^+}\cap \mathfrak{S}_\mu}\sigma.
%\end{align*}
Hence, by length additivity,
$$\TAIJ( {\widetilde{a}_{h,k}} + 1,  { {\widetilde{a}_{h,k}} + a_{h+1,k}-1})\Sigma_A= \Sigma_{ \mathcal D_{\delta^+}\cap\fS_{\nu}}\Sigma_{\mathcal D_\nu\cap\fS_\mu}=\Sigma_{ \mathcal D_{\delta^+}\cap\fS_{\nu^+}}\Sigma_{\mathcal D_{\nu^+}\cap\fS_\mu}= \TDIJ( {{\widetilde{a}_{h,k}}}, {{\widetilde{a}_{h,k}} - a_{h,k} + 1})\Sigma_{A^+_{h,k}},$$
proving assertion (1).  If $B=A^+_{h,k}$, then $b_{h,k}>0$ and $B^-_{h,k}=A$. Hence, assertion (2) follows from (1).
%Equation {\rm(2)}  can be proved by replacing {$A$} with {${A^-_{h,k}}$}.
\end{proof}

%The following follows from definitions.

\begin{lem}\label{xTinverse}
For {$\alpha \in \CMN(n, r)$},
{$1\leq u < r$, and $ j \ge 0$}, assume {$s_{u+1}, \cdots , s_{u+j} \in \fS_{\alpha}$}. Then	$x_{\alpha} \TAIJ({u},  {u+j}) = x_{\alpha} \TDIJ({u+j},  {u})=\STEP{ j+2} x_{\alpha} $ if, in addition, $s_u\in\fS_\alpha$.
Moreover, the following holds:
%If $s_u\not\in\fS_\alpha$, then {\hlt{ TODO: No matter $s_u$ in $\fS_\alpha$ or not }}
\begin{align*}
x_{\alpha} T_{u}^{-1} T_{u+1}^{-1} \cdots T_{u+j}^{-1}
&=
	{q}^{-1 - j} x_{\alpha}T_{u}   T_{u+1} \cdots T_{u + j}
	- {q}^{- j-1 }  ({q} - 1)   x_{\alpha} \TAIJ( {u}, { u + j  - 1}).
\end{align*}

\end{lem}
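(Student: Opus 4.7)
The first equality chain is immediate. With the extra hypothesis $s_u \in \fS_\alpha$ combined with the standing assumption, we have $x_\alpha T_k = q x_\alpha$ for each $k \in \{u, u+1, \ldots, u+j\}$ by \eqref{Tixlambda}. Expanding $\TAIJ(u,u+j)$ term by term and applying these identities in succession yields $x_\alpha \TAIJ(u,u+j) = (1 + q + q^2 + \cdots + q^{j+1}) x_\alpha = \STEP{j+2} x_\alpha$; the identical calculation taken in reverse handles $\TDIJ(u+j, u)$.

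For the moreover part, the plan is to induct on $j$, using the rewriting rule $T_k^{-1} = q^{-1} T_k - q^{-1}(q-1)$ from \eqref{Tick-inv}(1). The base case $j = 0$ is direct: $x_\alpha T_u^{-1} = q^{-1} x_\alpha T_u - q^{-1}(q-1) x_\alpha$, matching the stated formula since $\TAIJ(u, u-1) = 1$ by definition.

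For the inductive step, I would right-multiply the formula at level $j-1$ by $T_{u+j}^{-1}$ and then split $T_{u+j}^{-1}$ as above. The first piece $q^{-j} x_\alpha T_u \cdots T_{u+j-1} T_{u+j}^{-1}$ yields $q^{-j-1} x_\alpha T_u \cdots T_{u+j}$ plus a correction $-q^{-j-1}(q-1) x_\alpha T_u \cdots T_{u+j-1}$. The second piece $-q^{-j}(q-1) x_\alpha \TAIJ(u, u+j-2) T_{u+j}^{-1}$ relies on the key observation that $T_{u+j}$ commutes with each $T_k$ for $k \leq u+j-2$ (the indices differ by at least $2$), so $T_{u+j}^{-1}$ commutes with $\TAIJ(u, u+j-2)$. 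This lets me slide $T_{u+j}^{-1}$ past $\TAIJ(u, u+j-2)$ to reach $x_\alpha$, after which $x_\alpha T_{u+j}^{-1} = q^{-1} x_\alpha$ using $s_{u+j} \in \fS_\alpha$. Thus the second piece reduces to $-q^{-j-1}(q-1) x_\alpha \TAIJ(u, u+j-2)$. Summing the two corrections gives $-q^{-j-1}(q-1) x_\alpha [T_u T_{u+1} \cdots T_{u+j-1} + \TAIJ(u, u+j-2)] = -q^{-j-1}(q-1) x_\alpha \TAIJ(u, u+j-1)$ by the recursive build-up of $\TAIJ$, which is exactly the required form.

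No single step looks to be a real obstacle; the main care is tracking powers of $q$ and confirming the commutativity that justifies sliding $T_{u+j}^{-1}$ past $\TAIJ(u, u+j-2)$. It is precisely this commutativity, combined with $s_{u+j} \in \fS_\alpha$ from the standing assumption, that permits the moreover assertion to hold without invoking the extra hypothesis $s_u \in \fS_\alpha$ needed for the first assertion.
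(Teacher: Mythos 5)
Your proof is correct and follows exactly the route the paper indicates: the first equality chain via \eqref{Tixlambda} and the definition \eqref{Tij}, and the "moreover" identity by induction on $j$ using $qT_k^{-1}=T_k-(q-1)$ from \eqref{Tick-inv}(1). The paper gives only this one-sentence sketch; your fleshed-out induction, including the commutation of $T_{u+j}^{-1}$ past $\TAIJ(u,u+j-2)$ and the observation that only $s_{u+j}\in\fS_\alpha$ (not $s_u$) is needed, is the intended argument.
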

\begin{proof} Assertion (1) is clear by \eqref{Tixlambda} and \eqref{Tij}, while assertion (2) can be proved by induction on $j$, noting \eqref{Tick-inv}(1).
\end{proof}

Finally, we look at some commutation relations involving odd generators $c_i$. Recall from \eqref{cqij} the elements $c_{q,i,j}$.

\begin{lem}\label{xcT}
For {$\alpha \in \CMN(n, r)$},
{$1\leq u<r$, and $j \ge 0$}, assume
{$s_{u+1}, \cdots , s_{u+j} \in \fS_{\alpha}$} if {$j\ge 1$}.
\begin{itemize}
\item[(1)]
For {$j \ge 0$}, we have in $\HCR$
$$x_{\alpha}c_u T_{u} T_{u+1} \cdots T_{u+j}={q}^{j+1}   x_{\alpha}  {T^{-1}(u,j+1)} c_{u+j+1}+
({q} - 1)  {q}^{j}  x_{\alpha}\sum_{k=0}^{j} {T^{-1}(u,k)} c_{u+k}$$
			%( c_{u} +  T_{u}^{-1}c_{u+1} + T_{u}^{-1} T_{u+1}^{-1} c_{u+2} + \cdots
			%+  T_{u}^{-1} T_{u+1}^{-1} \cdots  T_{u+j-1}^{-1} c_{u+j}  ).
where $T^{-1}(u, k)=
\begin{cases}
1, &\text{ if }k=0;\\
T_{u}^{-1} T_{u+1}^{-1} T_{u+2}^{-1} \cdots T_{u+k-1}^{-1},&\text{ if }k \in[1,j].
\end{cases}$

%In particular, if $s_u\in\fS_\alpha$,
%then \hlt{$x_{\alpha}c_u T_{u} T_{u+1} \cdots T_{u+j}=x_{\alpha} c_{q, u, u+j+1}$}s.
\item[(2)] Further, we have
\begin{align*}
&x_{\alpha}c_{u}  \TAIJ({u}, {u+j} )
 ={q}^{j+1} x_{\alpha} ( c_{u} + T_{u}^{-1} c_{u+1} +  T_{u}^{-1} T_{u+1}^{-1}   c_{u+2}
	+ \cdots
	+ T_{u}^{-1} T_{u+1}^{-1} T_{u+2}^{-1} \cdots T_{u+j}^{-1}  c_{u+j+1} ).
\end{align*}
If, in addition, $s_u\in\fS_\alpha$, then
$
x_{\alpha} c_{u} \TAIJ( {u}, {u+j})
= x_{\alpha} c_{q, u, u+j+1}.
$
\item[(3)] If {$s_{u},  s_{u-1} \cdots, s_{u-j} \in \fS_{\alpha}$},
then
$
x_{\alpha} c_{u+1} \TDIJ( {u}, {u-j})
= x_{\alpha} c_{q, u-j, u+1}.
$
\item[(4)] For $1\leq i\leq j\leq r$, $(c_{q,i,j})^2=-{\STEPX{j-i+1}{q^2}}$.
\end{itemize}
\end{lem}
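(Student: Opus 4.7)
The plan is to prove (1) by induction on $j$, deduce (2) from (1) by expanding $\TAIJ(u, u+j)$ and summing, handle (3) directly by iterating the clean commutation $c_{i+1} T_i = T_i c_i$ from \eqref{Hecke-Cliff}, and verify (4) by a short squaring argument using the Clifford relations.

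For (1), the base case $j = 0$ is just \eqref{Tick-inv}(3) left-multiplied by $x_\alpha$. For the inductive step, assuming the identity at level $j-1$, I would right-multiply by $T_{u+j}$. The leading term $q^j x_\alpha T^{-1}(u,j) c_{u+j} T_{u+j}$ splits via \eqref{Tick-inv}(3) into $q^{j+1} x_\alpha T^{-1}(u,j+1) c_{u+j+1} + q^j (q-1) x_\alpha T^{-1}(u,j) c_{u+j}$. In the tail, each $c_{u+k}$ with $k \le j-1$ commutes with $T_{u+j}$ by \eqref{Hecke-Cliff}, and each factor of $T^{-1}(u,k)$ commutes with $T_{u+j}$ by the braid relations since the indices $u, u+1, \ldots, u+k-1$ all differ from $u+j$ by at least $2$; once $T_{u+j}$ reaches $x_\alpha$ on the left, $s_{u+j} \in \fS_\alpha$ and \eqref{Tixlambda} convert it to the scalar $q$. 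Combining with the residual from the leading term assembles the full tail $(q-1) q^j x_\alpha \sum_{k=0}^{j} T^{-1}(u,k) c_{u+k}$ and closes the induction.

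For (2), expanding $\TAIJ(u,u+j) = 1 + \sum_{m=0}^{j} T_u T_{u+1} \cdots T_{u+m}$ and applying (1) to each summand, a geometric-series telescoping shows that the coefficient of $x_\alpha T^{-1}(u,k) c_{u+k}$ is uniformly $q^{j+1}$ for $0 \le k \le j+1$ (coming from $1 + (q-1)(1+q+\cdots+q^{j}) = q^{j+1}$ at $k=0$ and $q^k + (q-1) q^k(1+q+\cdots+q^{j-k}) = q^{j+1}$ for $1 \le k \le j+1$). Under the added hypothesis $s_u \in \fS_\alpha$, one has $x_\alpha T_l^{-1} = q^{-1} x_\alpha$ for each $l \in [u, u+j]$ by \eqref{Tick-inv}(1) and \eqref{Tixlambda}, and iterating gives $x_\alpha T^{-1}(u,k) = q^{-k} x_\alpha$, so the bracket collapses to $\sum_{k=0}^{j+1} q^{j+1-k} x_\alpha c_{u+k} = x_\alpha c_{q,u,u+j+1}$ by \eqref{cqij}. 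For (3), repeated application of $c_{i+1} T_i = T_i c_i$ from \eqref{Hecke-Cliff} yields $c_{u+1} T_u T_{u-1} \cdots T_{u-m} = T_u T_{u-1} \cdots T_{u-m} c_{u-m}$; since every $s_{u-l}$ lies in $\fS_\alpha$, \eqref{Tixlambda} then gives $x_\alpha c_{u+1} T_u \cdots T_{u-m} = q^{m+1} x_\alpha c_{u-m}$, and summing over $m$ reassembles $x_\alpha c_{q, u-j, u+1}$ by \eqref{cqij}.

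Finally, for (4), squaring $c_{q,i,j} = \sum_{k=i}^{j} q^{j-k} c_k$ and using $c_k c_l = -c_l c_k$ for $k \neq l$ from \eqref{cliff}, all cross terms cancel pairwise and the diagonal contribution is $\sum_{k=i}^{j} q^{2(j-k)} c_k^2 = -\sum_{k=i}^{j} q^{2(j-k)} = -\STEPX{j-i+1}{q^2}$. The main obstacle I anticipate is the bookkeeping in the inductive step of (1): verifying that $T_{u+j}$ commutes past every factor of $T^{-1}(u,k)$ and every $c_{u+k}$ for $k \le j-1$ needs careful index tracking, but once this is confirmed, the absorption into $x_\alpha$ via $s_{u+j} \in \fS_\alpha$ is automatic, and the rest of the argument is purely mechanical.
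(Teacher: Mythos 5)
Your proof is correct and follows essentially the same route as the paper: part (1) is the same repeated application of \eqref{Tick-inv}(3) with absorption of the surviving $T_{u+k}$'s into $x_\alpha$ via \eqref{Tixlambda} (merely reorganised as an induction on $j$), part (2) uses the identical telescoping sum $q^k+\sum_{i\ge k}(q-1)q^i=q^{j+1}$, and part (3) is the same iteration of $c_{i+1}T_i=T_ic_i$. Your direct cancellation argument for (4) is a fine substitute for the paper's one-line appeal to induction.
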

\begin{proof}
We first prove (1).
Repeatedly applying \eqref{Tick-inv}(3) gives
\begin{align*}
x_{\alpha}c_u &T_{u} T_{u+1} \cdots T_{u+j} \\
&= x_{\alpha} ({q} T_{u}^{-1} c_{u+1}  + ({q} - 1) c_{u}) T_{u+1} \cdots T_{u+j}  \\
&=  {q}  x_{\alpha}  T_{u}^{-1} c_{u+1} T_{u+1} \cdots T_{u+j}   + ({q} - 1) {q}^{j}  x_{\alpha}  c_{u}   \\
&=  {q}  x_{\alpha}  T_{u}^{-1} ( {q} T_{u+1}^{-1} c_{u+2}
	+ ({q} - 1) c_{u+1} ) T_{u+2} \cdots T_{u+j}
	+ ({q} - 1) {q}^{j} x_{\alpha}  c_{u}  \\
&= {q}^2   x_{\alpha}  T_{u}^{-1} T_{u+1}^{-1} c_{u+2} T_{u+2} \cdots T_{u+j}
	+ ({q} - 1)  {q}^{j}  x_{\alpha}  (  c_{u}  +  T_{u}^{-1}c_{u+1} ) \\
& = \cdots \\
&=
		{q}^{j+1}   x_{\alpha}  T_{u}^{-1} T_{u+1}^{-1} T_{u+2}^{-1} \cdots T_{u+j}^{-1}  c_{u+j+1}\\
		& \qquad + ({q} - 1)  {q}^{j}  x_{\alpha}
			( c_{u} +  T_{u}^{-1}c_{u+1} + T_{u}^{-1} T_{u+1}^{-1} c_{u+2} + \cdots
			+  T_{u}^{-1} T_{u+1}^{-1} \cdots  T_{u+j-1}^{-1} c_{u+j}),
\end{align*}
proving (1). Now, by (1), we have
$$\aligned
x_{\alpha}c_{u}  \TAIJ({u}, {u+j} )&=x_\alpha c_u+\sum_{i=0}^jx_\alpha c_uT_u\cdots T_{u+i}\\
&=x_\alpha c_u+\sum_{i=0}^j\big(({q} - 1)  {q}^{i}  x_{\alpha}\sum_{k=0}^{i}
	{T^{-1}(u,k) } c_{u+k}+{q}^{i+1}   x_{\alpha}  { T^{-1}(u,i+1) } c_{u+i+1}\big)\\
&=\sum_{k=0}^j\big(q^k+\sum_{i=k}^j(q-1)q^i\big)x_\alpha
	 T^{-1}(u,k)  c_{u+k}+q^{j+1}x_\alpha  {T^{-1}(u,j+1)} c_{u+j+1}\\
&={q}^{j+1} x_{\alpha} ( c_{u} + T_{u}^{-1} c_{u+1} +  T_{u}^{-1} T_{u+1}^{-1}   c_{u+2}
	+ \cdots
	+ T_{u}^{-1} T_{u+1}^{-1} T_{u+2}^{-1} \cdots T_{u+j}^{-1}  c_{u+j+1} ),
\endaligned
$$
 since the telescoping sum $q^k+\sum_{i=k}^j(q-1)q^i=q^{j+1}$.
This proves the first assertion of (2). The second assertion in (2) is clear.

For assertion (3), $s_{u-i}\in\fS_{\alpha}$ implies $x_{\alpha}T_{u-i}=qx_{\alpha}$ for $0\leq i\leq j$.
Thus, by \eqref{Hecke-Cliff},
we obtain
\begin{align*}
x_{\alpha}c_{u+1}\TDIJ(u,u-j)&=x_{\alpha}c_{u+1}(1+T_u+T_uT_{u-1}+\cdots T_uT_{u-1}\cdots T_{u-j})\\
&=x_{\alpha}(c_{u+1}+T_uc_u+T_uT_{u-1}c_{u-1}+\cdots+T_uT_{u-1}\cdots T_{u-j}c_{u-j})\\
&=x_{\alpha}(c_{u+1}+qc_u+\cdots+q^{j+1}c_{u-j})=x_{\alpha} c_{q, u-j, u+1},
\end{align*}
proving (3).

Finally, (4) is straightforward by an induction.
\end{proof}

\section{Distinguished double coset representatives and the SDP condition}

Recall from \eqref{jmath} the distinguished double coset representative $d_A\in\fS_r$ associated with $A\in M_n(\NN)_r$. To compute some structure constants in \eqref{eq cA}, especially in the odd case, we need to impose some commutation relations between $T_{d_A}$ and certain $c$-elements. The SDP condition introduced in this section is convenient for the purpose.

This element $d_A$ has a reduced expression as given in \eqref{d_A}.
As a permutation, $d_A$ is given in \cite[p. 424]{Du} (or \cite[Ex. 8.2]{DDPW}). More precisely, for $A=(a_{i,j})\in \MN(n)$ with $\widetilde A=(\widetilde a_{i,j})$ and $\widehat A=(\widehat a_{i,j})$ as in \eqref{Atilde} and \eqref{Ahat}, the description of the pseudo-matrix $\dddot A$ associated with $A$ in loc. cit. (see Example \ref{pseudo} below) can be easily interpreted as follow:
\begin{equation}\label{map d_A}
d_A(\widetilde{a}_{h-1,k}+ p )=\widehat a_{h,k-1} + p ,
\end{equation}
for all $h,k\in[1,n] $ with $a_{h,k}>0$ and $p\in[1,a_{h,k}]$. In other words, $d_A$ is the permutation obtained by concatenating the permutations
\begin{equation}\label{a_ij}
\left(\begin{matrix}&
\widetilde{a}_{i-1,j}+1 &\widetilde{a}_{i-1,j}+2&\cdots&\widetilde{a}_{i-1,j}+a_{i,j}\\
&\widehat a_{i,j-1} +1
& \widehat a_{i,j-1}+2
&\cdots&\widehat a_{i,j-1}+a_{i,j}
\end{matrix}\right)
\end{equation}
for all $a_{i,j}>0$ via the ordering from top to bottom of column 1, then top to bottom of column 2, and so on.

\begin{exam} \label{pseudo}
Let $A={\tiny\begin{pmatrix} 0&3&2\\1&1&0\\2&0&0\end{pmatrix}}\in M_3(\NN)_{9}$. Then
$$d_A={\tiny\begin{pmatrix} 1&2&3&4&5&6&7&8&9\\6&8&9&1&2&3&7&4&5\end{pmatrix}}.$$
The images $d_A(1),d_A(2),\ldots,d_A(9)$ of $d_A$ (i.e., the second row in $d_A$) is obtained as follows: Form the pseudo-matrix
$\dddot{A}={\tiny\begin{pmatrix} -&(1,2,3)&(4,5)\\6&7&-\\(8,9)&-&-\end{pmatrix}}$
obtained by replacing each nonzero entry $a_{i,j}$ by the second row of \eqref{a_ij} --- a sequence of length $a_{i,j}$. In other words, $\dddot A$ is obtained by filling $1,2,\ldots,9$ along each row from left to right (using the sequence length $a_{i,j}$), and then down successive rows. Now, the sequence $d_A(1),\ldots, d_A(9)$ is obtained by reading the numbers from left to right inside each sequence associated with $a_{i,j}$ down column 1, then column 2, and so on.
\end{exam}

By specialising $q$ to 1, $\SerA=\HCR|_{q=1}$ is the Sergeev algebra (see \cite{Ser}),
which is the ``semi-direct (or smash) product'' $\C_r\rtimes R\fS_r$. (See \cite{BGJKW} for another such a kind of example.)
% then, for $\AK(h,k) =\AK(n,k)(\ABSUM{A})$, we have in $\SerA$ % and $\BK(h,k) =\BK(n,k)(\ABSUM{A})$,
Then, we have $wc_i=c_{w(i)}w$ in $\SerA$ and, in particular,
\begin{displaymath}
	  {d_A} c_{\widetilde{a}_{h-1,k}+ p}=c_{\widehat a_{h,k-1}+ p} {d_A}
\end{displaymath}
for all  $1\leq h,k\leq n$ with $a_{h,k}>0$ and $1\leq p\leq a_{h,k}$.
However, if we replace {$d_A$} by {$T_{d_A}$},  this commutation formula may not hold in the Hecke-Clifford superalgebra $\HCR$.
So we introduce the following definition.
\begin{defn}\label{defn:SDP}
Let $A=(a_{i,j})\in \MN(n)$.
%let {$\lambda=\ro(A)$}, {$r=|A|$}, $\AK(h,k)=\AK(h,k)(A)$, and $\widehat a_{h,k-1}=\lambda_{h-1}+\AK(h,k)$.
If $a_{h,k}>0$ and
\begin{displaymath}
 c_{\widehat a_{h,k-1}+p} {T_{d_A}}={T_{d_A}} c_{\widetilde{a}_{h-1,k} + p}
	 \qquad(\mbox{in }\HCR)
\end{displaymath}
for each {$p \in [1,a_{h,k}]$}, then {$A$} is said to
satisfy the {\it semi-direct product} (SDP) {\it condition} at {$(h, k)$}.

If {$A$} satisfies the  SDP condition at {$(h, k)$} for every {$k\in [1,n]$} (resp., $h\in[1,n]$) with {$a_{h,k}>0$},
then {$A$} is said to satisfy the SDP condition on the $h$th row (resp., $k$th column).
\end{defn}
Using the element defined in \eqref{cqij}, $A$ satisfied the SDP condition at $(h,k)$ implies
\begin{equation}\label{longSDP}
c_{q,\widehat a_{h,k-1}+1,\widehat a_{h,k-1}+a_{h,k}} {T_{d_A}}
	 = {T_{d_{A}}} c_{q,\widetilde{a}_{h-1,k} + 1,\widetilde{a}_{h-1,k} +a_{h,k}}.
	 \end{equation}

\begin{exam}\label{lem_dA1}For the given $A$ as above, if $d_A= 1$,
then, for  any   {$h,k \in [1, n]$}  satisfying {${a}_{h, k}>0$},
 the equation \eqref{map d_A} implies
{$\widehat a_{h,k-1} + p = \widetilde{a}_{h-1,k}+ p$}.
Hence, $A$ satisfies the SDP condition everywhere (i.e., at any {$(h,k)$} with ${a}_{h, k}>0$).

 In particular, if %$\Ad =(\SE{A}|\SO{A})$ satisfying that
 $ A=(a_{i,j})$ is one of the following matrices in $\MNR(n,r)$:
 $$%{\diag}(\lambda),\quad
{\diag}(\lambda) + \sum_{i=1}^{n-1} u_i E_{i, i+1},\quad
{\diag}(\lambda) + \sum_{i=1}^{n-1} u_i E_{i+1, i},$$
where $\lambda\in\NN^n$ and ${\bf u}=(u_1,\ldots,u_{n-1}) \in \NN^{n-1}$,
then {$d_A = 1$},
and $A$ satisfies the SDP condition everywhere.
%at {$(h,k)$}  for any {$h,k$} with {${a}_{h, k}>0$}.

In general, one checks easily by Remark \ref{rem:dA=1}(2) (cf. Example \ref{pseudo}) that $d_A=1$ if and only if the nonzero entries of $A$ are on a zig-zag line of the shape
\begin{equation*}
\begin{tikzpicture}[scale=.5]
\draw (0,5) -- (1,5);
\draw (1,5) -- (1,4);
\draw(1,4) -- (2.5,4);
\draw(2.5,4) -- (2.5,3.5);
\draw(2.5,3.5) -- (3.5,3.5);
\draw[dashed](3.5,3.5) -- (3.5,2.5);
\end{tikzpicture}\qquad\text{or}\qquad
\begin{tikzpicture}[scale=.5]
%\draw (0,5) -- (1,5);
\draw (1,5) -- (1,4);
\draw(1,4) -- (2.5,4);
\draw(2.5,4) -- (2.5,3.5);
\draw(2.5,3.5) -- (3.5,3.5);
\draw(3.5,3.5) -- (3.5,2.5);
\draw[dashed](3.5,2.5) -- (4, 2.5);
\end{tikzpicture}
\end{equation*}
%In other words, the pseudo-matrix $\dddot A$ obtained by filling $1,2,\ldots,r$ along rows is the same pseudo-matrix obtained by filling $1,2,\ldots,r$ down columns.
%does not contain a $2\times 2$ submatrix whose (1,1),(1,2), and (2,1) entries are all nonzero. This is equivalent to say that $ A$ is a block diagonal matrix in $\MNR(n,r)$ with blocks of the form $U^{(k)}_{\lambda,\bf u}$ and $L^{(k)}_{\lambda,\bf u}$ on the diagonal.
\end{exam}

\begin{rem}The SDP condition on the base $A=A^{\bar0}+A^{\bar1}$ of $\Ad=(A^{\bar0}|A^{\bar1})$ makes the computation for some of the multiplication formulas in Sections 5 and 6 possible. Interestingly, these multiplication formulas associated with matrices that satisfy some SDP conditions play a key role for the establishment of the BLM type realisation for quantum queer supergroups.
\end{rem}

We now derive an equivalent condition for a matrix to satisfy the SDP condition at $(h,k)$. %We will use the lower left corner matrix $\llcm^{h,k}$ defined in Definition \ref{corner} to describe them.
%In the following Lemmas \ref{shiftonN}, and \ref{block form}--\ref{triang_aaa} and Corollaries \ref{shifton-l} and \ref{triang_aaa_cor}, we list the cases where a matrix satisfying the SPD conditions. We will see in \cite{DGLW} that these cases are sufficient for solving the BLM type realisation problem.

\begin{lem}\label{CT2}
(1) If $1\leq i\leq t\leq j<n$, then we have
\begin{equation*}
c_tT_jT_{j-1}\cdots T_i=T_jT_{j-1}\cdots T_ic_{t+1}+(q-1)T_j\cdots T_{t+1}T_{t-1}\cdots T_i(c_i-c_{t+1}).
\end{equation*}

(2) Let $A=(a_{i,j})\in \MN(n)$.
If $a_{h,k}>0$, then there are some {$f^{\cdots}_{w,i}\in R$} such that, for all $p\in[1,a_{h,k}]$, we have in $\HCR$
\begin{equation*}
	c_{\widehat a_{h,k-1}+p} {T_{d_A}}
	 =  {T_{d_{A}}} c_{\widetilde{a}_{h-1,k} + p}
	 + \sum_{w < {d_{A}},i\in[1,r]} f^{\cdots}_{w,i} T_w c_{i}.
\end{equation*}
\end{lem}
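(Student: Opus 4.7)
The plan is to establish (1) by a direct stepwise application of the Hecke--Clifford relations in \eqref{Hecke-Cliff}, and then to obtain (2) as an immediate consequence of the general commutation formula \eqref{homog} combined with the explicit description of $d_A$ as a permutation given in \eqref{map d_A}.

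For (1), the idea is to slide $c_t$ from left to right through $T_jT_{j-1}\cdots T_i$. The factors $T_j, T_{j-1}, \ldots, T_{t+1}$ all commute with $c_t$ by the first relation in \eqref{Hecke-Cliff}, so $c_tT_j\cdots T_{t+1} = T_j\cdots T_{t+1}c_t$. When $c_t$ meets $T_t$, the third relation produces two branches: $c_tT_t = T_tc_{t+1} + (q-1)(c_t-c_{t+1})$. In the first (``main'') branch, $c_{t+1}$ commutes through each of $T_{t-1}, \ldots, T_i$ by the first relation, yielding $T_jT_{j-1}\cdots T_ic_{t+1}$. In the second (``correction'') branch, the $T_t$ factor is absent, and one applies the cascade $c_{s+1}T_s = T_sc_s$ repeatedly to get $c_tT_{t-1}\cdots T_i = T_{t-1}\cdots T_ic_i$, while $c_{t+1}$ still commutes cleanly through, giving $c_{t+1}T_{t-1}\cdots T_i = T_{t-1}\cdots T_ic_{t+1}$. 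Thus the difference $c_t-c_{t+1}$ on the left becomes $c_i-c_{t+1}$ on the right, and assembling the two branches yields the claimed identity.

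For (2), by \eqref{map d_A} we have $d_A(\widetilde{a}_{h-1,k}+p) = \widehat{a}_{h,k-1}+p$ for $p\in[1,a_{h,k}]$, equivalently $d_A^{-1}(\widehat{a}_{h,k-1}+p) = \widetilde{a}_{h-1,k}+p$. Substituting $j=\widehat{a}_{h,k-1}+p$ and $w=d_A$ into \eqref{homog} immediately gives the desired formula, with the correction terms $T_yc_i$ indexed over $y<d_A$ in the Bruhat order. Alternatively, one may avoid appealing to \eqref{homog} and instead iterate (1) together with the simpler commutation rules of \eqref{Hecke-Cliff} through the reduced expression of $T_{d_A}$ laid out in \eqref{d_A}--\eqref{full wij}: each step either commutes $c$ past a factor cleanly or invokes (1), and in the latter case the ``main'' piece preserves length and reassembles into $T_{d_A}c_{\widetilde{a}_{h-1,k}+p}$, while the ``correction'' piece drops a simple reflection and hence yields a $T_y$ with $y<d_A$ by the subword property of the Bruhat order.

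The only real obstacle, encountered if one follows the iterative route, is carefully tracking that the accumulated correction terms really all have Hecke part strictly below $d_A$; this is handled by the subword property but is somewhat tedious. The \eqref{homog}-based argument sidesteps this entirely, reducing (2) to a single substitution, so it is the route I would take. Part (1) by contrast is a fully explicit check, with no substantive obstacle beyond careful use of the three relations in \eqref{Hecke-Cliff}.
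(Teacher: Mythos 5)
Your proof is correct and follows essentially the same route as the paper: part (1) by commuting $c_t$ rightward, splitting at $T_t$ via the third relation of \eqref{Hecke-Cliff} and using the cascade $c_{s+1}T_s=T_sc_s$ on the correction branch, and part (2) as a direct substitution into \eqref{homog} using the permutation description \eqref{map d_A} of $d_A$. The alternative iterative route you sketch for (2) is not needed, as you correctly note.
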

\begin{proof}
According to \eqref{Hecke-Cliff}, we have
\begin{align*}
c_tT_jT_{j-1}\cdots T_i&=T_j\cdots T_{t+1}c_tT_tT_{t-1}\cdots T_i\\
&=T_j\cdots T_{t+1}(T_tc_{t+1}+(q-1)(c_t-c_{t+1}))T_{t-1}\cdots T_i\\
&=T_j\cdots T_{t+1}(T_tc_{t+1})T_{t-1}\cdots T_i+T_j\cdots T_{t+1}((q-1)(c_t-c_{t+1}))T_{t-1}\cdots T_i\\
&=T_jT_{j-1}\cdots T_ic_{t+1}+(q-1)T_j\cdots T_{t+1}T_{t-1}\cdots T_i(c_i-c_{t+1}).
\end{align*}
Assertion (2) follows from \eqref{homog}.
\end{proof}
Using the notation in \eqref{cqij} and \eqref{Ahat}, Lemma \ref{CT2}(2) implies
\begin{equation}\label{longSDP2}
	c_{q,\widehat a_{h,k-1}+1,\widehat a_{h,k-1}+a_{h,k}} {T_{d_A}}
	 =
	 {T_{d_{A}}} c_{q,\widetilde{a}_{h-1,k} + 1,\widetilde{a}_{h-1,k} +a_{h,k}}
	 + \sum_{w < {d_{A}},i\in[1,r]} g^{\cdots}_{w,i} T_w c_{i}.
\end{equation}

Recall the lower left corner matrix $\llcm^{h,k}$ defined in Definition \ref{corner}.

\begin{thm}\label{CdA1}
Let $A\in \MN(n)$ and $h,k\in[1,n]$. %with $a_{h,k}> 0$ and $\ro(A)=\lambda$,
Then $A$ satisfies the SDP condition at $(h,k)$ if and only if $a_{h,k}>0$ and $a_{i,j}=0$, for $i>h$ and $j<k$ (equivalently, the lower left corner matrix $\llcm^{h,k}$ at $(h,k)$ is 0).
%Specially, if $a_{h,1}> 0$, $A$ satisfies SDP condition at position $(h,1)$.
\end{thm}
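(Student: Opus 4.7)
The plan is to track the index of $c$ as $c_{\widehat a_{h,k-1}+p}$ is pushed from left to right through the reduced expression \eqref{d_A} of $T_{d_A}$, using the Hecke--Clifford relations \eqref{Hecke-Cliff}. When $c_t$ meets $T_s$, either $c_t$ commutes past freely (if $t\neq s,s+1$), or $c_{s+1}T_s=T_sc_s$ (the \emph{good} case, index decreases by $1$), or $c_sT_s=T_sc_{s+1}+(q-1)(c_s-c_{s+1})$ (the \emph{bad} case, producing an error). By Lemma~\ref{CT2}(2), the SDP condition at $(h,k)$ is equivalent to the absence of any bad step for every $p\in[1,a_{h,k}]$.

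Because each descending run $w_{i,j}^{(p')}=s_{\sigma_{i-1,j}+p'-1}\cdots s_{\widetilde a_{i-1,j}+p'}$ is monotone, iterating Lemma~\ref{CT2}(1) reduces the behaviour of the whole block $w_{i,j}$ on $t$ to three cases: (A) if $t=\sigma_{i-1,j}+q$ with $q\in[1,a_{i,j}]$, then $w_{i,j}$ sends $t$ error-freely to $\widetilde a_{i-1,j}+q$; (B) if $t\leq \widetilde a_{i-1,j}$ or $t>\sigma_{i-1,j}+a_{i,j}$, then $t$ passes through unchanged; (C) otherwise, a bad step occurs inside $w_{i,j}$. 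So SDP at $(h,k)$ amounts to demanding that every block $w_{i,j}$ encountered by $t$ is of type (A) or (B), for every $p\in[1,a_{h,k}]$.

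For the direction $\Leftarrow$, suppose $a_{h,k}>0$ and $\llcm^{h,k}=0$. Then $w_{i',j'}=1$ whenever $i'>h$ and $j'<k$, and by \eqref{2corner} $\sigma_{h-1,k}=\widehat a_{h,k-1}$. Fixing $p$ and $t_0=\widehat a_{h,k-1}+p$, a direct expansion using $\sigma_{i-1,j}=\widetilde\lambda_{i-1}+\sum_{u\geq i,\,t<j}a_{u,t}$ together with $\llcm^{h,k}=0$ gives $t_0>\sigma_{i-1,j}+a_{i,j}$ for all $i\leq h$, $j<k$, so type (B) holds throughout columns $1,\ldots,k-1$. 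In column $k$, $w_{i,k}$ with $i<h$ is type (B) above the range, $w_{h,k}$ is type (A) sending $t_0=\sigma_{h-1,k}+p$ to $\widetilde a_{h-1,k}+p$, and $w_{i,k}$ with $i>h$ is type (B) below the range. In columns $j>k$, the identities $\widetilde a_{h-1,k}+p\leq\widetilde a_{h,k}\leq\widetilde a_{n,k}\leq\widetilde a_{i-1,j}$ place $t$ below the range throughout. No bad step arises, so SDP holds.

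For the direction $\Rightarrow$, I will contrapose. Pick $(i',j')$ with $i'>h$, $j'<k$ and $a_{i',j'}>0$. Using the classical fact that the occurrences of bad steps in a reduced expression for $w$ are controlled by an invariant of $(t_0,w)$ -- namely, a bad step for $t_0$ in any reduced expression of $d_A$ is forced exactly when there exists $j$ with $j>t_0$ and $d_A^{-1}(j)<d_A^{-1}(t_0)$ -- it suffices to exhibit such a $j$. Picking any $j\in[\widehat a_{i',j'-1}+1,\widehat a_{i',j'}]$ and using \eqref{map d_A}, the estimates $\widehat a_{i',j'-1}\geq\widetilde\lambda_h\geq\widehat a_{h,k-1}+a_{h,k}$ and $\widetilde a_{i',j'}\leq\widetilde a_{n,k-1}=\widetilde a_{0,k}\leq\widetilde a_{h-1,k}$ give $j>\widehat a_{h,k-1}+p$ and $d_A^{-1}(j)\leq\widetilde a_{i',j'}<\widetilde a_{h-1,k}+p$ for every $p\in[1,a_{h,k}]$, producing the required bad crossing. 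The main obstacle is this direction: one must show a bad step is unavoidable regardless of how $t$ has already been modified by earlier blocks, which the reformulation in terms of the inversion condition handles cleanly.
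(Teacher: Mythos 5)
Your ``if'' direction is essentially the paper's own argument: the three-way classification of how the index $t$ passes each block $w_{i,j}$ is exactly what the paper does via Lemma~\ref{mmin} and the explicit analysis of $c_{\hahk+p}T_{w_{h,k}}$, and your inequality checks match \eqref{2corner} and the surrounding computation. That half is correct.

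The ``only if'' direction has a genuine gap. You correctly reformulate the occurrence of a bad step in terms of the inversion condition (there exists $j>t_0$ with $d_A^{-1}(j)<d_A^{-1}(t_0)$), and your choice of $j\in[\widehat a_{i',j'-1}+1,\widehat a_{i',j'}]$ and the accompanying estimates are fine. But you then say it ``suffices'' to exhibit such a $j$, which implicitly asserts the implication
$$\text{(a bad step occurs)}\;\Longrightarrow\; \sum_{w<d_A,\,i}f_{w,i}T_wc_i\neq 0$$
in Lemma~\ref{CT2}(2). That implication is not automatic: the error term created at the first bad step still has to be pushed through the remainder of $T_{d_A}$, possibly triggering further bad steps, and cancellations among the resulting contributions are not excluded a priori. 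Your formulation ``SDP at $(h,k)$ amounts to demanding no type (C) block'' in the second paragraph already conflates ``no bad step $\Rightarrow$ SDP'' (true and easy) with the converse (the hard part). The paper closes exactly this gap around \eqref{lt1}--\eqref{TLformula}: it isolates the first bad block $w_{l,t}^{(1)}$, applies Lemma~\ref{CT2}(1) to extract the contribution $(q-1)\,T_{w_{l,t}^{(1)}\backslash s_{p'}}\,c_{\widetilde a_{l-1,t}+1}$, and then shows via the $\min$-estimates (Lemma~\ref{mmin}, \eqref{bound}, and the subalgebra $\mathcal H_{\ge m}$) that the Clifford index $\widetilde a_{l-1,t}+1$ is strictly smaller than every index appearing in any other error contribution, so this term cannot cancel and the tail is nonzero. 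Your proof needs this (or an equivalent) non-cancellation argument; the inversion reformulation controls only whether a bad step occurs, not whether the resulting error is nonzero in the $T_wc^\alpha$ basis.
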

\begin{proof}
We first recall from \eqref{d_A} the reduced expression of $d_A$:
\begin{equation*}
\begin{aligned}
	d_A =
		(w_{2,1}w_{3,1}\cdots w_{n,1})
		(w_{2,2}w_{3,2}\cdots w_{n,2})
		\cdots
		(w_{2,n-1}w_{3,n-1}\cdots w_{n,n-1}) ,
\end{aligned}
\end{equation*}
where $w_{i,j}$, for $i\in[2,n],j\in[1,n-1]$, are defined in \eqref{wij}. Thus, if $w_{i,j}\neq1$, then $a_{i,j}>0$, $|\urcm_{i,j}|\neq0$, and $w_{i,j}=w_{i,j}^{(1)}w_{i,j}^{(2)}\cdots w_{i,j}^{(a_{i,j})}$ has length $a_{i,j}|\urcm_{i,j}|$, where, for $k\in[1,a_{i,j}]$,
 \begin{equation}\label{w-i,j,u}
 w_{i,j}^{(k)}=(s_{\sigma_{i-1,j}+k-1} s_{\sigma_{i-1,j}+k-2} \cdots s_{\widetilde{a}_{i-1,j} +k}).\end{equation}

Moreover, we also adopt the following notational scheme when breaking $d_A$ as a product of ``segments'':
\begin{equation}\label{d_A-in-3}
d_A=\overleftarrow{(d_A)}_{i,j}\cdot w_{i,j}\cdot \overrightarrow{(d_A)}_{i,j}=\overleftarrow{(d_A)}_{i,j}^{(k)}\cdot w_{i,j}^{(k)}\cdot \overrightarrow{(d_A)}_{i,j}^{(k)}
\end{equation}

We first prove that ``if'' part.

Suppose $a_{h,k}>0$ and $a_{i,j}=0$ for all $i>h$ and $j<k$, i.e., $\llcm^{h,k}=0$. We prove that $A$ satisfies SDP condition at $(h,k)$: $c_{\ha_{h,k-1}+p} {T_{d_A}}={T_{d_A}} c_{\widetilde{a}_{h-1,k} + p}$, for all $p\in[1,a_{h,k}]$.

By the hypothesis, we have $w_{i,j}=1$ for all $i>h$ and $j<k$. Thus, $d_A$ has the form
\begin{equation}
\begin{aligned}
d_A =&
		(w_{2,1}w_{3,1}\cdots w_{h,1})
		\cdots
		(w_{2,k-1}w_{3,k-1}\cdots w_{h,k-1}) (w_{2,k}w_{3,k}\cdots w_{h-1,k}\centerdot w_{h,k}\cdots w_{n,k})\\
		&
        (w_{2,k+1}w_{3,k+1}\cdots w_{n,k+1})\cdots (w_{2,n-1}w_{3,n-1}\cdots w_{n,n-1})
\end{aligned}
\end{equation}
Consider the maximal index defined in \eqref{max} for
$$w:=(w_{2,1}w_{3,1}\cdots w_{h,1})
\cdots
(w_{2,k-1}w_{3,k-1}\cdots w_{h,k-1}).$$
By Lemma \ref{mmin}(2), $m_1=\text{max}(w)<\sigma_{h-1,k-1}+a_{h,k-1}$. Similarly, $m_2=\text{max}(w_{2,k}w_{3,k}\cdots w_{h-1,k})<\sigma_{h-2,k}+a_{h-1,k}$.
Now, $\llcm^{h,k}=0$ implies
%Since $a_{i,j}=0$ for $i>h$ and $j<k$,
\begin{equation*}
\begin{aligned}
&\,\quad\sigma_{h-2,k}+a_{h-1,k}=\tilde{\lambda}_{h-2}+\sum_{j\leq k}a_{h-1,j}+\sum_{j<k}a_{h,j}=\tilde{\lambda}_{h-2}+\sum_{j\leq k}a_{h-1,j}+\overleftarrow{\bf r}^k_h\\
&\leq\tilde{\lambda}_{h-1}+\overleftarrow{\bf r}^k_h(=\sigma_{h-1,k-1}+a_{h,k-1})<\tilde{\lambda}_{h-1}+\overleftarrow{\bf r}^{k}_h+p=\hahk+p,\\
\end{aligned}
\end{equation*}
for all $p\in[1,a_{h,k}]$. Hence, $\hahk+p>\text{max}(m_1,m_2)+1$. This implies,
by the notation in \eqref{d_A-in-3}, that $c_{\hahk+p}T_{\overleftarrow{(d_A)}_{h,k}}=T_{\overleftarrow{(d_A)}_{h,k}}c_{\hahk+p}$ and so
\begin{equation}\label{xxx}
c_{\hahk+p}T_{d_A}=T_{\overleftarrow{(d_A)}_{h,k}}c_{\hahk+p} T_{w_{h,k}}T_{ \overrightarrow{(d_A)}_{h,k}}.
\end{equation}
% commutes with these $T_{w_{i,j}}$ for $j<k$ or $i<h$ and $j=k$.

We now commute $c_{\hahk+p}T_{w_{h,k}}$ for a fixed $p\in[1,a_{h,k}]$. The hypothesis $\llcm^{h,k}=0$ together with \eqref{2corner} gives
 $$\sigma_{h-1,k}=\tilde{\lambda}_{h-1}+\overleftarrow{\bf r}^{k}_h=\hahk.$$
 Recall that the $j$th factor of $w_{h,k}=w_{h,k}^{(1)}w_{h,k}^{(2)}\cdots w_{h,k}^{(a_{h,k})}$ has the form
 $$w_{h,k}^{(j)}=s_{\hahk+j-1}s_{\hahk+j-2}\cdots s_{\tilde{a}_{h-1,k}+j}.$$
If $j<p$,  the index of $s_i$ in the expression of $w_{h,k}^{(j)}$  is smaller than $\hahk+p-1$. So,
 $c_{\hahk+p}$ commutes with these $T_{w_{h,k}^{(j)}}$ .
If $j=p$, repeatedly applying \eqref{Hecke-Cliff} yields
 \begin{equation}
 \begin{aligned}
 c_{\hahk+p}T_{w_{h,k}^{(p)}}
 =c_{\hahk+p}T_{\hahk+p-1}\cdots T_{\widetilde a_{h-1,k}+p}
 =T_{w_{h,k}^{(p)}}c_{\tilde{a}_{h-1,k}+p}.
 \end{aligned}
 \end{equation}
Finally, if $j>p$, then $\tilde{a}_{h-1,k}+p<\tilde{a}_{h-1,k}+j$. Hence,
 $c_{\tilde{a}_{h-1,k}+p}$ commutes with $T_{w_{h,k}^{(j)}}$ and \eqref{xxx} becomes
 $c_{\hahk+p}T_{d_A}=T_{\overleftarrow{(d_A)}_{h,k}}T_{w_{h,k}}c_{\tilde{a}_{h-1,k}+p}T_{ \overrightarrow{(d_A)}_{h,k}}$.

It remains to prove that $c_{\tilde{a}_{h-1,k}+p}T_{ \overrightarrow{(d_A)}_{h,k}}=T_{ \overrightarrow{(d_A)}_{h,k}}c_{\widetilde{a}_{h-1,k}+p}$.  This is clear, since
 $$\overrightarrow{(d_A)}_{h,k}=w_{h+1,k}\cdots w_{n,k}
        (w_{2,k+1}w_{3,k+1}\cdots w_{n,k+1})\cdots (w_{2,n-1}w_{3,n-1}\cdots w_{n,n-1}),$$
and Lemma \ref{mmin}(1) implies
$$
\text{min}(\overrightarrow{(d_A)}_{h,k})\geq \widetilde a_{h,k}+1>\widetilde{a}_{h-1,k}+p.$$

 We now prove the ``only if'' part. This is equivalent to proving that
  if $a_{h,k}>0$ and $\llcm^{h,k}\neq0$, then $A$ does not satisfy the SDP condition at $(h,k)$.

 Suppose that there exists $a_{i,j}>0$ for some $i,j$ with $i>h$ and $j<k$. Then we must have $k>1$ (and $h<n$).
 Set
$$t=\mbox{min}\{j\mid a_{i,j}>0, i>h,j<k\},~l=\mbox{min}\{i\mid a_{i,t}>0,i>h\}.$$
 Then $a_{l,t}$ is the top left most nonzero entry in the corner matrix $\llcm^{h,k}$. Thus, $a_{i,j}=0$ for $i>h$ and $j<t$ or $h<i<l$ and $j=t$, and so, $w_{i,j}=1$ in these cases.
Hence, $d_A$ takes the form
\begin{equation}
\begin{aligned}
d_A =&
		(w_{2,1}w_{3,1}\cdots w_{h,1})
		\cdots
		(w_{2,t-1}w_{3,t-1}\cdots w_{h,t-1}) (w_{2,t}w_{3,t}\cdots w_{h,t} \centerdot w_{l,t}^{(1)}\cdots w_{n,t})\\
       & (w_{2,t+1}w_{3,t+1}\cdots w_{n,t+1})\cdots (w_{2,n-1}w_{3,n-1}\cdots w_{n,n-1})=\overleftarrow{(d_A)}_{l,t}^{(1)}\cdot w_{l,t}^{(1)}\cdot \overrightarrow{(d_A)}_{l,t}^{(1)}
\end{aligned}
\end{equation}
Now we derive the commutation formula for $ c_{\hahk+p} T_{d_A}$.

First,  we have
\begin{equation}\label{eq0}
 c_{\hahk+p}T_{\overleftarrow{(d_A)}_{l,t}^{(1)}}=T_{\overleftarrow{(d_A)}_{l,t}^{(1)}}c_{\hahk+p},
\end{equation} since, by Lemma \ref{mmin}(2) and noting $\llcm^{h,t}=0$,
$$\aligned
\text{max}(\overleftarrow{(d_A)}_{l,t}^{(1)})&\leq \sigma_{h-1,t}+a_{h,t}-1=\ha_{h,t-1}+\sum_{i>h,j<t}a_{i,j}+a_{h,t}-1=\ha_{h,t-1}-1<\hahk.
\endaligned$$

Second, we consider the commutation relation for $c_{\hahk+p} T_{w_{l,t}^{(1)}}$. %By \eqref{wij},
%According to the expression of $w_{i,j}$,
%$$w_{l,t}^{(1)}=s_{\sigma_{l-1,t}} \cdots s_{p'} \cdots s_{\widetilde{a}_{l-1,t} +1}$$
The hypothesis $a_{h,k}>0$ together with $a_{i,j}=0$, for $i>h$ and $j<t$ (and noting $l>h$) imply
$$\sigma_{l-1,t}=\tilde{\lambda}_{l-1}+\sum_{i>l-1,j<t}a_{i,j}=\tilde{\lambda}_{l-1}\geq \tilde{\lambda}_h\geq \tilde{\lambda}_{h-1}+\overleftarrow{\bf r}^{k}_h+p=\hahk+p=:p'.$$
On the other hand, $a_{l,t}>0$ together with $a_{i,t}=0$, for $h<i<l$, and $\llcm^{h,t}=0$ implies $\tilde{a}_{l-1,t}=\tilde{a}_{h,t}$ and
 $$\tilde{a}_{l-1,t}+1=\tilde{a}_{h,t}+1\leq \tilde{\lambda}_{h-1}+\overleftarrow{\bf r}^{k}_h+p=p'.$$
Hence, $\sigma_{l-1,t}\geq p'\geq \tilde{a}_{l-1,t}+1$ and so, $w_{l,t}^{(1)}=s_{\sigma_{l-1,t}} \cdots s_{p'} \cdots s_{\widetilde{a}_{l-1,t} +1}$ (i.e., $s_{p'}|w_{l,t}^{(1)}$).
Applying Lemma \ref{CT2}(1) yields
\begin{equation}\label{lt1}
\aligned c_{p'} T_{w_{l,t}^{(1)}}&=T_{w_{l,t}^{(1)}} c_{p'+1}+(q-1)T_{{w_{l,t}^{(1)}}\backslash s_{p'}}(c_{\tilde{a}_{l-1,t}+1}-c_{p'+1})\\
&=(q-1)T_{{w_{l,t}^{(1)}}\backslash s_{p'}}c_{\tilde{a}_{l-1,t}+1}+(T_{w_{l,t}^{(1)}}-(q-1)T_{{w_{l,t}^{(1)}}\backslash s_{p'}})c_{p'+1}.
\endaligned
\end{equation}
where ${w_{l,t}^{(1)}}\backslash s_{p'}$ ($p'=\hahk+p$) is the element obtained by deleting $s_{p'}$ from $w_{l,t}^{(1)}$.
%Next, we may use induction on $q\in[1,a_{l,t}]$ with
%$\sigma_{l-1,t}+q-1\geq \tilde{\lambda}_{h-1}+\overleftarrow{r}^{k}_h+p+q-1\geq \tilde{a}_{l-1,t}+q$
%Since $s_{\tilde{a}_{l-1,t}+2}$ has the minimal index among all $s_i$ in $w_{j,t}^{(2)}\cdots w_{l,j}^{(a_{l,t})}$

Finally, consider the multiplication \eqref{lt1}$\times T_{\overrightarrow{(d_A)}_{l,t}^{(1)}}$.
We must analyse the commutation relations for
$$(1)\quad c_{\tilde{a}_{l-1,t}+1}T_{\overrightarrow{(d_A)}_{l,t}^{(1)}},\qquad (2)\quad c_{p'+1}T_{\overrightarrow{(d_A)}_{l,t}^{(1)}}.$$

For (1), observe that the first factor of $\overrightarrow{(d_A)}_{l,t}^{(1)}$ is $w_{l,t}^{(2)}$. By Lemma \ref{mmin}(1),
%the definition of minimal index in \eqref{max} that
 $$\text{min}(\overrightarrow{(d_A)}_{l,t}^{(1)})\geq\tilde{a}_{l-1,t}+2>\tilde{a}_{l-1,t}+1.$$
 Thus,
 by \eqref{Hecke-Cliff}, we have
\begin{equation}\label{eq1}
c_{\tilde{a}_{l-1,t}+1}T_{\overrightarrow{(d_A)}_{l,t}^{(1)}}=T_{\overrightarrow{(d_A)}_{l,t}^{(1)}}c_{\tilde{a}_{l-1,t}+1}.
\end{equation}
For (2), since
$$\overrightarrow{(d_A)}_{l,t}^{(1)}=w_{l,t}^{(2)}\cdots w_{l,t}^{(a_{l,t})}w_{l+1,t}\cdots w_{n,t}
       (w_{2,t+1}w_{3,t+1}\cdots w_{n,t+1})\cdots (w_{2,n-1}w_{3,n-1}\cdots w_{n,n-1}),
       $$
and every nonidentity factor $w_{i,j}$  of $\overrightarrow{(d_A)}_{l,t}^{(1)}$ is a product of nonidentity $w_{i,j}^{(k)}$ ($k\in[1,a_{i,j}]$) satisfying $i>l$, if $j=t$, or $j>t$ and the inequality $\text{min}(w_{i,j}^{(k)})= \widetilde a_{i-1,j}+k\geq \widetilde a_{i-1,j}+1 >\widetilde a_{l-1,t}+1$ by \eqref{bound}. By deleting all the identity $w_{i,j}$ from $\overrightarrow{(d_A)}_{l,t}^{(1)}$, we may assume that
$\overrightarrow{(d_A)}_{l,t}^{(1)}=y_1y_2\cdots y_m$ is a product of those nonidentity $w_{i,j}^{(k)}$. Let $m_1=\text{min}(y_1)$ and $m=\text{min}(m_1, p'+1)$. Then $\text{min}(y_i)\geq \text{min}(y_1)$ for all $i\in[1,m]$  and $m>\widetilde{a}_{l-1,t}+1$.
Moreover, each $y_i$ has the form
\begin{equation}
y_i=s_bs_{b-1} \cdots s_a,\;\text{ where }\;b=\sigma_{i-1,j}+k-1\geq \widetilde a_{i-1,j}+k=a\geq m.
\end{equation}
Let $\mathcal H_{\geq m}$ be the subalgebra of $\HCR$ generated by $T_i, c_j$ with $m\leq i\leq r-1$ and $m\leq j\leq r$. Then
 $c_{p'+1}\in\mathcal H_{\geq m}$ and $T_{\overrightarrow{(d_A)}_{l,t}^{(1)}}\in \mathcal H_{\geq m}$. Hence, $ c_{p'+1}T_{\overrightarrow{(d_A)}_{l,t}^{(1)}}\in \mathcal H_{\geq m}$.
Consequently, by \eqref{homog},
\begin{equation}\label{eq2}
 c_{p'+1}T_{\overrightarrow{(d_A)}_{l,t}^{(1)}}
=T_{\overrightarrow{(d_A)}_{l,t}^{(1)}}c_{\tilde{a}_{h-1,k}+p}+\sum_{d'<\overrightarrow{(d_A)}_{l,t}^{(1)}, u\geq m}f_{d',u}T_{d'}c_u\;\;(f_{d',u}\in R).
 \end{equation}
Combining \eqref{eq0}--\eqref{eq2} gives
\begin{equation}\label{TLformula}
\begin{aligned}
c_{\hahk+p}T_{d_A}
=T_{d_A}c_{\tilde{a}_{h-1,k}+p}+(q-1)T_{d_{A}\backslash s_{\tilde{\lambda}_{h-1}+\overleftarrow{r}^{k}_h+p}}c_{\tilde{a}_{l-1,t}+1}+
\sum_{d< d_{A},\atop u'>\tilde{a}_{l-1,t}+1}g_{d,u'} T_dc_{u'}.
\end{aligned}
\end{equation}
Since the ``tail'' $(q-1)T_{d_{A}/s_{\tilde{\lambda}_{h-1}+\overleftarrow{r}^{k}_h+p}}c_{\tilde{a}_{l-1,t}+1}+
\sum_{d< d_{A},\atop u'>\tilde{a}_{l-1,t}+1}g_{d,u'} T_dc_{u'}$ is nonzero (as $q\neq 1$),
it follows that $A$ does not satisfy the SDP condition at $(h,k)$.
\end{proof}

\begin{cor}\label{shiftonN}
(1) Every $A=(a_{i,j})\in \MN(n)$ satisfies the SDP condition on the 1st column or $n$th row.

(2) If $l\in[1,n]$ and $A=(a_{i,j})\in \MN(n)$ satisfy $a_{i,j}=0$, for $i>l$ and $j<n$, then
$A$ satisfies SDP condition on the $l$-th row.
\end{cor}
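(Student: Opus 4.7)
The plan is to deduce both parts directly from Theorem~\ref{CdA1}, which reduces verification of the SDP condition at $(h,k)$ to two conditions: $a_{h,k}>0$ and the vanishing of the lower left corner matrix $\llcm^{h,k}$ (i.e.\ $a_{i,j}=0$ for all $i>h$, $j<k$). So the strategy is simply to check, for each entry position relevant to (1) and (2), that the defining index range for $\llcm^{h,k}$ is either vacuous or forced to be zero by the hypothesis.

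For part (1), I would argue two vacuous cases. If $(h,k)=(h,1)$ with $a_{h,1}>0$, then the condition $j<k=1$ for $j\in[1,n]$ is vacuous, so $\llcm^{h,1}=0$ automatically, and Theorem~\ref{CdA1} yields the SDP condition at $(h,1)$. Hence $A$ satisfies SDP on the first column. Symmetrically, if $(h,k)=(n,k)$ with $a_{n,k}>0$, the condition $i>h=n$ for $i\in[1,n]$ is vacuous, so $\llcm^{n,k}=0$ automatically, giving SDP on the $n$th row. These two observations handle (1).

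For part (2), fix $k\in[1,n]$ with $a_{l,k}>0$ and examine $\llcm^{l,k}$. Its entries are the $a_{i,j}$ with $i>l$ and $j<k$. Since $k\le n$ forces $j<k\le n$, i.e.\ $j<n$, the hypothesis $a_{i,j}=0$ for $i>l$, $j<n$ gives $\llcm^{l,k}=0$. Thus Theorem~\ref{CdA1} applies at every $(l,k)$ with $a_{l,k}>0$, which is precisely SDP on the $l$th row.

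There is no genuine obstacle here: all the work has been done in Theorem~\ref{CdA1}, and the corollary is a two-line bookkeeping exercise translating the two forms of hypothesis in (1) and (2) into emptiness of the index range that defines $\llcm^{h,k}$. The only thing to be slightly careful about is the vacuous quantifier in (1)---one should state explicitly that $\{j\in[1,n]\mid j<1\}=\emptyset$ and $\{i\in[1,n]\mid i>n\}=\emptyset$ so that the corresponding lower left corner matrices are (trivially) zero, and so Theorem~\ref{CdA1} applies at every nonzero entry in the first column, respectively the $n$th row.
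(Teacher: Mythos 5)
Your proposal is correct and is exactly the argument the paper intends: the corollary is stated without proof precisely because it is the immediate specialisation of Theorem~\ref{CdA1} to the cases where the index range defining $\llcm^{h,k}$ is empty (first column, $n$th row) or is killed by the hypothesis (part (2)), and the paper's Definition~\ref{corner} even sets $\llcm^{h,1}=0$ and $\llcm^{n,k}=0$ by convention. Your explicit remark about the vacuous quantifiers is the only content needed, and it matches the paper's conventions.
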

%\begin{proof}(1) Suppose $1\leq k\leq n$ and $1\leq p<a_{n,k}$.
%By \eqref{sigmaij}  we observe that $\widetilde{\lambda}_{n-1}+\AK(n,k)+p+1>\sigma_{i-1,j}+a_{i,j}$ for any $2\leq i\leq n$ and $1\leq j\leq k-1$, or $2\leq i\leq n-1$ and $j=k$. Therefore, by \eqref{wij}, we obtain that $c_{\widetilde{\lambda}_{n-1}+\AK(n,k)+p+1}$ commutes with $T_{w_{i,j}}$ for each pair of $i,j$ satisfying $2\leq i\leq n$ and $1\leq j\leq k-1$, or $2\leq i\leq n-1$ and $j=k$. Meanwhile direct calculation show via \eqref{Hecke-Cliff} that the following holds \begin{align*}  c_{\widetilde{\lambda}_{n-1} +  \sum^{k-1}_{u=1}a_{n, u}+ p +1} T_{w_{n,k}} =T_{w_{n,k}} c_{\widetilde{a}_{n-1,k} +  p +1}. \end{align*} Furthermore, observe that $$ \widetilde{a}_{n-1,k} +  p +1<\widetilde{a}_{i-1,j}+1$$for all {$2\leq i\leq n$} and $k+1\leq j\leq n-1$, which implies that $c_{\widetilde{a}_{n-1,k} +  p +1}$ commutes with $T_{w_{i,j}}$ for each pair of $i,j$ satisfying  {$2 \leq i\leq n$} and $k+1\leq j\leq n-1$. Putting together the lemma is proved by \eqref{d_A}. \end{proof}

%%%ZZZ

%\newpage
\section{Multiplication formulas in {$\QqnrR$}: the even case}\label{even case}

We  are now ready to compute, for any $\Ad =  ({\SEE{a}_{i,j}} | {\SOE{a}_{i,j}})  \in\MNZ(n,r)$, the structure constants that appear in $\phi_\Xd\phi_\Ad$, where $\Xd$ is one of the following six matrices:
\begin{equation}\label{even-odd}
\aligned
\text{\bf the even case:}\;\;&(\lambda|O),\;\; (\lambda + E_{h, h+1} -E_{h+1, h+1} |O), \;\;(\lambda-E_{h,h}+E_{h+1,h}|O);\\
\text{\bf the\, odd\, case:}\;\;& (\lambda-E_{h,h}|E_{h,h}),\;\;(\lambda-E_{h+1, h+1}| E_{h, h+1} ),\;\;(\lambda-E_{h,h}|E_{h+1,h}).
\endaligned
\end{equation}
 Here $1\leq h\leq n-1$, and we regard any $\lambda \in \Lambda(n,r)$
 as a diagonal matrix $\lambda={\diag}(\lambda_1,\ldots,\lambda_n)$ in $\MN(n)$
 and write $\lambda+B:={\diag}(\lambda_1,\ldots,\lambda_n)+B$, for $B\in\MN(n)$.

We first deal with the even case in this section.

\begin{conv}\label{CONV}
The terms $\phi_\Md$ appearing in $\phi_\Xd\phi_\Ad$ have the form
$\Md=(\SE{A} \pm E|\SO{A} \pm E')$ for some {$E  \in \MN(n)$},   {$E'  \in M_n(\NN_2)$}. If $\Md\not\in \MNZN(n)$ (i.e., $\Md$ has a negative entry or $\SO{A} \pm E'$ has an entry $>1$), then we set $\phi_\Md$ to be 0. We make a similar convention for the elements $T_\Md$ in \eqref{eq cA} that defines $\phi_\Md$.
\end{conv}

The following notations will be used in the proofs of the multiplication formulas in this and next sections.

\begin{notn}[for the proofs in Sections \ref{even case} and \ref{odd case}]\label{rem_phi_short_0}
Let $\Ad = ({\SEE{a}_{i,j}} | {\SOE{a}_{i,j}})  \in \MNZN(n)$ and $A=\lfloor \Ad\rfloor$ its base matrix. For fixed $1\leq h,k\leq n$, we decompose the elements $c_\Ad, c'_\Ad$ defined in \eqref{eq cA} as a product of three segments:
\begin{align}\label{cA-decomp}
	%&c_A =  {\cbefore} \cdot  c_{\gamma} \cdot {\cafter},
	&c_\Ad =  {\cbefore} \cdot  {\cmiddle} \cdot {\cafter},\qquad c'_\Ad =  ({\cbefore})' \cdot  ({\cmiddle})' \cdot ({\cafter})'
\end{align}
where ${\cmiddle} := (c_{{q}, {\widetilde{a}}_{h-1,k}+1, {\widetilde{a}}_{h,k}})^{\SOE{a}_{h,k}}
		\cdot (c_{{q}, {\widetilde{a}}_{h,k}+1, {\widetilde{a}}_{h+1,k}})^{\SOE{a}_{h+1,k}},$
and
{$ {\cbefore} $} (resp., {$ {\cafter} $}) are the segment of the product before (resp., after) $\cmiddle$. The decomposition for $c'_\Ad$ is similar.

Recall the notations in  \eqref{Ahk} and \eqref{prsum} with $\lambda=\ro(A)$, %(and \eqref{Ahk})
and let {$\AK(h+1,k)=\AK(h+1,k)(A)$},  $\BK(h,k)=\BK(h,k)(A)$. Then
 $$\lambda_{(h)}^+:=\ro(\AP)=\lambda + \bs{\ep}_{h} - \bs{\ep}_{h+1}=\lambda+\alpha_h,\quad
  \lambda^-_{(h)}:=\co(\AM)=\lambda - \bs{\ep}_{h} + \bs{\ep}_{h+1}=\lambda-\alpha_h,$$ and
 {$\co(\AP)  = \co(\AM) = \co(A)$}.
Let %$\nu = \nu_{A} = (  \cdots ,a_{h,k},  a_{h+1,k}, \cdots )$ be as in \eqref{nuA} and let
\begin{equation}\label{nu+nu-}
\aligned
         \nu &= \nu_{A} = (\cdots  \cdots ,a_{h,k},  a_{h+1,k}, \cdots\cdots )\;\;\text{(as in \eqref{nuA}),}\\
	\nu^+ &= \nu_{\AP} = (  \cdots,a_{h,k}+1,  a_{h+1,k}-1, \cdots ), \quad
	\delta^+ = ( \cdots, a_{h,k}, 1, a_{h+1,k}-1, \cdots ),\\
	\nu^- &= \nu_{\AM} = (  \cdots ,  a_{h,k}-1, a_{h+1,k}+1, \cdots ), \quad
	\delta^-= ( \cdots ,a_{h,k}-1, 1, a_{h+1,k} , \cdots ).
	\endaligned
\end{equation}
Here the omitted components are identical. Clearly, $\fS_{\delta^\pm}\subset \fS_{\nu^\pm}$.
If {${\D_{\pm}} $} denote the sets of the shortest representatives  of the left cosets of  {$\fS_{\delta^\pm}$} in {$\fS_{\nu^\pm}$}, i.e.,
$$\D_+=\mathcal D_{\delta^+}^{-1}\cap\fS_{\nu^+},\quad \D_-=\mathcal D_{\delta^-}^{-1}\cap\fS_{\nu^-},$$
and set {$\widetilde a = \widetilde{a}_{h,k}$}, {$a = a_{h,k}$},  $b= a_{h+1,k}$, and $x_{\D_\pm}=\Sigma_{\D_\pm}$, then we have
\begin{equation}\label{subsubgroup}
\begin{aligned}
&	x_{\nu^+} = x_{\D_{+}} x_{\delta^+} = x_{\delta^+} \TDIJ({\widetilde a},  {\widetilde a-a+1}),\\
& x_{\nu^-} = x_{\D_{-}} x_{\delta^-} =x_{\delta^-} \TAIJ({\widetilde a},  {\widetilde a+b-1}),
\end{aligned}
\qquad x_{\nu} =  x_{\delta^+} \TAIJ( {\widetilde a+1}, {\widetilde a+b-1})
	=  x_{\delta^-} \TDIJ({\widetilde a-1},  {\widetilde a-a +1}).
\end{equation}
Let $'\!\nu^+ = \nu_{^t\!\AP}$ and $'\!\nu^- = \nu_{^t\!\AM}$ be defined as in \eqref{nuA} by using the transposes $^t\!\AP$, $^t\!\AM$ of $\AP$, $\AM$, respectively.
Then $\fS_{'\!\nu^\pm}\subseteq \fS_{\lambda_{(h)}^\pm}$ and we have (cf. \cite[Corollary 3.5]{DW2})
\begin{equation}\label{eq_ta+}
\aligned
	x_{{{\lambda}_{(h)}^{+}}} {T_{d_{\AP}}}
	&=  x_{{\lambda}_{(h)}^{+}\backslash{}'\!\nu^+}  {T_{d_{\AP}}}  x_{\nu^+}, \quad\text{where}\quad x_{{{\lambda}_{(h)}^{+}}\backslash{}'\! \nu^+}  := \sum_{u' \in {\D}^{-1}_{{}'\!\nu^+} \cap \fS_{{{\lambda}_{(h)}^{+}}}} T_{u'};\\
x_{{{\lambda}_{(h)}^{-}}} {\TDAM}
	&= x_{{{\lambda}_{(h)}^{-}}\backslash{}'\!\nu^-}  {\TDAM}  x_{\nu^-}, \quad\text{where}\quad
	x_{{{\lambda}_{(h)}^{-}}\backslash{}'\!\nu^-} := \sum_{u' \in {\D}^{-1}_{{}'\!\nu^-} \cap \fS_{{{\lambda}_{(h)}^{-}}}} T_{u'}.
\endaligned
\end{equation}

\end{notn}
The following displays the multiplication formulas for $\phi_\Xd\phi_\Ad$ in the queer $q$-Schur superalgebra $\QqnrR$, where  $X$ is one of the following matrices
\begin{equation}\label{EVEN}
\Dd_\mu:=(\mu|O),\;\;
 \Ed_{h,\lambda}:=(\lambda + E_{h, h+1}-E_{h+1, h+1}|O),\;\; \Fd_{h,\lambda}:=(\lambda-E_{h,h} + E_{h+1,h} |O).
 \end{equation}

 %Recall the notation in \eqref{stepd}.

\begin{thm}\label{phiupper0}
Let {$h \in [1,n-1]$} and {$\Ad =  ({\SEE{a}_{i,j}} | {\SOE{a}_{i,j}})   \in \MNZ(n,r)$}. Assume $A=(a_{i,j})$ is the base of $\Ad$ with {$a_{i,j}=\SEE{a}_{i,j} + \SOE{a}_{i,j}$},
  and
	 $\BK(h,k)=\BK(h,k)(A)$. Then, for $\lambda,\mu\in\Lambda(n,r)$ and $\varepsilon=\delta_{\lambda,\ro(A)} $, the following multiplication formulas hold in $\QqnrR$:
\begin{align*}
(1)\;\;& \phi_{\Dd_\mu} \phi_\Ad = \delta_{\mu,\ro(A)}\phi_{\Ad},\qquad
	 \phi_\Ad \phi_{\Dd_\mu}  =\delta_{\mu,\co(A)}\phi_{\Ad}.\\
(2)\;\;& \phi_{\Ed_{h,\lambda}}  \phi_\Ad = \varepsilon\sum_{k=1}^n \Big\{
	{q}^{\BK(h,k) + \SOE{a}_{h+1,k}}  \STEP{ \SEE{a}_{h,k} + 1}
		\phi_{(\SE{A} + E_{h,k}  - E_{h+1, k} | \SO{A} )}
	+
	% {\delta}_{1,\SOE{a}_{h+1, k}} {\delta}_{0,\SOE{a}_{h,k}}
	{q}^{\BK(h,k)}  \phi_{(\SE{A}  | \SO{A} + E_{h,k} - E_{h+1, k}  )}  \\
	&\hspace{6cm} +
	% {\delta}_{1,\SOE{a}_{h,k}} {\delta}_{1,\SOE{a}_{h+1,k}}
	{q}^{\BK(h,k)-1} \STEPPD{a_{h,k}+1}
	\phi_{(\SE{A} + 2E_{h,k} | \SO{A}  -E_{h,k} - E_{h+1,k} )}\Big
\}.\\
(3)\;\;& \phi_{\Fd_{h,\lambda}}  \phi_\Ad = \varepsilon\sum_{k=1}^n \Big\{
	 {q}^{\AK(h+1,k) }  \STEP{ \SEE{a}_{h+1, k} +1}
	\phi_{(\SE{A} - E_{h,k} + E_{h+1, k} | \SO{A} )}
	 +
	% {\delta}_{1,\SOE{a}_{h,k}} {\delta}_{0,\SOE{a}_{h+1,k}}
	{q}^{\AK(h+1,k)  + a_{h, k} - 1} \phi_{(\SE{A}  |\SO{A} - E_{h,k} + E_{h+1,k})} \\
	& \hspace{4.5cm} +
	% {\delta}_{1,\SOE{a}_{h,k}} {\delta}_{1,\SOE{a}_{h+1,k}}
	{q}^{\AK(h+1,k)  + a_{h, k} -2} \STEPPD{a_{h+1, k} +1}
	\phi_{(\SE{A} + 2E_{h+1,k} | \SO{A}  - E_{h,k} - E_{h+1,k})}
\Big\}.
\end{align*}
\end{thm}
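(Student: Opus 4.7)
The plan is to use the structure-constant description \eqref{eq_gB}: writing $T_\Ad = x_{\ro(A)} h'_\Ad$ with $h'_\Ad = T_{d_A} c_\Ad \Sigma_A$, one has
$$\phi_\Xd\phi_\Ad(x_{\co(A)}) = \delta_{\co(X),\ro(A)}\, T_\Xd \cdot h'_\Ad,$$
and the task reduces to rewriting the right-hand side as an $R$-linear combination of basis elements $T_\Md$; the Kronecker delta in each of (1)--(3) is precisely this $\co(X) = \ro(A)$ condition. Part (1) is immediate from Example \ref{lem_dA1}: the base of $\Dd_\mu$ is the diagonal $\mathrm{diag}(\mu)$, so $d_{\Dd_\mu} = 1$ and $c_{\Dd_\mu} = 1$, while $\mathcal D_{\nu_{\mathrm{diag}(\mu)}} \cap \fS_\mu = \{1\}$ forces $\Sigma_{\Dd_\mu} = 1$; hence $T_{\Dd_\mu} = x_\mu$, and both formulas follow directly from the definition of $\phi_\Ad$ in Proposition \ref{DW-basis}.

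For (2), I first expand $T_{\Ed_{h,\lambda}} = x_{\lambda + \alpha_h}\,\TAIJ(\widetilde\lambda_h+1, \widetilde\lambda_{h+1}-1)$, as the base of $\Ed_{h,\lambda}$ is a length-two zigzag of the type in Example \ref{lem_dA1} (so $d=1$, $c=1$) and its tail is the shortest-coset sum for $\fS_1 \times \fS_{\lambda_{h+1}-1}$ inside the $(h{+}1)$-st block of $\fS_\lambda$. Next I partition the $\lambda_{h+1}$ summands of this $\TAIJ$ by the column decomposition of row $h+1$ of $A$, namely $[0,\lambda_{h+1}-1] = \bigsqcup_{k=1}^n [\AK(h+1,k), \AK(h+1,k+1)-1]$, and apply Lemma \ref{prop_pjshift}(1) to each sub-sum to produce, for each $k$ with $a_{h+1,k} > 0$,
$$\sum_{j=\AK(h+1,k)}^{\AK(h+1,k+1)-1} T_{\widetilde\lambda_h+1}\cdots T_{\widetilde\lambda_h+j}\,T_{d_A} = T_{\widetilde\lambda_h}\cdots T_{\widetilde\lambda_h-\BK(h,k)+1}\,T_{d_{A^+_{h,k}}}\,\TAIJ(\widetilde a_{h,k}+1,\widetilde a_{h,k}+a_{h+1,k}-1).$$
Multiplication on the left by $x_{\lambda+\alpha_h}$ and \eqref{Tixlambda} absorb the descending chain $T_{\widetilde\lambda_h}\cdots T_{\widetilde\lambda_h - \BK(h,k) + 1}$ to yield the overall factor $q^{\BK(h,k)}$. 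Decomposing $c_\Ad = \cbefore \cdot \cmiddle \cdot \cafter$ as in Notation \ref{rem_phi_short_0}, the indices of the residual $\TAIJ(\widetilde a_{h,k}+1,\widetilde a_{h,k}+a_{h+1,k}-1)$ are disjoint from those of $\cbefore$ and $\cafter$, so the only nontrivial commutation is with $\cmiddle = (c_{q,\widetilde a_{h-1,k}+1,\widetilde a_{h,k}})^{\SOE{a}_{h,k}}(c_{q,\widetilde a_{h,k}+1,\widetilde a_{h+1,k}})^{\SOE{a}_{h+1,k}}$, governed by Lemma \ref{xcT}. Swapping the tail via Lemma \ref{Tnsum}(1) ($\TAIJ\Sigma_A \to \TDIJ\Sigma_{A^+_{h,k}}$), absorbing the resulting $\TDIJ$ into $x_{\ro(A^+_{h,k})}T_{d_{A^+_{h,k}}}$ via \eqref{eq_ta+} and Lemma \ref{xTinverse}(1), and using \eqref{xc} to slide odd generators past the leading idempotent, the four sub-cases $(\SOE{a}_{h,k},\SOE{a}_{h+1,k}) \in \{0,1\}^2$ reassemble into the three destination families of the theorem: the ``purely even'' shift $(\SE A + E_{h,k}-E_{h+1,k}\,|\,\SO A)$, the ``odd transport'' $(\SE A\,|\,\SO A + E_{h,k}-E_{h+1,k})$, and the ``annihilation'' $(\SE A + 2E_{h,k}\,|\,\SO A - E_{h,k}-E_{h+1,k})$ in which two odd entries at $(h,k),(h+1,k)$ collapse into an even pair via $c_{q,\cdot,\cdot}^2$ producing the $\STEPPD{a_{h,k}+1}$ factor (Lemma \ref{xcT}(4)).

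Part (3) is proved in parallel fashion, starting from $T_{\Fd_{h,\lambda}} = x_{\lambda-\alpha_h}\,\TDIJ(\widetilde\lambda_h-1,\widetilde\lambda_{h-1}+1)$, partitioning by the column decomposition of row $h$ of $A$, and invoking Lemma \ref{prop_pjshift}(2), Lemma \ref{xcT}(3), and Lemma \ref{Tnsum}(2) in place of their upper-shift counterparts; the absorbed chain $T_{\widetilde\lambda_h}T_{\widetilde\lambda_h+1}\cdots T_{\widetilde\lambda_h+\AK(h+1,k)-1}$ now yields the base pre-factor $q^{\AK(h+1,k)}$, and the factor of $\cmiddle$ that interacts with the residual $\TDIJ$ is $(c_{q,\widetilde a_{h-1,k}+1,\widetilde a_{h,k}})^{\SOE{a}_{h,k}}$, which accounts for the shifted exponents $q^{\AK(h+1,k)+a_{h,k}-1}$ and $q^{\AK(h+1,k)+a_{h,k}-2}$ in the second and third terms. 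The principal technical obstacle in both parts is the careful $q$-power and sign bookkeeping in the Clifford commutations: each application of Lemma \ref{xcT} spawns ``straightening'' tails that must be reassembled into the correctly normalized $c_\Md$-element attached to the destination base $A^\pm_{h,k}$ (whose $(h,k)$- and $(h{+}1,k)$-ranges differ from the $A$-ranges by a shift of $\pm 1$), and identity \eqref{xc} must be invoked repeatedly to slide odd generators past the leading idempotent $x_{\ro(\Md)}$ so that the final answer is manifestly in the standard form $T_\Md$.
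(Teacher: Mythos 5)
Your proposal follows essentially the same route as the paper's proof: reduce to the structure-constant computation $T_\Xd\cdot h'_\Ad$, expand $T_\Xd$ as $x_{\lambda+\alpha_h}\TAIJ(\widetilde\lambda_h+1,\widetilde\lambda_{h+1}-1)$ (resp. $x_{\lambda-\alpha_h}\TDIJ(\widetilde\lambda_h-1,\widetilde\lambda_{h-1}+1)$), partition by columns of row $h{+}1$ (resp. row $h$), shift via Lemma~\ref{prop_pjshift}, absorb the descending chain to produce $q^{\BK(h,k)}$ (resp. $q^{\AK(h+1,k)}$), decompose $c_\Ad=\cbefore\cmiddle\cafter$, swap tails via Lemma~\ref{Tnsum}, and finish with the four sub-cases on $(\SOE{a}_{h,k},\SOE{a}_{h+1,k})$ using Lemma~\ref{xcT}, \eqref{xc}, and \eqref{eq_ta+}. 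The only caveat is that the description of the $(1,1)$ sub-case is oversimplified: the factor $\STEPPD{a_{h,k}+1}$ is not produced by a single application of Lemma~\ref{xcT}(4), but by the more delicate ``swap--separate--expand'' manipulation of $c'c'x_{\delta^+}$ in \eqref{c'c'} followed by a reassembly of several terms; the sketch gives the right ingredients but glosses over this bookkeeping.
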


\begin{proof} Formula (1) is a special case of \eqref{co=ro}, noting that $\phi_{(\lambda|O)}$ is the identity map on $x_\lambda\HCR$.
We now prove (2).

Let {$\Xd=\Ed_{h,\lambda}= (\lambda + E_{h, h+1} -E_{h+1, h+1} |O)$} whose base matrix $X=\lambda + E_{h, h+1} -E_{h+1, h+1} $.
Then {$\ro(X) =\lambda+ \alpha_h = {\lambda}_{(h)}^{+}$}, $\co(X)=\lambda$,
{$d_X = 1$}, {$T_{d_X} = 1$},  and {$c_\Xd  = 1$}.
Further, we have $\fS_{\nu_X}=\fS_{\lambda_{(h)}^+}\cap\fS_\lambda$ and
{$
	 \D_{{\nu}_{X}} \cap \fS_{{\lambda}}   = \{ 1, s_{{\widetilde{\lambda}_{h}+1}},
		s_{{\widetilde{\lambda}_{h}+1}} s_{{\widetilde{\lambda}_{h}+2}},
		\cdots,
		s_{{\widetilde{\lambda}_{h}+1}} s_{{\widetilde{\lambda}_{h}+2}} \cdots s_{{\widetilde{\lambda}_{h+1}-1}}
	\}
$}. Thus,
$$T_\Xd=x_{{{\lambda}_{(h)}^{+}}}\Sigma_{\D_{{\nu}_{X}} \cap \fS_{\lambda}}=x_{{{\lambda}_{(h)}^{+}}}\TAIJ( {\widetilde{\lambda}_{h}+1}, {\widetilde{\lambda}_{h+1}-1})=\sum_{{1\leq k\leq n}\atop {a_{h+1,k}>0}}x_{{{\lambda}_{(h)}^{+}}}\cdot
\sum_{j=\AK(h+1,k)}^{\AK(h+1,k+1)-1}
		T_{\widetilde{\lambda}_{h}+1} T_{\widetilde{\lambda}_{h}+2} \cdots  T_{\widetilde{\lambda}_{h}+j}
		.$$
Hence,  for $\lambda=\ro(A)$ and $\mu=\co(A)$, \eqref{eq_gB} with $\Bd=\Xd$ becomes
$$
z	:= x_{{{\lambda}_{(h)}^{+}}} \TAIJ( {\widetilde{\lambda}_{h}+1}, {\widetilde{\lambda}_{h+1}-1}) {T_{d_A}} c_{\Ad}
		\sum _{{\sigma} \in \D_{{\nu}_{_A}} \cap \fS_{\mu}} T_{\sigma}
	 = 	\sum_{{1\leq k\leq n}\atop {a_{h+1,k}>0}} z_k,
	 %\;\text{ with }\delta^<_{0,a_{h+1,k}}=\begin{cases}1,\text{ if }0<a_{h+1,k};\\0,\text{ otherwise,}
	% \end{cases}
$$
where, for each {$k \in [1, n]$} with $a_{h+1,k}>0$ (and $\Sigma_A:=\sum _{{\sigma} \in \D_{{\nu}_{_A}} \cap \fS_{\mu}} T_{\sigma}$),
\begin{equation}\label{z_k}
\aligned
	z_k
	     &=x_{{{\lambda}_{(h)}^{+}}}\Big(\sum_{j=\AK(h+1,k)}^{\AK(h+1,k+1)-1}
		T_{\widetilde{\lambda}_{h}+1} T_{\widetilde{\lambda}_{h}+2} \cdots  T_{\widetilde{\lambda}_{h}+j}
		{T_{d_A}} \Big) c_{\Ad}\Sigma_A.
		%\sum _{{\sigma} \in \D_{{\nu}_{_A}} \cap \fS_{\mu}} T_{\sigma}.
\endaligned
\end{equation}
Then, with {$\widetilde a= \widetilde{a}_{h,k}$}, {$a = a_{h,k}$},  and $b = a_{h+1,k}$, %and $\nu=\nu_A$,
\eqref{z_k} becomes, by Lemma  \ref{prop_pjshift},
\begin{align*}
	z_k
&=
		%\sum_{j=\AK(h+1,k)}^{\AK(h+1,k+1) - 1}
		x_{{{\lambda}_{(h)}^{+}}}
		T_{\widetilde{\lambda}_{h}} T_{\widetilde{\lambda}_{h}-1}\cdots T_{\widetilde{\lambda}_{h}-\BK(h,k)+1}
		{{\TDAP}}\TAIJ( {\widetilde a+1}, {\widetilde a+b-1})
		%T_{\widetilde{a}_{h,k}+1} \cdots T_{\widetilde{a}_{h,k}+p_j}
		c_{\Ad}\Sigma_A
		%\sum _{{\sigma} \in \D_{{\nu}} \cap \fS_{\mu}} T_{\sigma}
		\\
&=
	{q}^{\BK(h,k)} x_{{{\lambda}_{(h)}^{+}}}  {{\TDAP}}
	\TAIJ( {\widetilde a+1}, {\widetilde a+b-1})	c_{\Ad}\Sigma_A
	%\sum _{{\sigma} \in \D_{{\nu}} \cap \fS_{\mu}} T_{\sigma}
	\quad ( \mbox{since } s_{\widetilde{\lambda}_{h}}, s_{\widetilde{\lambda}_{h}-1}, \cdots,
		s_{\widetilde{\lambda}_{h}-\BK(h,k)+1} \in {\fS}_{{{\lambda}_{(h)}^{+}}})
	\\
&=
	{q}^{\BK(h,k)} x_{{{\lambda}_{(h)}^{+}}\backslash{}'\!\nu^+} {{\TDAP}}
	x_{{\D_{+}}} x_{\delta^+} \TAIJ( {\widetilde a+1}, {\widetilde a+b-1}) c_{\Ad}\Sigma_A
	%\sum _{{\sigma} \in \D_{{\nu}} \cap \fS_{\mu}} T_{\sigma}
	\quad
 {
	\mbox{($x_{\nu^+}=x_{{\D_{+}}} x_{\delta^+}$ by \eqref{eq_ta+}  and \eqref{subsubgroup})}
 }
	\\
&=
	{q}^{\BK(h,k)} x_{{{\lambda}_{(h)}^{+}}\backslash{}'\!\nu^+} {{\TDAP}}
	x_{{\D_{+}}}  (c_{\Ad})' x_{\nu}\Sigma_A
	%\sum _{{\sigma} \in \D_{{\nu}} \cap \fS_{\mu}} T_{\sigma}
	\quad
 {
	\mbox{($x_{\delta^+} \TAIJ( {\widetilde a+1}, {\widetilde a+b-1})  = x_{\nu}$ by  \eqref{subsubgroup} and by \eqref{xc})}
 }.
\end{align*}
By rewriting $x_{\nu}=x_{\delta^+} \TAIJ( {\widetilde a+1}, {\widetilde a+b-1})$ again and applying Lemma  \ref{Tnsum}(1),
we have
 \begin{equation}\label{zkupper}
	\begin{aligned}
&z_k=
	{q}^{\BK(h,k)} x_{{{\lambda}_{(h)}^{+}}\backslash{}'\!\nu^+} {{\TDAP}}
	x_{{\D_{+}}}   (c_{\Ad})'  x_{\delta^+}  \TDIJ({\widetilde a},  {\widetilde a-a+1})\Sigma_{A^+_{h,k}},\;\;
	\text{where }\Sigma_{A^+_{h,k}}=\sum _{{\sigma} \in \D_{\nu^+} \cap \fS_{\mu}} T_{\sigma},
	\\
&=
	{q}^{\BK(h,k)} x_{{{\lambda}_{(h)}^{+}}\backslash{}'\!\nu^+} {{\TDAP}}
	x_{{\D_{+}}}   ({\cbefore})' ({\cmiddle})' ({\cafter})' x_{\delta^+}  \TDIJ({\widetilde a},  {\widetilde a-a+1})\Sigma_{A^+_{h,k}}\\
	%\sum _{{\sigma} \in \D_{\nu^+} \cap \fS_{\mu}} T_{\sigma} \\
&=
	{(-1)}^{(\SOE{a}_{h,k} + \SOE{a}_{h+1,k}) \YK } {q}^{\BK(h,k)} x_{{{\lambda}_{(h)}^{+}}\backslash{}'\!\nu^+} {{\TDAP}}\cdot
	\underbrace{x_{{\D_{+}}}  ({\cmiddle})'  x_{\delta^+}  \TDIJ({\widetilde a},  {\widetilde a-a+1})}_{=\fkm^+}\cdot {\cbefore} {\cafter}\Sigma_{A^+_{h,k}}.
	%\sum _{{\sigma} \in \D_{\nu^+} \cap \fS_{\mu}} T_{\sigma} .
\end{aligned}
\end{equation}
Here, for the last equality, we used the following relations:
\begin{enumerate}[(a)]
\item
$({\cbefore})' ({\cmiddle})' ({\cafter})' ={(-1)}^{(\SOE{a}_{h,k} + \SOE{a}_{h+1,k}) \YK }({\cmiddle})' ({\cbefore})' ({\cafter})' $ (noting that $\YK$ is the partial sum at the $(h-1,k)$ position associated with $\nu_{A^{\bar 1}}$);
\item
$({\cbefore})' ({\cafter})' =(c_{\delta^+}^\alpha)'$ for some $\alpha\leq\delta^+$ with $\alpha_i=0,\;i\in[\widetilde a_{h-1,k}+1,\widetilde a_{h+1,k}]$,
and hence  $({\cbefore})' ({\cafter})' x_{\delta^+}=x_{\delta^+}({\cbefore})({\cafter}) $ by \eqref{xc}; %and, finally,
\item
$({\cbefore})({\cafter})$ commutes with $\TDIJ({\widetilde a},  {\widetilde a-a+1})$ since $T_ic_j=c_jT_i$ for all $j\neq i,i+1$.
\end{enumerate}

To complete the computation of $z_k$, we need to manipulate the ``middle part'' $\fkm^+$. This requires to consider the following four cases.
We record the following fact
\begin{equation}\label{3.3}
x_{\nu^+}  \TDIJ({\widetilde a},  {\widetilde a-a+1})=\STEP{a+1}x_{\nu^+},\;\;\text{since }s_{\widetilde a},s_{\widetilde a-1},\ldots,s_{\widetilde a-a+1}\in\fS_{\nu^+}.
\end{equation}

The first case is simple.

\medskip
\noindent
 {\bf Case 1 ({$\SOE{a}_{h,k} = 0$}, {$\SOE{a}_{h+1,k} = 0$}).}
In this case,  $(\cmiddle)' = 1$, $c_{\Ad} ={\cbefore} {\cafter} =c_{A^+_{h,k}}$ and
\begin{align*}
z_k
&=
	{q}^{\BK(h,k)} x_{{{\lambda}_{(h)}^{+}}\backslash{}'\!\nu^+} {{\TDAP}}
	x_{\nu^+}  \TDIJ({\widetilde a},  {\widetilde a-a+1}) {\cbefore} {\cafter}\Sigma_{A^+_{h,k}}
	%\sum _{\sigma\in \D_{\nu^+ \cap \fS_\mu}} T_{\sigma}
	\qquad \mbox{ (as $x_{{\D_{+}}}  x_{\delta^+}  = x_{\nu^+} $) }
	\\
&=
	{q}^{\BK(h,k)} \STEP{a+1} x_{{{\lambda}_{(h)}^{+}}} {{\TDAP}}   {\cbefore} {\cafter}\Sigma_{A^+_{h,k}}
	 %\sum _{{\sigma} \in \D_{\nu^+ } \cap \fS_{\mu}}  T_{\sigma}
	%\qquad  \mbox{(by Lemma \ref{xTinverse})}
	\\
&=
	{q}^{\BK(h,k)} \STEP{ \SEE{a}_{h,k} +1} T_{(\SE{A} + E_{h,k} - E_{h+1, k} | \SO{A})} .
\end{align*}

\medskip
\noindent
 {\bf Case 2 ({$\SOE{a}_{h,k} = 0$}, {$\SOE{a}_{h+1,k} = 1$}).}
Then $(\cmiddle)' = c'_{{q}, \widetilde a +1, \widetilde a +b}$ and
\begin{align*}
z_k &=
	{(-1)}^{\YK } {q}^{\BK(h,k)} x_{{{\lambda}_{(h)}^{+}}\backslash{}' \nu^+} {{\TDAP}}\cdot
	\underbrace{x_{{\D_{+}}}  (c_{{q}, \widetilde a +1, \widetilde a +b})'  x_{\delta^+}  \TDIJ({\widetilde a},  {\widetilde a-a+1})}_{=\fkm^+} \cdot{\cbefore} {\cafter}\Sigma_{A^+_{h,k}}.
	%\sum _{{\sigma} \in \D_{\nu^+} \cap \fS_{\mu}} T_{\sigma} .
\end{align*}
We now rewrite the middle part $\fkm^+$ using \eqref{subsubgroup}, \eqref{xc}, and \eqref{cqij}, etc. as follows:
\begin{align*}
	%x_{{\D_{+}}} (c_{{q}, \widetilde a +1, \widetilde a +b})'  &x_{\delta^+}  \TDIJ({\widetilde a},  {\widetilde a-a+1})
	\fkm^+&= x_{{\D_{+}}} ({q} (c_{{q}, \widetilde a +2, \widetilde a +b})' +  c_{\widetilde a+1}) x_{\delta^+}  \TDIJ({\widetilde a},  {\widetilde a-a+1})\quad (\text{by \eqref{cqij}})\\
	&= {q} x_{{\D_{+}}} x_{\delta^+} c_{{q}, \widetilde a +2, \widetilde a +b}  \TDIJ({\widetilde a},  {\widetilde a-a+1}) +  x_{{\D_{+}}} x_{\delta^+}c_{\widetilde a+1} \TDIJ({\widetilde a},  {\widetilde a-a+1})\quad(\text{by \eqref{xc}})  \\
	&= {q}  (c_{{q}, \widetilde a +2, \widetilde a +b})'x_{\nu^+}  \TDIJ({\widetilde a},  {\widetilde a-a+1}) +  x_{\nu^+}c_{{q}, \widetilde a-a+1, \widetilde a +1}\quad(\text{by \eqref{subsubgroup}, Lemma  \ref{xcT}(3)})\\
	%x_{{\D_{+}}} x_{\delta^+}c_{\widetilde a+1} \TDIJ({\widetilde a},  {\widetilde a-a+1})  \\
	&=
		{q} \STEP{a +1}  x_{\nu^+} c_{{q}, \widetilde a +2, \widetilde a +b}  +  x_{\nu^+}c_{{q}, \widetilde a-a+1, \widetilde a +1} \quad (\text{by \eqref{3.3}}).
\end{align*}
Notice that, if $a^{\bar 0}_{h+1,k}=0$ (thus, $b=1$), then the term $(c_{{q}, \widetilde a +2, \widetilde a +b})' =0$ (hence, $c_{{q}, \widetilde a +2, \widetilde a +b}=0$).
Hence, by noting ${\cbefore} c_{{q}, \widetilde a+2, \widetilde a +b}  {\cafter}=c_{\nu^+}^\alpha$ with $\alpha=\nu_{A^{\bar1}}$ and ${\cbefore} c_{{q}, \widetilde a-a+1, \widetilde a +1} {\cafter}=c_{\nu^+}^\beta$ with $\beta=\nu_{\SO{A} - E_{h+1, k}  + E_{h,k}}$,
\begin{align*}
z_k
&=
	{q}^{\BK(h,k)+1} \STEP{a+1}  x_{{{\lambda}_{(h)}^{+}}} {{\TDAP}}
	{\cbefore} c_{{q}, \widetilde a +2, \widetilde a +b}  {\cafter}\Sigma_{A^+_{h,k}}\\
	%\sum _{{\sigma} \in \D_{{\nu^+}} \cap \fS_{\mu}} T_{\sigma}  \\
	&\qquad + {q}^{\BK(h,k)} x_{{{\lambda}_{(h)}^{+}}} {{\TDAP}}
	{\cbefore} c_{{q}, \widetilde a-a+1, \widetilde a +1} {\cafter}\Sigma_{A^+_{h,k}}\\
	%\sum _{{\sigma} \in \D_{{\nu^+}} \cap \fS_{\mu}} T_{\sigma}  \\
&=
	{q}^{\BK(h,k)+1} \STEP{ \SEE{a}_{h,k} +1}  T_{(\SE{A} + E_{h,k} - E_{h+1, k} | \SO{A}) }
	+ {q}^{\BK(h,k)} T_{(\SE{A}  | \SO{A}  + E_{h,k} - E_{h+1, k} ) }.
\end{align*}
Here the last step follows from the facts that $(\SE{A} + E_{h,k} - E_{h+1, k} | \SO{A})$ and $(\SE{A}  | \SO{A}  + E_{h,k} - E_{h+1, k} )$ have the same base $A^+_{h,k}$, and
 the sign in $z_k$ is cancelled after swapping $ c_{{q}, \widetilde a +2, \widetilde a +b}$ (resp., $c_{{q}, \widetilde a-a+1, \widetilde a +1}$) with $\cbefore$.
(Note that,
by Convention \ref{CONV}, $T_{(\SE{A} + E_{h,k} - E_{h+1, k} | \SO{A}) }=0$ if $a^{\bar 0}_{h+1,k}=0$.)

\medskip
\noindent
 {\bf Case 3 ({$\SOE{a}_{h,k} = 1$}, {$\SOE{a}_{h+1,k} = 0$}).} Then we have {${\cmiddle} = c_{{q}, \widetilde a-a+1, \widetilde a}$} and
\begin{align*}
z_k &=
	{(-1)}^{\YK } {q}^{\BK(h,k)} x_{{{\lambda}_{(h)}^{+}}\backslash{}'\nu^+} {{\TDAP}}\cdot\underbrace{
	x_{{\D_{+}}}  (c_{{q}, \widetilde a-a+1, \widetilde a})'  x_{\delta^+}  \TDIJ({\widetilde a},  {\widetilde a-a+1})}_{=\fkm^+} \cdot{\cbefore} {\cafter}\Sigma_{A^+_{h,k}}.\\
	%\sum _{{\sigma} \in \D_{{\nu^+}} \cap \fS_{\mu}} T_{\sigma}.
\end{align*}
For the middle part $\fkm^+$, again using \eqref{subsubgroup}, \eqref{xc}, and \eqref{cqij},  one can deduce
\begin{align*}
 %{q} x_{{\D_{+}}}  (c_{{q}, \widetilde a-a+1, \widetilde a})' & x_{\delta^+}  \TDIJ({\widetilde a},  {\widetilde a-a+1})
 q\fkm^+&
	=  x_{{\D_{+}}}  x_{\delta^+} \cdot {q} c_{{q}, \widetilde a-a+1, \widetilde a} \cdot  \TDIJ({\widetilde a},  {\widetilde a-a+1}) \\
	&=  x_{\nu^+}  c_{{q}, \widetilde a-a+1, \widetilde a +1} \TDIJ({\widetilde a},  {\widetilde a-a+1}) -x_{\nu^+}  c_{\widetilde a+1} \TDIJ({\widetilde a},  {\widetilde a-a+1}) \\
	&=   (c_{{q}, \widetilde a-a+1, \widetilde a +1})'  x_{\nu^+} \TDIJ({\widetilde a},  {\widetilde a-a+1}) -x_{\nu^+}  c_{{q}, \widetilde a-a+1, \widetilde a +1} \;(\text{by Lemma \ref{xcT}(3)})\\
	&= ( \STEP{a+1} -1 )x_{\nu^+} c_{{q}, \widetilde a-a+1, \widetilde a +1}\;\;(\text{by \eqref{3.3}}).
\end{align*}
Substituing leads to
\begin{align*}
z_k
&=
	{q}^{\BK(h,k)-1} ( \STEP{a+1} -1 ) x_{{{\lambda}_{(h)}^{+}}} {{\TDAP}}
	{\cbefore}c_{{q}, \widetilde a-a+1, \widetilde a +1} {\cafter}\Sigma_{A^+_{h,k}}\\
	%\sum _{{\sigma} \in \D_{{\nu^+}} \cap \fS_{\mu}} T_{\sigma} \\
&=
	{q}^{\BK(h,k) }   \STEP{ \SEE{a}_{h,k} +1}
	T_{(\SE{A}+ E_{h,k}  -E_{h+1, k}  |\SO{A})} .
\end{align*}

\noindent
 {\bf Case 4 ({$\SOE{a}_{h,k} = 1$}, {$\SOE{a}_{h+1,k} = 1$}).}
Then {$(\cmiddle)' = c'_{{q},  \widetilde a-a+1,  \widetilde a} c'_{{q},  \widetilde a +1,  \widetilde a +b}$} and
\begin{align*}
z_k &=
	{q}^{\BK(h,k)} x_{{{\lambda}_{(h)}^{+}}\backslash{}'\nu^+} {{\TDAP}}\cdot\underbrace{
	x_{{\D_{+}}}  c'_{{q},  \widetilde a-a+1,  \widetilde a} c'_{{q},  \widetilde a +1,  \widetilde a +b}  x_{\delta^+}  \TDIJ({ \widetilde a},  { \widetilde a-a+1})}_{=\fkm^+}\cdot {\cbefore} {\cafter}\Sigma_{A^+_{h,k}}.
	%\sum _{{\sigma} \in \D_{{\nu^+}} \cap \fS_{\mu}} T_{\sigma}.
\end{align*}
This time, we handle the middle part $\fkm^+$ in a more tricky way: first swapping the two $c'$ factors, then commuting one with $x_{\delta^+}$, then separating $c_{\widetilde a+1}$ from the $c'$ factor and adding the tail $c_{\widetilde a+1}$ to the $q$ times the $c$-factor,
and finally expanding the product. In other words, we have
\begin{equation}\label{c'c'}
\aligned
 q c'_{{q}, \widetilde a-a+1, \widetilde a} c'_{{q}, \widetilde a +1, \widetilde a +b}  x_{\delta^+}&=-qc'_{{q}, \widetilde a +1, \widetilde a +b}c'_{{q}, \widetilde a-a+1, \widetilde a}  x_{\delta^+}=-c'_{{q}, \widetilde a +1, \widetilde a +b}x_{\delta^+}(qc_{{q}, \widetilde a-a+1, \widetilde a} )   \\
&= -  (c_{\widetilde a +1} +qc'_{{q}, \widetilde a +2, \widetilde a +b}) x_{\delta^+} (c_{{q}, \widetilde a-a+1, \widetilde a+1} -c_{\widetilde a+1}) \\
&= - x_{\delta^+} (qc_{{q}, \widetilde a +2, \widetilde a +b}+c_{\widetilde a +1}) (c_{{q}, \widetilde a-a+1, \widetilde a+1} -c_{\widetilde a+1}) \\
=	- x_{\delta^+} (qc_{{q}, \widetilde a +2, \widetilde a +b}&c_{{q}, \widetilde a-a+1, \widetilde a +1}
	- {q}  c_{{q}, \widetilde a +2, \widetilde a +b}  c_{\widetilde a+1}
	+ c_{\widetilde a+1} c_{{q}, \widetilde a-a+1, \widetilde a +1}
	 -  c_{\widetilde a+1} c_{\widetilde a+1} ).
\endaligned
\end{equation}

Now, substituting into $\fkm^+$, noting  $c_{\widetilde a +1}c_{{q}, \widetilde a-a+1, \widetilde a +1}=-  (c_{{q}, \widetilde a-a+1, \widetilde a +1} c_{\widetilde a +1}  + 2)$, $x_{\nu^+}   \TDIJ({\widetilde a},  {\widetilde a-a+1})=\STEP{a+1} x_{\nu^+} $, and repeatedly applying  \eqref{subsubgroup}, \eqref{xc} %\eqref{cqij}, Lemma \ref{xTinverse} and Lemma \ref{xcT}(3)
yield
\begin{align*}
 q\fkm^+
&=	- {q}  x_{\nu^+}  c_{{q}, \widetilde a +2, \widetilde a +b}c_{{q}, \widetilde a-a+1, \widetilde a +1} \TDIJ({\widetilde a },  {\widetilde a-a+1})
	+ {q}  x_{\nu^+}  c_{{q}, \widetilde a +2, \widetilde a +b}  c_{\widetilde a+1}  \TDIJ({\widetilde a },  {\widetilde a-a+1})  \\
	&\qquad -  x_{\nu^+} c_{\widetilde a+1} c_{{q}, \widetilde a-a+1, \widetilde a +1} \TDIJ({\widetilde a},  {\widetilde a-a+1})
	 +  x_{\nu^+}c_{\widetilde a+1} c_{\widetilde a+1}  \TDIJ({\widetilde a },  {\widetilde a-a+1})  \\
&=	- {q}  (c_{{q}, \widetilde a +2, \widetilde a +b})' (c_{{q}, \widetilde a-a+1, \widetilde a +1})' x_{\nu^+}   \TDIJ({\widetilde a},  {\widetilde a-a+1})
	+ {q} (c_{{q}, \widetilde a +2, \widetilde a +b})'  x_{\nu^+} c_{\widetilde a+1}  \TDIJ({\widetilde a},  {\widetilde a-a+1})  \\
	&\qquad + x_{\nu^+} (c_{{q}, \widetilde a-a+1, \widetilde a +1} c_{\widetilde a +1}  + 2) \TDIJ({\widetilde a},  {\widetilde a-a+1})
	 - x_{\nu^+}  \TDIJ({\widetilde a},  {\widetilde a-a+1})  \\
&=	- {q}  \STEP{a+1} x_{\nu^+}  c_{{q}, \widetilde a +2, \widetilde a +b} c_{{q}, \widetilde a-a+1, \widetilde a +1}
	+ {q} x_{\nu^+} c_{{q}, \widetilde a +2, \widetilde a +b}  c_{{q}, \widetilde a-a+1, \widetilde a +1}   \\
	&\qquad + (c_{{q}, \widetilde a-a+1, \widetilde a +1} )' x_{\nu^+} c_{\widetilde a +1}  \TDIJ({\widetilde a},  {\widetilde a-a+1})
	+ \STEP{a+1} x_{\nu^+} \;\;(\text{2nd term by Lemma } \ref{xcT}(3))  \\
&=	{q}( \STEP{a+1} - 1 ) x_{\nu^+}  c_{{q}, \widetilde a-a+1, \widetilde a +1} c_{{q}, \widetilde a +2, \widetilde a+b} + (\STEP{a+1} - {\STEPP{a+1}} ) x_{\nu^+},
\end{align*}
where the third term $=x_{\nu^+}(c_{{q}, \widetilde a-a+1, \widetilde a +1} )^2$  by  Lemma \ref{xcT}(3) and then apply Lemma \ref{xcT}(4).
Since ${q}( \STEP{a+1} - 1 )=q^2\STEP{a}$, we have then, in this case,
\begin{align*}
z_k
&=
	{q}^{\BK(h,k) + 1}  \STEP{a}
	x_{{{\lambda}_{(h)}^{+}}} {{\TDAP}}
	{\cbefore} c_{{q}, \widetilde a-a+1, \widetilde a +1} c_{{q}, \widetilde a +2, \widetilde a +b}  {\cafter}\Sigma_{A^+_{h,k}}\\
	%\sum _{{\sigma} \in \D_{{\nu^+}} \cap \fS_{\mu}} T_{\sigma} \\
	&\qquad + {q}^{\BK(h,k)-1} (\STEP{a+1} - {\STEPP{a+1}} )
	x_{{{\lambda}_{(h)}^{+}}} {{\TDAP}} {\cbefore} {\cafter}\Sigma_{A^+_{h,k}}\\
	%\sum _{{\sigma} \in \D_{{\nu^+}} \cap \fS_{\mu}} T_{\sigma} \\
&=
	{q}^{\BK(h,k) + 1}   \STEP{ \SEE{a}_{h,k}+1}
	T_{(\SE{A} + E_{h,k} - E_{h+1,k} | \SO{A})}
	  + {q}^{\BK(h,k)-1} \STEPPD{a_{h,k}+1}
	T_{( \SE{A} + 2E_{h,k} | \SO{A}  -E_{h,k} - E_{h+1,k} )} .
\end{align*}
%We now combine the four cases above into a single expression.
Let
\begin{align*}
z'_k&=
	{q}^{\BK(h,k) + \SOE{a}_{h+1,k}}  \STEP{ \SEE{a}_{h,k} + 1}
		T_{(\SE{A}  + E_{h,k} - E_{h+1, k}| \SO{A} )}
	+
	% {\delta}_{1,\SOE{a}_{h+1, k}} {\delta}_{0,\SOE{a}_{h,k}}
	{q}^{\BK(h,k)} T_{(\SE{A}  | \SO{A} + E_{h,k}- E_{h+1, k}   )}  \\
	&\hspace{5cm} +
	% {\delta}_{1,\SOE{a}_{h,k}} {\delta}_{1,\SOE{a}_{h+1,k}}
	{q}^{\BK(h,k)-1} \STEPPD{a_{h,k}+1}
	T_{(\SE{A} + 2E_{h,k} | \SO{A}  -E_{h,k} - E_{h+1,k} )}.
\end{align*}
Then, by the notational Convention \ref{CONV}, $z'_k=0$ if $a_{h+1,k}=0$. A case-by-case argument following the four cases above shows that $z'_k=z_k$ if $a_{h+1,k}>0$. This proves (2).

Finally,  we prove the formula in (3).
Observe that,
 for {$\Xd=\Fd_{h,\la}=( \lambda -E_{h, h} + E_{h+1, h}| O)$} with $\lambda=\ro(A)$,
 we have %$X=\lfloor\Xd\rfloor=\diag(\lambda) -E_{h, h} + E_{h+1, h}$ and
 \begin{equation}\label{Fbar}
 \aligned
 X&=\lfloor\Xd\rfloor=\diag(\lambda) -E_{h, h} + E_{h+1, h},\;\text{ and }\\
 \mathcal{D}_{\nu_X}\cap \mathfrak{S}_\lambda
&= \{ 1,  s_{\widetilde{\lambda}_h - 1}, s_{\widetilde{\lambda}_h - 1}s_{\widetilde{\lambda}_h-2},
	\cdots,
	s_{\widetilde{\lambda}_h-1}s_{\widetilde{\lambda}_h-2}\cdots s_{\widetilde{\lambda}_{h-1}+1}\}.
	\endaligned
\end{equation}
By
replacing the ($+$) notations by the ($-$) notations in various places in the proof of (2),
 \eqref{eq_gB} with $\Bd=\Xd$ becomes

\begin{align*}
z
&=
	x_{{{\lambda}_{(h)}^{-}}}
		 \TDIJ( {\tilde{\lambda}_{h}-1}, {\tilde{\lambda}_{h-1}+1}) {T_{d_A}} c_{\Ad}\Sigma_A
		%\sum _{{\sigma} \in \D_{{\nu}_{_A}} \cap \fS_{\mu}} T_{\sigma}
 =  \sum_{{1\leq k\leq n}\atop {a_{h,k}>0}}  z_k,
\end{align*}
where, for each {$k \in [1,n]$} with {$a_{h,k}>0$} and by Lemma  \ref{prop_pjshift}(2),
\begin{equation}\label{6.4.10}
\aligned
z_k&=
		\sum_{j=\BK(h,k)}^{\BK(h,k-1) - 1}
		x_{{{\lambda}_{(h)}^{-}}}
		T_{\tilde{\lambda}_{h}-1} T_{\tilde{\lambda}_{h}-2} \cdots T_{\tilde{\lambda}_{h}-j}
		{T_{d_A}} c_{\Ad}\Sigma_A\\
	&=x_{{{\lambda}_{(h)}^{-}}}
	T_{\tilde{\lambda}_{h}} T_{\tilde{\lambda}_{h}+1}
	\cdots T_{\tilde{\lambda}_{h}+\AK(h+1,k)-1}   \TDAM \TDIJ({\widetilde a-1},  {\widetilde a-a +1})
c_{\Ad}\Sigma_A\\
&={q}^{\AK(h+1,k)} x_{{{\lambda}_{(h)}^{-}}\backslash{}'\!\nu^-}  \TDAM   x_{\nu^-}
\TDIJ({\widetilde a-1},  {\widetilde a-a +1})c_{\Ad} \Sigma_A,
\endaligned
\end{equation}
since $s_{\tilde{\lambda}_{h}}, s_{\tilde{\lambda}_{h}+1},
	\cdots, s_{\tilde{\lambda}_{h}+\AK(h+1,k)-1}$ are all in $\fS_{\lambda^-_{(h)}}$.
(Recall from Notation \ref{rem_phi_short_0} that {$\widetilde a = \widetilde{a}_{h,k}$}, {$a = a_{h,k}$}, and {$b = a_{h+1,k}$}.)
Thus, by  \eqref{subsubgroup},
%Recall the notations in \eqref{cA-decomp},and  {$u = \tilde{a}_{h,k}$}, {$s = a_{h,k}$}, {$t = a_{h+1,k}$},
%applying equations \eqref{eq_ta+},   \eqref{subsubgroup}, \eqref{xc},and Lemma  \ref{Tnsum},
\begin{equation}\label{m^-}
\aligned
z_k
&=
	{q}^{\AK(h+1,k)} x_{{{\lambda}_{(h)}^{-}}\backslash{}'\!\nu^-}  \TDAM   x_{{\D_{-}}}
	x_{\delta^-} \TDIJ({\widetilde a-1},  {\widetilde a-a +1})c_{\Ad}\Sigma_A\\
&=
	{q}^{\AK(h+1,k)} x_{{{\lambda}_{(h)}^{-}}\backslash{}'\!\nu^-}  \TDAM   x_{{\D_{-}}}
	(c_{\Ad})' x_{\delta^-} \TDIJ({\widetilde a-1},  {\widetilde a-a +1})\Sigma_A\quad(x_{\delta^-} \TDIJ({\widetilde a-1},  {\widetilde a-a +1})=x_\nu, \eqref{xc})\\
&=
	{q}^{\AK(h+1,k)} x_{{{\lambda}_{(h)}^{-}}\backslash{}'\!\nu^-}  \TDAM   x_{{\D_{-}}}
	 (c_{\Ad})' x_{{\delta^-}} \TAIJ( {\widetilde a}, {\widetilde a+b-1})\Sigma_{A^-_{h,k}}\quad(\text{by Lemma  \ref{Tnsum}(2)})\\
&=
	{(-1)}^{(\SOE{a}_{h,k} + \SOE{a}_{h+1,k}) \YK } {q}^{\AK(h+1,k)}
	x_{{{\lambda}_{(h)}^{-}}\backslash{}'\!\nu^-}  \TDAM \cdot
	x_{{\D_{-}}}  ({\cmiddle})' x_{\delta^-} \TAIJ( {\widetilde a}, {\widetilde a+b-1})\cdot {\cbefore} {\cafter}\Sigma_{A^-_{h,k}}.
	%\sum _{{\sigma} \in \D_{{\nu}_{\AM}} \cap \fS_{\mu}} T_{\sigma}.
\endaligned
\end{equation}

For later use, we extract from \eqref{6.4.10} and \eqref{m^-} the element
 $${\frak M}^-:=x_{\nu^-}\TDIJ(\widetilde a-1,\widetilde a-a+1)
	c_{\Ad}
	\Sigma_A={(-1)}^{(\SOE{a}_{h,k} + \SOE{a}_{h+1,k}) \YK } \underbrace{x_{{\D_{-}}}  ({\cmiddle})' x_{\delta^-} \TAIJ( {\widetilde a}, {\widetilde a+b-1})}_{=\fkm^-}\cdot {\cbefore} {\cafter}\Sigma_{A^-_{h,k}}.$$
	Note that $z_k={q}^{\AK(h+1,k)}
	x_{{{\lambda}_{(h)}^{-}}\backslash{}'\!\nu^-}  \TDAM\fkM^-$.

We summarise the computation of $\fkm^-$, $\fkM^-$ and $z_k$ corresponding to the four values of $(x,y):=(a^{\bar1}_{h,k},a^{\bar1}_{h+1,k})$ in Table 1.
\begin{center}
{\bf (Table 1)}
\begin{tabular}{|c|c|}\hline
$(x,y)$&$\fkm^-$\qquad$\fkM^-$\qquad $z_k$\qquad\qquad\qquad\qquad\\\hline
$(0,0)$&$\aligned \fkm^-&=\STEP{b+1}x_{\nu^-}\\
\fkM^-&=\STEPX{b+1}{q}x_{\nu^-}c_{(A^{\bar{0}}-E_{h,k}+E_{h+1,k}|A^{\bar{1}})}\Sigma_{A^-_{h,k}}\\
z_k&={q}^{\AK(h+1,k)}\STEP{b+1}T_{(A^{\bar0}-E_{h,k}+E_{h+1,k}|A^{\bar1})}\endaligned$\\\hline
$(1,0)$&$\aligned \fkm^-&=\STEP{b+1}x_{\nu^-}c_{q,\tilde{a}-a+1,\tilde{a}-1}+q^{a-1}x_{\nu^-}c_{q,\tilde{a},\tilde{a}+b} \\
\fkM^-&=\STEPX{b+1}{q}x_{\nu^-}c_{(A^{\bar{0}}-E_{h,k}+E_{h+1,k}|A^{\bar{1}})}\Sigma_{A^-_{h,k}}+q^{a-1}x_{\nu^-}c_{(A^{\bar{0}}|A^{\bar{1}}-E_{h,k}+E_{h+1,k})}\Sigma_{A^-_{h,k}}\\
z_k&={q}^{\AK(h+1,k)}\STEP{b+1}T_{(A^{\bar0}-E_{h,k}+E_{h+1,k}|A^{\bar1})}+{q}^{\AK(h+1,k)+a-1}T_{(A^{\bar0}|A^{\bar1}-E_{h,k}+E_{h+1,k})}\endaligned$\\\hline
$(0,1)$&$\aligned \fkm^-&=\STEP{b}x_{\nu^-}(c_{q,\tilde{a},\tilde{a}+b}) \\
\fkM^-&=\STEPX{b}{q}x_{\nu^-}c_{(A^{\bar{0}}-E_{h,k}+E_{h+1,k}|A^{\bar{1}})}\Sigma_{A^-_{h,k}}\\
z_k&={q}^{\AK(h+1,k)}\STEP{b}T_{(A^{\bar0}-E_{h,k}+E_{h+1,k}|A^{\bar1})} \endaligned$\\\hline
$(1,1)$&$\aligned \fkm^-&=\STEP{b}x_{\nu^-}(c_{q,\tilde{a}-a+1,\tilde{a}-1})(c_{q,\tilde{a},\tilde{a}+b})-q^{a-2}\STEPPDR{b+1}x_{\nu^-} \\
\fkM^-&=\STEPX{b}{q}x_{\nu^-}c_{(A^{\bar{0}}-E_{h,k}+E_{h+1,k}|A^{\bar{1}})}\Sigma_{A^-_{h,k}}\!\!-\!q^{a-2}\STEPX{b\!+\!1}{q^2,q}x_{\nu^-}c_{(A^{\bar{0}}+2E_{h+1,k}|A^{\bar{1}}-E_{h,k}-E_{h+1,k})}\Sigma_{A^-_{h,k}}\\
z_k&={q}^{\AK(h+1,k)}\STEP{b}T_{(A^{\bar0}-E_{h,k}+E_{h+1,k}|A^{\bar1})}-{q}^{\AK(h+1,k)+a-2}\STEPPDR{b+1}T_{(A^{\bar0}+2E_{h+1,k}|A^{\bar1}-E_{h,k}-E_{h+1,k})}\endaligned$\\\hline
\end{tabular}
\end{center}

Let %by the notational Convention \ref{CONV},  we have
\begin{equation*}
\begin{aligned}
z_k'
&=%\sum_{1\leq k\leq m+n\atop a_{h,k}>0}^n \Big\{
	 {q}^{\AK(h+1,k) }  \STEP{ \SEE{a}_{h+1, k} +1}
	T_{(\SE{A} - E_{h,k} + E_{h+1, k} | \SO{A} )}
	 +
	{q}^{\AK(h+1,k)  + a_{h, k} - 1} T_{(\SE{A}  |\SO{A} - E_{h,k} + E_{h+1,k})} \\
	& \hspace{4.5cm} -
	{q}^{\AK(h+1,k)  + a_{h, k} -2} \STEPPDR{a_{h+1, k} +1}
	T_{(\SE{A} + 2E_{h+1,k} | \SO{A}  - E_{h,k} - E_{h+1,k})}.
%\Big\}.
\end{aligned}
\end{equation*}
Then, by the notational Convention \ref{CONV}, if $(a^{\bar1}_{h,k},a^{\bar1}_{h+1,k})=(0,0)$, then $b=a_{h+1,k}^{\bar 0}$ and the second and third summands in $z_k'$ are 0. Hence, $z_k'$ is the $z_k$ of line $(0,0)$ in Table 1.
The other three cases can be checked easily. This completes the proof of (3).
\end{proof}

%For later use, we extract the following from the proof.
Using the notational Convention \ref{CONV}, the element $\fkM^-$ can be unified as follows. This multiplication formulas in $\HCR$ will be used in the proof of Theorem \ref{philower1}(2).

\begin{cor}\label{fkM}
Maintain the notation in Theorem \ref{phiupper0}. We have in $\HCR$
\begin{equation}\label{F_A3}
\begin{aligned}
x_{\nu_{A^-_{h,k}}}&\TDIJ(\widetilde a_{h,k}-1,\widetilde a_{h,k}-a_{h,k}+1)
	c_{\Ad}
	\Sigma_A
=
	  \STEP{ \SEE{a}_{h+1, k} +1}
	x_{{\nu}_{{A}^-_{h,k}}}c_{(\SE{A} - E_{h,k} + E_{h+1, k} | \SO{A} )}\Sigma_{A^-_{h,k}}\\
&	 +
	{q}^{a_{h, k} - 1} x_{{\nu}_{ {A}^-_{h,k}}}c_{{(\SE{A}  |\SO{A} - E_{h,k} + E_{h+1,k})}} \Sigma_{A^-_{h,k}}\\	
&-{q}^{ a_{h, k} -2} \STEPPDR{a_{h+1, k} +1}
	x_{{\nu}_{ {A}^-_{h,k}}}c_{(\SE{A} + 2E_{h+1,k} | \SO{A}  - E_{h,k} - E_{h+1,k})}\Sigma_{A^-_{h,k}}.
\end{aligned}
\end{equation}
\end{cor}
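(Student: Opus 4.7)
The plan is to recognise the left-hand side of \eqref{F_A3} as precisely the element $\fkM^-$ that already appears as an intermediate quantity inside the proof of Theorem~\ref{phiupper0}(3), and then to repackage the four-case answer recorded in Table~1 into a single identity by invoking Convention~\ref{CONV}.

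First I would revisit the computation of $\fkM^- = x_{\nu^-}\TDIJ(\widetilde a - 1,\widetilde a - a + 1)\,c_\Ad\,\Sigma_A$ carried out in that proof. The ingredients are the decomposition $c_\Ad = \cbefore\cdot\cmiddle\cdot\cafter$ from Notation~\ref{rem_phi_short_0}; the rewriting $\TDIJ(\widetilde a - 1,\widetilde a - a+1)\Sigma_A = \TAIJ(\widetilde a,\widetilde a+b-1)\Sigma_{A^-_{h,k}}$ from Lemma~\ref{Tnsum}(2) (after first using $x_{\delta^-}\TDIJ = x_{\nu}$ via \eqref{subsubgroup}); and the intertwining $x_\mu (c^\alpha_\mu)' = c^\alpha_\mu x_\mu$ from \eqref{xc}. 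These combine to give
\begin{equation*}
\fkM^- = (-1)^{(a^{\bar 1}_{h,k}+a^{\bar 1}_{h+1,k})\YK}\cdot \underbrace{x_{\D_-}(\cmiddle)' x_{\delta^-}\TAIJ(\widetilde a,\widetilde a+b-1)}_{=:\,\fkm^-}\cdot \cbefore\cafter\,\Sigma_{A^-_{h,k}}.
\end{equation*}
The core piece $\fkm^-$ is then simplified by a case split on $(a^{\bar 1}_{h,k},a^{\bar 1}_{h+1,k})\in\{0,1\}^2$, using \eqref{subsubgroup}, Lemma~\ref{xcT}(3)--(4), and a telescoping identity such as the one in the Case~4 computation of Theorem~\ref{phiupper0}(2).

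The outputs of the four cases are exactly the four rows of Table~1. To unify them into the single identity \eqref{F_A3}, I would observe that the three matrix labels
\begin{equation*}
\Md_1=(\SE A-E_{h,k}+E_{h+1,k}|\SO A),\;\; \Md_2=(\SE A|\SO A-E_{h,k}+E_{h+1,k}),\;\; \Md_3=(\SE A+2E_{h+1,k}|\SO A-E_{h,k}-E_{h+1,k})
\end{equation*}
lie outside $\MNZN(n)$ in precisely the parity combinations where the corresponding term is absent from Table~1: $c_{\Md_2}=0$ unless $(a^{\bar 1}_{h,k},a^{\bar 1}_{h+1,k})=(1,0)$, while $c_{\Md_3}=0$ unless $(a^{\bar 1}_{h,k},a^{\bar 1}_{h+1,k})=(1,1)$. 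Convention~\ref{CONV} then forces the convention-induced zeros on the right-hand side of \eqref{F_A3} to align case-by-case with Table~1; in particular, the Gaussian factor $\STEP{a^{\bar 0}_{h+1,k}+1}$ specialises to $\STEP{b+1}$ when $a^{\bar 1}_{h+1,k}=0$ and to $\STEP{b}$ when $a^{\bar 1}_{h+1,k}=1$, matching both regimes of the first column of Table~1.

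The only mild obstacle is the bookkeeping of the global sign $(-1)^{(a^{\bar 1}_{h,k}+a^{\bar 1}_{h+1,k})\YK}$: one must verify that after commuting the $c_{q,\cdot,\cdot}$-factors produced by $\fkm^-$ back into the positions vacated by $\cmiddle$ in order to re-recognise them as $c_{\Md_i}$ on the right-hand side, exactly the same sign is regenerated and cancels. This cancellation is automatic because the product $\cbefore\cdot(\text{new middle})\cdot\cafter$ is assembled with the same left-to-right ordering convention on $c$-factors as $c_\Ad$ itself, so no residual sign survives in the final identity.
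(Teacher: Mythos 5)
Your proposal is correct and follows essentially the same route as the paper: the left-hand side is exactly the element $\fkM^-$ isolated in the proof of Theorem~\ref{phiupper0}(3), and the paper obtains the corollary precisely by unifying the four rows of Table~1 via Convention~\ref{CONV}, with the sign $(-1)^{(a^{\bar 1}_{h,k}+a^{\bar 1}_{h+1,k})\YK}$ cancelling upon reassembling the $c$-factors into $c_{\Md_i}$, just as you describe. Your checks that the vanishing patterns of $c_{\Md_2}$, $c_{\Md_3}$ and the specialisation of $\STEP{a^{\bar 0}_{h+1,k}+1}$ match the table case-by-case are exactly the verification the paper leaves implicit.
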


\begin{rem}
If $A^{\bar 1}=0$, then the multiplication formulas coincide with those for quantum $\mathfrak{gl}_n$ (cf. \cite[Lem.~3.4]{BLM}).
\end{rem}

\section{Multiplication formulas in {$\QqnrR$}: the odd case}\label{odd case}
In this section, we  derive the multiplication formulas for $\phi_\Xd\phi_\Ad$  under the SDP condition, where $\Xd$ is one of the matrices \begin{equation}\label{ODD}
%\Dd_{\bar h,\lambda}:=
(\lambda-E_{h,h}|E_{h,h}),\;\;
 %\Ed_{\bar h,\lambda}:=
 (\lambda-E_{h+1, h+1}| E_{h, h+1} ),\;\;%\Fd_{\bar h,\lambda}:=
 (\lambda-E_{h,h}|E_{h+1,h}).
 \end{equation}
  We also describe the general case using the following order relation.

Recall from \cite{BLM} the partial order `{$\preceq$}' defined over {$ \MN(n)$}.
For {$A, B \in \MN(n)$}, we say {$ B \preceq A $} if the following conditions hold:
\begin{enumerate}
\item
{$\sum_{i \le s, j\ge t } b_{i,j} \le \sum_{i \le s, j\ge t } a_{i,j}$}
	 for all {$1\leq s<t\leq n $};
\item
{$\sum_{i \ge s, j\le t } b_{i,j} \le \sum_{i \ge s, j\le t } a_{i,j}$}
	 for all {$1\leq t<s\leq n $}.	
\end{enumerate}
In particular, {$B \prec A$} if one of the inequalities is strict.

For $A,B\in  \MN(n)_r$, if $d_A$, $d_B$ are the associated double coset representatives, then we have, by \cite[Lem.~13.20]{DDPW},
\begin{equation}\label{13.20}
d_A<d_B\implies A\prec B.
\end{equation}

For any $\mu\in\Lambda(n,r)$, denote by $\mathcal{H}_{\mu,R}^c$ the subsuperalgebra of $\mathcal{H}^c_{r,R}$  associated with the Young subgroup $\mathfrak{S}_\mu$ and the full Clifford superalgebra $\mathcal C_r$.
%$$
%X_{\lambda,\mu}:=\Big(\bigcup_{d\in\mathcal{D}_{\lambda,\mu}}\big(\{d\}\times (\mathcal{D}_{\nu(d)}\cap \mathfrak{S}_{\mu})\big)\Big)\times \mathbb{Z}_2^r.
%$$
\begin{lem}[{\cite[Cors 3.5 \& 3.8]{DW2}\label{lem:basis-permutation}}]\label{6.1}
For any given $\lambda,\mu\in\Lambda(n,r)$, the right (resp. left) $\mathcal{H}^{c}_{r,R}$-module $x_{\lambda}\mathcal{H}^c_{r,R}$ (resp. $\mathcal{H}^c_{r,R}x_\mu$ ) admits a basis $\{x_{\lambda}T_dc^{\alpha}T_{v}\mid d\in\mathcal{D}_{\lambda,\mu},\alpha\in \NN_2^r,v\in\mathcal{D}_{\nu(d)}\cap \mathfrak{S}_{\mu}\}$ (resp. $\{T_uc^\alpha T_d x_\mu\mid u\in\mathcal{D}^{-1}_{\nu(d^{-1})}\cap\mathfrak{S}_\lambda,\alpha\in\NN_2^r,d\in\mathcal{D}_{\lambda,\mu}\}$), where $\fS_{\nu(d)}=d^{-1}\fS_\lambda d\cap\fS_\mu$ and $\fS_{\nu(d^{-1})}=\fS_\lambda \cap d\fS_\mu d^{-1}$. Moreover
\begin{align}
x_{\lambda} T_d=\sum_{u'\in\mathcal{D}^{-1}_{\nu(d^{-1})}\cap \mathfrak{S}_{\lambda}}T_{u'}T_dx_{\nu(d)}\label{eq:X-lambda-Td}\\
\mathcal{H}_{\mu,R}^c=\bigoplus_{\sigma\in \mathcal{D}_{\nu(d)}\cap \mathfrak{S}_\mu}\mathcal{H}^c_{\nu(d),R}T_\sigma\label{Hmuc-decomp}
\end{align}
for each $d\in\mathcal{D}_{\lambda,\mu}$.
\end{lem}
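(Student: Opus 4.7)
The plan is to reduce this to the classical length-additive double coset decomposition for $(\fS_\lambda,\fS_\mu)$-cosets in $\fS_r$ and then tie in the Clifford piece via the commutation relations \eqref{Hecke-Cliff}. A cardinality count shows that the proposed indexing set has $|\D_\lambda|\cdot 2^r$ elements, which matches the $R$-rank of $x_\lambda\HCR$ (since $\HCR\cong\mathcal{H}(\fS_r)_R\otimes_R \C_r$ as $R$-modules and $x_\lambda\mathcal{H}(\fS_r)_R$ has rank $|\D_\lambda|$), so it is enough to prove spanning.

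First I would invoke the standard fact that every $w\in\fS_r$ has a unique factorisation $w=u\cdot d\cdot v$ with $u\in\fS_\lambda$, $d\in\D_{\lambda,\mu}$, $v\in\D_{\nu(d)}\cap\fS_\mu$ and $\ell(w)=\ell(u)+\ell(d)+\ell(v)$. Applied to any element $T_w c^\alpha$ of the basis $\mathcal{B}'$ of $\HCR$, together with \eqref{Tixlambda}, this gives
$$x_\lambda T_w c^\alpha=q^{\ell(u)}\,x_\lambda T_d T_v c^\alpha.$$
To recast $T_v c^\alpha$ into the prescribed shape $c^\beta T_v$, I would iterate \eqref{Hecke-Cliff} along a reduced expression of $v$; the rules $T_i c_i=c_{i+1}T_i$, $T_i c_{i+1}=c_i T_i-(q-1)(c_i-c_{i+1})$, and $T_i c_j=c_j T_i$ for $j\neq i,i+1$ yield a triangular expansion
$$T_v c^\alpha=\pm\, c^{v\cdot\alpha}T_v+\sum_{v'<v,\ \beta}\ast\,c^\beta T_{v'}.$$
Substituting and recursing on the Bruhat tail $v'<v$ produces a spanning set of the claimed form. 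Linear independence then follows because, comparing this with the already available basis $\{x_\lambda T_d T_v c^\alpha\}$ (itself a direct consequence of the Hecke-algebra decomposition of $x_\lambda\mathcal{H}(\fS_r)_R$ tensored with $\C_r$), the change-of-basis matrix is unipotent up to signs under any lexicographic refinement of the $v$-Bruhat order.

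The two displayed identities are then read off as independent ingredients. Identity \eqref{eq:X-lambda-Td} comes from $\fS_\lambda=\bigsqcup_{u'\in\D^{-1}_{\nu(d^{-1})}\cap\fS_\lambda}u'\fS_{\nu(d^{-1})}$ (length-additively), which converts to $x_\lambda=\sum_{u'}T_{u'}x_{\nu(d^{-1})}$; multiplying by $T_d$ and using the intertwining $x_{\nu(d^{-1})}T_d=T_d\,x_{\nu(d)}$, itself a consequence of $d^{-1}\fS_{\nu(d^{-1})}d=\fS_{\nu(d)}$ with length additivity, gives the stated formula. Identity \eqref{Hmuc-decomp} is the analogous length-additive decomposition $\fS_\mu=\bigsqcup_{\sigma\in\D_{\nu(d)}\cap\fS_\mu}\fS_{\nu(d)}\sigma$ promoted to the Hecke--Clifford level using the free right $\mathcal{H}(\fS_{\nu(d)})_R$-module structure of $\mathcal{H}(\fS_\mu)_R$ and extending Clifford-equivariantly. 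The left-module assertion for $\HCR x_\mu$ is treated symmetrically using the basis $\mathcal{B}$ in place of $\mathcal{B}'$. The main technical obstacle is the iteration step above: the $(q-1)(c_i-c_{i+1})$ correction terms in \eqref{Hecke-Cliff} can spawn indices $v'\notin\D_{\nu(d)}\cap\fS_\mu$, and these must be folded back via \eqref{eq:X-lambda-Td} (applied inside $\fS_{\nu(d^{-1})}$) to restore the stratum constraint; tracking this folding through the recursion is where care is required.
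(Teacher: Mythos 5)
The paper offers no proof of this lemma: it is quoted verbatim from \cite[Cors.~3.5 \& 3.8]{DW2}, so there is nothing internal to compare your argument against. Your reconstruction is nevertheless correct and follows the standard route one would expect (and that [DW2] itself takes): length-additive triple factorisation $w=udv$ for $w\in\D_\lambda$, the unitriangular transition between $T_vc^\alpha$ and $c^\beta T_{v'}$ coming from \eqref{Hecke-Cliff}, a rank count (or, as you do, unitriangularity against the known basis $\{x_\lambda T_w c^\alpha\mid w\in\D_\lambda\}$) to get independence, and the Kilmoyer-type identities $x_\lambda=\sum_{u'}T_{u'}x_{\nu(d^{-1})}$ and $x_{\nu(d^{-1})}T_d=T_dx_{\nu(d)}$ for \eqref{eq:X-lambda-Td}. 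Your closing remark correctly identifies the one point needing care — lower Bruhat terms $v'$ falling out of $\D_{\nu(d)}\cap\fS_\mu$ are absorbed by $x_\lambda T_dT_{\sigma_0}=q^{\ell(\sigma_0)}x_\lambda T_d$ for $\sigma_0\in\fS_{\nu(d)}$ — though this absorption lives on the $\fS_{\nu(d)}$ side, not "inside $\fS_{\nu(d^{-1})}$" as you phrase it.
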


Recall the notations in \eqref{prsum}, and Notation \ref{rem_phi_short_0}. We also apply \eqref{Atilde} to define the partial sum matrix $\widetilde{A}^{\bar 1}=(\widetilde a^{\bar1}_{i,j})$. %and {${\lambda} = \ro(A)$}.
Recall also the SDP condition  in Definition \ref{defn:SDP}.
\begin{thm}\label{phidiag1}
For {$h \in [1,n]$} and {$\Ad =  (A^{\bar0}|A^{\bar 1})=(a^{\bar0}_{i,j}|a^{\bar 1}_{i,j})   \in \MNZ(n,r)$}, let $A=A^{\bar0}+A^{\bar 1}$, {${\lambda} = \ro(A)$}, and $\BK(h,k)=\BK(h,k)(A)$. Let  $\Dd_{\bar h}=(\lambda-E_{h, h}| E_{h, h})$.
\begin{enumerate}
\item	Assume that {$A$}  satisfies the SDP condition on the $h$-th row.
	 Then we have in $\QqnrR$
\begin{align*}
\phi_{\Dd_{\bar h}} \phi_\Ad
	&=
\sum_{ k=1}^n
{(-1)}^{ {\SOE{\widetilde{a}}}_{h-1,k} } {q}^{\BK(h,k) }
\Big\{
	 %{\delta}_{0,\SOE{a}_{h,k}}
	 \phi_{(\SE{A} -E_{h,k}|\SO{A}+ E_{h,k})}
	% {\delta}_{1,\SOE{a}_{h,k}}
	- {\STEPP{a_{h,k}}}
	\phi_{(\SE{A} +  E_{h,k}| \SO{A} - E_{h,k})}
\Big\}=:\sdpHk.
\end{align*}
\item In general, we have
$$\phi_{\Dd_{\bar h}} \phi_\Ad=\sdpHk+ \sum_{
 			\substack{\Bd \in \MNZ(n, r) \\
 				 {\lfloor \Bd \rfloor} \prec { {A} }
 			}
 			}  f^{\Dd_{\bar h},\Ad}_{\Bd} \phi_{\Bd}\;\;(f^{\Dd_{\bar h},\Ad}_{\Bd}  \in R).$$
\end{enumerate}
\end{thm}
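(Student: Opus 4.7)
The plan is to compute $\phi_{\Dd_{\bar h}}\phi_\Ad(x_{\co(A)}) = T_{\Dd_{\bar h}}\cdot T_{d_A}c_\Ad\Sigma_A$ directly in $\HCR$ and then identify the result in the standard basis $\{T_\Md\}$ of Proposition~\ref{DW-basis}(1). Since the base matrix of $\Dd_{\bar h}=(\lambda-E_{h,h}|E_{h,h})$ is $\diag(\lambda)$, we have $d_X=1$, $\Sigma_X=1$, and $c_{\Dd_{\bar h}}=c_{q,\widetilde{\lambda}_{h-1}+1,\widetilde{\lambda}_h}$, so $T_{\Dd_{\bar h}}=x_\lambda c_{q,\widetilde{\lambda}_{h-1}+1,\widetilde{\lambda}_h}$. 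The central task thus reduces to commuting the odd Clifford element $c_{q,\widetilde{\lambda}_{h-1}+1,\widetilde{\lambda}_h}$ past $T_{d_A}$ and then merging it with $c_\Ad$. A direct coefficient-matching using $\widehat{a}_{h,k-1}+p=\widetilde{\lambda}_{h-1}+\AK(h,k)+p$ and $\widetilde{\lambda}_h-(\widehat{a}_{h,k-1}+p)=(a_{h,k}-p)+\BK(h,k)$ gives the preliminary identity
\[
c_{q,\widetilde{\lambda}_{h-1}+1,\widetilde{\lambda}_h} \;=\; \sum_{k:\,a_{h,k}>0} q^{\BK(h,k)}\, c_{q,\widehat{a}_{h,k-1}+1,\widehat{a}_{h,k-1}+a_{h,k}},
\]
which slices the action along the row-$h$ block structure of $A$.

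For part (1), the SDP hypothesis on row $h$ together with \eqref{longSDP} will supply the key exchange $c_{q,\widehat{a}_{h,k-1}+1,\widehat{a}_{h,k-1}+a_{h,k}}T_{d_A}=T_{d_A}C_k$, where $C_k:=c_{q,\widetilde{a}_{h-1,k}+1,\widetilde{a}_{h,k}}$. It then remains to evaluate the Clifford product $C_k c_\Ad$ for each $k$. I would split $c_\Ad$ at the $(h,k)$-block along the column-then-row ordering and commute $C_k$ past the prefix (whose total parity is $\widetilde{a}^{\bar 1}_{h-1,k}\pmod{2}$), extracting the sign $(-1)^{\widetilde{a}^{\bar 1}_{h-1,k}}$. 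Two subcases then arise: when $a^{\bar 1}_{h,k}=0$, the stray $C_k$ installs itself as the new odd block at $(h,k)$, yielding $(-1)^{\widetilde{a}^{\bar 1}_{h-1,k}}c_\Md$ with $\Md=(A^{\bar 0}-E_{h,k}|A^{\bar 1}+E_{h,k})$; when $a^{\bar 1}_{h,k}=1$, Lemma~\ref{xcT}(4) gives $C_k^2=-[\![a_{h,k}]\!]_{q^2}$, producing $-(-1)^{\widetilde{a}^{\bar 1}_{h-1,k}}[\![a_{h,k}]\!]_{q^2}\,c_{\Md'}$ with $\Md'=(A^{\bar 0}+E_{h,k}|A^{\bar 1}-E_{h,k})$. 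Since $\lfloor\Md\rfloor=\lfloor\Md'\rfloor=A$, the common prefix $x_\lambda T_{d_A}$ and tail $\Sigma_A$ reassemble the two pieces into $T_\Md$ and $T_{\Md'}$ respectively; summing over $k$ (with Convention~\ref{CONV} silencing $a_{h,k}=0$ terms and $[\![0]\!]_{q^2}=0$ silencing the invalid side of each subcase) yields exactly $\sdpHk$.

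For part (2), when SDP fails on row $h$, \eqref{longSDP2} replaces the exact interchange by $T_{d_A}C_k$ plus an error of shape $\sum_{w<d_A,\,i}g^{(k)}_{w,i}T_w c_i$ for each $k$. The ``main'' contribution reproduces $\sdpHk$ as in part (1), so the remaining task is to show that each error term $x_\lambda T_w c_i c_\Ad\Sigma_A$ (with $w<d_A$) expands in the basis of Proposition~\ref{DW-basis}(1) as a combination of $T_\Bd$ with $\lfloor\Bd\rfloor\prec A$. I would rewrite the Clifford part $c_i c_\Ad$ in canonical $c^\beta$-form via \eqref{homog} and \eqref{cqij}, then apply Lemma~\ref{6.1} (through \eqref{eq:X-lambda-Td} and \eqref{Hmuc-decomp}) to bring $x_\lambda T_w$ into a combination of $x_\lambda T_{d_B}$ with $d_B\le w$; the resulting $\Bd$ therefore satisfies $d_B\le w<d_A$, whence \eqref{13.20} (applied with the roles of $A$ and $B$ reversed) forces $\lfloor\Bd\rfloor\prec A$. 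The principal technical obstacle lies precisely here: one must verify carefully that neither the Clifford straightening of $c_i c_\Ad$ to canonical form nor the Hecke straightening of $x_\lambda T_w$ can introduce any distinguished double coset representative whose Bruhat length exceeds $w$, so that the strict inequality $d_B<d_A$, and hence $\lfloor\Bd\rfloor\prec A$, is preserved throughout the reduction to the standard basis.
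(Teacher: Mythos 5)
Your proposal is correct and follows essentially the same route as the paper: the same slicing of $c_{q,\widetilde{\lambda}_{h-1}+1,\widetilde{\lambda}_h}$ along the row-$h$ blocks via $\widehat{a}_{h,k-1}=\widetilde{\lambda}_{h-1}+\AK(h,k)$, the same use of \eqref{longSDP} and Lemma~\ref{xcT}(4) with the sign $(-1)^{\widetilde{a}^{\bar 1}_{h-1,k}}$ for part (1), and the same use of \eqref{longSDP2}, Lemma~\ref{6.1} and \eqref{13.20} for part (2). The only refinement the paper makes to the ``obstacle'' you flag is that it does not treat the error terms individually (a single $x_\lambda T_w c_i c_\Ad\Sigma_A$ need not lie in $x_\lambda\HCR\cap\HCR x_\mu$): it works with the total tail $\Tk=z-\sdpHk$, which does lie in that intersection and hence equals $\sum f_\Bd T_\Bd$, while \eqref{Hmuc-decomp} confines $\Tk$ to the basis components $x_\lambda T_{d}c^\alpha T_v$ with $d<d_A$, so equating coefficients forces $d_{\lfloor\Bd\rfloor}<d_A$ and then $\lfloor\Bd\rfloor\prec A$.
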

\begin{proof}
	Let  {$\Xd=\Dd_{\bar h}=({\lambda-E_{h, h}| E_{h, h}})$}. Then $X=\lfloor\Dd_{\bar h}\rfloor=\diag(\lambda)$.
	Clearly  {$\ro(X) = \lambda$}, {$d_X = 1$},
	{$\D_{{\nu}_{_X}} \cap \fS_{\lambda}  = \{ 1\} $},
	and
	{$ c_\Xd = c_{{q}, \widetilde{\lambda}_{h-1}+1, \widetilde{\lambda}_{h}} $}.
Then, for $\Bd=\Xd$ and $\mu=\co(A)$,   \eqref{eq_gB} becomes
\begin{equation}\label{z elt}
\aligned
z&
:=\phi_\Xd\phi_\Ad(x_\mu)=   x_{\lambda}   c_{{q}, \widetilde{\lambda}_{h-1}+1, \widetilde{\lambda}_{h}}
		{T_{d_A}} c_{\Ad}\Sigma_A\\
		%\sum _{{\sigma} \in \D_{{\nu}_{_A}} \cap \fS_{\mu}} T_{\sigma} \\
&=   x_{\lambda}
		( {q}^{{\lambda}_{h} - 1} c_{\widetilde{\lambda}_{h-1}+1}
		+ {q}^{{\lambda}_{h} - 2} c_{\widetilde{\lambda}_{h-1}+2}+
		 \cdots+
		 {q} c_{\widetilde{\lambda}_{h}- 1}
		+  c_{\widetilde{\lambda}_{h}}
		)
		{T_{d_A}} c_{\Ad}\Sigma_A\\
		%\sum _{{\sigma} \in \D_{{\nu}_{_A}} \cap \fS_{\mu}} T_{\sigma} \\
&=	 \sum_{{1\leq k\leq n}\atop {a_{h,k}>0}}
	 x_{\lambda}  \Big(\sum_{j= {\AK(h,k) }+1 }^{ {\AK(h,k+1) }}
	 {q}^{{\lambda_{h}}-j} c_{ \widetilde{\lambda}_{h-1} +j} \Big){T_{d_A}} c_{\Ad}\Sigma_A\\
	 &=  \sum_{{1\leq k\leq n}\atop {a_{h,k}>0}}  x_{\lambda}
		{q}^{{\lambda_h}-\AK(h,k+1)}
		(c_{q,\ha_{h,k-1}+1,\ha_{h,k-1}+a_{h,k}} {T_{d_A}} )c_{\Ad}\Sigma_A\;\text{ (as $\widetilde{\lambda}_{h-1}+\AK(h,k)=\ha_{h,k-1}$)},
	%\sum _{{\sigma} \in \D_{{\nu}_{_A}} \cap \fS_{\mu}} T_{\sigma}.
\endaligned
\end{equation}
%(Here we used the index change $p = j- {\AK(h,k) } $. Recall also
where the tail $\Sigma_A$ of $T_\Ad$ is defined in \eqref{tail}.)
Applying the SDP condition \eqref{longSDP} may turn $z$ into a summation:
\begin{align*}
z	=  \sum_{{1\leq k\leq n}\atop {a_{h,k}>0}}{q}^{{\lambda_h}-\AK(h,k+1)}x_{\lambda}({T_{d_A}} c_{q,\widetilde a_{h-1,k}+1,\widetilde a_{h-1,k}+a_{h,k}})
    c_{\Ad}\Sigma_A = \sum_{ 1\leq k\leq n, a_{h,k}>0} z_k,
\end{align*}
where, for each $1\leq k\leq n$ with $a_{h,k}>0$,
\begin{equation}\label{zz}
z_k={q}^{{\lambda_h} -  {\AK(h,k+1) }}  x_{\lambda}  {T_{d_A}}\cdot\underbrace{
	c_{{q}, \widetilde{a}_{h-1,k} + 1 , \widetilde{a}_{h-1,k} + a_{h,k} }c_\Ad}_{=\ufm}\cdot\Sigma_A.
\end{equation}
If we decompose $c_\Ad = {\ckbefore} c^{\SOE{a}_{h,k}}_{{q}, \widetilde{a}_{h-1,k} + 1 , \widetilde{a}_{h-1,k} + a_{h,k} } {\ckafter}$, similar to \eqref{cA-decomp}, then the middle part
$$\ufm=  {(-1)}^{\YK} ({\ckbefore}c_{{q}, \widetilde{a}_{h-1,k} + 1 , \widetilde{a}_{h-1,k} + a_{h,k} } c^{\SOE{a}_{h,k}}_{{q}, \widetilde{a}_{h-1,k} + 1 , \widetilde{a}_{h-1,k} + a_{h,k} } {\ckafter}).$$

If  {$\SOE{a}_{h,k} = 0$}, then $\ufm=  {(-1)}^{\YK}c_\Md$, where $\Md=( \SE{A} -E_{h,k}|\SO{A}+ E_{h,k} )$. Since $\Ad$ and $\Md$ have the same base so that  $d_A=d_M,\Sigma_A=\Sigma_M$, it follows that
\begin{align*}
z_k
=  {(-1)}^{\YK} {q}^{{\lambda_h} -  {\AK(h,k+1) }}  x_{\lambda}  {T_{d_M}}
	c_\Md\Sigma_M
	%\sum _{{\sigma} \in \D_{{\nu}_{_A}} \cap \fS_{\mu}} T_{\sigma}
	=
 {(-1)}^{\YK} {q}^{{\lambda_h} -  {\AK(h,k+1) }} T_{( \SE{A} -E_{h,k}|\SO{A}+ E_{h,k} )} .
\end{align*}

If {$\SOE{a}_{h,k} = 1$}, then
$$\aligned
\ufm&={(-1)}^{\YK}({\ckbefore}(c_{{q}, \widetilde{a}_{h-1,k} + 1 , \widetilde{a}_{h-1,k} + a_{h,k} })^2  {\ckafter})\\
&=
(-1)^{\YK}({-\STEPP{a_{h,k}})}({\ckbefore} {\ckafter})\quad(\text{Lemma \ref{xcT}(4)}) \\
&=(-1)^{\YK+1}{\STEPP{a_{h,k}}}c_\Nd,\;\text{where }\Nd=(\SE{A} + E_{h,k}|\SO{A}- E_{h,k} ).
\endaligned$$
Now, $\Ad$ and $\Nd$ have the same base so that $d_A=d_N,\Sigma_A=\Sigma_N$.
 Hence, \eqref{zz} becomes
\begin{align*}
z_k
= {(-1)}^{\YK+1} {q}^{{\lambda_h} -  {\AK(h,k+1) }}  {\STEPP{a_{h,k}}} x_{\lambda}  {T_{d_N}}
	c_\Nd\Sigma_N=
 {(-1)}^{\YK+1} {q}^{{\lambda_h} -  {\AK(h,k+1) }}  {\STEPP{a_{h,k}}}   T_{ (\SE{A} + E_{h,k}|\SO{A}- E_{h,k} )}.
\end{align*}
Finally, putting $z_k'={(-1)}^{ {\SOE{\widetilde{a}}}_{h-1,k} } {q}^{\BK(h,k) }
\big(
	 %{\delta}_{0,\SOE{a}_{h,k}}
	 T_{(\SE{A} -E_{h,k}|\SO{A}+ E_{h,k})}
	% {\delta}_{1,\SOE{a}_{h,k}}
	- {\STEPP{a_{h,k}}}
	T_{(\SE{A} +  E_{h,k}| \SO{A} - E_{h,k})}
\big),$
 one checks easily $z'_k=z_k$, by Convention \ref{CONV}. This proves (1).% that a case-by-case argument  proves  the formula.

  We now prove the general case (2). Applying Lemma \ref{CT2}(2) to \eqref{z elt} yields
 $$\aligned
 z&=\sum_{{1\leq k\leq n}\atop {a_{h,k}>0}} x_{\lambda}
		\Big( \sum_{p=1 }^{a_{h,k}}
		{q}^{{\lambda_h}-\AK(h,k)-p}
			 ( {T_{d_A}}  c_{\tilde{a}_{h-1,k} + p }
			 	+  \sum_{w<d_A,j\in[1,r]}
	 		 f^{\cdots}_{w,j}
			T_{w}c_j)\Big)
			 c_{\Ad}
	\Sigma_A\\
& =\sdpHk+\Tk
\endaligned
$$
where $\sdpHk=\sdpHk(x_\mu)$, $\Sigma_A=\sum_{v\in\mathcal D_{\nu_{\!A}}\cap\fS_\mu}$ and
$$\Tk= \sum_{ 1\leq k\leq n, a_{h,k}>0}\sum_{p=1 }^{a_{h,k}}
			 \sum_{w<d_A,j\in[1,r]}
	 		 f^{\cdots}_{w,j} {q}^{{\lambda_h}-\AK(h,k)-p}
		x_{\lambda}
			T_{w}c_j
			 c_{\Ad}
	\Sigma_A.$$
	
% According to \cite[Prop.5.2]{DW2},

Since every $w$ in the expression of $\Tk$ satisfies $w<d_A$ and may be written as $w=xdy$ with $w\in\fS_\lambda$, $d\in\mathcal D_{\lambda,\mu}$, and $y\in\fS_\mu$, it follows that $d<d_A$. Also, $c_j c_{\Ad}\Sigma_A\in\mathcal{H}^c_{\mu,R}$. By \cite[Cor. 3.8]{DW2}  (or Lemma \ref{6.1}), $\Tk$ is a linear combination of basis elements of the form $x_\lambda T_dc^\alpha T_v$ for $x_\lambda\HCR$, where $d\in\mathcal D_{\lambda,\mu}$ with $d<d_A$, $\alpha\in\NN_2^r$ and $v\in\mathcal D_{\nu_A}\cap\fS_\mu$.

On the other hand, since both $z$ and $\sdpHk$ are in $x_\lambda \HCR \cap \HCR x_\mu$, it follows that
$$\Tk=z-\sdpHk \in x_\lambda \HCR \cap \HCR x_\mu.$$
 Thus, by Proposition \ref{DW-basis}(1),
$\Tk=\sum_{\Md\in M_n(\NN|\NN_2)_{\lambda,\mu}}f_\Md^{\Dd_{\bar h},\Ad} T_\Md$ ($f_\Md^{\Dd_{\bar h},\Ad}\in R$), which is also a linear combination of the same basis for $x_\lambda\HCR$.

Equating coefficients and using \cite[Lem.~13.20]{DDPW} or \eqref{13.20} conclude that
\begin{equation*}\label{y_k diag}
\begin{aligned}\Tk
&=\sum_{\Bd\in \MNZ(n,r),d_{\lfloor\Bd\rfloor}<d_{\lfloor \Ad\rfloor} }f^{\Dd_{\bar h},\Ad}_{\Bd}T_{\Bd}=\sum_{
 			\substack{\Bd \in \MNZ(n, r) \\
 				 {\lfloor \Bd\rfloor} \prec { {A} }
 			}
 			}  f^{\Dd_{\bar h},\Ad}_{\Bd} \phi_{\Bd}(x_{\co(\Bd)}),
\end{aligned}
\end{equation*}
proving (2).
\end{proof}

We now tackle the remaining odd cases.
The proof is parallel to the even cases in Section \ref{even case} under the SDP condition, but with extra care of the $c$-elements. The general case are even more complicated.
\begin{thm}\label{phiupper1}
	Let {$h \in [1,n-1]$}  and {$\Ad =(A^{\bar0}|A^{\bar 1})=  ({\SEE{a}_{i,j}} | {\SOE{a}_{i,j}})   \in \MNZ(n,r)$} with base $A=A^{\bar0}+A^{\bar 1}$,
	 {${\lambda} = \ro(A)$}, and $\BK(h,k)=\BK(h,k)(A)$. Let $\Ed_{\bar h}=(\lambda-E_{h+1, h+1}| E_{h, h+1})$.
	% suppose {$A$}  satisfies the AAA condition on $h$,
\begin{enumerate}
\item	Suppose that, for every $k\in[1,n]$ such that $a_{h+1,k}>0$, $A$ satisfies the SDP condition at $(h,k)$ if $a_{h,k}>0$ and satisfies  $\llcm^{h,k}=0$ if $a_{h,k}=0$.\footnote{This hypothesis is equivalent to that, for every $k\in[1,n]$, $A_{h,k}^+$ satisfies the SDP condition at $(h,k)$.}
% for all $k\in[1,n]$ with $a_{h+1,k}>0$.}
%Assume that, for each {$k \in [1,n]$} with {$a_{h+1,k} \ge 1$},
%	{${A}^+_{h,k}$}  satisfies the SDP condition at $(h,k)$.
	Then
	we have in $\QqnrR$
\begin{equation}\label{oddUP}
\aligned
	\phi_{\Ed_{\bar h}} \phi_\Ad
	&= \sum_{k=1}^n \Big \{
	% {\delta}_{0,\SOE{a}_{h, k}}
	{(-1)}^{{\SOE{\widetilde{a}}}_{h-1,k}} {q}^{\BK(h,k) + \SOE{a}_{h+1,k} } \phi_{(\SE{A} - E_{h+1, k}| \SO{A}  + E_{h,k} )} \\
	& +
	% {\delta}_{1,\SOE{a}_{h+1, k}}
	{(-1)}^{{\SOE{\widetilde{a}}}_{h-1,k} + 1 - \SOE{a}_{h,k} }
	{q}^{\BK(h,k)  } \STEP{ \SEE{a}_{h,k} +1}
		\phi_{(\SE{A} + E_{h,k}| \SO{A} - E_{h+1, k} )}  \\
	&  +
	% {\delta}_{1,\SOE{a}_{h,k}}
	{(-1)}^{{\SOE{\widetilde{a}}}_{h-1,k} } {q}^{\BK(h,k)-1 + \SOE{a}_{h+1,k}} \STEPPDR{a_{h,k}+1}
		\phi_{(\SE{A} + 2 E_{h,k}  -E_{h+1, k} |\SO{A} - E_{h,k})}
\Big\}\\
&=:\sdpHe.
\endaligned
\end{equation}
\item In general, we have
$$\phi_{\Ed_{\bar h}} \phi_\Ad=\sdpHe+ \sum_{
 			\substack{\Bd \in \MNZ(n, r) \\
 				 {\exists k,\, \lfloor \Bd\rfloor} \prec { {A^+_{h.k}} }
 			}
 			}  f^{\Ed_{\bar h},\Ad}_{\Bd} \phi_{\Bd}.$$
			\end{enumerate}
\end{thm}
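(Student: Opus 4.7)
The strategy is to parallel the even-case argument of Theorem~\ref{phiupper0}(2), inserting the extra Clifford generator contributed by $\Xd=\Ed_{\bar h}$ and invoking the SDP condition at the critical step. Since the base matrix $X=\lfloor\Xd\rfloor=\diag(\lambda)+E_{h,h+1}-E_{h+1,h+1}$ coincides with that of the even matrix $\Ed_{h,\lambda}$, we again have $d_X=1$ and $\Sigma_X=\TAIJ(\widetilde\lambda_h+1,\widetilde\lambda_{h+1}-1)$; reading off \eqref{eq_c_a} yields $c_\Xd=c_{\widetilde\lambda_h+1}$, so
\[
T_\Xd = x_{\lambda^+_{(h)}}\, c_{\widetilde\lambda_h+1}\,\TAIJ(\widetilde\lambda_h+1,\widetilde\lambda_{h+1}-1).
\]
Writing $z=\phi_\Xd\phi_\Ad(x_{\co(A)})=T_\Xd T_{d_A} c_\Ad \Sigma_A$ and expanding the outer $\TAIJ$ column by column via Lemma~\ref{prop_pjshift}(1) produces $z=\sum_{k:a_{h+1,k}>0}z_k$, each summand carrying the descending chain $T_{\widetilde\lambda_h}\cdots T_{\widetilde\lambda_h-\BK(h,k)+1}$ immediately to the right of $c_{\widetilde\lambda_h+1}$ and the familiar even-case pattern $T_{d_{A^+_{h,k}}}\TAIJ(\widetilde a+1,\widetilde a+b-1)c_\Ad\Sigma_A$ on its far right, where $\widetilde a=\widetilde a_{h,k}$ and $b=a_{h+1,k}$.

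Next I would sweep $c_{\widetilde\lambda_h+1}$ through the descending chain by iterated use of $c_{i+1}T_i=T_ic_i$ from \eqref{Hecke-Cliff}, arriving at $c_{\widehat a_{h,k}+1}$ on the right (via the identity $\widetilde\lambda_h-\BK(h,k)+1=\widehat a_{h,k}+1$), and absorb the chain into $x_{\lambda^+_{(h)}}$ to produce a factor $q^{\BK(h,k)}$. The hypothesis is engaged here: Theorem~\ref{CdA1} (via the footnote reformulation) ensures that $A^+_{h,k}$ satisfies the SDP condition at $(h,k)$, and with $p=a_{h,k}+1$ one obtains $c_{\widehat a_{h,k}+1}T_{d_{A^+_{h,k}}}=T_{d_{A^+_{h,k}}}c_{\widetilde a+1}$. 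This rewrites
\[
z_k = q^{\BK(h,k)}\, x_{\lambda^+_{(h)}}\, T_{d_{A^+_{h,k}}}\, c_{\widetilde a+1}\,\TAIJ(\widetilde a+1,\widetilde a+b-1)\, c_\Ad\, \Sigma_A,
\]
after which the even-case template applies: decompose $c_\Ad=\cbefore\cmiddle\cafter$, move $\cbefore$ and $\cafter$ outside (noting that $c_{\widetilde a+1}$ commutes with $x_{\delta^+}$ because $s_{\widetilde a},s_{\widetilde a+1}\notin\fS_{\delta^+}$, and accumulating a sign $(-1)^{\widetilde a^{\bar 1}_{h-1,k}}$ from passing $\cbefore$ across $c_{\widetilde a+1}$), and combine \eqref{eq_ta+}, \eqref{subsubgroup}, \eqref{xc} and Lemma~\ref{Tnsum}(1) into an odd analogue of the even-case $\fkm^+$ that carries the extra $c_{\widetilde a+1}$.

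The resulting core is then resolved in the four subcases $(a^{\bar 1}_{h,k},a^{\bar 1}_{h+1,k})\in\{0,1\}^2$: $(0,0)$ and $(0,1)$ yield Term~1 directly, via Lemma~\ref{xcT}(3) and $c_{\widetilde a+1}^2=-1$ (the latter supplying the weight $q^{a^{\bar 1}_{h+1,k}}$); $(1,0)$ produces Terms~2 and~3 by anticommuting $c_{\widetilde a+1}$ past $c'_{q,\widetilde a-a+1,\widetilde a}$ and applying \eqref{3.3}; and $(1,1)$ mirrors Case~4 of the even proof by telescoping $c_{\widetilde a+1}(c'_{q,\widetilde a-a+1,\widetilde a})(c'_{q,\widetilde a+1,\widetilde a+b})$ in the spirit of \eqref{c'c'} and applying Lemma~\ref{xcT}(3)--(4) and \eqref{3.3} to recover all three terms with the stated signs and $q$-coefficients. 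For part~(2), the exact SDP identity is replaced by its error-bearing version Lemma~\ref{CT2}(2); the principal term reproduces $\sdpHe$, and the residue, which lies in $x_{\lambda^+_{(h)}}\HCR\cap\HCR x_{\co(A)}$, expands on the standard basis of Proposition~\ref{DW-basis}(1) via Lemma~\ref{6.1} and \eqref{13.20} as a combination of $\phi_\Bd$ with $d_{\lfloor\Bd\rfloor}<d_{A^+_{h,k}}$ (hence $\lfloor\Bd\rfloor\prec A^+_{h,k}$) for some $k$, exactly parallel to the argument in Theorem~\ref{phidiag1}(2). The main obstacle will be subcase $(1,1)$: with the additional $c_{\widetilde a+1}$ interacting nontrivially with both factors of $(\cmiddle)'$, the telescoping is substantially more delicate than in the even case, and tracking all signs and $q$-weights through the three resulting terms is the principal source of technical complexity.
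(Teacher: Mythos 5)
Your proposal matches the paper's proof in all its essentials: same reading of $T_\Xd$, same use of Lemma~\ref{prop_pjshift}(1) to decompose over $k$, the crucial observation that sweeping $c_{\widetilde\lambda_h+1}$ through the descending chain lands on $c_{\widehat a_{h,k}+1}$ and then the SDP condition for $A^+_{h,k}$ at $(h,k)$ (guaranteed by the hypothesis and Theorem~\ref{CdA1}) converts it to $T_{d_{A^+_{h,k}}}c_{\widetilde a+1}$, and the same middle-part manipulation via \eqref{eq_ta+}, \eqref{subsubgroup}, \eqref{xc}, Lemma~\ref{Tnsum}(1), followed by the four subcases; part (2) via Lemma~\ref{CT2}(2), the basis of Proposition~\ref{DW-basis}(1), Lemma~\ref{6.1} and \eqref{13.20} is also identical. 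One caveat worth flagging in your sketch of the four-case resolution: you misattribute the surviving terms — subcase $(0,1)$ yields Terms~1 \emph{and}~2, not just Term~1, while subcase $(1,0)$ yields \emph{only} Term~3 (Convention~\ref{CONV} kills Term~1 because $\SO{A}+E_{h,k}$ would have an entry $2$, and kills Term~2 because $\SO{A}-E_{h+1,k}$ would have a negative entry) — but this is a slip in the summary rather than in the method, and a careful execution of your own steps would produce the right coefficients.
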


\begin{proof}
Let {$\Xd=\Ed_{\bar h}= (\lambda-E_{h+1, h+1} | E_{h, h+1})$}. Then its base $X=\lambda+E_{h, h+1}-E_{h+1, h+1}$ which is the same base of the matrix $\Xd$ in Theorem  \ref{phiupper0}{\rm (2)}.
Thus, we have
{$\ro(X) = {{{\lambda}_{(h)}^{+}}}  $},
 {$c_\Xd = c_{\widetilde{\lambda}_{h}+1}$}.
Similar to Theorem \ref{phiupper0}(2),
the equation \eqref{eq_gB} with $\Bd=\Xd$ (and $a=a_{h,k}, b=a_{h+1,k}$)  becomes
\begin{equation}\label{z elt2}
\aligned
z& = \sum_{{1\leq k\leq n}\atop {a_{h+1,k}>0}}
		x_{{{\lambda}_{(h)}^{+}}}    c_{\widetilde{\lambda}_{h}+1}\Big(\sum_{j=\AK(h+1,k)}^{\AK(h+1,k+1) - 1}
		T_{\widetilde{\lambda}_{h}+1} T_{\widetilde{\lambda}_{h}+2} \cdots T_{\widetilde{\lambda}_{h}+j}
		{T_{d_A}}\Big) c_{\Ad}\Sigma_A\\
		&=\sum_{{1\leq k\leq n}\atop {a_{h+1,k}>0}}  x_{{{\lambda}_{(h)}^{+}}}  c_{\widetilde{\lambda}_{h}+1}
		\big(T_{\widetilde{\lambda}_{h}} T_{\widetilde{\lambda}_{h}-1}\cdots T_{\widetilde{\lambda}_{h}-\BK(h,k)+1}  {{\TDAP}}
		\TAIJ( {\widetilde a+1}, {\widetilde a+b-1})\big)c_{\Ad}\Sigma_A\;\;(\text{Lemma }\ref{prop_pjshift})\\
		&=\sum_{{1\leq k\leq n}\atop {a_{h+1,k}>0}} x_{{{\lambda}_{(h)}^{+}}}
		T_{\widetilde{\lambda}_{h}} T_{\widetilde{\lambda}_{h}-1}\cdots T_{\widetilde{\lambda}_{h}-\BK(h,k)+1}
		(c_{\widetilde{\lambda}_{h}-\BK(h,k) + 1}
		{{\TDAP}})\TAIJ( {\widetilde a+1}, {\widetilde a+b-1})
		%T_{\widetilde{a}_{h,k}+1} \cdots T_{\widetilde{a}_{h,k}+p_j}
		c_{\Ad}\Sigma_A.
		%\sum _{{\sigma} \in \D_{{\nu}_{_A}} \cap \fS_{\mu}} T_{\sigma}.
\endaligned
\end{equation}

Observe that $\widetilde{\lambda}_{h}-\BK(h,k) + 1=\widetilde{\lambda}_{h-1}+\AK(h,k+1) + 1=\hahk+a_{h,k} + 1=\widehat a_{h,k-1}+a_{h,k}^+$,
where $a_{i,j}^+$ is the $(i,j)$-entry of $A^+_{h,k}$ (thus, $a_{h,k-1}^+=a_{h,k-1}$ and $a_{h,k}^+=a_{h,k} + 1$). Since the hypothesis in (1) together with Theorem \ref{CdA1} implies that $A^+_{h,k}$ satisfies
 the SDP condition at $(h,k)$, for all $k\in[1,n]$ with $a^+_{h,k}>0$, it follows that $$c_{\widetilde{\lambda}_{h}-\BK(h,k) + 1}
		{{\TDAP}}={{\TDAP}}c_{\widetilde a_{h-1,k}^++a_{h,k}^+}={{\TDAP}}c_{\widetilde a+1}.$$

Hence, since
		$x_{{{\lambda}_{(h)}^{+}}}
		T_{\widetilde{\lambda}_{h}} T_{\widetilde{\lambda}_{h}-1}\cdots T_{\widetilde{\lambda}_{h}-\BK(h,k)+1}={q}^{\BK(h,k)} x_{{{\lambda}_{(h)}^{+}}} $, we have
%  that  %for each {$k \in [1,n]$} with {$a_{h+1,k} \ge 1$},
%	{${A}^+_{h,k}$}  satisfies the SDP condition at $(h,k)$ implies
\begin{align*}
z=\sum_{{1\leq k\leq n}\atop {a_{h+1,k}>0}}z_k,\;\;
\text{ where }z_k
&= {q}^{\BK(h,k)} x_{{{\lambda}_{(h)}^{+}}} {{\TDAP}} c_{\widetilde a +1} \TAIJ( {\widetilde a+1}, {\widetilde a+b-1}) c_\Ad\Sigma_A,
	%\sum _{{\sigma} \in \D_{{\nu}_{_A}} \cap \fS_{\mu}} T_{\sigma} .
\end{align*}
and,
similar to  the even case, using an argument for turning $\Sigma_A$ to $\Sigma_{A^+_{h,k}}$ right above \eqref{zkupper} together with the fact $x_{\delta^+}c_{\widetilde a+1}=c_{\widetilde a+1}x_{\delta^+}$ gives rise to
%applying \eqref{eq_ta+}, \eqref{subsubgroup}, Lemma  \ref{Tnsum}, and  \eqref{xc} yields
\begin{equation}\label{C2zk}
\aligned
z_k&
=
	{(-1)}^{(\SOE{a}_{h,k} + \SOE{a}_{h+1,k}) \YK } {q}^{\BK(h,k)}
	x_{{{\lambda}_{(h)}^{+}}\backslash{}'\!\nu^+} {{\TDAP}}\cdot\underbrace{
	x_{{\D_{+}}}  c_{\widetilde a +1} ({\cmiddle})'  x_{\delta^+}   \TDIJ({\widetilde a},  {\widetilde a-a+1})}_{=\ufm^+}\cdot {\cbefore} {\cafter}\Sigma_{A^+_{h,k}}.
	%\!\!  \sum _{{\sigma} \in \D_{\nu^+} \cap \fS_{\mu}} \!\! \!\! T_{\sigma} .
\endaligned
\end{equation}

%Notice the factor {$c_{u+1}$}, which causes the difference from the even case,
%we discuss the  four cases.
Similar to the even situation, we need to analyse the middle part $\ufm^+$ in four cases.

\medskip
\noindent
{\bf Case 1.}  {$\SOE{a}_{h,k} = 0$}, {$\SOE{a}_{h+1,k} = 0$}.
In this case we have   {$({\cmiddle})' = 1$} and, by Lemma \ref{xcT}(3),
$$\ufm^+=x_{{\D_{+}}}  c_{\widetilde a +1} x_{\delta^+}  \TDIJ({\widetilde a},  {\widetilde a-a+1})= x_{\nu^+} c_{\widetilde a +1} \TDIJ({\widetilde a},  {\widetilde a-a+1})= x_{\nu^+} c_{q, \widetilde a-a+1,\widetilde a +1}.$$
Thus, if $\Md=(\SE{A} - E_{h+1, k}| \SO{A}  + E_{h,k} )$, then $\Md$ has the base $M=A^+_{h,k}$ and $\ro(M)=\lambda^+_{(h)}$. Moreover, we have
 $c_\Md={\cbefore}  c_{{q}, \widetilde a-a+1, \widetilde a +1}  {\cafter}$. Hence,
\begin{align*}
z_k &=
	{q}^{\BK(h,k)} x_{{{\lambda}_{(h)}^{+}}\backslash{}'\!\nu^+} {{\TDAP}} x_{\nu^+} c_{q, \widetilde a-a+1,\widetilde a +1}
	%x_{\nu^+} c_{\widetilde a +1} \TDIJ({\widetilde a},  {\widetilde a-a+1})
	{\cbefore} {\cafter}\Sigma_{A^+_{h,k}}\\
	 %\sum _{{\sigma} \in \D_{\nu^+} \cap \fS_{\mu}}  T_{\sigma} \\
&=
	{(-1)}^{\YK} {q}^{\BK(h,k)} x_{{{\lambda}_{(h)}^{+}}} d_M
	 {\cbefore}  c_{{q}, \widetilde a-a+1, \widetilde a +1}  {\cafter}\Sigma_{M}\\
	%\sum _{{\sigma} \in \D_{\nu^+} \cap \fS_{\mu}}  T_{\sigma} \\
&=
	{(-1)}^{\YK} {q}^{\BK(h,k)} T_{(\SE{A} - E_{h+1, k}| \SO{A}  + E_{h,k} )}.
\end{align*}

\medskip
\noindent
{\bf Case 2.}
 {$\SOE{a}_{h,k} = 0$}, {$\SOE{a}_{h+1,k} = 1$}.
Then {$({\cmiddle})' = (c_{{q}, \widetilde a +1, \widetilde a +b})'=c_{\widetilde a+1}+q(c_{{q}, \widetilde a +2, \widetilde a +b})'$} and
\begin{align*}
	%&	x_{{\D_{+}}} c_{u +1}  (c_{{q}, u +1, u +t})'  x_{\delta}   \TDIJ({u},  {u-s+1}) \\
	\ufm^+&= {q} x_{{\D_{+}}} c_{\widetilde a +1} (c_{{q}, \widetilde a +2, \widetilde a +b})'x_{\delta^+} \TDIJ({\widetilde a},  {\widetilde a-a+1})  +  x_{{\D_{+}}} c_{\widetilde a +1}^2 x_{\delta^+}  \TDIJ({\widetilde a },  {\widetilde a-a+1}) \\
	&= {q} x_{\nu^+}  c_{\widetilde a +1}  \TDIJ({\widetilde a},  {\widetilde a-a+1})c_{{q}, \widetilde a +2, \widetilde a +b}  -  x_{\nu^+}  \TDIJ({\widetilde a },  {\widetilde a-a+1}) \\
	&= {q} x_{\nu^+}   c_{{q}, \widetilde a-a+1, \widetilde a +1} c_{{q}, \widetilde a +2, \widetilde a +b} -  \STEP{a+1}  x_{\nu^+}.
\end{align*}
%\begin{align*}
%z_k &=
%	{(-1)}^{\YK } {q}^{\BK(h,k)} x_{{{\lambda}_{(h)}^{+}}\backslash{}'\!\nu^+} {{\TDAP}}
%	x_{{\D_{+}}}  c_{u +1} (c_{{q}, u +1, u +t})'  x_{\delta}  \TDIJ({u},  {u-s+1})  {\cbefore} {\cafter}
%	 \sum _{{\sigma} \in \D_{\nu^+} \cap \fS_{\mu}}  T_{\sigma} .
%\end{align*}
%Meanwhile by \eqref{cqij}, \eqref{subsubgroup}, Lemma \ref{xcT}(2) and Lemma \ref{xcT}(3), we deduce that
Substituting into \eqref{C2zk} and noting that the above $\Md$ and $\Nd=(\SE{A} + E_{h,k}| \SO{A} - E_{h+1, k} )$ have the same base $A^+_{h,k}$ and
$$c_{{q}, \widetilde a-a+1, \widetilde a +1}  c_{{q}, \widetilde a +2, \widetilde a +b} \cdot {\cbefore} {\cafter}= {\cbefore} c_{{q}, \widetilde a-a+1, \widetilde a +1}  c_{{q}, \widetilde a +2, \widetilde a +b} {\cafter}=c_\Md,$$ we obtain
\begin{align*}
z_k
&=
	{(-1)}^{\YK } {q}^{\BK(h,k)+1} x_{{{\lambda}_{(h)}^{+}}\backslash{}'\!\nu^+} {{\TDAP}}
	x_{\nu^+}   c_{{q}, \widetilde a-a+1, \widetilde a +1}  c_{{q}, \widetilde a +2, \widetilde a +b} \cdot {\cbefore} {\cafter}\Sigma_{A^+_{h,k}}\\
	% \sum _{{\sigma} \in \D_{\nu^+} \cap \fS_{\mu}}  T_{\sigma} \\
&\qquad - {(-1)}^{\YK } {q}^{\BK(h,k)}  \STEP{a+1}  x_{{{\lambda}_{(h)}^{+}}\backslash{}'\!\nu^+} {{\TDAP}}
 	x_{\nu^+} {\cbefore} {\cafter}\Sigma_{A^+_{h,k}}\\
	%\sum _{{\sigma} \in \D_{\nu^+} \cap \fS_{\mu}}  T_{\sigma}  \\
&=	 %{\delta}_{0,\SOE{a}_{h,k}}
	{(-1)}^{\YK } {q}^{\BK(h,k)+1}
	T_{(\SE{A} - E_{h+1, k}| \SO{A}  + E_{h,k})}  +
	{(-1)}^{\YK +1 } {q}^{\BK(h,k)} \STEP{ \SEE{a}_{h,k} +1}
	T_{(\SE{A} + E_{h,k}| \SO{A} - E_{h+1, k} )} .
\end{align*}

\medskip
\noindent
{\bf Case 3.}
  {$\SOE{a}_{h,k} = 1$}, {$\SOE{a}_{h+1,k} = 0$}.
It follows {${\cmiddle} = c_{{q}, \widetilde a -a+1, \widetilde a }$} and the middle part has the form
\begin{align*}
z_k &=
	{(-1)}^{\YK } {q}^{\BK(h,k)} x_{{{\lambda}_{(h)}^{+}}\backslash{}'\!\nu^+} {{\TDAP}}
	\underbrace{x_{{\D_{+}}}  c_{  \widetilde a+1}  (c_{{q},  \widetilde a-a+1,  \widetilde a})'  x_{\delta}   \TDIJ({ \widetilde a},  { \widetilde a-a+1})}_{\ufm^+} {\cbefore} {\cafter}\Sigma_{A^+_{h,k}}.
	 %\sum _{{\sigma} \in \D_{\nu^+} \cap \fS_{\mu}} T_{\sigma} .
\end{align*}
Using \eqref{cqij}, \eqref{subsubgroup}, Lemma \ref{xcT}(2)\&(3), we have
\begin{align*}
q\ufm^+&={q} x_{{\D_{+}}}  c_{\widetilde a  +1} (c_{{q}, \widetilde a -a+1, \widetilde a })'  x_{\delta^+}  \TDIJ({\widetilde a },  {\widetilde a -a+1})  \\
	&=  -x_{{\D_{+}}}   x_{\delta^+}  {q} c_{{q}, \widetilde a -a+1, \widetilde a } c_{\widetilde a  +1} \TDIJ({\widetilde a },  {\widetilde a -a+1}) \\
	&=  	- x_{\nu^+}  c_{{q}, \widetilde a -a+1, \widetilde a  +1} c_{\widetilde a  +1} \TDIJ({\widetilde a },  {\widetilde a -a+1})
		+ x_{\nu^+}  (c_{\widetilde a +1})^2  \TDIJ({\widetilde a },  {\widetilde a -a+1}) \\
	&=  	- ( c_{{q}, \widetilde a -a+1, \widetilde a  +1} )' x_{\nu^+}  c_{\widetilde a  +1} \TDIJ({ \widetilde a},  {\widetilde a -a+1})
		- x_{\nu^+}  \TDIJ({\widetilde a },  {\widetilde a -a+1}) \\
	&=  	- x_{\nu^+}  (c_{{q}, \widetilde a -a+1, \widetilde a  +1})^2%  c_{{q}, \widetilde a -a+1, \widetilde a  +1}
		- \STEP{a+1} x_{\nu^+} \quad(\text{by Lemma \ref{xcT}(3)}) \\
	&=   ( {\STEPP{a+1}} - \STEP{a+1} ) x_{\nu^+}, \quad(\text{by Lemma \ref{xcT}(4)}).
\end{align*}
Hence, taking $\Nd=(\SE{A}+ 2 E_{h,k}  -E_{h+1, k}  |\SO{A} - E_{h,k})$ which has base $N=A^+_{h,k}$ and $c_\Nd={\cbefore} {\cafter}$, we have
\begin{align*}
z_k&=
	{(-1)}^{\YK } {q}^{\BK(h,k)-1} x_{{{\lambda}_{(h)}^{+}}\backslash{}'\!\nu^+} {{\TDAP}}\big(( {\STEPP{a+1}} - \STEP{a+1} ) x_{\nu^+}\big)
	%x_{{\D_{+}}}  c_{u +1}  (c_{{q}, u-s+1, u})'  x_{\delta}   \TDIJ({u},  {u-s+1})
	 {\cbefore} {\cafter}\Sigma_{A^+_{h,k}}\\
&=
	{(-1)}^{\YK } {q}^{\BK(h,k)-1}	( {\STEPP{a_{h,k}+1}}  - \STEP{a_{h,k}+1} )
	 T_{(\SE{A}+ 2 E_{h,k}  -E_{h+1, k}  |\SO{A} - E_{h,k})}.
\end{align*}

\medskip
\noindent
{\bf Case 4.}  {$\SOE{a}_{h,k} = 1$}, {$\SOE{a}_{h+1,k} = 1$}.
Then {(${\cmiddle})' =c'_{{q}, \widetilde a-a+1, \widetilde a} c'_{{q}, \widetilde a +1, \widetilde a +b} $} and \eqref{C2zk} takes the form
\begin{equation}\label{last z_k}
z_k =
	{q}^{\BK(h,k)-1} x_{{{\lambda}_{(h)}^{+}}\backslash{}'\!\nu^+} {{\TDAP}}(q\ufm^+) {\cbefore} {\cafter}\Sigma_{A^+_{h,k}},
	%\sum _{{\sigma} \in \D_{\nu^+} \cap \fS_{\mu}}  T_{\sigma},
\end{equation}
where $q\ufm^+=x_{{\D_{+}}}  c_{\widetilde a +1} c'_{{q}, \widetilde a-a+1, \widetilde a} c'_{{q}, \widetilde a +1, \widetilde a +b}  x_{\delta^+}  \TDIJ(\widetilde{a},  \widetilde{a}-a+1)$. Multiplying \eqref{c'c'} by $c_{\widetilde a +1} $ to the left yields
$$
\aligned
 qc_{\widetilde a +1}  c'_{{q}, \widetilde a-a+1, \widetilde a} c'_{{q}, \widetilde a +1, \widetilde a +b}  x_{\delta^+}=	x_{\delta^+} (qc_{{q}, \widetilde a +2, \widetilde a +b}c_{\widetilde a +1} c_{{q}, \widetilde a-a+1, \widetilde a +1}
	+{q}  c_{{q}, \widetilde a +2, \widetilde a +b}
	+  c_{{q}, \widetilde a-a+1, \widetilde a +1}
	 -   c_{\widetilde a+1} )
\endaligned
$$
Substituting into $q\ufm^+$ with a similar argument right after \eqref{c'c'},
%With the fact $c_{u +1} c_{{q}, u-s+1, u +1}= -c_{{q}, u-s+1, u +1} c_{u +1}  -2$
% and  \eqref{subsubgroup}, Lemma \ref{xcT}, %(2) as well as Lemma \ref{xcT}(3),
 we have
\begin{align*}
%&{q} x_{{\D_{+}}}  c_{u +1} (c_{{q}, u-s+1, u} c_{{q}, u +1, u +t})'  x_{\delta}  \TDIJ({u},  {u-s+1}) \\
%&= - x_{{\D_{+}}}  c_{u +1} (c_{{q}, u +1, u +t})' x_{\delta} {q} c_{{q}, u-s+1, u}  \TDIJ({u},  {u-s+1})  \\
%&=	- x_{{\D_{+}}}  c_{u +1} {q} (c_{{q}, u +2, u +t})' x_{\delta} c_{{q}, u-s+1, u +1} \TDIJ({u},  {u-s+1})
%	+ x_{{\D_{+}}}  c_{u +1} {q} (c_{{q}, u +2, u +t})'  x_{\delta} c_{u+1}  \TDIJ({u},  {u-s+1})  \\
%	&\qquad - x_{{\D_{+}}}  c_{u +1} c_{u+1} x_{\delta} c_{{q}, u-s+1, u +1} \TDIJ({u},  {u-s+1})
%	 + x_{{\D_{+}}}  c_{u +1} c_{u+1} x_{\delta} c_{u+1}  \TDIJ({u},  {u-s+1})  \\
%&= 	{q} x_{{\D_{+}}}   x_{\delta} c_{{q}, u +2, u +t} c_{u +1} c_{{q}, u-s+1, u +1} \TDIJ({u},  {u-s+1})
%	+ {q} x_{{\D_{+}}}   x_{\delta} c_{{q}, u +2, u +t} \TDIJ({u},  {u-s+1})  \\
%	&\qquad + x_{{\D_{+}}}   x_{\delta} c_{{q}, u-s+1, u +1} \TDIJ({u},  {u-s+1})
%	 - x_{{\D_{+}}}   x_{\delta} c_{u+1}  \TDIJ({u},  {u-s+1})  \\
q\ufm^+&=	{q} x_{\nu^+} c_{{q}, \widetilde a +2, \widetilde a +b} (-c_{{q}, \widetilde a-a+1, \widetilde a +1} c_{\widetilde a +1}  -2) \TDIJ({\widetilde{a}},  {\widetilde a-a+1})
	+ {q} x_{\nu^+} c_{{q}, \widetilde a +2, \widetilde a +b} \TDIJ({\widetilde a},  {\widetilde a-a+1})  \\
	&\qquad + x_{\nu^+} c_{{q}, \widetilde a-a+1, \widetilde a +1} \TDIJ({\widetilde a},  {\widetilde a-a+1})
	 - x_{\nu^+} c_{\widetilde a+1}  \TDIJ({\widetilde a},  {\widetilde a-a+1})  \\
&=	 -{q} x_{\nu^+} c_{{q}, \widetilde a +2, \widetilde a +b} c_{{q}, \widetilde a-a+1, \widetilde a+1} c_{\widetilde a +1}  \TDIJ({\widetilde a},  {\widetilde a-a+1})
	- {q} x_{\nu^+} c_{{q}, \widetilde a +2, \widetilde a +b}  \TDIJ({\widetilde a},  {\widetilde a-a+1}) \\
	&\qquad  + x_{\nu^+} c_{{q}, \widetilde a-a+1, \widetilde a +1} \TDIJ({\widetilde a},  {\widetilde a-a+1})
	- x_{\nu^+} c_{\widetilde a+1}  \TDIJ({\widetilde a},  {\widetilde a-a+1})  \\
&=	-{q} c'_{{q}, \widetilde a +2, \widetilde a +b}c'_{{q}, \widetilde a-a+1, \widetilde a +1} x_{\nu^+}  c_{{q}, \widetilde a-a+1, \widetilde a +1}
	- {q} c'_{{q}, \widetilde a +2, \widetilde a +b}  \STEP{a+1} x_{\nu^+}  \\
	&\qquad  + c'_{{q}, \widetilde a-a+1, \widetilde a +1}\STEP{a+1}  x_{\nu^+}
	- x_{\nu^+} c_{{q}, \widetilde a-a+1, \widetilde a +1}   \\
&=	{q} ({\STEPP{a+1}} - \STEP{a+1}) x_{\nu^+} c_{{q}, \widetilde a +2, \widetilde a +b}
	+(\STEP{a+1} -1)  x_{\nu^+} c_{{q}, \widetilde a-a+1, \widetilde a +1}.
\end{align*}
Then \eqref{last z_k} becomes after substitution
\begin{align*}
z_k&=
	{(-1)}^{\YK} {q}^{\BK(h,k)} ({\STEPP{a+1}} - \STEP{a+1}) x_{{{\lambda}_{(h)}^{+}}} {{\TDAP}}
	 {\cbefore} c_{{q}, \widetilde a +2, \widetilde a +b} {\cafter}\Sigma_{A^+_{h,k}}\\
	%\sum _{{\sigma} \in \D_{\nu^+} \cap \fS_{\mu}} T_{\sigma} \\
	& \qquad +
	{(-1)}^{\YK} {q}^{\BK(h,k)-1} (\STEP{a+1} -1) x_{{{\lambda}_{(h)}^{+}}} {{\TDAP}}
	  {\cbefore} c_{{q}, \widetilde a-a+1, \widetilde a +1} {\cafter}\Sigma_{A^+_{h,k}}.
	%\sum _{{\sigma} \in \D_{\nu^+} \cap \fS_{\mu}}  T_{\sigma} \\
\end{align*}
Let $\Bd=(b^{\bar0}_{i,j}|b^{\bar1}_{i,j})=( \SE{A}+2E_{h,k} - E_{h+1, k} | \SO{A}  - E_{h,k} )$. Then its base $B=(b_{i,j})=A^+_{h,k}$ and $b^{\bar1}_{h,k}=0,b^{\bar1}_{h+1,k}=1$, and $b_{h+1,k}=a_{h+1,k}-1=b-1$. Thus,
$\widetilde b_{h,k}+1=\widetilde a+2$, and $\widetilde b_{h+1,k}=\widetilde a+b$. Hence, $c_\Bd={\cbefore} c_{{q}, \widetilde a +2, \widetilde a +b} {\cafter}$. Similarly, putting $\Cd=( \SE{A} + E_{h,k} | \SO{A}  - E_{h+1,k} )$, we have $c_\Cd={\cbefore} c_{{q}, \widetilde a-a+1, \widetilde a +1} {\cafter}$. Consequently,
\begin{align*}
z_k&={(-1)}^{\YK} {q}^{\BK(h,k)} ({\STEPP{a_{h,k} + 1}} - \STEP{ a_{h,k} +1})
	T_{( \SE{A}+2E_{h,k} - E_{h+1, k} | \SO{A}  - E_{h,k} ) } \\
	& \; +
	{(-1)}^{\YK} {q}^{\BK(h,k)} \STEP{ \SEE{a}_{h,k} + 1}
	T_{ ( \SE{A} + E_{h,k} | \SO{A}  - E_{h+1,k} ) }\;\;  (\text{noting $\STEP{a+1} -1=q\STEP{a_{h,k}}$}).
\end{align*}
Finally,
by using Convention \ref{CONV},  one checks easily $\sum_{1\leq k\leq n\atop a_{h+1,k}>0}z_k$ is the image of the right hand side of \eqref{oddUP} at $x_{\co(A)}$. This proves (1).

In general, by Lemma \ref{CT2}(2), $c_{\widetilde{\lambda}_{h}-\BK(h,k) + 1}
		{{\TDAP}}={{\TDAP}}c_{\widetilde a_{h,k}+1}+\sum_{w<d_{A^+_{h,k}},j\in[1,r]}f_{w,j}^{\cdots}T_wc_j$.
Thus,
\eqref{z elt2} takes the form
$$\aligned
z&=\sum_{{1\leq k\leq n}\atop {a_{h+1,k}>0}} {q}^{\BK(h,k)} x_{{{\lambda}_{(h)}^{+}}}
\Big(\TDAP c_{\tilde{a}_{h,k}+1}+\sum_{w<d_{A^+_{h,k}},j\in[1,r]}f_{w,j}^{\cdots}T_wc_j\Big)\TAIJ( {\widetilde a+1}, {\widetilde a+b-1})
		%T_{\widetilde{a}_{h,k}+1} \cdots T_{\widetilde{a}_{h,k}+p_j}
		c_{\Ad}
		\Sigma_A\\
&=\sdpHe+\Te,%\sum_{{1\leq k\leq n}\atop {a_{h+1,k}>0}} y_k,
\endaligned
$$
where $\sdpHe=\sdpHe(x_{\mu})$ ($\mu=\co(A)$)
\begin{equation}\label{Te1}
\Te=\sum_{{1\leq k\leq n}\atop {a_{h+1,k}>0}}\sum_{w<d_{A^+_{h,k}},j\in[1,r]}
		f_{w,j}^{\cdots}{q}^{\BK(h,k)} x_{{{\lambda}_{(h)}^{+}}}
T_wc_j\TAIJ( {\widetilde a+1}, {\widetilde a+b-1})
		%T_{\widetilde{a}_{h,k}+1} \cdots T_{\widetilde{a}_{h,k}+p_j}
		c_{\Ad}
		\Sigma_A.
\end{equation}
Since $\TAIJ({\widetilde a+1},, {\widetilde a+b-1})c_{\Ad} \Sigma_A\in \mathcal{H}^c_{\mu,R}$ and every $w<d_{A^+_{h,k}}$ has the form
$w=xdy$ with $x\in\fS_{\lambda_{(h)}^+}$, $d\in\mathcal D_{\lambda_{(h)}^+,\mu}$ and $y\in\fS_\mu$, it follows that $\Te$ is a linear combination of the basis elements $x_{\lambda_{(h)}^+}T_dc^\alpha T_v$ for $x_{\lambda_{(h)}^+}\HCR$  (see \cite[Cor. 3.8]{DW2} or Lemma \ref{6.1}) where $d<d_{A^+_{h,k}}$, for some $k$.

On the other hand, since both $z$ and $\sdpHe$ are in $x_{\lambda_{(h)}^+}\mathcal{H}^c_{r,R}\cap\mathcal{H}^c_{r,R}x_\mu$, it follows that $\Te\in x_{\lambda_{(h)}^+}\mathcal{H}^c_{r,R}\cap\mathcal{H}^c_{r,R}x_\mu.$ Hence, by Proposition \ref{DW-basis}(1),%\cite[Prop. 5.2]{DW2},
\begin{equation}\label{Te2}
\Te=\sum_{\Md\in M_n(\NN|\NN_2)_{\lambda_{(h)}^+,\mu}}f^{\Ed_{\bar h},\Ad}_\Md T_\Md.
\end{equation}
Writing this sum as a linear combination of the same basis for $x_{\lambda_{(h)}^+}\HCR$ and equating coefficients show that every
$d_{\lfloor\Md\rfloor}<d_{A^+_{h,k}}$, for some $k$. Now assertion (2) follows from the fact $d_A<d_B\implies A\prec B$; see \cite[Lem. 13.20]{DDPW} or \eqref{13.20}.
% a case-by-case argument  proves the result.
\end{proof}

We now deal with the most complicated case.
\begin{thm}\label{philower1}
Let {$h \in [1,n-1]$}  and {$\Ad =(A^{\bar0}|A^{\bar 1})=  ({\SEE{a}_{i,j}} | {\SOE{a}_{i,j}})   \in \MNZ(n,r)$} with $A=A^{\bar0}+A^{\bar 1}$,
	 {${\lambda} = \ro(A)$}, and $\BK(h+1,k)=\BK(h+1,k)(A)$. Let $\Fd_{\bar h}=(\lambda-E_{h, h}| E_{h+1, h})$.
\begin{item}
\item[(1)] If $A$ satisfies the SDP condition at $(h,k)$, for all $k\in[1,n]$ with $a_{h,k}>0$, then we have
\begin{align*}
\phi_{\Fd_{\bar h}} \phi_\Ad
 &=\sum_{{1\leq k\leq n}\atop {a_{h,k}>0}} \Big\{
	% {\delta}_{0,\SOE{a}_{h+1, k}}
	{(-1)}^{{\SOE{\widetilde{a}}}_{h-1,k} + \SOE{a}_{h,k}} 	\phi_{(\SE{A} - E_{h,k}| \SO{A}+ E_{h+1, k})} \\
	& \qquad +
	% {\delta}_{1,\SOE{a}_{h+1, k}}
	{(-1)}^{{\SOE{\widetilde{a}}}_{h-1,k} + \SOE{a}_{h,k}+1}
	{q}^{-1}  \STEPPDR{a_{h+1, k} +1}
	\phi_{(\SE{A} - E_{h,k} + 2E_{h+1, k} | \SO{A}  -E_{h+1, k})} \\
	& \qquad +
	% {\delta}_{1,\SOE{a}_{h,k}}
	{(-1)}^{{\SOE{\widetilde{a}}}_{h-1,k}+1} {q}^{a_{h,k}-1}
	\STEP{ \SEE{a}_{h+1,k}+1}
	\phi_{(\SE{A} + E_{h+1,k} | \SO{A}  - E_{h,k} )}
\Big\}=:\sdpHf.
\end{align*}

\item[(2)] In general, let
\begin{align*}
\text{\rm HH}\overline{\textsc f}&=\sum_{{1\leq k\leq n}\atop {a_{h,k}>0}}\sum_{l=1}^{k-1}(-1)^{\tilde{a}^{\bar1}_{h,l}}~\cdot{q}^{\overrightarrow{r}^{l}_{h+1}{-\overrightarrow{r}_{h+1}^{k-1}}}~\Big\{\STEP{ \SEE{a}_{h+1, k} +1}\cdot\\
&\quad\quad\quad\qquad\;(\phi_{(\SE{A} - E_{h,k} + E_{h+1, k} -E_{h+1,l}| \SO{A}+E_{h+1,l})}-{\STEPP{a_{h+1,l}}}\phi_{(\SE{A} - E_{h,k} + E_{h+1, k} +E_{h+1,l}| \SO{A}-E_{h+1,l})})\\
&\quad\quad\quad\quad+{q}^{a_{h, k} - 1}(\phi_{{(\SE{A} -E_{h+1,l} |\SO{A} - E_{h,k} + E_{h+1,k}+E_{h+1,l})}}-{\STEPP{a_{h+1,l}}}\phi_{{(\SE{A} +E_{h+1,l} |\SO{A} - E_{h,k} + E_{h+1,k}-E_{h+1,l})}})\\	
&\quad\quad\quad\quad+{q}^{ a_{h, k} -2} \STEPPDR{a_{h+1, k} +1}(
	\phi_{(\SE{A} + 2E_{h+1,k}-E_{h+1,l} | \SO{A}  - E_{h,k} - E_{h+1,k}+E_{h+1,l})}\\
&\quad\quad\quad\quad\quad\quad\quad\quad\quad\quad-{\STEPP{a_{h+1,l}}}\phi_{(\SE{A} + 2E_{h+1,k}+E_{h+1,l} | \SO{A}  - E_{h,k} - E_{h+1,k}-E_{h+1,l})})
\Big\}.
\end{align*}
Then $$\phi_{\Fd_{\bar h}} \phi_{\Ad}=\sdpHf+(q-1)\text{\rm HH}\overline{\textsc f}+\sum_{\Bd\in \MNZ(n,r)\atop \exists k, \lfloor \Bd\rfloor\prec{A}^-_{h,k}}f^{\Fd_{\bar h},\Ad}_{\Bd}\phi_{\Bd}.$$
\end{item}
\end{thm}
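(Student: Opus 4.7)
The setup mirrors Theorems \ref{phidiag1} and \ref{phiupper1}. Since $\Fd_{\bar h} = (\la - E_{h,h}\,|\,E_{h+1,h})$ has base $X = \la - E_{h,h} + E_{h+1,h}$ with $d_X = 1$ by Example \ref{lem_dA1}, and its unique odd entry at $(h+1,h)$ sits at position $\widetilde\la_h$ in $\nu_X$, we get $c_\Xd = c_{\widetilde\la_h}$ and, by \eqref{Fbar}, $\Sigma_X = \TDIJ(\widetilde\la_h - 1, \widetilde\la_{h-1} + 1)$. Assuming $\la = \ro(A)$, this yields
$$z := \phi_{\Fd_{\bar h}}\phi_\Ad(x_{\co(A)}) = x_{\la^-_{(h)}}\,c_{\widetilde\la_h}\,\TDIJ(\widetilde\la_h - 1, \widetilde\la_{h-1} + 1)\, T_{d_A}\, c_\Ad\, \Sigma_A.$$

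For part (1), I would split the expansion of $\TDIJ(\widetilde\la_h - 1, \widetilde\la_{h-1} + 1) T_{d_A}$ according to column $k$ (summing over $j\in[\BK(h,k),\BK(h,k-1)-1]$ for each $k$ with $a_{h,k}>0$) and invoke Lemma \ref{prop_pjshift}(2) on each sub-sum to convert it into
$$T_{\widetilde\la_h} T_{\widetilde\la_h+1}\cdots T_{\widetilde\la_h + \AK(h+1,k) - 1}\, T_{d_{A^-_{h,k}}}\, \TDIJ(\widetilde a_{h,k} - 1, \widetilde a_{h,k} - a_{h,k} + 1).$$
The leading odd factor $c_{\widetilde\la_h}$ then has to be shuttled through the new $T$-product on its right; I would use Lemma \ref{xcT}(1), combined with the simplification $x_{\la^-_{(h)}} T_{\widetilde\la_h + i}^{-1} = q^{-1} x_{\la^-_{(h)}}$ (valid because $s_{\widetilde\la_h+i}\in\fS_{\la^-_{(h)}}$ for $0\le i\le \la_{h+1}-1$), to collapse its principal contribution to $x_{\la^-_{(h)}}c_{\widehat a_{h+1,k-1}}$. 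The trailing piece $x_{\nu_{A^-_{h,k}}}\TDIJ(\widetilde a_{h,k} - 1, \widetilde a_{h,k} - a_{h,k} + 1) c_\Ad \Sigma_A$ can be expanded via Corollary \ref{fkM} into exactly three $c_\Bd$-style terms. At this point the SDP condition at $(h,k)$ for $A$ enters: it allows me to commute the resulting $c$-element back across $T_{d_{A^-_{h,k}}}$ and combine with the three terms from Corollary \ref{fkM} to produce the three matrix-summands of $\sdpHf$, using the square identity $(c_{q,\widetilde a_{h-1,k}+1,\widetilde a_{h,k}})^2 = -\STEPP{a_{h,k}}$ from Lemma \ref{xcT}(4) where needed. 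A case-by-case analysis over $(a^{\bar 1}_{h,k}, a^{\bar 1}_{h+1,k}) \in \{0,1\}^2$ mirroring Theorem \ref{phiupper0}(3) settles the coefficients, while the supercommutation of $c$-elements with $\cbefore$ supplies the $(-1)^{\SOE{\widetilde{a}}_{h-1,k}}$-signs.

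For part (2), the SDP step is replaced by the general commutation Lemma \ref{CT2}(2), producing an additional correction $\sum_{w<d_A,\,i} f^{\cdots}_{w,i} T_w c_i$ alongside the SDP-type term. The leading non-SDP correction, which results from deleting a single simple reflection in $d_A$ according to the formula in Lemma \ref{CT2}(1), can be processed explicitly by a second application of the even-case reduction from Theorem \ref{phiupper0}(3), with a further column $l<k$ playing the role of the shifted column; matching each of the three $\sdpHk$-like terms produced there with the sign-adjusted contributions yields the six matrices within each $(k,l)$-summand of $\text{\rm HH}\overline{\textsc f}$. All remaining pieces lie in $x_{\la^-_{(h)}}\HCR \cap \HCR x_{\co(A)}$ by construction, so Proposition \ref{DW-basis}(1) expands them uniquely as $\sum f^{\Fd_{\bar h},\Ad}_\Bd \phi_\Bd$; the bound $d_{\lfloor\Bd\rfloor} < d_{A^-_{h,k}}$, inherited from the $w<d_{A^-_{h,k}}$ constraint, combined with \eqref{13.20} forces $\lfloor\Bd\rfloor \prec A^-_{h,k}$.

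The principal obstacle I anticipate is the $c$-element bookkeeping in part (1): pushing $c_{\widetilde\la_h}$ through the new $T$-product generated by Lemma \ref{prop_pjshift}(2) produces both a principal odd contribution at position $\widehat a_{h+1,k-1}$ and a $(q-1)$-tail, and pairing these terms with the three summands from Corollary \ref{fkM} via the SDP-commutation demands careful sign and $[\![\cdot]\!]_{q,q^2}$-coefficient tracking over each parity case. The analogous bookkeeping in part (2) is combinatorially heavier still, since the $\text{\rm HH}\overline{\textsc f}$-summands arise from crossing \emph{two} non-trivial SDP-type corrections (one at column $k$ and one at column $l<k$) and must be distinguished carefully from the residual terms absorbed into the $\sum \phi_\Bd$ piece.
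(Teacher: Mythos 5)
Your skeleton---the $T$-shift via Lemma \ref{prop_pjshift}(2), Corollary \ref{fkM} for $x_{\nu^-}\TDIJ(\widetilde a-1,\widetilde a-a+1)c_\Ad\Sigma_A$, an SDP commutation, and the disposal of the remainder via Proposition \ref{DW-basis}(1) and \eqref{13.20}---matches the paper's architecture. The first gap is in part (1): you propose pushing $c_{\widetilde\lambda_h}$ past $T_{\widetilde\lambda_h}\cdots T_{\widetilde\lambda_h+\AK(h+1,k)-1}$ via Lemma \ref{xcT}(1) and flag the resulting $(q-1)$-tail as a bookkeeping burden, but under the SDP hypothesis of part (1) that tail is not there. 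By Theorem \ref{CdA1}, the SDP condition at $(h,k)$ is equivalent to $\llcm^{h,k}=0$, so $a_{h+1,j}=0$ for all $j<k$, hence $\AK(h+1,k)=0$ and the $T$-product is empty. All that remains is to commute $c_{\widetilde\lambda_h}$ past $\TDAM$, and this uses the SDP condition of $A^-_{h,k}$ at $(h+1,k)$, which is also forced (since $\llcm^{h+1,k}(A^-_{h,k})=\llcm^{h+1,k}(A)=0$ and $a^-_{h+1,k}>0$). Without this observation, your part (1) computation drags along $(q-1)$-corrections that do not occur in $\sdpHf$ and that you have no mechanism to remove.

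The second gap is the claimed source of $(q-1)\text{\rm HH}\overline{\textsc f}$ in part (2). You attribute it to the deleted-reflection correction in Lemma \ref{CT2}(1), but that term (indeed, the entire tail of Lemma \ref{CT2}(2)) has strictly shorter $T_w$-support and therefore contributes only to the last residual $\sum f^{\Fd_{\bar h},\Ad}_\Bd\phi_\Bd$, discarded via Lemma \ref{6.1}, Proposition \ref{DW-basis}(1), and \eqref{13.20}. The middle term $(q-1)\text{\rm HH}\overline{\textsc f}$ in fact arises from the $(q-1)$-tail of Lemma \ref{xcT}(1)---precisely the tail that vanishes in part (1) but is nonzero when $\AK(h+1,k)>0$. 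That tail is the odd element $(q-1)\,c_{q,\widetilde\lambda_h,\widehat a_{h+1,k-1}-1}$, which decomposes into a $q$-weighted sum over columns $l<k$ with $a_{h+1,l}>0$ of pieces $c_{q,\widehat a^-_{h+1,l-1}+1,\widehat a^-_{h+1,l}}$; each piece is taken across $\TDAM$ via \eqref{longSDP2}, and the principal part is then multiplied into the three summands of Corollary \ref{fkM} and contracted using $(c_{q,\widetilde a_{h,l}+1,\widetilde a_{h+1,l}})^2=-\STEPP{a_{h+1,l}}$, which produces the six matrix-summands per $(k,l)$ in $\text{\rm HH}\overline{\textsc f}$. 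Your ``second application of the even-case reduction'' is essentially the Corollary \ref{fkM} step, but attaching it to a deleted simple reflection rather than to the odd $(q-1)$-tail of the $c$-shuffle misidentifies where the extra column index $l$ comes from and leaves the mechanism ill-defined.
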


\begin{proof} We want to compute $z:=\phi_{\Xd} \phi_A(x_\mu)$ with $\Xd=\Fd_{\bar h}=(\lambda-E_{h, h}| E_{h+1, h})$ and $\mu=\co(A)$. Since $X=\lfloor
\Fd_{\bar h}\rfloor$ is the same matrix $X$ as given in \eqref{Fbar}, and $c_\Xd=c_{\tilde\lambda_h}$, it follows from \eqref{Fbar} that
$\sum _{{\sigma} \in \D_{{\nu}_{_X}} \cap \fS_{\lambda}} T_{\sigma}= \TDIJ( {\tilde{\lambda}_{h}-1}, {\tilde{\lambda}_{h-1}+1})$, and thus,
\begin{equation}\label{F1_A_z}
\begin{aligned}
z%&=\phi_{(\lambda-E_{h, h}| E_{h+1, h})} \phi_A(x_\mu)\\
&= x_{{{\lambda}_{(h)}^{-}}} T_{d_X} c_\Xd  \TDIJ( {\tilde{\lambda}_{h}-1}, {\tilde{\lambda}_{h-1}+1})
	 \cdot
	( {T_{d_A}} c_{\Ad}  \Sigma_A) \\
 &=
 \sum_{{1\leq k\leq n}\atop {a_{h,k}>0}}x_{{{\lambda}_{(h)}^{-}}}  c_{\tilde{\lambda}_{h}}
\Big( \sum_{j=\BK(h,k)}^{\BK(h,k-1) - 1}
		 T_{\tilde{\lambda}_{h}-1} \cdots T_{\tilde{\lambda}_{h}-j }  {T_{d_A}}\Big) c_{\Ad}
		\Sigma_A\\
&= \sum_{{1\leq k\leq n}\atop {a_{h,k}>0}}
	%\sum_{j=\BK(h,k)}^{\BK(h,k-1) - 1}
	x_{{{\lambda}_{(h)}^{-}}}c_{\tilde{\lambda}_{h}}\big(
	T_{\tilde{\lambda}_{h}} T_{\tilde{\lambda}_{h}+1}
	\cdots T_{\tilde{\lambda}_{h}+\AK(h+1,k)-1}   \TDAM \TDIJ(\widetilde a-1,\widetilde a-a+1)\big)
	%T_{\tilde{a}_{h,k}-1} \cdots T_{\tilde{a}_{h,k}-q_j}
	c_{\Ad}
	\Sigma_A\;\;(a=a_{h,k}),
\end{aligned}
\end{equation}
by Lemma \ref{prop_pjshift}(2). For each $k\in[1,n]$ with $a_{h,k}>0$,
let
\begin{equation}\label{642}
z_k=x_{{{\lambda}_{(h)}^{-}}} c_{\tilde{\lambda}_{h}}
	T_{\tilde{\lambda}_{h}}T_{\tilde{\lambda}_{h}+1}
	\cdots T_{\tilde{\lambda}_{h}+\AK(h+1,k)-1}  \TDAM \TDIJ(\widetilde a-1,\widetilde a-a+1)
	c_{\Ad}
	\Sigma_A.
\end{equation}

The hypothesis that $A$ satisfies the SDP condition at $(h,k)$ implies by Theorem \ref{CdA1} that $\llcm^{h,k}=0$.
Thus, the lower left corner matrix of $A^-_{h,k}=(a_{i,j}^-)$ at $(h,k)$ (and hence, at $(h+1,k)$) is zero. This implies, on the one hand, $a_{h+1,j}=0$  for $j<k$, forcing $\overleftarrow{r}^k_{h+1}=0$ and so, $T_{\tilde{\lambda}_{h}}\cdots T_{\tilde{\lambda}_{h}+\AK(h+1,k)-1}=1$. On the other hand,
since $a^-_{h+1,k}=a_{h+1,k}+1>0$, it follows that $A^-_{h,k}$ satisfies the SDP condition at $(h+1,k)$, by Theorem \ref{CdA1} again.
Thus, as $\tilde\lambda_h=\ha_{h,n}=\ha_{h+1,k-1}=\ha^-_{h+1,k-1}+1$, we have
\begin{equation}\label{SDPzk}
\aligned
&\quad c_{{\tilde{\lambda}_{h}}}\TDAM=c_{\ha^-_{h+1,k-1}+1}\TDAM=\TDAM c_{\widetilde a^-_{h,k}+1}=\TDAM c_{\widetilde a_{h,k}}, \;\;\text{ and }\\
z_k &=
	 x_{{{\lambda}_{(h)}^{-}}} c_{{\tilde{\lambda}_{h}}}
	   \TDAM
	 \TDIJ(\widetilde a-1,\widetilde a-a+1)
	c_{\Ad}
	\Sigma_A
=%\sum_{{1\leq k\leq n}\atop {a_{h,k}>0}}
	 x_{{{\lambda}_{(h)}^{-}}}
	   \TDAM c_{\tilde{a}_{h,k}}
	 \TDIJ(\widetilde a-1,\widetilde a-a+1)
	c_{\Ad}
	\Sigma_A.
\endaligned
\end{equation}
Hence, by Lemma \ref{SDP-FA},
\begin{equation}\label{A.1}
\begin{aligned}
z=\sum_{1\leq k\leq n \atop a_{h,k}>0}z_k&= \sum_{1\leq k\leq n \atop a_{h,k}>0}\Big\{
	{(-1)}^{{\SOE{\widetilde{a}}}_{h-1,k} + \SOE{a}_{h,k}}	T_{(\SE{A} - E_{h,k}| \SO{A}+ E_{h+1, k})} \\
	& \qquad +
	{(-1)}^{{\SOE{\widetilde{a}}}_{h-1,k} + \SOE{a}_{h,k}+1}
	{q}^{-1}  \STEPPDR{a_{h+1, k} +1}
	T_{(\SE{A} - E_{h,k} + 2E_{h+1, k} | \SO{A}  -E_{h+1, k})} \\
	& \qquad +
	{(-1)}^{{\SOE{\widetilde{a}}}_{h-1,k}+1} {q}^{a_{h,k}-1}
	\STEP{ \SEE{a}_{h+1,k}+1}
	T_{(\SE{A} + E_{h+1,k} | \SO{A}  - E_{h,k} )}
\Big\}=\sdpHf(x_\mu),
\end{aligned}
\end{equation}
proving (1).

We now prove the general case (2).

By applying Lemma \ref{xcT} {\rm(1)} to $x_{{{\lambda}_{(h)}^{-}}} c_{\tilde{\lambda}_{h}}
	T_{\tilde{\lambda}_{h}}T_{\tilde{\lambda}_{h}+1}
	\cdots T_{\tilde{\lambda}_{h}+\AK(h+1,k)-1}$, \eqref{642} becomes
\begin{equation}
\begin{aligned}	
z_k &=
% \sum_{{1\leq k\leq n}\atop {a_{h,k}>0}}
	\Big(
		{q}^{\AK(h+1,k) }  x_{{{\lambda}_{(h)}^{-}}}  T_{\tilde{\lambda}_{h}}^{-1} T_{{\tilde{\lambda}_{h}}+1}^{-1} T_{{\tilde{\lambda}_{h}}+2}^{-1} \cdots T_{{\tilde{\lambda}_{h}}+\AK(h+1,k)-1}^{-1}  c_{{\tilde{\lambda}_{h}}+\AK(h+1,k) }\\
		& \qquad  \qquad + ({q} - 1)  {q}^{\AK(h+1,k)-1}    x_{{{\lambda}_{(h)}^{-}}}
			\big( c_{\tilde{\lambda}_{h}} +  T_{\tilde{\lambda}_{h}}^{-1}c_{{\tilde{\lambda}_{h}}+1} + T_{\tilde{\lambda}_{h}}^{-1} T_{{\tilde{\lambda}_{h}}+1}^{-1} c_{{\tilde{\lambda}_{h}}+2}  \\
		&\quad\;+ \cdots
			+  T_{\tilde{\lambda}_{h}}^{-1} T_{{\tilde{\lambda}_{h}}+1}^{-1} \cdots  T_{{\tilde{\lambda}_{h}}+\AK(h+1,k)-2}^{-1} c_{{\tilde{\lambda}_{h}}+\AK(h+1,k)-1}\big)
	\Big)  \cdot
	   \TDAM
	 \TDIJ(\widetilde a-1,\widetilde a-a+1)
	c_{\Ad}
	\Sigma_A\\
&=\Big(x_{{{\lambda}_{(h)}^{-}}}  c_{{\tilde{\lambda}_{h}}+\AK(h+1,k) }+
({q} - 1) x_{{{\lambda}_{(h)}^{-}}}
			({q}^{\AK(h+1,k)-1} c_{\tilde{\lambda}_{h}} + {q}^{\AK(h+1,k)-2}c_{{\tilde{\lambda}_{h}}+1} +{q}^{\AK(h+1,k)-3} c_{{\tilde{\lambda}_{h}}+2}  \\
		& \qquad  \qquad \qquad + \cdots
			+  c_{{\tilde{\lambda}_{h}}+\AK(h+1,k)-1})\Big)\cdot
	   \TDAM
	 \TDIJ(\widetilde a-1,\widetilde a-a+1)
	c_{\Ad}
	\Sigma_A\\
&=\Big(x_{{{\lambda}_{(h)}^{-}}}  c_{\ha_{h+1,k-1}}+
({q} - 1) x_{{{\lambda}_{(h)}^{-}}}c_{q,\tilde\lambda_h,\ha_{h+1,k-1}-1}
			\Big)\cdot
	   \TDAM
	 \TDIJ(\widetilde a-1,\widetilde a-a+1)
	c_{\Ad}
	\Sigma_A=Z_k+N_k,
\end{aligned}
\end{equation}
where
\begin{equation}\label{F_A2}
\begin{aligned}
&Z_k= x_{{{\lambda}_{(h)}^{-}}} c_{\ha_{h+1,k-1}}  \TDAM \TDIJ(\widetilde a-1,\widetilde a-a+1)
	c_{\Ad}
	\Sigma_A\\
&N_k=(q-1)
			x_{{{\lambda}_{(h)}^{-}}}c_{q,\tilde\lambda_h,\ha_{h+1,k-1}-1}
			%( {q}^{\AK(h+1,k)-1}  c_{\tilde{\lambda}_{h}} +  {q}^{\AK(h+1,k)-2}  c_{{\tilde{\lambda}_{h}}+1}\cdots +  c_{{\tilde{\lambda}_{h}}+\AK(h+1,k)-1})
		\TDAM
	\TDIJ(\widetilde a-1,\widetilde a-a+1)
	c_{\Ad}
	\Sigma_A.
\end{aligned}
\end{equation}

Consider $Z_k$ first.
Since $\ha_{h+1,k-1}=\ha^-_{h+1,k-1}+1$ and $\widetilde a_{h,k}=\widetilde a^-_{h,k}+1$, by Lemma \ref{CT2}(2),
 \begin{equation}\label{zk1(1)}
 c_{\ha_{h+1,k-1}}\TDAM= \TDAM c_{\widetilde{a}_{h,k}}+\sum_{w<d_{A_{h,k}^-},j\in[1,r]}f_{w,j}^{\cdots} T_w c_j.
 \end{equation}
Thus, $Z_k= (\diamondsuit)_k+(\clubsuit)_k$, where
$$
\begin{aligned}
(\diamondsuit)_k&:=x_{{{\lambda}_{(h)}^{-}}}\TDAM c_{\widetilde{a}_{h,k}}\TDIJ(\widetilde a-1,\widetilde a-a+1)
	c_{\Ad}
	\Sigma_A,\\
(\clubsuit)_k&:=
	x_{{{\lambda}_{(h)}^{-}}} \Big(\sum_{w<d_{A_{h,k}^-},j\in[1,r]}f_{w,j}^{\cdots} T_w c_j\Big)
	%(\sum_{d<d_{A_{h,k}^-},\gamma\in\mathbb{Z}^r_2}g_{\gamma,d}T_d c^\gamma)
	\TDIJ(\widetilde a-1,\widetilde a-a+1)
	c_{\Ad}
	\Sigma_A.
\end{aligned}
$$
Note that, according to \eqref{SDPzk}, $(\diamondsuit)_k$ is the $k$th summand of the head term $\sdpHf(x_{\co(A)})$ in (1).
% equal to the $z_k$ when $A^-_{h,k}$ satisfies SDP condition at $(h,k)$ in assertion (1).

%We now consider $N_k$. Set $A^-_{h,k}=B=(b^{\bar0}_{i,j}|b^{\bar 1}_{i,j})$ and $b_{i,j}=b^{\bar0}_{i,j}+b^{\bar1}_{i,j}$. then $b_{h,k}=a_{h,k}-1$, $b_{h+1,k}=a_{h+1,k}+1$, and $b_{i,j}=a_{i,j}$ for others.

%Let ${\lambda'}=\lambda^-_{(h)}=\ro(A^-_{h,k})$, so $\lambda'_{h}=\lambda_h-1$, $\lambda'_{h+1}=\lambda_{h+1}+1$ and $\tilde{\lambda'}_h=\tilde{\lambda}_h-1$, $\overleftarrow{\bf r}^l_{h+1}(A)=\overleftarrow{\bf r}^l_{h+1}(A^-_{h,k})$ for $l\leq k$, and $\overrightarrow{\bf r}^l_{h+1}(A)+1=\overrightarrow{\bf r}^l_{h+1}(A^-_{h,k})$ for $l<k$.
Suppose $\AK(h+1,k)=a_{h+1,l_1}+a_{h+1,l_2}+
\cdots+a_{h+1,l_s}$, where $a_{h+1,l_i}>0$ and $1\leq l_1<l_2<\cdots <l_s<k$. Then
$$\aligned
c_{q,\widetilde\lambda_h,\ha_{h+1,k-1}-1}&={q}^{\AK(h+1,k)-1} c_{\tilde{\lambda}_{h}} + {q}^{\AK(h+1,k)-2}c_{{\tilde{\lambda}_{h}}+1} +{q}^{\AK(h+1,k)-3} c_{{\tilde{\lambda}_{h}}+2} + \cdots
			+  c_{{\tilde{\lambda}_{h}}+\AK(h+1,k)-1}\\
&=\sum_{i=1}^s{q}^{\AK(h+1,k)-\AK(h+1,l_i+1)}c_{q,\ha^-_{h+1,l_i-1}+1,\ha^-_{h+1,l_i}}.
\endaligned$$
Thus, by \eqref{longSDP2}
$$\aligned
N_k&=(q-1)
			x_{{{\lambda}_{(h)}^{-}}}\Big(\sum_{i=1}^s{q}^{\AK(h+1,k)-\AK(h+1,l_i+1)}c_{q,\ha^-_{h+1,l_i-1}+1,\ha^-_{h+1,l_i}}\Big)		\TDAM
	\TDIJ(\widetilde a-1,\widetilde a-a+1)
	c_{\Ad}
	\Sigma_A\\
	&=(q-1)\sum_{i=1}^s{q}^{\AK(h+1,k)-\AK(h+1,l_i+1)}
			x_{{{\lambda}_{(h)}^{-}}}\Big(\TDAM c_{(q,\widetilde{a}_{h,l_i-1}^-+1,\widetilde{a}_{h+1,l_i}^-)}+\sum_{w<d_{A^-_{h,k}},j\in[1,r]}g^{\cdots}_{w,j}T_w c_j
			%c_{q,\ha^-_{h+1,l_i-1}+1,\ha^-_{h+1,l_i}}		\TDAM
			\Big)
	\TDIJ(\widetilde a-1,\widetilde a-a+1)
	c_{\Ad}
	\Sigma_A\\
&=(q-1)((\diamondsuit\diamondsuit)_k+(\clubsuit\clubsuit)_k),
	\endaligned$$
where (noting $\widetilde{a}_{h,l_i-1}^-=\widetilde{a}_{h,l_i-1}$ and $\widetilde{a}_{h,l_i}^-=\widetilde{a}_{h,l_i}$)
$$
\begin{aligned}
(\diamondsuit\diamondsuit)_k&:=\sum_{1\leq l\leq k-1\atop a_{h+1,l}>0}{q}^{\AK(h+1,k)-\AK(h+1,l+1)}x_{{\lambda}_{(h)}^{-}}\TDAM c_{(q,\tilde{a}_{h,l}+1,\tilde{a}_{h+1,l})}
	\TDIJ(\widetilde a-1,\widetilde a-a+1)
	c_{\Ad}
	\Sigma_A\\
(\clubsuit\clubsuit)_k&:=\sum_{1\leq l\leq k-1\atop a_{h+1,l}>0}\sum_{w<d_{A^-_{h,k}},j\in[1,r]}
{q}^{\AK(h+1,k)-\AK(h+1,l+1)}g^{\cdots}_{w,j}x_{{\lambda}_{(h)}^{-}}T_w c_j
	\TDIJ(\widetilde a-1,\widetilde a-a+1)
	c_{\Ad}
	\Sigma_A\\
	&=\sum_{w<d_{A^-_{h,k}},j\in[1,r]}f^{\cdots}_{w,j}x_{\lambda^-_{(h)}} T_w c_j \TDIJ(\widetilde a-1,\widetilde a-a+1)
	c_{\Ad}
	\Sigma_A.
\end{aligned}
$$
Similar to the computations in \eqref{6.4.10} and \eqref{m^-}, the element $(\diamondsuit\diamondsuit)_k$ has the form
\begin{equation}\label{F_A4}
\begin{aligned}
(\diamondsuit\diamondsuit)_k&=\sum_{1\leq l\leq k-1\atop a_{h+1,l}>0}{q}^{\AK(h+1,k)-\AK(h+1,l+1)}x_{{\lambda}_{(h)}^{-}}\TDAM c_{(q,\tilde{a}_{h,l}+1,\tilde{a}_{h+1,l})}\TDIJ(\widetilde a-1,\widetilde a-a+1)
	c_{\Ad}
	\Sigma_A\\
&=\sum_{1\leq l\leq k-1\atop a_{h+1,l}>0}%\sum_{\sigma\in\mathcal{D}_{\nu(d^{-1}_{A^-_{h,k}})}\cap \mathfrak{S}_{{\lambda}_{(h)}^{-}}}T_\sigma
{q}^{\AK(h+1,k)-\AK(h+1,l+1)}x_{{\lambda}_{(h)}^{-}\backslash{}'\!\nu^-} \TDAM x_{\nu^-}c_{(q,\tilde{a}_{h,l}+1,\tilde{a}_{h+1,l})}\TDIJ(\widetilde a-1,\widetilde a-a+1)
	c_{\Ad}
	\Sigma_A\\
&=\sum_{1\leq l\leq k-1\atop a_{h+1,l}>0}{q}^{\AK(h+1,k)-\AK(h+1,l+1)}x_{{\lambda}_{(h)}^{-}\backslash{}'\!\nu^-}
%\sum_{\sigma\in\mathcal{D}_{\nu(d^{-1}_{A^-_{h,k}})}\cap \mathfrak{S}_{{\lambda}_{(h)}^{-}}}T_\sigma
\TDAM c'_{(q,\tilde{a}_{h,l}+1,\tilde{a}_{h+1,l})}
x_{\nu^-}\TDIJ(\widetilde a-1,\widetilde a-a+1)
	c_{\Ad}
	\Sigma_A,
\end{aligned}
\end{equation}
where $\nu^-={\nu}_{{A}^-_{h,k}}$; see  \eqref{nu+nu-}. By Corollary \ref{fkM},
\begin{equation}\label{F_A3}
\begin{aligned}
x_{\nu^-}\TDIJ(\widetilde a-1,\widetilde a-a+1)
	c_{\Ad}
	\Sigma_A&=
	  \STEP{ \SEE{a}_{h+1, k} +1}
	x_{{\nu}^-}c_{(\SE{A} - E_{h,k} + E_{h+1, k} | \SO{A} )}\Sigma_{A^-_{h,k}}\\
&	 +
	{q}^{a_{h, k} - 1} x_{\nu^-}c_{{(\SE{A}  |\SO{A} - E_{h,k} + E_{h+1,k})}} \Sigma_{A^-_{h,k}}\\	
&+{q}^{ a_{h, k} -2} \STEPPDR{a_{h+1, k} +1}
	x_{\nu^-}c_{(\SE{A} + 2E_{h+1,k} | \SO{A}  - E_{h,k} - E_{h+1,k})}\Sigma_{A^-_{h,k}}.
\end{aligned}
\end{equation}
Let $\Bd$ be one of the following three matrices occurring in \eqref{F_A3}
$$(\SE{A} - E_{h,k} + E_{h+1, k} | \SO{A} ),\; (\SE{A}  |\SO{A} - E_{h,k} + E_{h+1,k}),\; (\SE{A} + 2E_{h+1,k} | \SO{A}  - E_{h,k} - E_{h+1,k}),$$  and write $\Bd=(B^{\bar0}|B^{\bar1})=(b_{i,j}^{\bar0}|b_{i,j}^{\bar1})$. Then
 we have $\lfloor \Bd\rfloor=A^-_{h,k}$, $\nu_{\lfloor \Bd\rfloor}=\nu_{A^-_{h,k}}=\nu^-$, and $c_{\Bd}=c^{\nu_{B^{\bar 1}}}_{\nu^-}$. Note that, for $1\leq l<k$, $$b^{\bar0}_{h+1,l}=a^{\bar0}_{h+1,l}, b^{\bar1}_{h+1,l}=a^{\bar1}_{h+1,l}, \tilde{b}_{h,l}=\tilde{a}_{h,l}, \tilde{b}_{h+1,l}=\tilde{a}_{h+1,l}.$$
Thus,
 \begin{equation}\label{F_A6}
\begin{aligned}
&\quad\,c'_{(q,\tilde{a}_{h,l}+1,\tilde{a}_{h+1,l})}x_{\nu^-}c_{\Bd}
=x_{\nu^-}c_{(q,\tilde{a}_{h,l}+1,\tilde{a}_{h+1,l})}c_{\Bd}
=x_{\nu_{\lfloor \Bd\rfloor}}c_{(q,\tilde{b}_{h,l}+1,\tilde{b}_{h+1,l})}c_{\Bd}\\
&=\begin{cases}(-1)^{\tilde{a}^{\bar1}_{h,l}}x_{\nu_{\lfloor \Bd\rfloor}}c_{(\SEE{B}-E_{h+1,l}|\SOE{B}+E_{h+1,l})}, &\mbox{ if } b^{\bar1}_{h+1,l}=a^{\bar1}_{h+1,l}=0;\\
-(-1)^{\tilde{a}^{\bar1}_{h,l}}{\STEPP{a_{h+1,l}}} x_{\nu_{\lfloor \Bd\rfloor}}c_{(\SEE{B}+E_{h+1,l}|\SOE{B}-E_{h+1,l})}, &\mbox{ if } b^{\bar1}_{h+1,l}=a^{\bar1}_{h+1,l}=1.
\end{cases}
\end{aligned}
\end{equation}
%Since $\sum_{\sigma\in\mathcal{D}_{\nu(d^{-1}_{A^-_{h,k}})}\cap \mathfrak{S}_{{\lambda}_{(h)}^{-}}}T_\sigma \TDAM x_{{\nu}_{ {A}^-_{h,k}}}=x_{{{\lambda}_{(h)}^{-}}}\TDAM $, reminding that ${{\lambda}_{(h)}^{-}}=\lambda^-_{(h)}$ and due
Substituting  \eqref{F_A3} and \eqref{F_A6} into \eqref{F_A4} yields
\begin{equation}\label{F_A5}
\begin{aligned}
&(\diamondsuit\diamondsuit)_k=\sum_{l=1}^{k-1}(-1)^{\tilde{a}^{\bar1}_{h,l}}~\cdot{q}^{\AK(h+1,k)-\AK(h+1,l+1)}\\
	  &\Big\{\STEP{ \SEE{a}_{h+1, k} +1}(T_{(\SE{A} - E_{h,k} + E_{h+1, k} -E_{h+1,l}| \SO{A}+E_{h+1,l})}-{\STEPP{a_{h+1,l}}}T_{(\SE{A} - E_{h,k} + E_{h+1, k} +E_{h+1,l}| \SO{A}-E_{h+1,l})})\\
&+{q}^{a_{h, k} - 1}(T_{{(\SE{A} -E_{h+1,l} |\SO{A} - E_{h,k} + E_{h+1,k}+E_{h+1,l})}}-{\STEPP{a_{h+1,l}}}T_{{(\SE{A} +E_{h+1,l} |\SO{A} - E_{h,k} + E_{h+1,k}-E_{h+1,l})}})\\	
&+{q}^{ a_{h, k} -2} \STEPPDR{a_{h+1, k} +1}(
	T_{(\SE{A} + 2E_{h+1,k}-E_{h+1,l} | \SO{A}  - E_{h,k} - E_{h+1,k}+E_{h+1,l})}\\
&\quad\quad\quad\quad\quad\quad\quad\quad\quad\quad-{\STEPP{a_{h+1,l}}}T_{(\SE{A} + 2E_{h+1,k}+E_{h+1,l} | \SO{A}  - E_{h,k} - E_{h+1,k}-E_{h+1,l})})
\Big\},
\end{aligned}
\end{equation}
which is the $k$th term of $\text{\rm HH}\overline{\textsc f}(x_{\co(A)})$.

%On the other hand, rewrite $$(\clubsuit\clubsuit)_k=\sum_{w<d_{A^-_{h,k}}}f^{\cdots}_{w,j}x_\lambda T_w c_j \TDIJ(\widetildea-1,\widetilde a-a+1)c_{\Ad}\Sigma_A.$$

%According to \cite[Prop. 5.2]{DW2}, $z\in x_{\lambda_h^-}\mathcal{H}^c_r\cap \mathcal{H}^c_r x_\mu$.
Finally, combining above gives
%Summarize the results above
$$z=\sum_{1\leq k\leq n\atop a_{h,k}>0}\left\{(\diamondsuit)_k+(q-1)(\diamondsuit\diamondsuit)_k\right\}+\sum_{{1\leq k\leq n}\atop {a_{h,k}>0}}\left\{(\clubsuit)_k+(q-1)(\clubsuit\clubsuit)_k\right\}.$$
Since $z\in x_{\lambda_{(h)}^-}\HCR\cap \HCR x_\mu$ and $(\diamondsuit)_k$ and $(\diamondsuit\diamondsuit)_k$ are in $x_{\lambda_{(h)}^-}\HCR\cap \HCR x_\mu$, it follows that
$$  \sum_{{1\leq k\leq n}\atop {a_{h,k}>0}}\left\{(\clubsuit)_k+(q-1)(\clubsuit\clubsuit)_k\right\}\in x_{\lambda_h^-}\HCR\cap \HCR x_\mu$$

Since $ \TDIJ(\widetilde a-1,\widetilde a-a+1)\in\mathcal{H}_{\mu,R}$, and so, $ \sum_{\sigma\in\mathcal D_{\nu_A}\cap\mathfrak{S}_\mu}\TDIJ(\widetilde a-1,\widetilde a-a+1)
	c_{\Ad}T_\sigma\in \mathcal{H}^c_{\mu,R}$,
	a similar argument at the end of the proofs of Theorems \ref{phidiag1} and \ref{phiupper1} shows that
$$\sum_{{1\leq k\leq n}\atop {a_{h,k}>0}}\left\{(\clubsuit)_k+(q-1)(\clubsuit\clubsuit)_k\right\}=\sum_{ \Bd\in\MNZ(n,r)\atop \exists k,\lfloor{\Bd}\rfloor\prec  {A}^-_{h,k}}f^{\Fd_{\bar h},\Ad}_{\Bd}T_{\Bd},$$
for some $f^{\Fd_{\bar h},\Ad}_{\Bd}\in R$. This completes the proof of (2).
\end{proof}

\begin{rem}\label{regular}
Let $\boldsymbol{\mathcal Q}(n,r)_{\mathbb C(\up)}=\mathcal Q_{\boldsymbol q}(n,r;\mathbb C(\up))$, where ${\boldsymbol q}=\up^2$ and let ${\bf U}(\mathfrak{q}_n)$ be the quantum queer supergroup over $\mathbb C(\up)$ with generators $E_h,F_h,K_j,K_{\bar n}$ ($h\in[1,n-1],j\in[1,n]$). Then, by \cite{Ol}, there is a superalgebra homomorphism $\varphi: {\bf U}(\mathfrak{q}_n)\to\boldsymbol{\mathcal Q}(n,r)_{\mathbb C(\up)}$. 
Let $e_h=\varphi(E_h)$, $f_h=\varphi(F_h)$, $k_j=\varphi(K_j)$, and $k_{\bar n}=\varphi(K_{\bar n})$. Then, by \cite[Th.9.5]{DW2},  $\boldsymbol{\mathcal Q}(n,r)=\mathcal Q_{\boldsymbol q}(n,r;\mathbb Q(\up))$ is generated by $e_h\phi_{(\lambda|{O})}$, $f_h\phi_{(\lambda|{O})}$, $k_i\phi_{(\lambda|{O})}$, and $k_{\bar n}\phi_{(\lambda|{O})}$, for all $h\in[1,h-1]$, $\lambda\in\Lambda(n,r)$. We will see in \cite{DGLW} that these generators are exactly those $\phi_\Xd$, where $\Xd$ is one of the matrices
$$(\lambda|O),\;\; (\lambda + E_{h, h+1} -E_{h+1, h+1} |O), \;\;(\lambda-E_{h,h}+E_{h+1,h}|O),\;\;
 (\lambda-E_{n,n}|E_{n,n}).$$
 Hence, the multiplication formulas given in Theorem \ref{phiupper0} and Theorem \ref{phidiag1} give the matrix form of the regular representation of  $\boldsymbol{\mathcal Q}(n,r)$.
\end{rem}

\section{Some special cases}
In this section, we establish several special multiplication formulas related to certain generators. We single out them for the convenience when checking the defining relations in \cite{DGLW}. We see below that the tail parts in Theorems \ref{phiupper1} and \ref{philower1} are computable if $A$ is simple enough.

%Recall the notation in Remark \ref{rem_phi_short_0}.
\begin{prop}\label{phiupper2}
	For any given {$\mu \in \CMN(n,r-1)$}, {$h \in [1,n-1]$}, the following formulas hold in {$\QqnrR $}.
	\begin{align*}
{\rm {(1)}}& \quad
\phi_{(\mu | E_{h, h+1})}
\phi_{({\mu}+  E_{h+1, h}|O)}
= 			\phi_{({\mu}-E_{h+1, h+1}   + E_{h+1, h}| E_{h, h+1})}
	+	 {q}^{{\mu}_{h+1}}  \phi_{( {\mu}   | E_{h,h})} \\
	& \qquad\qquad\qquad\qquad\qquad\qquad
	- 	({q}-1) \STEP{{\mu}_{h}+1}	\phi_{( {\mu}-E_{h+1, h+1}   + E_{h, h}  | E_{h+1,h+1})}, \\
{\rm {(2)}}& \quad
\phi_{({\mu}| E_{h, h+1})}
\phi_{({\mu} | E_{h+1, h})}
=
-	\STEP{{\mu}_{h} + 1} {q}^{ {\mu}_{h+1} }
		\phi_{( {\mu}  +  E_{h,h}|{O})}  - 	\phi_{({\mu}-E_{h+1, h+1}   | E_{h, h+1} + E_{h+1, h} )}\\
	& \qquad\qquad\qquad\qquad\qquad\qquad
	+ 	({q}-1) 	\phi_{( {\mu}-E_{h+1, h+1}    | E_{h,h} + E_{h+1,h+1})} \\
{\rm {(3)}}& \quad
\phi_{({\mu}| E_{h+1, h})}
\phi_{({\mu}+  E_{h, h+1}|O)}
=  \phi_{({\mu} - E_{h,h} + E_{h, h+1}| E_{h+1, h})}
+\phi_{( {\mu} | E_{h+1, h+1})}, \\
{\rm (4)}& \quad
 \phi_{({\mu}| E_{h+1, h})}
\phi_{({\mu} | E_{h, h+1})}
= \phi_{({\mu} - E_{h,h} |   E_{h, h+1} +  E_{h+1, h})}
	- \STEP{ {\mu}_{h+1} + 1 } \phi_{( {\mu} + E_{h+1, h+1} | O)}.
\end{align*}
\end{prop}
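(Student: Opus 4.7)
The plan is to specialise Theorems \ref{phiupper1} and \ref{philower1} to the case when $\Ad$ has base a diagonal matrix plus a single off-diagonal entry, exploiting the key simplification that $d_A=1$ in all four formulas. For $A=\mu+E_{h+1,h}$ (formulas (1)--(2)) or $A=\mu+E_{h,h+1}$ (formulas (3)--(4)), the unique off-diagonal entry has trivial upper-right corner matrix, so by Remark \ref{rem:dA=1}(2) every factor $w_{i,j}$ in \eqref{d_A} reduces to the identity.

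For formulas (3) and (4), I would apply Theorem \ref{philower1}(1) with $A=\mu+E_{h,h+1}$. A direct check shows $\llcm^{h,h}=0$ and $\llcm^{h,h+1}=0$, so by Theorem \ref{CdA1} the SDP condition holds along row $h$ and the head formula $\sdpHf$ applies without any tail correction. Expanding $\sdpHf$, most of the candidate terms vanish via Convention \ref{CONV} since $\SO{A}$ is either $O$ or $E_{h,h+1}$; only the Type 1 contribution at $k=h$ together with the Type 1 (for (3)) or Type 3 (for (4)) contribution at $k=h+1$ survives, yielding exactly the stated right-hand sides. The minus sign in the second term of RHS(4) comes from the factor $(-1)^{\SOE{\widetilde a}_{h-1,k}+1}$ in Theorem \ref{philower1}(1) together with $\SOE{\widetilde a}_{h-1,h+1}=0$.

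For formulas (1) and (2) with $A=\mu+E_{h+1,h}$ and $\mu_{h+1}>0$, the hypothesis of Theorem \ref{phiupper1}(1) fails at $k=h+1$: indeed, $a_{h,h+1}=0$ while $\llcm^{h,h+1}$ contains the entry $a_{h+1,h}=1$. I would therefore invoke Theorem \ref{phiupper1}(2), which gives $\sdpHe$ plus a tail indexed by $\Bd$ with $\lfloor\Bd\rfloor\prec A^+_{h,h+1}$. A direct analysis of the partial order $\prec$ under the constraint $\ro(B)=\co(B)=\mu+\bs{\ep}_h$ shows that the \emph{only} such matrix $B$ is the diagonal $\diag(\mu+\bs{\ep}_h)$; hence the tail contributes exclusively to basis elements with diagonal base---precisely where the discrepancies with $\sdpHe$ in RHS(1) and RHS(2) lie (the second and third terms of RHS(1); the first and third terms of RHS(2)).

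To pin down the tail coefficients, I would exploit $d_A=1$ to rewrite
$$\phi_\Xd\phi_\Ad(x_{\co(A)})=x_{\mu+\bs{\ep}_h}\,c_\Xd\,\TAIJ(\widetilde\mu_h+1,\widetilde\mu_h+\mu_{h+1})\,c_\Ad\,\TDIJ(\widetilde\mu_h,\widetilde\mu_{h-1}+1),$$
and systematically apply Lemma \ref{xcT}(2) to commute $c_\Xd=c_{\widetilde\mu_h+1}$ past the ascending Hecke product, using $x_\alpha T_i=qx_\alpha$ for $s_i\in\fS_{\mu+\bs{\ep}_h}$. This produces the factor $q^{\mu_{h+1}}$ multiplying $\phi_{(\mu|E_{h,h})}$, while the subsequent interaction with the descending tail $\TDIJ(\widetilde\mu_h,\widetilde\mu_{h-1}+1)$ via \eqref{Hecke-Cliff}(2)--(3) yields the $-(q-1)\STEP{\mu_h+1}$ coefficient of the third RHS term of (1). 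Formula (2) is obtained analogously, with the extra factor $c_{\widetilde\mu_h+1}$ from $c_\Ad$ squaring against that from $c_\Xd$ via $c_u^2=-1$ (Lemma \ref{xcT}(4)) to produce the overall minus signs of RHS(2) together with the new diagonal-base contribution $-q^{\mu_{h+1}}\STEP{\mu_h+1}\phi_{(\mu+E_{h,h}|O)}$. The main obstacle will be the careful Clifford sign bookkeeping: tracking the parities $(-1)^{\SOE{\widetilde a}_{h-1,k}}$ and the $(q-1)$-error terms from Lemma \ref{CT2}(1) through these commutations so that the precise $q$-powers in the RHS emerge.
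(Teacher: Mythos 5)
Your treatment of parts (3) and (4) agrees with the paper: the base $A=\diag(\mu)+E_{h,h+1}$ satisfies the SDP condition along row $h$, so $\sdpHf$ applies cleanly, and your bookkeeping of which terms survive Convention~\ref{CONV} at $k=h$ and $k=h+1$ is correct.

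For (1)--(2) the comparison is more interesting. You correctly diagnose that the SDP hypothesis of Theorem~\ref{phiupper1}(1) fails at $k=h+1$ (since $\llcm^{h,h+1}$ contains $a_{h+1,h}=1$), so a direct computation of the tail is required; the paper does the same. Your observation that the only $B$ with $\ro(B)=\co(B)=\mu+\bs{\ep}_h$ and $B\prec A^+_{h,h+1}$ is $\diag(\mu+\bs{\ep}_h)$ is a correct and genuinely useful extra remark (it is not in the paper), and it does constrain in advance where the tail can live. But it does not replace the explicit computation, which both you and the paper still have to do, and this is where your route diverges from the paper's. You propose to push $c_\Xd=c_{\widetilde\mu_h+1}$ through \emph{all} of $\Sigma_X=\TAIJ(\widetilde\mu_h+1,\widetilde\mu_{h+1})$ at once via Lemma~\ref{xcT}(2). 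Since $s_{\widetilde\mu_h+1}\notin\fS_{\mu+\bs{\ep}_h}$, only the first assertion of that lemma applies, and the output is $q^{\mu_{h+1}}x_{\mu+\bs{\ep}_h}\bigl(c_u+T_u^{-1}c_{u+1}+\cdots\bigr)$ with a string of inverse Hecke generators $T_{\widetilde\mu_h+1}^{-1}\cdots$ sitting to the \emph{left} of $\Sigma_A=\TDIJ(\widetilde\mu_h,\widetilde\mu_{h-1}+1)$. The $l=0$ summand does produce $q^{\mu_{h+1}}T_{(\mu|E_{h,h})}$ cleanly, but for $l\geq 1$ the factor $T_{\widetilde\mu_h+1}^{-1}$ does not commute with the $T_{\widetilde\mu_h}$ inside $\Sigma_A$, nor is it absorbed by $x_{\mu+\bs{\ep}_h}$, so extracting the $-(q-1)\STEP{\mu_h+1}$ coefficient requires considerably more manipulation than your phrase ``subsequent interaction with the descending tail'' suggests. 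The paper instead splits $z=z_h+z_{h+1}$ \emph{before} commuting anything, applies $c_uT_u=T_uc_{u+1}+(q-1)(c_u-c_{u+1})$ at the single place where $c_\Xd$ meets a nontrivial $T$-factor, and then each resulting piece absorbs the surrounding Hecke products via $x_\alpha T_i=qx_\alpha$ and Lemmas~\ref{xTinverse}, \ref{xcT}. That route keeps all $T^{-1}$'s adjacent to $x_\alpha$, which is what makes the simplification tractable. Finally, your appeal to Lemma~\ref{CT2}(1) in the ``main obstacle'' sentence is a bit off target: that lemma is the engine of Theorem~\ref{CdA1}, but it is not what carries the computation here; the single relation \eqref{Hecke-Cliff}(3) applied at $c_{\widetilde\lambda_h+1}T_{\widetilde\lambda_h+1}$ is.

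In summary: (3)--(4) are essentially the paper's proof; (1)--(2) start the same way and your partial-order observation is a nice supplement, but the proposed one-shot commutation via Lemma~\ref{xcT}(2) is a different and, as sketched, substantially less tractable route, with the derivation of the $-(q-1)\STEP{\mu_h+1}$ term left at the hand-waving stage.
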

\begin{proof} Parts (3) and (4) clearly follow from Theorem \ref{philower1}(1) as the matrix $\diag(\mu)+E_{h,h+1}$ satisfies SDP condition on the $h$th row by Theorem \ref{CdA1}.
We now prove parts (1) and (2).

Let $A^\star=  ({\SEE{a}_{i,j}} | {\SOE{a}_{i,j}})  =(\mu+E_{h+1,h}|{O})$
or $(\mu|E_{h+1,h})$ which have the same base matrix
$$A=\lfloor A^\star\rfloor=\diag(\mu)+E_{h+1,h}=(a_{i,j}).$$ Thus, $\lambda:=\ro(A)=\mu+\epsilon_{h+1}$. Hence, $\Xd=(\mu|E_{h,h+1})=(\lambda-E_{h+1,h+1}|E_{h,h+1})=\Ed_{\bar h}$ in both cases. This means that cases (1) and (2) are special cases of Theorem \ref{phiupper1}(2) and $\ro(X)=\ro(\lfloor X^\star\rfloor)=\lambda^+_{(h)},$ and   {$c_{X^\star} = c_{\widetilde{\lambda}_{h}+1}$}. Since $A=\diag(\lambda)+E_{h+1,h}-E_{h+1,h+1}$, it follows that
\begin{equation}\label{eq:T-A-special}
\aligned
\nu_A&=(\lambda_1, 0,\cdots,0,\lambda_h, 0,\cdots,0,1,\lambda_{h+1}-1,0,\cdots),\\
\co(A)&=(\lambda_1, \cdots\cdots,\lambda_h+1,\lambda_{h+1}-1,\cdots,\lambda_n),\\
\D_{{\nu}_{_A}} \cap \fS_{\co(A)}&=\{1,s_{\widetilde\la_{h}},s_{\widetilde\la_{h}}s_{\widetilde\la_{h}-1}\cdots,s_{\widetilde\la_h}s_{\widetilde\la_{h}-1}\cdots s_{\widetilde\la_{h-1}+1}\},\\
\Sigma_A&\big(=\Sigma_{v\in\D_{{\nu}_{_A}} \cap \fS_{\co(A)}} T_v\big)
= \TDIJ({\widetilde{\lambda}_{h}},  {\widetilde{\lambda}_{h-1}+1}).
\endaligned
\end{equation}
By Example \ref{lem_dA1} and \eqref{eq cA},
 we also have
 \begin{equation}\label{choice cA}
 d_A =1,\qquad
 c_{A^\star} =\begin{cases} 1,&\text{ if }\Ad=({\mu}+  E_{h+1, h}|{O});\\ c_{\widetilde{\lambda}_{h} + 1},&\text{ if }\Ad=({\mu}|  E_{h+1, h}).\end{cases}
 \end{equation}
 Since, by Theorem \ref{CdA1}, $A^+_{h,h+1}$ does not satisfy the SDP condition at $(h,h+1)$ when $\mu_{h+1}>0$ (see footnote 3), we must compute the tail term explicitly in Theorem \ref{phiupper1}(2).

By \eqref{z elt2}, we have, for $\AK(h,k)=\AK(h,k)(A)$,%and  with $B=X$ the equation \eqref{eq_gB} becomes
\begin{align}\label{eq:z-zk-special}
	z=\sum_{1\leq k\leq n\atop a_{h+1,k}>0} z_k,
	\mbox{ where }
	 z_k&=
		\sum_{j=\AK(h+1,k)}^{\AK(h+1,k+1) - 1}
		x_{\lambda^+_{(h)}}  c_{\widetilde{\lambda}_{h}+1}
		T_{\widetilde{\lambda}_{h}+1} T_{\widetilde{\lambda}_{h}+2} \cdots T_{\widetilde{\lambda}_{h}+j}
		{T_{d_A}} c_{\Ad}\Sigma_A.
		%\sum _{{\sigma} \in \D_{{\nu}_{_A}} \cap \fS_{\mu}} T_{\sigma} .
\end{align}
Further, since {$a_{h+1,k} = 0$}, for all $k\neq h, h+1$, it follows that $z=z_h+z_{h+1}$, where
$$z_{h}=		x_{\lambda^+_{(h)}}  c_{\widetilde{\lambda}_{h}+1}
		c_{A^\star}\Sigma_A=		x_{\lambda^+_{(h)}}  c_{\widetilde{\lambda}_{h}+1}
		c_{A^\star}
		\TDIJ( {\widetilde{\lambda}_{h}} , {\widetilde{\lambda}_{h} -  ({\lambda}_{h} - 1)}),\;\text{  as $d_A=1, a_{h+1,h} = 1$,}$$
 and, as $\AK(h+1,h+1) = 1, \AK(h+1,h+2) = \AK(h+1,h+1) + a_{h+1, h+1} = {\lambda}_{h+1}$, and $d_A=1$,
$$\aligned
z_{h+1}&=		x_{\lambda^+_{(h)}}  c_{\widetilde{\lambda}_{h}+1}\Big(\sum_{j=\AK(h+1,h+1)}^{\AK(h+1,h+2) - 1}
		T_{\widetilde{\lambda}_{h}+1} T_{\widetilde{\lambda}_{h}+2} \cdots T_{\widetilde{\lambda}_{h}+j}\Big)
		{T_{d_A}} c_{\Ad}\Sigma_A\\
	&=x_{\lambda^+_{(h)}}  c_{\tilde{\lambda}_{h}+1}T_{\tilde{\lambda}_{h}+1}
		\TAIJ( {\tilde{\lambda}_{h}+2} , {\tilde{\lambda}_{h}+ {\lambda}_{h+1}- 1}  )
		 c_{A^{\star}}\Sigma_A.
\endaligned$$
%For $k=h$, since {$a_{h+1,h} = 1$}, by \eqref{eq:z-zk-special} and
Now, depending on the choice of $c_\Ad$ in \eqref{choice cA}, Lemma \ref{xcT}(3) together with \eqref{eq cA} implies
\begin{align}
z_{h}
=&
\begin{cases}
(1)\;x_{\lambda^+_{(h)}}  c_{{q}, \tilde{\lambda}_{h-1}+1, \tilde{\lambda}_{h}+1} =T_{(\lambda-E_{h+1,h+1}|E_{h,h})}, &\text{ if } A^\star=(\mu+E_{h+1,h}|O),\\
(2)\; -x_{\lambda^+_{(h)}}
		\TDIJ( {\tilde{\lambda}_h} , {\tilde{\lambda}_h -  (\lambda_h - 1)})= -	\STEP{{\lambda}_{h} + 1} T_{( {\lambda}-E_{h+1,h+1}   +  E_{h,h}| O)},&\text{ if }A^\star=(\mu|E_{h+1,h}).\end{cases}
\label{eq:zh-special}
\end{align}
It remains to compute $z_{h+1}$.
Note that $c_{\tilde{\lambda}_{h}+1}T_{\tilde{\lambda}_{h}+1}=T_{\tilde{\lambda}_{h}+1}  c_{\tilde{\lambda}_{h}+2} + ({q}-1)(c_{\tilde{\lambda}_{h}+1} -  c_{\tilde{\lambda}_{h}+2})$, by \eqref{Hecke-Cliff}.
\begin{equation}\label{z_{h+1}}
\aligned
z_{h+1}&=
		x_{\lambda^+_{(h)}}  (T_{\tilde{\lambda}_{h}+1}  c_{\tilde{\lambda}_{h}+2} + ({q}-1)(c_{\tilde{\lambda}_{h}+1} -  c_{\tilde{\lambda}_{h}+2}))
		\TAIJ( {\tilde{\lambda}_{h}+2} , {\tilde{\lambda}_{h}+ {\lambda}_{h+1}- 1}  )
		 c_{A^{\star}}\Sigma_A\\
%		\sum _{{\sigma} \in \D_{{\nu}_{_A}} \cap \fS_{\mu}} T_{\sigma} \notag\\
&=
		x_{\lambda^+_{(h)}}  T_{\tilde{\lambda}_{h}+1}  c_{\tilde{\lambda}_{h}+2}
		\TAIJ({\tilde{\lambda}_{h}+2} ,  {\tilde{\lambda}_{h}+ {\lambda}_{h+1}- 1}  )
		 c_{A^{\star}}\Sigma_A\\
%		\sum _{{\sigma} \in \D_{{\nu}_{_A}} \cap \fS_{\mu}} T_{\sigma}\notag \\
&\qquad
	+	
		({q}-1)x_{\lambda^+_{(h)}}
		\TAIJ({\tilde{\lambda}_{h}+2} ,  {\tilde{\lambda}_{h}+ {\lambda}_{h+1}- 1}  )c_{\tilde{\lambda}_{h}+1}
		 c_{A^{\star}}\Sigma_A\\
%		\sum _{{\sigma} \in \D_{{\nu}_{_A}} \cap \fS_{\mu}} T_{\sigma}\notag \\
&\qquad
		-
		({q}-1) x_{\lambda^+_{(h)}}  c_{\tilde{\lambda}_{h}+2}
		\TAIJ({\tilde{\lambda}_{h}+2} , {\tilde{\lambda}_{h}+ {\lambda}_{h+1}- 1}  	)
		 c_{A^{\star}}\Sigma_A.
		%\sum _{{\sigma} \in \D_{{\nu}_{_A}} \cap \fS_{\mu}} T_{\sigma}. \label{eq:zh+1-special}
\endaligned
\end{equation}
%due to the fact $c_{\tilde{\lambda}_{h}+1}\TAIJ({\tilde{\lambda}_{h}+2} ,  {\tilde{\lambda}_{h}+ {\lambda}_{h+1}- 1}  )=
%		\TAIJ({\tilde{\lambda}_{h}+2} ,  {\tilde{\lambda}_{h}+ {\lambda}_{h+1}- 1}  )c_{\tilde{\lambda}_{h}+1}$.
		
If $A^\star=(\mu+E_{h+1,h}|O)$, then  {$c_{A^\star} = 1$} and
\begin{align}\label{eq:T-sigma-C}
	\TAIJ( {\widetilde{\lambda}_{h}+2} ,  {\widetilde{\lambda}_{h}+ {\lambda}_{h+1}-1}  )\Sigma_A
%\sum _{{\sigma} \in \D_{{\nu}_{_A}} \cap \fS_{\mu}} T_{\sigma}
&=\TAIJ({\widetilde{\lambda}_{h}+2} ,  {\widetilde{\lambda}_{h}+ {\lambda}_{h+1}-1}  	)
	 \TDIJ( {\widetilde{\lambda}_{h}},  {\widetilde{\lambda}_{h} -  ({\lambda}_{h} - 1)} )
=\sum _{{\sigma} \in \D_{{\nu}_{_C}} \cap \fS_{\co(C)}} T_{\sigma}=\Sigma_C,
\end{align}
where {$C = {\lambda}-2E_{h+1, h+1} + E_{h, h+1} + E_{h+1, h}$} with $\nu_C=(\cdots,\lambda_h, 1,0,\cdots,0,1,\lambda_{h+1}-2,\cdots)$. (Compare $\nu_C$ with $\co(A)$ in \eqref{eq:T-A-special}.)
%Direct calculation using ??? shows {$d_C = s_{\widetilde{\mu}_{h} + 1}$}, and
Hence, by %\eqref{eq:T-A-special}, \eqref{eq:T-sigma-C}, \eqref{eq:zh+1-special} and
Lemmas \ref{xTinverse} and \ref{xcT}(2)\&(3), we obtain
\begin{align*}
z_{h+1}
&=
		x_{\lambda^+_{(h)}}  T_{\tilde{\lambda}_{h}+1}  c_{\tilde{\lambda}_{h}+2}\Sigma_C
		%\sum _{{\sigma} \in \D_{{\nu}_{_C}} \cap \fS_{\mu}} T_{\sigma}
		 +	
		  ({q}-1) \STEP{ {\lambda}_{h+1} -1 } x_{\lambda^+_{(h)}}
		c_{\tilde{\lambda}_{h}+1}
		 \TDIJ( {\tilde{\lambda}_h} , {\tilde{\lambda}_h -  (\lambda_h - 1)} 	)
		 \\
&\qquad
		-
		({q}-1) x_{\lambda^+_{(h)}} c_{{q}, \tilde{\lambda}_{h}+2, \tilde{\lambda}_{h+1}}
		\TDIJ( {\tilde{\lambda}_h}, {\tilde{\lambda}_h -  (\lambda_h - 1)} )
		\\
%&=	x_{\lambda^+_{(h)}}  T_{\tilde{\lambda}_{h}+1}  c_{\tilde{\lambda}_{h}+2}\Sigma_C
		%\sum _{{\sigma} \in \D_{{\nu}_{_C}} \cap \fS_{\mu}} T_{\sigma}  \\
%	+	({q}-1) \STEP{ {\lambda}_{h+1} -1 } x_{\lambda^+_{(h)}}
%		c_{{q}, \tilde{\lambda}_{h-1}+1, \tilde{\lambda}_{h}+1}		 \\
%&\qquad-({q}-1) x_{\lambda^+_{(h)}}
%		\TDIJ({\tilde{\lambda}_h} , {\tilde{\lambda}_h -  (\lambda_h - 1)} )c_{{q}, \tilde{\lambda}_{h}+2, \tilde{\lambda}_{h+1}}		\\
&=
		x_{\lambda^+_{(h)}}  T_{\tilde{\lambda}_{h}+1}  c_{\tilde{\lambda}_{h}+2}\Sigma_C
		%\sum _{{\sigma} \in \D_{{\nu}_{_C}} \cap \fS_{\mu}} T_{\sigma}
	+	
		  ({q}-1) \STEP{ {\lambda}_{h+1} -1 } x_{\lambda^+_{(h)}}
		c_{{q}, \tilde{\lambda}_{h-1}+1, \tilde{\lambda}_{h}+1}
		 \\
&\qquad
		-
		({q}-1) \STEP{{\lambda}_{h}+1}
		x_{\lambda^+_{(h)}}
		c_{{q}, \tilde{\lambda}_{h}+2, \tilde{\lambda}_{h+1}}
		\\
&=
		T_{({\lambda} - 2 E_{h+1, h+1}   + E_{h+1, h}| E_{h, h+1})}
	+	
		  ({q}-1) \STEP{ {\lambda}_{h+1} -1 } T_{ ({\lambda}- E_{h+1, h+1}  | E_{h,h})}
		 \\
&\qquad
		-
		({q}-1) \STEP{{\lambda}_{h}+1}
		T_{( {\lambda}- 2 E_{h+1, h+1}   + E_{h, h}   | E_{h+1,h+1})}.
\end{align*}
This together with \eqref{eq:zh-special}(1), after replacing $\lambda-E_{h+1,h+1}$ by $\mu$, proves (1).

If $A=(\mu|E_{h+1,h})$, then {$c_{A^\star} = c_{\widetilde{\lambda}_{h}+1}$} and, hence, $\TAIJ({\tilde{\lambda}_{h}+2} , {\tilde{\lambda}_{h}+ {\lambda}_{h+1}- 1}  	)
		c_{A^\star}=c_{A^\star}\TAIJ({\tilde{\lambda}_{h}+2} , {\tilde{\lambda}_{h}+ {\lambda}_{h+1}- 1})$. Then, with a similar argument, \eqref{z_{h+1}} becomes
\begin{align*}
z_{h+1}
&=
		- x_{\lambda^+_{(h)}}  T_{\tilde{\lambda}_{h}+1}
		c_{\tilde{\lambda}_{h}+1} c_{\tilde{\lambda}_{h}+2}
		\TAIJ({\tilde{\lambda}_{h}+2},  {\tilde{\lambda}_{h}+ {\lambda}_{h+1}- 1}  	)\Sigma_A
		- ({q}-1)x_{\lambda^+_{(h)}}
		\TAIJ({\tilde{\lambda}_{h}+2},  {\tilde{\lambda}_{h}+ {\lambda}_{h+1}- 1}  	)
		\TDIJ({\tilde{\lambda}_h}, {\tilde{\lambda}_h -  (\lambda_h - 1)}) \\
&\qquad
		-({q}-1)
		x_{\lambda^+_{(h)}}  c_{\tilde{\lambda}_{h}+2}
		\TAIJ({\tilde{\lambda}_{h}+2} , {\tilde{\lambda}_{h}+ {\lambda}_{h+1}- 1}  	)
		c_{\tilde{\lambda}_{h}+1}
		\TDIJ( {\tilde{\lambda}_h}, {\tilde{\lambda}_h -  (\lambda_h - 1)}) \\
&=
		-x_{\lambda^+_{(h)}}  T_{\tilde{\lambda}_{h}+1}
		c_{\tilde{\lambda}_{h}+1} c_{\tilde{\lambda}_{h}+2}\Sigma_C-  ({q}-1)  x_{\lambda^+_{(h)}} \STEP{ {\lambda}_{h+1} - 1 }  \STEP{ {\lambda}_{h} + 1 } \\
&\qquad
		+
		({q}-1) x_{\lambda^+_{(h)}}
		c_{ {q}, \tilde{\lambda}_{h-1}+1,  \tilde{\lambda}_{h}+1}
		c_{{q}, \tilde{\lambda}_{h}+2, \tilde{\lambda}_{h+1} }
		\\
&=
		-T_{ ({\lambda} - 2 E_{h+1, h+1}   | E_{h, h+1} + E_{h+1, h}) }
		 -  ({q}-1)  \STEP{ {\lambda}_{h+1} - 1 }  \STEP{ {\lambda}_{h} + 1 }
	 	T_{( {\lambda}- E_{h+1, h+1}   + E_{h, h}  | O )}	
		 \\
&\qquad +
		({q}-1) 	T_{( {\lambda}- 2 E_{h+1, h+1}  | E_{h,h} + E_{h+1,h+1} )} .
\end{align*}
This together with \eqref{eq:zh-special}(2), after replacing $\lambda-E_{h+1,h+1}$ by $\mu$, proves (2).
\end{proof}

The following relation gives an example of the fact that a product of two odd elements in the superalgebra $\QqnrR $ is an even element.

\begin{lem}\label{relation_eei}Let $h\in[1,n-1]$ and
 {${\lambda}  \in \CMN(n, r - 1)$}. Then the following relation holds in {$\QqnrR $}:
\begin{equation}\label{oo=ee}
\aligned
 &{\phi}_{( {\lambda}  | E_{h, h+1} )}
		{\phi}_{( {\lambda}  |  E_{h+1, h+2} )}	+ {q}  {\phi}_{( {\lambda}   + \alpha_h | E_{h+1, h+2} )}
		{\phi}_{(  {\lambda} -\alpha_{h+1} | E_{h, h+1}  )}=	 \\
- &{\phi}_{(   {\lambda} + E_{h, h+1} | O)}
		{\phi}_{( {\lambda}  +  E_{h+1, h+2} | O)}		 +    {q}  {\phi}_{( {\lambda}  + \alpha_{h} + E_{h+1, h+2} |O  )}
		{\phi}_{(  {\lambda} -\alpha_{h+1} + E_{h, h+1}  | O)} .
		\endaligned
\end{equation}
\end{lem}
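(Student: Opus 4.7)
The plan is to evaluate both sides of \eqref{oo=ee} on $x_\mu$ for $\mu := \lambda + \bs{\ep}_{h+2}$. By \eqref{co=ro} this is the unique composition for which all four products are simultaneously nonzero (each left factor has row-sum $\lambda + \bs{\ep}_h$ and each right factor has column-sum $\mu$), so the identity reduces to one in $x_{\lambda+\bs{\ep}_h}\HCR \cap \HCR x_\mu$, which by Proposition \ref{DW-basis}(1) is a free $R$-module with an explicit small basis indexed by $\MNZN(n)_{\lambda+\bs{\ep}_h, \mu}$.

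The key simplification is that every right factor has base matrix of the form $\diag(\rho) + E_{j, j+1}$ for some $\rho$ and $j\in\{h,h+1\}$, so by Example \ref{lem_dA1} we have $d_{\lfloor A\rfloor} = 1$ and the SDP condition holds everywhere. Each $T_\Ad$ then reduces explicitly to $x_{\ro(A)}\, c_\Ad\, \Sigma_A$, where $\Sigma_A$ is a single $\TAIJ$-interval product and $c_\Ad$ is either trivial (in the even case) or a single Clifford generator $c_{\widetilde{\rho}_j + 1}$ at the position corresponding to the unique odd entry (in the odd case). I will then expand each of the four products: the two RHS ones via Theorem \ref{phiupper0}(2) applied with $\Xd$ being the matrix $\Ed_{h, \cdot}$ or $\Ed_{h+1, \cdot}$ of \eqref{EVEN} for the appropriate composition (yielding tail-free closed forms), and the two LHS ones via Theorem \ref{phiupper1}(2) with the corresponding odd analogues $\Ed_{\bar h}$ and $\Ed_{\overline{h+1}}$ (yielding SDP head parts plus a tail $\Te$). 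Because the base matrices are as simple as possible, the tail is explicitly computable using only Lemmas \ref{prop_pjshift}--\ref{xcT}.

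The main obstacle will be the careful bookkeeping of Clifford-generator positions and anti-commutation signs across the four expansions, since the partial sums $\widetilde{\lambda}_j$ and $\widetilde{(\lambda - \alpha_{h+1})}_j$ shift slightly between corresponding terms. The key algebraic mechanism is that transporting the Clifford generator from the right factor past the $\TAIJ$-interval in the left factor via \eqref{Hecke-Cliff}(2)--(3) produces boundary terms in which the Clifford-square relation $c_i^2 = -1$ accounts for the minus sign before $\phi_{(\lambda + E_{h, h+1}|O)}\phi_{(\lambda + E_{h+1, h+2}|O)}$, while the anti-commutation $c_ic_j=-c_jc_i$ together with the $(q-1)$-correction in \eqref{Hecke-Cliff}(3) combine to yield the factor of $q$ in front of $\phi_{(\lambda + \alpha_h + E_{h+1, h+2}|O)}\phi_{(\lambda - \alpha_{h+1} + E_{h, h+1}|O)}$. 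Once all four products are written in the common $T_\Md$-basis, the SDP head contributions on both sides match term-by-term and the remaining tail contributions recombine into the predicted even--even expressions, establishing the claimed identity.
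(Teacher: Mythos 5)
Your opening moves coincide with the paper's: both sides are evaluated on $x_\mu$ with $\mu=\lambda+\bs{\ep}_{h+2}$, and the simplicity of the base matrices ($d_A=1$ by Example \ref{lem_dA1}, so each $T_\Ad$ is just $x_{\ro(A)}c_\Ad\Sigma_A$ with $\Sigma_A$ a single interval sum) reduces the claim to an identity among four explicit elements $\fcY_1,\dots,\fcY_4$ of $x_{\lambda+\bs{\ep}_h}\HCR\cap\HCR x_\mu$. Where you diverge is in routing these products through the general multiplication theorems. The paper does not do this for the lemma at hand: it expands $\fcY_1$ and $q\fcY_2$ directly via Lemma \ref{xcT}(2) into sums of $T^{-1}$-monomials ending in Clifford generators, observes that the two sums agree except for the single extra summand of $\fcY_1$, which carries $c_{\widetilde\lambda_{h+1}+1}^2=-1$, so that $\fcY_1+q\fcY_2$ telescopes to the single monomial $-q^{\lambda_{h+1}}x_{\lambda+\bs{\ep}_h}T_{\widetilde\lambda_h+1}^{-1}\cdots T_{\widetilde\lambda_{h+1}}^{-1}(\cdots)$, and then converts this by Lemma \ref{xTinverse} into exactly $-\fcY_3+q\fcY_4$, whose direct computation is immediate. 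Your proposed detour through Theorem \ref{phiupper1}(2) buys nothing here, because that theorem only asserts that the tail $\Te$ is \emph{some} combination of strictly lower terms; making it explicit for these particular matrices is precisely the $\HCR$ computation the paper performs (compare Proposition \ref{phiupper2}(1)--(2), where such tails are worked out for closely related products).

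The point I would push back on is your stated verification mechanism. The claim that ``the SDP head contributions on both sides match term-by-term'' does not parse: the right-hand side consists of even products, which have no head/tail decomposition, and on the left the heads of the two odd products do not cancel against each other either. The cancellation that actually proves the lemma is the telescoping of $\fcY_1+q\fcY_2$ just described. Your remarks that $c_i^2=-1$ produces the minus sign in front of the first even product and that the $(q-1)$-corrections (here entering through Lemma \ref{xTinverse}) produce the factor $q$ in front of the second do capture the correct mechanism; but the head-versus-tail bookkeeping you propose as the organizing principle is the wrong frame and would have to be abandoned once the explicit computation is attempted.
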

\begin{proof}Observe that each term in \eqref{oo=ee} is a product of the form $\phi_\Ad\phi_\Bd$. For all choices of $\Bd$, the base matrix $B$ have the same $\co(B)=\lambda+\bs{\ep}_{h+2}$. %All $\Ad$'s have the same base matrix $A=\diag(\lambda)+E_{h,h+1}$ and all $\Bd$'s have the base matrix $B=\diag(\lambda)+E_{h+1,h+2}$.
We prove the result by applying both sides to $x_{{\lambda} + \bs{\ep}_{h+2}} $ and check the identity in $\HCR$.

Let
{${A}^{\star}_1 =  ( {\lambda} | E_{h, h+1}) $} and
 {${B}^{\star}_1= ( {\lambda}  |  E_{h+1, h+2})$} with base $A$ and $B$, respectively. Then
\begin{align*}
 {\fcY}_1 &=
		{\phi}_{( {\lambda}  | E_{h, h+1} )}
		\big({\phi}_{( {\lambda} |  E_{h+1, h+2} )}	(x_{{\lambda} + \bs{\ep}_{h+2}})\big )=
		{\phi}_{( {\lambda}  | E_{h, h+1} )}
		( T_{( {\lambda}  |  E_{h+1, h+2}) }  ) \\
&=
		{\phi}_{( {\lambda} | E_{h, h+1} )}
		( x_{\ro({B})}
		T_{d_{B}} c_{{B}^{\star}_1}
		%\sum _{{\sigma} \in \D_{{\nu}_{{B}}} \cap \fS_{\co({B})}} T_{\sigma}
		\Sigma_{B}
		 )=	
		x_{\ro({A})}
		T_{d_{A}} c_{{A}^{\star}_1}
		%\sum _{{\sigma} \in \D_{{\nu}_{{A}}} \cap \fS_{\co({A})}} T_{\sigma}
		\Sigma_{A} \cdot
		T_{d_{B}} c_{{B}^{\star}_1}
		%\sum _{{\sigma} \in \D_{{\nu}_{{B}}} \cap \fS_{\co({B})}} T_{\sigma}
		\Sigma_{B}
		.
\end{align*}
By Example \ref{lem_dA1} and various definitions, it is easy to verify
{$d_{A} = d_{B} = 1$},
{$c_{{A}^{\star}_1} = c_{\widetilde{\lambda}_{h} + 1}$} ,
{$c_{{B}^{\star}_1} = c_{\widetilde{\lambda}_{h+1} + 1}$},
{$\ro({A}) =  {\lambda} + \bs{\ep}_{h}$},
and
\begin{align*}
\Sigma_{A}
=
\TAIJ({\widetilde{\lambda}_{h}+1} , {\widetilde{\lambda}_{h} + {\lambda}_{h+1} }), \qquad
\Sigma_{B}
= \TAIJ( {\widetilde{\lambda}_{h+1} +1} , {\widetilde{\lambda}_{h+1} + {\lambda}_{h+2} } ).
\end{align*}
Hence,
\begin{align*}
 {\fcY}_1
& =
		x_{{\lambda} + \bs{\ep}_{h}}
		c_{\widetilde{\lambda}_{h} + 1}
		\TAIJ({\widetilde{\lambda}_{h}+1} , {\widetilde{\lambda}_{h} + {\lambda}_{h+1} })
		c_{\widetilde{\lambda}_{h+1} + 1}
		 \TAIJ( {\widetilde{\lambda}_{h+1} +1} , {\widetilde{\lambda}_{h+1} + {\lambda}_{h+2} } )
	 .
\end{align*}
Similarly,
set
{$M^{\star}_1 = ( {\lambda} +\alpha_h | E_{h+1, h+2} )$},
{$ N^{\star}_1 = ( {\lambda} -\alpha_{h+1} | E_{h, h+1} ) $}, having base matrices, $M$, $N$, respectively,
then we have
\begin{align*}
\Sigma_M
 =\TAIJ( {\widetilde{\lambda}_{h+1} +1}, {\widetilde{\lambda}_{h+1} + {\lambda}_{h+2} } ) \ , \qquad
\Sigma_N
= \TAIJ( {\widetilde{\lambda}_{h}+1} , {\widetilde{\lambda}_{h} + {\lambda}_{h+1} - 1}) \ ,
\end{align*}
and direct calculation shows
\begin{align*}
 {\fcY}_2
 &=	
		{\phi}_{( {\lambda}  + \bs{\ep}_{h} - \bs{\ep}_{h+1} | E_{h+1, h+2} )}
		\cdot
		{\phi}_{(  {\lambda} + \bs{\ep}_{h+2} -\bs{\ep}_{h+1} | E_{h, h+1}  )}	(x_{{\lambda} + \bs{\ep}_{h+2}})\\
& =
		x_{{\lambda} + \bs{\ep}_{h}}
		c_{\widetilde{\lambda}_{h+1} + 1}
		\Sigma_M \cdot
		c_{\widetilde{\lambda}_{h} + 1}
		\Sigma_N
		   \\
& =
		x_{{\lambda} + \bs{\ep}_{h}}
		c_{\widetilde{\lambda}_{h+1} + 1}
		\TAIJ( {\widetilde{\lambda}_{h+1} +1}, {\widetilde{\lambda}_{h+1} + {\lambda}_{h+2} } )
		c_{\widetilde{\lambda}_{h} + 1}
		\TAIJ( {\widetilde{\lambda}_{h}+1} , {\widetilde{\lambda}_{h} + {\lambda}_{h+1} - 1})
		.
\end{align*}
On the other hand, let
{${{A}}^{\star}_2 =  ( {\lambda} + E_{h, h+1} | O) $},
 {${{B}}^{\star}_2 = ( {\lambda}  + E_{h+1, h+2} | O)$} which have the same base matrices $A$ and $B$ as $A^\star_1$ and $B^\star_1$.
Using the proof of Proposition \ref{phiupper0}(2),
a straightforward calculation implies
\begin{align*}
 {\fcY}_3
&=
		{\phi}_{(   {\lambda} + E_{h, h+1} | O)}
		\big(
		{\phi}_{( {\lambda}  +  E_{h+1, h+2} | O)}	(x_{{\lambda} + \bs{\ep}_{h+2}} )\big ) =
		{\phi}_{( {\lambda} + E_{h, h+1} | O)}
		 (T_{(  {\lambda}  +  E_{h+1, h+2} | O ) } )   \\
&=
		{\phi}_{( \lambda+  E_{h, h+1}| O )}
		( x_{\ro({{B}})}
		T_{d_{{B}}} c_{{{B}}^{\star}_2}
		\Sigma_{{B}} 		 )=
		x_{\ro({{A}})}
		T_{d_{{A}}} c_{{{A}}^{\star}_2}
		\Sigma_{{A}}  \cdot
		T_{d_{{B}}} c_{{{B}}^{\star}_2}
		\Sigma_{{B}}
		 \\
& =
		x_{{\lambda} + \bs{\ep}_{h}}
		\TAIJ({\widetilde{\lambda}_{h}+1} , {\widetilde{\lambda}_{h} + {\lambda}_{h+1} })
		 \TAIJ( {\widetilde{\lambda}_{h+1} +1} , {\widetilde{\lambda}_{h+1} + {\lambda}_{h+2} } )
		.
\end{align*}
Similarly, let
{$M^{\star}_2 = ( {\lambda} + \bs{\ep}_{h} - \bs{\ep}_{h+1}  + E_{h+1, h+2} | O)$},
{$ {N}^{\star}_2 = ( {\lambda} + \bs{\ep}_{h+2} -\bs{\ep}_{h+1} + E_{h, h+1}  | O) $},
and we have
\begin{align*}
 {\fcY}_4
 &=	
		{\phi}_{( {\lambda} + \bs{\ep}_{h} - \bs{\ep}_{h+1} + E_{h+1, h+2} |O  )}
		\big(
		{\phi}_{(  {\lambda} + \bs{\ep}_{h+2} -\bs{\ep}_{h+1} + E_{h, h+1}  | O)}	(x_{{\lambda} + \bs{\ep}_{h+2}} )\big)\\
& =
		x_{{\lambda} + \bs{\ep}_{h}}
		\Sigma_{M}  \cdot
		\Sigma_{N}
		 =
		x_{{\lambda} + \bs{\ep}_{h}}
		\TAIJ( {\widetilde{\lambda}_{h+1} +1}, {\widetilde{\lambda}_{h+1} + {\lambda}_{h+2}} )
		\TAIJ( {\widetilde{\lambda}_{h}+1} , {\widetilde{\lambda}_{h} + {\lambda}_{h+1} - 1})
		 .
\end{align*}
\iffalse
To prove
{$
  {\fcY}_1 +   {q} {\fcY}_2 = - {\fcY}_3 +  {q}  {\fcY}_4
  $},
where
\begin{align*}
{\fcY}_1
&=
		 x_{({\lambda} + \bs{\ep}_{h})}
		c_{\widetilde{\lambda}_{h} + 1}
		 \TAIJ( {\widetilde{\lambda}_{h}+1} , {\widetilde{\lambda}_{h} + {\lambda}_{h+1} })
		c_{\widetilde{\lambda}_{h+1} + 1}
		 \TAIJ( {\widetilde{\lambda}_{h+1} +1}, {\widetilde{\lambda}_{h+1} + {\lambda}_{h+2} + 1 - 1}) , \\
  {q}  {\fcY}_2
&=
		  {q}
		x_{({\lambda} + \bs{\ep}_{h})}
		c_{\widetilde{\lambda}_{h+1} + 1}
		 \TAIJ( {\widetilde{\lambda}_{h+1} +1}, {\widetilde{\lambda}_{h+1} + {\lambda}_{h+2} + 1 - 1})
		c_{\widetilde{\lambda}_{h} + 1}
		 \TAIJ( {\widetilde{\lambda}_{h}+1}, {\widetilde{\lambda}_{h} + {\lambda}_{h+1} - 1}	), \\
{\fcY}_3
&=
		x_{({\lambda} + \bs{\ep}_{h})}
		 \TAIJ(  {\widetilde{\lambda}_{h}+1} , {\widetilde{\lambda}_{h} + {\lambda}_{h+1} }	)
		 \TAIJ( {\widetilde{\lambda}_{h+1} +1} , {\widetilde{\lambda}_{h+1} + {\lambda}_{h+2} + 1 - 1}) , \\
  {q} {\fcY}_4
&=
		x_{({\lambda} + \bs{\ep}_{h})}
		 \TAIJ( {\widetilde{\lambda}_{h+1} +1} , {\widetilde{\lambda}_{h+1} + {\lambda}_{h+2} + 1 - 1} )
		 \TAIJ( {\widetilde{\lambda}_{h}+1} , {\widetilde{\lambda}_{h} + {\lambda}_{h+1} - 1}	).
\end{align*}
\fi
Let {$ \widetilde{a} = \widetilde{\lambda}_{h}$}, {$ {b} = {\lambda}_{h+1}$},  {$\widetilde c = \widetilde{\lambda}_{h+2} $}. Then,
 by Lemma \ref{xcT}(2),
\begin{align*}
{\fcY}_1
&=
		  x_{{\lambda} + \bs{\ep}_{h}}  c_{{\widetilde{a}} + 1} \TAIJ( {\widetilde{a}}+1, {\widetilde{a}} + {b})  c_{{\widetilde{a}} + {b} + 1} \TAIJ({\widetilde{a}} + {b} +1, \widetilde c) \\
&=
	 {q}^{b}x_{{\lambda} + \bs{\ep}_{h}}
	( c_{{\widetilde{a}}+1} + T_{{\widetilde{a}}+1}^{-1} c_{{\widetilde{a}}+2} +  T_{{\widetilde{a}}+1}^{-1} T_{{\widetilde{a}}+2}^{-1}   c_{{\widetilde{a}}+3}
		+ \cdots
		+ T_{{\widetilde{a}}+1}^{-1}
		%T_{{\widetilde{a}}+2}^{-1} T_{{\widetilde{a}}+3}^{-1}
		\cdots T_{{\widetilde{a}} + {b}}^{-1}  c_{{\widetilde{a}} + {b} +1}
	)  c_{{\widetilde{a}} + {b} + 1} \TAIJ({\widetilde{a}} + {b} +1, \widetilde c), \\
  {q}  {\fcY}_2
&=
	  {q}
	x_{{\lambda} + \bs{\ep}_{h}}
	c_{{\widetilde{a}} + {b} + 1}\TAIJ({\widetilde{a}} + {b} +1, \widetilde{c})
		c_{{\widetilde{a}} + 1}\TAIJ({\widetilde{a}}+1, {\widetilde{a}} + {b}-1) \\
&=
	-{q}^{ b }	x_{{\lambda} + \bs{\ep}_{h}}
	( c_{{\widetilde{a}}+1} + T_{{\widetilde{a}}+1}^{-1} c_{{\widetilde{a}}+2} +  T_{{\widetilde{a}}+1}^{-1} T_{{\widetilde{a}}+2}^{-1}   c_{{\widetilde{a}}+3}
	+ \cdots
	+ T_{{\widetilde{a}}+1}^{-1}
	%T_{{\widetilde{a}}+2}^{-1} T_{{\widetilde{a}}+3}^{-1}
	\cdots T_{{\widetilde{a}} + {b}-1}^{-1}  c_{{\widetilde{a}} + {b}} )
	c_{{\widetilde{a}} + {b} + 1}\TAIJ({\widetilde{a}} + {b} +1, \widetilde c).
\end{align*}
Consequently,
\begin{align*}
&{\fcY}_1 +   {q}  {\fcY}_2\\
&=  	{q}^{b}x_{{\lambda} + \bs{\ep}_{h}} ( c_{{\widetilde{a}}+1} + T_{{\widetilde{a}}+1}^{-1} c_{{\widetilde{a}}+2} +  T_{{\widetilde{a}}+1}^{-1} T_{{\widetilde{a}}+2}^{-1}   c_{{\widetilde{a}}+3}
	+ \cdots
	+ T_{{\widetilde{a}}+1}^{-1}
	%T_{{\widetilde{a}}+2}^{-1} T_{{\widetilde{a}}+3}^{-1}
	\cdots T_{{\widetilde{a}} + {b}}^{-1}  c_{{\widetilde{a}} + {b} +1}  )  c_{{\widetilde{a}} + {b} + 1} \TAIJ({\widetilde{a}} + {b} +1, \widetilde c) \\
	&
	- {q}^{b }	x_{{\lambda} + \bs{\ep}_{h}}
	( c_{{\widetilde{a}}+1} + T_{{\widetilde{a}}+1}^{-1} c_{{\widetilde{a}}+2} +  T_{{\widetilde{a}}+1}^{-1} T_{{\widetilde{a}}+2}^{-1}   c_{{\widetilde{a}}+3}
	+ \cdots
	+ T_{{\widetilde{a}}+1}^{-1}
	%T_{{\widetilde{a}}+2}^{-1} T_{{\widetilde{a}}+3}^{-1}
	\cdots T_{{\widetilde{a}} + {b}-1}^{-1}  c_{{\widetilde{a}} + {b}} )
	c_{{\widetilde{a}} + {b} + 1}\TAIJ({\widetilde{a}} + {b} +1, \widetilde c) \\
&=	- {q}^{  {b}  }x_{{\lambda} + \bs{\ep}_{h}} T_{{\widetilde{a}}+1}^{-1} T_{{\widetilde{a}}+2}^{-1} T_{{\widetilde{a}}+3}^{-1} \cdots T_{{\widetilde{a}} + {b}}^{-1}   \TAIJ({\widetilde{a}} + {b} +1, \widetilde{c}) \\
&=	- {q}^{  {b}  }
		\big(
			{q}^{    - {b}  } x_{{\lambda} + \bs{\ep}_{h}}  T_{ {\widetilde{a}}+1 }   T_{ {\widetilde{a}}+2} \cdots T_{  {\widetilde{a}} + {b} }
		- {q}^{   - {b}  }  ({q} - 1)  x_{{\lambda} + \bs{\ep}_{h}}  \TAIJ( {\widetilde{a}}+1 ,   {\widetilde{a}} + {b}-1 )\big)
		\TAIJ({\widetilde{a}} + {b} +1, \widetilde c)
		 \ \mbox{(by Lemma \ref{xTinverse})}
		\\
&=
	- {q}^{  {b}  }   {q}^{   - {b}} x_{{\lambda} + \bs{\ep}_{h}}T_{ {\widetilde{a}}+1 }   T_{ {\widetilde{a}}+2} \cdots T_{ {\widetilde{a}} + {b} }  \TAIJ({\widetilde{a}} + {b} +1, \widetilde c)
	+{q}^{  {b}  }   {q}^{   - {b} }  ({q} - 1)   x_{{\lambda} + \bs{\ep}_{h}}  \TAIJ( {\widetilde{a}}+1 ,   {\widetilde{a}} + {b}-1 )   \TAIJ({\widetilde{a}} + {b} +1, \widetilde c)
	\\
&=
	-x_{{\lambda} + \bs{\ep}_{h}}    T_{ {\widetilde{a}}+1 }   T_{ {\widetilde{a}}+2} \cdots T_{ {\widetilde{a}} + {b} }  \TAIJ({\widetilde{a}} + {b} +1, \widetilde c)
	+  ({q} - 1)  x_{{\lambda} + \bs{\ep}_{h}}   \TAIJ({\widetilde{a}}+1 ,   {\widetilde{a}} + {b} - 1)  \TAIJ({\widetilde{a}} + {b} +1, \widetilde c).
\end{align*}
On the other hand, we have
\begin{align*}
{\fcY}_3
&=	x_{{\lambda} + \bs{\ep}_{h}} 	\TAIJ({\widetilde{a}}+1, {\widetilde{a}} + {b})	\TAIJ({\widetilde{a}} + {b} +1, \widetilde c), \\
  {q}  {\fcY}_4
&=		 {q} 	x_{{\lambda} + \bs{\ep}_{h}}  \TAIJ({\widetilde{a}} + {b} +1, \widetilde c) \TAIJ({\widetilde{a}}+1, {\widetilde{a}} + {b}-1)
=		{q}	x_{{\lambda} + \bs{\ep}_{h}}  \TAIJ({\widetilde{a}}+1, {\widetilde{a}} + {b}-1) \TAIJ({\widetilde{a}} + {b} +1, \widetilde c),
\end{align*}
and
\begin{align*}
-{\fcY}_3 +  {q}  {\fcY}_4
&=
		- x_{{\lambda} + \bs{\ep}_{h}}  T_{{\widetilde{a}}+1} T_{{\widetilde{a}}+2} \cdots T_{{\widetilde{a}} + {b}}  \TAIJ({\widetilde{a}} + {b} +1, \widetilde c)
		+ ( {q} - 1)x_{{\lambda} + \bs{\ep}_{h}}  \TAIJ({\widetilde{a}}+1, {\widetilde{a}} + {b}-1) \TAIJ({\widetilde{a}} + {b} +1, \widetilde c).
\end{align*}
Hence,
{$  {\fcY}_1 +  {q}  {\fcY}_2 =- {\fcY}_3 +    {q} {\fcY}_4$}, as desired.
\end{proof}

\begin{rems}\label{raw}
%(1) Using Proposition \ref{prop_PhiAPhiB}, one may write down the multiplication formulas for $\phi_\Xd\phi_\Ad$, where $X$ is one of the six matrices given in \eqref{even-odd},
%$X=(\lambda|O), (\lambda-E_{j,j}|E_{j,j})$, $(\lambda-E_{h+1, h+1}+ E_{h, h+1}|O), (\lambda-E_{h,h}+E_{h+1,h}|O), (\lambda-E_{h+1, h+1}| E_{h, h+1} )$ or $(\lambda-E_{h,h}|E_{h+1,h})$
%(together with the required SDP conditions)in the twisted queer $q$-Schur superalgebra.
%(2)
 The multiplication formulas in Theorems \ref{phiupper0}, \ref{phidiag1}, \ref{phiupper1} and \ref{philower1}
are the ``raw'' formulas which are the counterpart of \cite[Lemma 3.2]{BLM} or \cite[Lemma 3.1]{DG}.
 By normalising the $\phi$-basis,
a normalised version of these formulas using symmetric Gaussian polynomials in \cite[Lemma 3.4 and \S4.6]{BLM}
 or \cite[Proposition 4.4 and 4.5]{DG} are used in the construction of the BLM type realisations.
 However, an inspection on the coefficients of these multiplication formulas in Sections 5 and 6 shows that,
for the candidate basis $\{[\Ad]\mid \Ad \in \MNZ(n,r)\}$, setting
{$[\Ad]:={v}^{-l(w^+_{A}) + l(w_{0,\ro(A)})}\phi_\Ad$} is not enough to obtain
a normalised version of these formulas.\footnote{Here $w^+_{A}$ is the longest elements of the double coset
	associated with $A$; see \cite[Lemma 13.10]{DDPW} for a geometric interpretation of $-w^+_{A}+w_{0,ro(A)}$.}
In fact, the factor $c_\Ad$ in defining $\phi_\Ad$ needs to be normalised too.
%  since the coefficients involve not just the entries of $A$. Fortunately, we are able to present a new BLM type realization for ${\bf U}(\mathfrak q_n)$ by simply using these raw multiplication formulas. This requires certain adjustments on generators. 
For more details, see \cite{DGLW}.
%see footnotes \ref{raw1} and \ref{raw2} below.
\end{rems}

\section{An outline of some applications}
We end this paper with an outline of some works in progress.
The first main application is to use the multiplication formulas discovered in \S\S5--7 to give a BLM type realisation for the quantum queer supergroup.

Using a modified approach \cite{DF}, the work in \cite{BLM} can be interpreted as an algebra monomorphism $f_\bsS$ from the quantum linear group
${\bf U}(\mathfrak{gl}_n)$ to the direct product of $q$-Schur algebras $\bsS(n,r):=\sS_q(n,r;\mathbb Q(\up))$ ($q=\up^2$), the top horizontal map in the diagram below  (see Corollary \ref{q-Sch}).
\begin{center}
\begin{tikzpicture}[scale=1.5]
\fill(0,2) node {${\bf U}(\mathfrak{gl}_n)$}; \fill(4,2) node {$\prod_{r\geq0}\bsS(n,r)$};
\fill(2,2.2) node {$f_{\bsS}$}; \fill(2,0.2) node {$\exists f_{\boldsymbol{\mathcal Q}}?$};
\fill(-.15,1) node {$\cap$}; \fill(4.15,1) node {$\cap$};
\fill(0,0) node {${\bf U}(\mathfrak{q}_n)$}; \fill(4,0) node {$\prod_{r\geq0}\widetilde{\boldsymbol{\mathcal Q}}(n,r)$};
\draw[->](0,1.8) -- (0,.2); \draw[->](0.5,2) -- (3.2,2); \draw[->](4,1.8) -- (4,.3);
                                       \draw[->](0.5,0) -- (3.2,0);
\end{tikzpicture}
\end{center}

The image of $f_\bsS$ can be presented in terms of a basis and explicit multiplication formulas as relations.
Since ${\bf U}(\mathfrak{gl}_n)$ is a subalgebra of the quantum queer supergroup ${\bf U}(\mathfrak q_n)$ and, by Corollary \ref{q-Sch}, each $q$-Schur algebra is a subalgebra of the ``twisted'' queer $q$-Schur superalgebra $\widetilde{\boldsymbol{\mathcal Q}}(n,r)$ (see Remark \ref{sHom}), there are two vertical inclusion maps.\footnote{In the diagram, a twisted version $\widetilde{\boldsymbol{\mathcal Q}}(n,r)$ is used in order to make it true over $\mathbb Q$. If we use the complex number field $\mathbb C$, then the twisted version is isomorphic to $\boldsymbol{\mathcal Q}(n,r)$; see \cite{DGLW}.} Thus, it is natural to ask if there exists a superalgebra homomorphism $f_{\boldsymbol{\mathcal Q}}$ such that the above diagram commutes. We will answer this question in the forthcoming paper \cite{DGLW}.

Here we outline the method used in earlier works and point out the degree of difficulty in the queer case.
The fundamental multiplication formulas established in \S\S5--7 are related to the standard basis for queer $q$-Schur superalgebras.
In the non-super case, stabilisation properties of these formulas are established to construct the modified quantum linear groups, i.e., the algebra $\dot{\bf U}$ in \cite{Lubk} associated with quantum $\mathfrak{gl}_n$; see \cite{BLM} and \cite[Part 5]{DDPW} for more details. For the algebra $\dot{\bf U}$ (without 1) of ${\bf U}={\bf U}(\mathfrak{gl}_{m|n})$, a similar work has been done; see \cite{DG, DGZ2}.
Then, by introducing certain ``long elements'' from the standard basis, the ``short'' multiplication formulas can be extended to these long elements. One then uses these long element multiplication formulas to define a subalgebra inside the direct product of $q$-Schur algebra/superalgebras. By further checking the defining relations, one proves that this subalgebra is isomorphic to a quantum linear group/supergroup.

It has been expected for years that such a construction should be generalised to quantum queer supergroup $\mathbf U(\mathfrak q_n)$.
However, deriving the multiplication formulas by the odd generators became the main hurdle. Just as indicated in the theorems in Section 6, there are no such explicit formulas! However, the SDP condition introduced in the paper provides a path for us to overcome the hurdle. The work \cite{DGLW} will complete the task above for quantum queer supergroups. In this way, we also reestablish the Schur--Olshanski duality given by truncating an infinite sum to a finite one as surjective algebra homomorphisms from $\mathbf U(\mathfrak q_n)$ to queer $q$-Schur algebras.

We then can give another important application which establishes the Schur--Olshanski duality at the integral level. Thus, representations of the hyperalgebra of $\mathbf U(\mathfrak q_n)$, especially those polynomial ones, in positive quantum characteristic can be investigated with a connection to the modular representations of Hecke--Clifford superalgebras, generalising some of the classical theories in \cite{BK1, BK2} to the quantum case. For more details, see the forthcoming paper \cite{DWZ}.

\appendix
\section{Some multiplication formulas in $\HCR$}
In this appendix, we prove \eqref{A.1}. Let $(\diamondsuit)_k:=x_{{{\lambda}_{(h)}^{-}}} \Big(\TDAM c_{\tilde{a}_{h,k}}\Big) \TDIJ(\widetilde a_{h,k}-1,\widetilde a_{h,k}-a_{h,k}+1)
	%T_{\tilde{a}_{h,k}-1} \cdots T_{\tilde{a}_{h,k}-q_j}
	c_{\Ad}
	\Sigma_A$.

\begin{lem}\label{SDP-FA}
Let {$h \in [1,n-1]$}  and {$\Ad =  ({\SEE{a}_{i,j}} | {\SOE{a}_{i,j}})   \in \MNZ(n,r)$}. Assume $A=\lfloor\Ad\rfloor$  and
{${\lambda} = \ro(A)$}. %, and set $q_j=j-\BK(h,k)$.
The following formula holds in $\HCR$. \begin{equation}\label{FSDP}
\begin{aligned}
(\diamondsuit)_k&=% \Big\{
	{(-1)}^{{\SOE{\widetilde{a}}}_{h-1,k} + \SOE{a}_{h,k}}  	T_{(\SE{A} - E_{h,k}| \SO{A}+ E_{h+1, k})} \\
	& \qquad +
	{(-1)}^{{\SOE{\widetilde{a}}}_{h-1,k} + \SOE{a}_{h,k}+1}
	{q}^{ -1}  \STEPPDR{a_{h+1, k} +1}
	T_{(\SE{A} - E_{h,k} + 2E_{h+1, k} | \SO{A}  -E_{h+1, k})} \\
	& \qquad +
	{(-1)}^{{\SOE{\widetilde{a}}}_{h-1,k}+1} {q}^{ a_{h,k}-1}
	\STEP{ \SEE{a}_{h+1,k}+1}
	T_{(\SE{A} + E_{h+1,k} | \SO{A}  - E_{h,k} )}.
%\Big\}.
\end{aligned}
\end{equation}
\end{lem}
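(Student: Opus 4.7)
The proof proceeds by direct computation in $\HCR$, paralleling the four-case analysis from the proof of Theorem \ref{phiupper0}(3) (which underlies Corollary \ref{fkM}) but accommodating the extra factor $c_{\widetilde a_{h,k}}$ inserted between $\TDAM$ and $\TDIJ$. First I apply \eqref{eq_ta+} with $\nu^- = \nu_{A^-_{h,k}}$ to extract $x_{\nu^-}$:
$$
(\diamondsuit)_k = x_{\lambda_{(h)}^-\backslash {}'\!\nu^-}\,\TDAM\cdot x_{\nu^-}\,c_{\widetilde a_{h,k}}\,\TDIJ(\widetilde a-1,\widetilde a-a+1)\,c_\Ad\,\Sigma_A,
$$
then decompose $c_\Ad = \ckbefore\cdot\cmiddle\cdot\ckafter$ as in \eqref{cA-decomp}, with $\cmiddle = (c_{q,\widetilde a_{h-1,k}+1,\widetilde a_{h,k}})^{a^{\bar 1}_{h,k}}(c_{q,\widetilde a_{h,k}+1,\widetilde a_{h+1,k}})^{a^{\bar 1}_{h+1,k}}$. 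Since the indices of $\TDIJ$ lie in $[\widetilde a-a+1,\widetilde a-1]$ (disjoint from those of $\ckbefore$ and $\ckafter$), both $\ckbefore$ and $\ckafter$ commute with $\TDIJ$; moving the odd element $c_{\widetilde a_{h,k}}$ past $\ckbefore$ produces the overall sign $(-1)^{\widetilde a^{\bar 1}_{h-1,k}}$ that appears as a global factor in \eqref{FSDP}. Lemma \ref{Tnsum}(2) then converts $\TDIJ(\widetilde a-1,\widetilde a-a+1)\,\Sigma_A$ into $\TAIJ(\widetilde a,\widetilde a+b-1)\,\Sigma_{A^-_{h,k}}$, setting the stage for Lemma \ref{xcT}(2).

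Next I split into the four cases $(a^{\bar 1}_{h,k}, a^{\bar 1}_{h+1,k}) \in \{0,1\}^2$, analogous to Cases 1--4 in the proof of Theorem \ref{phiupper0}(3). In each case, $\cmiddle$ and $c_{\widetilde a_{h,k}}$ combine under Lemma \ref{xcT}(2)--(4), the supercommutation laws, the squaring identity $(c_{q,i,j})^2 = -\STEPP{j-i+1}$, and the relation $x_{\nu^-}\TAIJ(\widetilde a,\widetilde a+b-1) = \STEP{b+1} x_{\nu^-}$ (from \eqref{3.3}-type manipulations) to yield a sum of at most three $c$-monomials. These correspond precisely to the $c_\Bd$ elements for the three matrices $(A^{\bar 0}-E_{h,k}\,|\,A^{\bar 1}+E_{h+1,k})$, $(A^{\bar 0}-E_{h,k}+2E_{h+1,k}\,|\,A^{\bar 1}-E_{h+1,k})$, and $(A^{\bar 0}+E_{h+1,k}\,|\,A^{\bar 1}-E_{h,k})$ appearing in \eqref{FSDP}: the first reflects $c_{\widetilde a_{h,k}}$ being absorbed as a new odd $c$-factor at the $(h+1,k)$-block of $A^-_{h,k}$ (via $x_{\nu^-} c_{\widetilde a}\TAIJ(\widetilde a,\widetilde a+b-1) = x_{\nu^-} c_{q,\widetilde a,\widetilde a+b}$); the second arises when a pre-existing odd factor at $(h+1,k)$ forces the product $c_{\widetilde a}\cdot c_{q,\widetilde a+1,\widetilde a+b}$ to collapse via the squaring identity and produces the coefficient $q^{-1}\STEPPDR{a_{h+1,k}+1}$; the third comes from pairing $c_{\widetilde a}$ with the odd factor at $(h,k)$, producing the coefficient $q^{a_{h,k}-1}\STEP{a^{\bar 0}_{h+1,k}+1}$. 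Reassembling $x_{\lambda_{(h)}^-\backslash {}'\!\nu^-}\TDAM x_{\nu^-} = x_{\lambda_{(h)}^-}\TDAM$ via \eqref{eq_ta+} and invoking Convention \ref{CONV} to zero out inadmissible terms then yields the unified formula.

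The hard part will be the delicate sign and coefficient bookkeeping through the four-case analysis. Tracking signs arising from supercommutation of $c_{\widetilde a_{h,k}}$ past each odd factor of $\cmiddle$, especially in the case $(1,1)$ where the product $c_{\widetilde a}\cdot c_{q,\widetilde a-a+1,\widetilde a}\cdot c_{q,\widetilde a+1,\widetilde a+b}$ generates multiple cross terms via repeated applications of $c_i^2=-1$ and the squaring identity, and matching them to the prescribed signs $(-1)^{\widetilde a^{\bar 1}_{h-1,k}+a^{\bar 1}_{h,k}}$, $(-1)^{\widetilde a^{\bar 1}_{h-1,k}+a^{\bar 1}_{h,k}+1}$, and $(-1)^{\widetilde a^{\bar 1}_{h-1,k}+1}$, is the crux of the verification. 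Similarly, reconciling the Gaussian-integer coefficients requires careful use of the telescoping identities already developed in the proof of Theorem \ref{phiupper0}(3).
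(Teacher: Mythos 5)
Your proposal is correct and takes essentially the same approach as the paper's own appendix proof: extract $x_{\nu^-}$ via \eqref{eq_ta+}, decompose $c_\Ad$ into before/middle/after, track the common sign $(-1)^{\widetilde a^{\bar 1}_{h-1,k}}$ from commuting the odd element $c_{\widetilde a_{h,k}}$ past $\ckbefore$, convert the tail $\Sigma_A$ to $\Sigma_{A^-_{h,k}}$, and run the four-case analysis on $(a^{\bar 1}_{h,k},a^{\bar 1}_{h+1,k})$ using Lemma~\ref{xcT}, the squaring identity, and the Gaussian-integer telescoping. The only cosmetic difference is that the paper first pushes $c_\Ad$ past $x_\nu$ (becoming $c'_\Ad$) and manipulates the block $\ufm^-=x_{\D_-}c_{\widetilde a}c'_\Ad x_{\nu^-}$, while you keep the $c$-factors on the right of $x_{\nu^-}$ and invoke Lemma~\ref{Tnsum}(2) directly; these are interchangeable bookkeeping choices leading to the same case-by-case computation.
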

\begin{proof} Throughout the proof, we keep the notations in the proof of Theorem \ref{philower1} as well as
 Notation \ref{rem_phi_short_0}. In particular, $\nu = \nu_A$, $\nu^- = \nu_{\AM}$ and $\delta^-$ are as in \eqref{nu+nu-} and
 $\tilde{a}=\tilde{a}_{h,k},a=a_{h,k}, b=a_{h+1,k}.$ We also record the following some of which have been (implicitly) used before.
\begin{equation}\label{diampf-2}
\aligned
(1)\;&x_{\nu^-} = x_{\D_{-}} x_{\delta^-} =x_{\delta^-} \TAIJ({\widetilde a},  {\widetilde a+b-1}), \qquad x_{\nu}=x_{\delta^-} \TDIJ({\widetilde a-1},  {\widetilde a-a+1});\\
(2)\;&x_\nu c_{q,\tilde{a}-a+1,\tilde{a}}=(c_{q,\tilde{a}-a+1,\tilde{a}})'x_\nu, \qquad  x_\nu c_{q,\tilde{a}+1,\tilde{a}+b}=(c_{q,\tilde{a}+1,\tilde{a}+b})'x_\nu;\\
(3)\;&x_{\nu^-}c_{q,\tilde{a}-a+1,\tilde{a}-1}=(c_{q,\tilde{a}-a+1,\tilde{a}-1})'x_{\nu^-},\qquad  x_{\nu^-}c_{q,\tilde{a},\tilde{a}+b}=(c_{q,\tilde{a},\tilde{a}+b})'x_{\nu^-};\\
(4)\;&x_{\delta^-}c_{q,\tilde{a}-a+1,\tilde{a}-1}=(c_{q,\tilde{a}-a+1,\tilde{a}-1})'x_{\delta^-},\quad  x_{\delta^-}c_{\tilde{a}}=c_{\tilde{a}}x_{\delta^-},\quad
x_{\delta^-}c_{q,\tilde{a}+1,\tilde{a}+b}=(c_{q,\tilde{a}+1,\tilde{a}+b})'x_{\delta^-}.
\endaligned
\end{equation}

Because it can not be able to commute $x_{\nu^-}$ with $(c_{q,\tilde{a}-a+1,\tilde{a}})'$ and $c_{q,\tilde{a}+1,\tilde{a}+b}$ as above formulas, in the following proof, we usually rewrite
\begin{equation}\label{diampf-5}
(c_{q,\tilde{a}-a+1,\tilde{a}})'=(c_{q,\tilde{a}-a+1,\tilde{a}-1})'+q^{a-1}c_{\tilde{a}}, \quad c_{q,\tilde{a}+1,\tilde{a}+b}=c_{q,\tilde{a},\tilde{a}+b}-q^bc_{\tilde{a}}.
\end{equation}
Since $s_{\tilde{a}},s_{\tilde{a}+1},\ldots,s_{\tilde{a}+b-1}\in \fS_{\nu^-}$,
\begin{equation}\label{diampf-3}
x_{\nu^-}\TAIJ({\widetilde a},  {\widetilde a+b-1})=\STEP{b+1}x_{\nu^-},
\end{equation} and from Lemma \ref{xcT},
\begin{equation}\label{diampf-4}
x_{\nu^-}c_{\tilde{a}}\TAIJ({\widetilde a},  {\widetilde a+b-1})=x_{\nu^-}c_{q,\tilde{a},\tilde{a}+b}.
\end{equation}
By \eqref{eq_ta+} and \eqref{diampf-2}(1),
$$x_{{{\lambda}_{(h)}^{-}}} {\TDAM}= x_{{{\lambda}_{(h)}^{-}}\backslash{}'\!\nu^-}  {\TDAM} x_{\D_{-}} x_{\delta^-}.$$
Thus, $(\diamondsuit)_k$ can be rewritten as
\begin{align*}
(\diamondsuit)_k
&=x_{{{\lambda}_{(h)}^{-}}\backslash{}'\!\nu^-}  \TDAM x_{\D_{-}} x_{\delta^-}c_{\tilde{a}} \TDIJ({\tilde{a}-1},  {\tilde{a}-a +1}) c_{\Ad}
	\Sigma_A.
\end{align*}

Moreover, by \eqref{diampf-2}(2)\&(4), $x_{\delta^-}c_{\tilde{a}} =c_{\tilde{a}}x_{\delta^-}$ and $x_\nu c_{\Ad}=c'_{\Ad}x_\nu$. Combining $x_{\nu}=x_{\delta^-} \TDIJ({\widetilde a-1},  {\widetilde a-a+1})$ and
$x_\nu\Sigma_A=x_\mu=x_{\nu^-}\Sigma_{A^-_{h,k}}$ gives
\begin{equation}\label{diamon-k}
(\diamondsuit)_k=x_{{{\lambda}_{(h)}^{-}}\backslash{}'\!\nu^-}  \TDAM x_{\D_{-}}c_{\tilde{a}} x_{\delta^-}\TDIJ({\tilde{a}-1},  {\tilde{a}-a +1}) c_{\Ad}
	\Sigma_A=x_{{{\lambda}_{(h)}^{-}}\backslash{}'\!\nu^-}\TDAM  \underbrace{ x_{\D_{-}}c_{\tilde{a}}c'_{\Ad}x_{\nu^-}}_{=\ufm^-}\Sigma_{A^-_{h,k}}.
\end{equation}
 Since $c'_{\Ad}=({\cbefore})' ({\cmiddle})' ({\cafter})'$ (see \eqref{cA-decomp}), moving $({\cmiddle})'$ to the left yields $$c'_{\Ad}=(-1)^{(a^{\bar 1}_{h,k}+a^{\bar 1}_{h+1,k})\tilde{a}^{\bar1}_{h-1,k}}({\cmiddle})'({\cbefore})'({\cafter})'.$$
On the other hand, $x_{\nu^-}=x_{\delta^-} \TAIJ({\widetilde a},  {\widetilde a+b-1})$, $({\cbefore})'({\cafter})'x_{\delta^-} =x_{\delta^-}({\cbefore})({\cafter})$, and $\TAIJ({\widetilde a},  {\widetilde a+b-1})$ commutes with ${\cbefore}{\cafter}$. The middle part $\ufm^-$ becomes
\begin{equation*}\label{mathcal-M}
\ufm^-= (-1)^{(a^{\bar 1}_{h,k}+a^{\bar 1}_{h+1,k})\tilde{a}^{\bar1}_{h-1,k}} x_{\D_{-}}c_{\tilde{a}} ({\cmiddle})'x_{\delta^-}\TAIJ({\widetilde a},  {\widetilde a+b-1}){\cbefore}{\cafter}.
\end{equation*}

{\it The following discussion is parallel to the cases in the proof of Theorem \ref{phiupper1} according to the four values of $(\SOE{a}_{h,k},\SOE{a}_{h+1,k})$ in
${\cmiddle}= (c_{{q}, {\widetilde{a}}_{h-1,k}+1, {\widetilde{a}}_{h,k}})^{\SOE{a}_{h,k}}
		\cdot (c_{{q}, {\widetilde{a}}_{h,k}+1, {\widetilde{a}}_{h+1,k}})^{\SOE{a}_{h+1,k}}$ to determine $\ufm^-$ and then $(\diamondsuit)_k$.}

{\bf Case 1 ({$\SOE{a}_{h,k} = 0$}, {$\SOE{a}_{h+1,k} = 0$}).}
 In this case,  {${\cmiddle} = 1$} and so,
\begin{equation*}
\begin{aligned}
 \ufm^-&=  x_{\D_{-}}c_{\tilde{a}} x_{\delta^-}\TAIJ({\widetilde a},  {\widetilde a+b-1}){\cbefore}{\cafter}
 =x_{\D_{-}}x_{\delta^-}c_{\tilde{a}}\TAIJ({\widetilde a},  {\widetilde a+b-1}){\cbefore}{\cafter}
\end{aligned}
\end{equation*}
 Applying \eqref{diampf-2}(2) and \eqref{diampf-4}, we have
 $x_{\D_{-}}x_{\delta^-}c_{\tilde{a}}\TAIJ({\widetilde a},  {\widetilde a+b-1})=x_{\nu^-}c_{{q}, \tilde{a}, \tilde{a}+b}.$
So,
\begin{equation*}
\begin{aligned}
 \ufm^-=x_{\nu^-}c_{{q}, \tilde{a}, \tilde{a}+b}{\cbefore}{\cafter}&=(-1)^{\tilde{a}^{\bar1}_{h-1,k}} x_{\nu^-}{\cbefore}c_{{q}, \tilde{a}, \tilde{a}+b}{\cafter}\\
 &=(-1)^{\tilde{a}^{\bar1}_{h-1,k}} x_{\nu^-}c_{(A^{\bar{0}}-E_{h,k}|A^{\bar{1}}+E_{h+1,k})}
\end{aligned}
\end{equation*}

Hence, \eqref{diamon-k} becomes
\begin{equation}\label{case1}
\begin{aligned}
(\diamondsuit)_k&=x_{{{\lambda}_{(h)}^{-}}\backslash{}'\!\nu^-}\TDAM ((-1)^{\tilde{a}^{\bar1}_{h-1,k}} x_{\nu^-}c_{(A^{\bar{0}}-E_{h,k}|A^{\bar{1}}+E_{h+1,k})})\Sigma_{A^-_{h,k}}\\
&=(-1)^{\tilde{a}^{\bar1}_{h-1,k}} x_{{\lambda}_{(h)}^{-}}\TDAM c_{(A^{\bar{0}}-E_{h,k}|A^{\bar{1}}+E_{h+1,k})}\Sigma_{A^-_{h,k}}=(-1)^{\tilde{a}^{\bar1}_{h-1,k}}T_{(A^{\bar{0}}-E_{h,k}|A^{\bar{1}}+E_{h+1,k})}.
\end{aligned}
\end{equation}

{\bf Case 2 ({$\SOE{a}_{h,k} = 0$}, {$\SOE{a}_{h+1,k} = 1$}).}
Here {${\cmiddle} = c_{{q}, \tilde{a} +1, \tilde{a} +b}$}. By \eqref{diampf-2}(4), $$ c_{\tilde{a}}(c_{{q}, \tilde{a} +1, \tilde{a} +b})' x_{\delta^-}=x_{\delta^-}c_{\tilde{a}}  (c_{{q}, \tilde{a} +1, \tilde{a} +b})$$ Thus,
\begin{align*}
\ufm^- &={(-1)}^{\YK }
	x_{{\D_{-}}} c_{\tilde{a}}  (c_{{q}, \tilde{a} +1, \tilde{a} +b})' x_{\delta^-} \TAIJ( {\tilde{a}}, {\tilde{a}+b-1}) {\cbefore} {\cafter}\\
&={(-1)}^{\YK }x_{\nu^-}
	c_{\tilde{a}}  (c_{{q}, \tilde{a} +1, \tilde{a} +b}) \TAIJ( {\tilde{a}}, {\tilde{a}+b-1}) {\cbefore} {\cafter}.
\end{align*}
Since $c_{\tilde{a}} c_{{q}, \tilde{a} +1, \tilde{a} +b}=-c_{{q}, \tilde{a} +1, \tilde{a} +b}c_{\tilde{a}}$, \eqref{diampf-5} and  \eqref{diampf-2}(3) imply
\begin{equation*}
\begin{aligned}
&x_{\nu^-} c_{\tilde{a}} c_{{q}, \tilde{a} +1, \tilde{a} +b} \TAIJ( {\tilde{a}}, {\tilde{a}+b-1})=-x_{\nu^-} c_{{q}, \tilde{a} +1, \tilde{a} +b} c_{\tilde{a}} \TAIJ( {\tilde{a}}, {\tilde{a}+b-1})\\
 &=  - x_{\nu^-}  (c_{{q}, \tilde{a}, \tilde{a} +b} -  {q}^b c_{\tilde{a}} ) c_{\tilde{a}} \TAIJ( {\tilde{a}}, {\tilde{a}+b-1})=  -x_{\nu^-} c_{{q}, \tilde{a}, \tilde{a} +b} c_{\tilde{a}} \TAIJ( {\tilde{a}}, {\tilde{a}+b-1}) + x_{\nu^-}  {q}^b c_{\tilde{a}} c_{\tilde{a}} \TAIJ( {\tilde{a}}, {\tilde{a}+b-1}).
\end{aligned}
\end{equation*}
By \eqref{diampf-2}(3)  again and \eqref{diampf-4},
$$\begin{aligned}
-x_{\nu^-} c_{{q}, \tilde{a}, \tilde{a} +b} c_{\tilde{a}} \TAIJ( {\tilde{a}}, {\tilde{a}+b-1}) &= -c'_{{q}, \tilde{a}, \tilde{a} +b} x_{\nu^-}c_{\tilde{a}} \TAIJ( {\tilde{a}}, {\tilde{a}+b-1})\\
&=-c'_{{q}, \tilde{a}, \tilde{a} +b}  x_{\nu^-}c_{q,\tilde{a},\tilde{a}+b}=-x_{\nu^-}c^2_{q,\tilde{a},\tilde{a}+b}={\STEPP{b+1}}x_{\nu^-}
\end{aligned}$$
and, by applying \eqref{diampf-3},
$x_{\nu^-}  {q}^b c_{\tilde{a}} c_{\tilde{a}} \TAIJ( {\tilde{a}}, {\tilde{a}+b-1})=-{q}^b\STEP{b+1}x_{\nu^-}.$
Hence,
\begin{align*}
\ufm^-&={(-1)}^{\YK }x_{\nu^-}({\STEPP{b+1}}-{q}^b\STEP{b+1}){\cbefore} {\cafter}\\
&={(-1)}^{\YK +1}q^{-1}{\STEPPDR{b+1}}x_{\nu^-}c_{(A^{\bar 0}-E_{h,k}+2E_{h+1,k}|A^{\bar 1}-E_{h+1,k})},
\end{align*}
and
\begin{equation}\label{case2}
\begin{aligned}
(\diamondsuit)_k&={(-1)}^{\YK +1}q^{-1}{\STEPPDR{b+1}}x_{\nu^-}c_{(A^{\bar 0}-E_{h,k}+2E_{h+1,k}|A^{\bar 1}-E_{h+1,k})}\Sigma_{A^-_{h,k}}\\
&={(-1)}^{\YK+1}q^{-1}{\STEPPDR{b+1}}T_{(A^{\bar 0}-E_{h,k}+2E_{h+1,k}|A^{\bar 1}-E_{h+1,k})}.
\end{aligned}
\end{equation}

{\bf Case 3 ({$\SOE{a}_{h,k} = 1$}, {$\SOE{a}_{h+1,k} = 0$)}.}
We have  {${\cmiddle} = c_{{q}, \tilde{a}-a+1, \tilde{a}}$} and
\begin{equation*}
\ufm^-= (-1)^{\tilde{a}^{\bar1}_{h-1,k}} x_{\D_{-}}c_{\tilde{a}} ( c_{{q}, \tilde{a}-a+1, \tilde{a}})'x_{\delta^-}\TAIJ({\widetilde a},  {\widetilde a+b-1}){\cbefore}{\cafter}.
\end{equation*}
Applying \eqref{diampf-2}(4),\eqref{diampf-5},\eqref{diampf-3} and \eqref{diampf-4} yields
\begin{equation*}
\begin{aligned}
&x_{\D_{-}}c_{\tilde{a}} ( c_{{q}, \tilde{a}-a+1, \tilde{a}})'x_{\delta^-}\TAIJ({\widetilde a},  {\widetilde a+b-1})\\
&=x_{\D_{-}}c_{\tilde{a}}((c_{{q}, \tilde{a}-a+1, \tilde{a} -1})' + {q}^{a-1} c_{\tilde{a}} )x_{\delta^-}\TAIJ({\widetilde a},  {\widetilde a+b-1})\\
&=x_{\D_{-}}c_{\tilde{a}}(c_{{q}, \tilde{a}-a+1, \tilde{a} -1})' x_{\delta^-}\TAIJ({\widetilde a},  {\widetilde a+b-1})+
{q}^{a-1}x_{\D_{-}}c_{\tilde{a}}c_{\tilde{a}}x_{\delta^-}\TAIJ({\widetilde a},  {\widetilde a+b-1})\\
&=-x_{\D_{-}}(c_{{q}, \tilde{a}-a+1, \tilde{a} -1})'x_{\delta^-}c_{\tilde{a}}\TAIJ({\widetilde a},  {\widetilde a+b-1})-{q}^{a-1}x_{\D_{-}}x_{\delta^-}\TAIJ({\widetilde a},  {\widetilde a+b-1})\\
&=-x_{\nu^-}(c_{{q}, \tilde{a}-a+1, \tilde{a} -1})(c_{{q}, \tilde{a}, \tilde{a}+b})-{q}^{a-1}{\STEP{b+1}}x_{\nu^-},
\end{aligned}
\end{equation*}
and
\begin{equation*}
\begin{aligned}
\ufm^-&= (-1)^{\tilde{a}^{\bar1}_{h-1,k}}(-x_{\nu^-}(c_{{q}, \tilde{a}-a+1, \tilde{a} -1})(c_{{q}, \tilde{a}, \tilde{a}+b})-{q}^{a-1}{\STEP{b+1}}x_{\nu^-}){\cbefore}{\cafter}\\
&=(-1)^{\tilde{a}^{\bar1}_{h-1,k}+1}x_{\nu^-}{\cbefore}(c_{{q}, \tilde{a}-a+1, \tilde{a} -1})(c_{{q}, \tilde{a}, \tilde{a}+b}){\cafter}
+(-1)^{\tilde{a}^{\bar1}_{h-1,k}+1}{q}^{a-1}{\STEP{b+1}}x_{\nu^-}{\cbefore}{\cafter}\\
&=(-1)^{\tilde{a}^{\bar1}_{h-1,k}+1}(x_{\nu^-}c_{(A^{\bar0}-E_{h,k}|A^{\bar1}+E_{h+1,k})}+{q}^{a-1}{\STEP{b+1}}x_{\nu^-}c_{(A^{\bar0}+E_{h+1,k}|A^{\bar1}-E_{h,k})}).
\end{aligned}
\end{equation*}
Consequently, \eqref{diamon-k} becomes
\begin{equation}\label{case3}
\begin{aligned}
(\diamondsuit)_k&=x_{{{\lambda}_{(h)}^{-}}\backslash{}'\!\nu^-}\TDAM \{(-1)^{\tilde{a}^{\bar1}_{h-1,k}+1}(x_{\nu^-}c_{(A^{\bar0}-E_{h,k}|A^{\bar1}+E_{h+1,k})}\\
&\quad+{q}^{a-1}{\STEP{b+1}}x_{\nu^-}c_{(A^{\bar0}+E_{h+1,k}|A^{\bar1}-E_{h,k})})\}\Sigma_{A^-_{h,k}}\\
&=(-1)^{\tilde{a}^{\bar1}_{h-1,k}+1}(T_{(A^{\bar0}-E_{h,k}|A^{\bar1}+E_{h+1,k})}+{q}^{a-1}{\STEP{b+1}}T_{(A^{\bar0}+E_{h+1,k}|A^{\bar1}-E_{h,k})}).
\end{aligned}
\end{equation}

{\bf Case 4 ({$\SOE{a}_{h,k} = 1$}, {$\SOE{a}_{h+1,k} = 1$}).}
Here we have {${\cmiddle} = c_{{q}, \tilde{a}-a+1, \tilde{a}} c_{{q}, \tilde{a} +1, \tilde{a} +b}$} and
\begin{equation*}
\ufm^-=  x_{\D_{-}}\underbrace{c_{\tilde{a}} (c_{{q}, \tilde{a}-a+1, \tilde{a}} c_{{q}, \tilde{a} +1, \tilde{a} +b})'x_{\delta^-}}_{=\mathcal{N}}\TAIJ({\widetilde a},  {\widetilde a+b-1}){\cbefore}{\cafter}.
\end{equation*}
Since
$$
\begin{aligned}
&c_{\tilde{a}}c'_{{q}, \tilde{a}-a+1, \tilde{a}}=c_{\tilde{a}}(c'_{{q}, \tilde{a}-a +1, \tilde{a}-1}+q^{a-1}c_{\tilde{a} })=-c'_{{q}, \tilde{a}-a +1, \tilde{a}-1}c_{\tilde{a}}-q^{a-1},\\
&c_{\tilde{a}}c_{{q}, \tilde{a}, \tilde{a} +b}=c_{\tilde{a}}(c_{{q}, \tilde{a}+1, \tilde{a} +b}+q^bc_{\tilde{a}})=-c_{{q}, \tilde{a}+1, \tilde{a} +b}c_{\tilde{a}}-q^b=-c_{{q}, \tilde{a}, \tilde{a} +b}c_{\tilde{a}}-2q^b,
\end{aligned}
$$
by \eqref{diampf-2}(4) and \eqref{diampf-5},
we have
\begin{equation*}
\begin{aligned}
\mathcal{N}&=c_{\tilde{a}} (c_{{q}, \tilde{a}-a+1, \tilde{a}})'( c_{{q}, \tilde{a} +1, \tilde{a} +b})'x_{\delta^-}\\
&=(-c'_{{q}, \tilde{a}-a +1, \tilde{a}-1}c_{\tilde{a}}-q^{a-1})x_{\delta^-}( c_{{q}, \tilde{a} +1, \tilde{a} +b})\\
&=x_{\delta^-}(-c_{{q}, \tilde{a}-a +1, \tilde{a}-1}c_{\tilde{a}}-q^{a-1})( c_{{q}, \tilde{a} +1, \tilde{a} +b}) \\
&=-x_{\delta^-}c_{{q}, \tilde{a}-a +1, \tilde{a}-1}c_{\tilde{a}}( c_{{q}, \tilde{a} +1, \tilde{a} +b})-q^{a-1} x_{\delta^-}( c_{{q}, \tilde{a} +1, \tilde{a} +b}).
\end{aligned}
\end{equation*}
Thus,
\begin{equation}\label{M}
\begin{aligned}
\ufm^-&= x_{\D_{-}}{\mathcal{N}}\TAIJ({\widetilde a},  {\widetilde a+b-1}){\cbefore}{\cafter}\\
&= x_{\D_{-}}\left\{-x_{\delta^-}c_{{q}, \tilde{a}-a +1, \tilde{a}-1}c_{\tilde{a}}( c_{{q}, \tilde{a} +1, \tilde{a} +b})-q^{a-1} x_{\delta^-}( c_{{q}, \tilde{a} +1, \tilde{a} +b})\right\}\TAIJ({\widetilde a},  {\widetilde a+b-1}){\cbefore}{\cafter}\\
&=\left\{-x_{\nu^-}c_{{q}, \tilde{a}-a +1, \tilde{a}-1}c_{\tilde{a}}( c_{{q}, \tilde{a} +1, \tilde{a} +b})\right\}\TAIJ({\widetilde a}, {\widetilde a+b-1}){\cbefore}{\cafter}\\
&\quad+\left\{-q^{a-1} x_{\nu^-}( c_{{q}, \tilde{a} +1, \tilde{a} +b})\right\}\TAIJ({\widetilde a},  {\widetilde a+b-1}){\cbefore}{\cafter}\\
&=\mathcal{M}_1+\mathcal{M}_2,
\end{aligned}
\end{equation}
where
 $$\begin{aligned}\mathcal{M}_1&= \left\{-x_{\nu^-}c_{{q}, \tilde{a}-a +1, \tilde{a}-1}c_{\tilde{a}}( c_{{q}, \tilde{a} +1, \tilde{a} +b})\right\}\TAIJ({\widetilde a},  {\widetilde a+b-1}){\cbefore}{\cafter},\\
 \mathcal{M}_2&= \left\{-q^{a-1} x_{\nu^-}( c_{{q}, \tilde{a} +1, \tilde{a} +b})\right\}\TAIJ({\widetilde a},  {\widetilde a+b-1}){\cbefore}{\cafter}.
 \end{aligned}$$
We now modify $\mathcal{M}_1$ and $\mathcal{M}_2$ into a desired form. Since
$$
\begin{aligned}
&x_{\nu^-}(c_{{q}, \tilde{a}-a +1, \tilde{a}-1})c_{\tilde{a}}( c_{{q}, \tilde{a}+1, \tilde{a} +b})\TAIJ({\widetilde a},  {\widetilde a+b-1})\\
&=x_{\nu^-}(c_{{q}, \tilde{a}-a +1, \tilde{a}-1})c_{\tilde{a}}( c_{{q}, \tilde{a}, \tilde{a} +b}-q^bc_{\tilde{a}})\TAIJ({\widetilde a},  {\widetilde a+b-1})\\
&=x_{\nu^-}(c_{{q}, \tilde{a}-a +1, \tilde{a}-1})(-c_{{q}, \tilde{a}, \tilde{a} +b}c_{\tilde{a}}+q^bc^2_{\tilde{a}})\TAIJ({\widetilde a},  {\widetilde a+b-1})\\
&=(-x_{\nu^-}(c_{{q}, \tilde{a}-a +1, \tilde{a}-1})(c_{{q}, \tilde{a}, \tilde{a} +b})c_{\tilde{a}}\TAIJ({\widetilde a},  {\widetilde a+b-1}))+
(-q^bx_{\nu^-}(c_{{q}, \tilde{a}-a +1, \tilde{a}-1})\TAIJ({\widetilde a},  {\widetilde a+b-1})),\\
\end{aligned}
$$
applying \eqref{diampf-2}(3) and \eqref{diampf-3} yields
$$
\begin{aligned}
&-x_{\nu^-}(c_{{q}, \tilde{a}-a +1, \tilde{a}-1})(c_{{q}, \tilde{a}, \tilde{a} +b})c_{\tilde{a}}\TAIJ({\widetilde a},  {\widetilde a+b-1})\\
&=-(c_{{q}, \tilde{a}-a +1, \tilde{a}-1})'(c_{{q}, \tilde{a}, \tilde{a} +b})'x_{\nu^-}c_{\tilde{a}}\TAIJ({\widetilde a},  {\widetilde a+b-1})=-(c_{{q}, \tilde{a}-a +1, \tilde{a}-1})'(c_{{q}, \tilde{a}, \tilde{a} +b})'x_{\nu^-}(c_{{q}, \tilde{a}, \tilde{a} +b})\\
&=-x_{\nu^-}(c_{{q}, \tilde{a}-a +1, \tilde{a}-1})(c_{{q}, \tilde{a}, \tilde{a} +b})^2
=\STEPP{b+1}x_{\nu^-}(c_{{q}, \tilde{a}-a +1, \tilde{a}-1}),
\end{aligned}
$$
and, by \eqref{diampf-4},
$$
-q^bx_{\nu^-}(c_{{q}, \tilde{a}-a +1, \tilde{a}-1})\TAIJ({\widetilde a},  {\widetilde a+b-1})=-q^b\STEP{b+1}x_{\nu^-}(c_{{q}, \tilde{a}-a +1, \tilde{a}-1}).
$$

Because of  $\STEPP{b+1}-q^b\STEP{b+1}=-q^{-1}{\STEPPDR{b+1}}$,
\begin{equation}\label{M_1}
\begin{aligned}
\mathcal{M}_1&= (-x_{\nu^-}c_{{q}, \tilde{a}-a +1, \tilde{a}-1}c_{\tilde{a}}( c_{{q}, \tilde{a} +1, \tilde{a} +b}))\TAIJ({\widetilde a},  {\widetilde a+b-1}){\cbefore}{\cafter}\\
&=q^{-1}{\STEPPDR{b+1}} x_{\nu^-}(c_{{q}, \tilde{a}-a +1, \tilde{a}-1}){\cbefore}{\cafter}\\
&=(-1)^{\tilde{a}^{\bar1}_{h-1,k}}q^{-1}{\STEPPDR{b+1}}x_{\nu^-}{\cbefore}(c_{{q}, \tilde{a}-a +1, \tilde{a}-1}){\cafter}\\
&=(-1)^{\tilde{a}^{\bar1}_{h-1,k}}q^{-1}\STEPPDR{b+1}x_{\nu^-}c_{(A^{\bar0}-E_{h,k}+2E_{h+1,k}|A^{\bar1}-E_{h+1,k})}.
\end{aligned}
\end{equation}

Now, we calculate $\mathcal{M}_2$. Since
$$
\begin{aligned}
-&q^{a-1} x_{\nu^-}( c_{{q}, \tilde{a} +1, \tilde{a} +b})\TAIJ({\widetilde a},  {\widetilde a+b-1}){\cbefore}{\cafter}\\
&=-q^{a-1} x_{\nu^-}( c_{{q}, \tilde{a}, \tilde{a} +b}-q^{b}c_{\tilde{a}})\TAIJ({\widetilde a},  {\widetilde a+b-1}){\cbefore}{\cafter}\;\;(\mbox{by \eqref{diampf-5}})\\
&=-q^{a-1}x_{\nu^-} c_{{q}, \tilde{a}, \tilde{a} +b}\TAIJ({\widetilde a},  {\widetilde a+b-1}){\cbefore}{\cafter}+
q^{a+b-1} x_{\nu^-}c_{\widetilde a}\TAIJ({\widetilde a},  {\widetilde a+b-1}){\cbefore}{\cafter}\\
&=\left(-q^{a-1}\STEP{b+1}x_{\nu^-} c_{{q}, \tilde{a}, \tilde{a} +b}+q^{a+b-1} x_{\nu^-} c_{{q}, \tilde{a}, \tilde{a} +b}\right){\cbefore}{\cafter}\;\;(\mbox{by \eqref{diampf-3} and \eqref{diampf-4}})\\
&=(-q^{a-1}\STEP{b+1}+q^{a+b-1})x_{\nu^-}c_{{q}, \tilde{a}, \tilde{a} +b}{\cbefore}{\cafter},\\
\end{aligned}
$$
and  $(-q^{a-1}\STEP{b+1}+q^{a+b-1})=-q^{a-1}\STEP{b}$, we have
\begin{equation}\label{M_2}
\begin{aligned}\mathcal{M}_2&=-q^{a-1}\STEP{b}x_{\nu^-}( c_{{q}, \tilde{a}, \tilde{a} +b}){\cbefore}{\cafter}\\
&=(-1)^{\tilde{a}^{\bar1}_{h-1,k}+1}q^{a-1}\STEP{b}x_{\nu^-}{\cbefore}( c_{{q}, \tilde{a}, \tilde{a} +b}){\cafter}\\
&=(-1)^{\tilde{a}^{\bar1}_{h-1,k}+1}q^{a-1}\STEP{b}x_{\nu^-}c_{(A^{\bar0}+E_{h+1,k}|A^{\bar1}-E_{h,k})}.
\end{aligned}
\end{equation}
Now, combining the formulas \eqref{M},\eqref{M_1} and \eqref{M_2}, we obtain in this case
\begin{equation}\label{case4}
\begin{aligned}
(\diamondsuit)_k&=x_{{{\lambda}_{(h)}^{-}}\backslash{}'\!\nu^-}\TDAM (\ufm^-)\Sigma_{A^-_{h,k}}\\
&=(-1)^{\tilde{a}^{\bar1}_{h-1,k}+a^{\bar1}_{h,k}+1}q^{-1}\STEPPDR{b+1}T_{(A^{\bar0}-E_{h,k}+2E_{h+1,k}|A^{\bar1}-E_{h+1,k})}\\
&\quad+(-1)^{\tilde{a}^{\bar1}_{h-1,k}+1}q^{a-1}\STEP{b} T_{(A^{\bar0}+E_{h+1,k}|A^{\bar1}-E_{h,k})}.
\end{aligned}
\end{equation}

Finally, let
\begin{equation*}\label{FSDP}
\begin{aligned}
(\diamondsuit)_k'%=
	%\sum_{j=\BK(h,k)}^{\BK(h,k-1) - 1}x_{{{\lambda}_{(h)}^{-}}} \left(\TDAM c_{\tilde{a}_{h,k}}\right)T_{\tilde{a}_{h,k}-1} \cdots T_{\tilde{a}_{h,k}-q_j}
	%c_{\Ad}
	%\Sigma_A\\
&= %\Big\{
	{(-1)}^{{\SOE{\widetilde{a}}}_{h-1,k} + \SOE{a}_{h,k}} 	T_{(\SE{A} - E_{h,k}| \SO{A}+ E_{h+1, k})} \\
	& \qquad +
	{(-1)}^{{\SOE{\widetilde{a}}}_{h-1,k} + \SOE{a}_{h,k}+1}
	{q}^{-1}  \STEPPDR{a_{h+1, k} +1}
	T_{(\SE{A} - E_{h,k} + 2E_{h+1, k} | \SO{A}  -E_{h+1, k})} \\
	& \qquad +
	{(-1)}^{{\SOE{\widetilde{a}}}_{h-1,k}+1} {q}^{ a_{h,k}-1}
	\STEP{ \SEE{a}_{h+1,k}+1}
	T_{(\SE{A} + E_{h+1,k} | \SO{A}  - E_{h,k} )}.
%\Big\}.
\end{aligned}
\end{equation*}
A case by case discussion, using \eqref{case1}, \eqref{case2}, \eqref{case3}, \eqref{case4}, and Convention \ref{CONV}, proves $(\diamondsuit)_k'=(\diamondsuit)_k$.
\end{proof}

\end{document}